\date{\today}
\theoremstyle{plain}
\newtheorem*{theorem*}{Theorem}
\def\kkk{\null\hfill $\Box $ \\}
\newtheorem{theor}{Theorem}[section]
\newtheorem*{theor*}{Theorem}
\newtheorem*{conj*}{Conjecture}
\newtheorem{conj}[theor]{Conjecture}
\newtheorem{prop}[theor]{Proposition}
\newtheorem{cor}[theor]{Corollary}
\newtheorem{lemma}[theor]{Lemma}
\newtheorem{rem}[theor]{Remark}
\theoremstyle{remark}
\newtheorem{Problem}[theor]{Problem}
\def\ST{{\mathcal S}_n}
\def\Rv{{\mathcal R}}
\def\R{{\mathbb R}}
\def\N{{\mathbb N}}
\def\Z{{\mathbb Z}}
\def\C{\mathbb{C}}
\newcommand{\lc}{\lceil}
\newcommand{\rc}{\rceil}
\def\la{\left\langle}
\def\r{\right}
\def\ra{\r\rangle}
\def\Bern{Bernoulli($p_n$) }
\def\Ber{Bernoulli($p$) }
\def\Berq{Bernoulli($q$) }
\def\Prob{{\mathbb P}}
\def\Exp{{\mathbb E}}
\def\E{{\mathbb E}}
\def\Event{{\mathcal E}}
\def\cf{\mathcal{Q}}
\newcommand{\p}{\mathbb{P}}
\newcommand{\Col}{\col}
\def\bal{{\bf UD}}
\def\dist{{\rm dist}}
\def\col{{\bf C}}
\def\row{{\bf R}}
\def\spn{{\rm span}\,}
\def\supp{{\rm supp\, }}
\def\Net{{\mathcal N}}
\def\gncvectors{{\mathcal V}}
\def\normr{{\Upsilon}}
\newcommand{\lam}{\lambda}
\newcommand{\eps}{\varepsilon}
\def\AA{{\mathcal A}_N}
\def\MSet{{\mathcal M}}
\def\Mc{{\mathcal{M}_{n}}}
\def\Om0{\Omega_0}
\def\edv{{\bf e}}
\def\qand{\quad \mbox{ and } \quad}
\def\nx{\| x \|}
\newcommand{\st}{\mathcal{T}}
\newcommand{\ct}{C_\tau}
\newcommand{\stt}{\mathcal{B}}
\newcommand{\il}{I^\ell}
\newcommand{\ir}{I^r}
\def\f{{\mathcal F}}
\def\ii{{\mathcal I}}
\def\nn{n_{s_0+3}}
\def\BB{\mathcal{AC}}
\def\aaa{r}
\def\gfn{{\bf g}}
\title{Singularity of sparse Bernoulli matrices}
\author{Alexander E. Litvak
and Konstantin E. Tikhomirov}
\newcommand\address{\noindent\leavevmode

\medskip
\noindent
Alexander E. Litvak\\
Dept.~of Math.~and Stat.~Sciences,\\
University of Alberta, \\
Edmonton, AB, Canada, T6G 2G1.\\
\texttt{\small
e-mail:  aelitvak@gmail.com}\\

\medskip

\noindent
Konstantin E. Tikhomirov\\
School~of Math., GeorgiaTech,\\
686 Cherry street,\\
Atlanta, GA 30332, USA.\\
\texttt{\small
e-mail:   ktikhomirov6@gatech.edu}\\
}
\begin{document}

\maketitle

\begin{abstract}
Let $M_n$ be an $n\times n$ random matrix with i.i.d.\ \Ber entries. We show that there is a universal constant
$C\geq 1$ such that, whenever $p$ and $n$ satisfy $C\log n/n\leq p\leq C^{-1}$,
\begin{align*}
\Prob\big\{\mbox{$M_n$ is singular}\big\}&=(1+o_n(1))\Prob\big\{\mbox{$M_n$ contains a zero row or column}\big\}\\
&=(2+o_n(1))n\,(1-p)^n,
\end{align*}
where $o_n(1)$ denotes a quantity  which converges to zero as $n\to\infty$. 
We provide the corresponding upper and lower bounds on the smallest singular value of $M_n$ as well.
\end{abstract}

\bigskip

{\small
\noindent{\bf AMS 2010 Classification:}
primary: 60B20, 15B52;
secondary: 46B06, 60C05.\\
\noindent
{\bf Keywords: }
Littlewood--Offord theory,
Bernoulli matrices, sparse matrices, smallest singular value, invertibility}

\tableofcontents

\section{Introduction}
\label{s:intro}

Invertibility of discrete random matrices attracts considerable attention in the literature.
The classical problem in this direction --- estimating the singularity probability of a square
random matrix $B_n$ with i.i.d.\ $\pm 1$ entries --- was first addressed by Koml\'os
in the 1960-es. Koml\'os \cite{Komlos} showed that $\Prob\{\mbox{$B_n$ is singular}\}$
decays to zero as the dimension grows to infinity. A breakthrough result of Kahn--Koml\' os--Szemer\' edi
\cite{KKS95} confirmed that the singularity probability of $B_n$ is exponentially small in dimension.
Further improvements on the singularity probability were obtained by Tao--Vu \cite{TV disc1, TV bernoulli}
and Bourgain--Vu--Wood \cite{BVW}.
An old conjecture states that $\Prob\{\mbox{$B_n$ is singular}\}=\big(\frac{1}{2}+o_n(1)\big)^n$.
The conjecture was resolved in \cite{KT-last}.

Other models of non-symmetric discrete random matrices considered in the literature include
adjacency matrices of $d$-regular digraphs,
as well as the closely related model of sums of independent uniform permutation matrices
\cite{LSY, Cook adjacency, Cook circular, LLTTY-cras, LLTTY:15, LLTTY first part, LLTTY third part,
LLTTY rank n-1, BCZ}.
In particular, the recent breakthrough works \cite{Huang, Mez, NW} confirmed that the adjacency
matrix of a uniform random $d$--regular digraph of a constant degree $d\geq 3$ is non-singular with probability
decaying to zero as the number of vertices of the graph grows to infinity.
A closely related line of research deals with the rank of random matrices over finite fields.
We refer to \cite{LMN} for some recent results and further references.

The development of the {\it Littlewood--Offord theory}
and a set of techniques of geometric functional analysis reworked in the random matrix context,
produced strong invertibility results for a broad class of distributions. Following works \cite{TV ann math,R-ann}
of Tao--Vu and Rudelson,
the paper \cite{RV} of Rudelson and Vershynin established optimal small ball probability estimates for the
smallest singular value in the class of square matrices with i.i.d.\ subgaussian entries,
namely, it was shown that any $n\times n$ matrix $A$ with i.i.d.\ subgaussian entries
of zero mean and unit variance satisfies $\Prob\{s_{\min}(A)\leq t\,n^{-1/2}\}\leq Ct+2\exp(-cn)$
for all $t>0$ and some $C,c>0$ depending only on the subgaussian moment.
The assumptions of identical distribution of entries and of bounded subgaussian moment
were removed in subsequent works \cite{RT,Livshyts,LTV}.
This line of research lead to positive solution of the Bernoulli matrix conjecture mentioned in the first paragraph.
Let us state the result of \cite{KT-last} for future reference.
\begin{theor*}[Invertibility of dense Bernoulli matrices, \cite{KT-last}]\hspace{0cm}
\begin{itemize}

\item For each $n$, let $B_n$ be the $n\times n$ random matrix with i.i.d.\ $\pm 1$ entries.
Then for any $\varepsilon>0$ there is $C$ depending only on $\varepsilon$ such that
the smallest singular value $s_{\min}(B_n)$ satisfies
$$
\Prob\big\{s_{\min}(B_n)\leq t n^{-1/2}\big\}\leq Ct+C(1/2+\varepsilon)^n,\quad t>0.
$$
In particular, $\Prob\big\{\mbox{$B_n$ is singular}\big\}=(1/2+o_n(1))^n$,
where the quantity $o_n(1)$ tends to zero as $n$ grows to infinity.

\item
For each $\varepsilon>0$ and $p\in(0,1/2]$ there is $C>0$ depending on $\varepsilon$ and $p$
such that for any $n$ and for random $n\times n$ matrix $M_n$ with i.i.d.\ \Ber entries,
$$
\Prob\big\{s_{\min}(M_n)\leq t n^{-1/2}\big\}\leq Ct+C(1-p+\varepsilon)^n,\quad t>0.
$$
In particular, for a fixed $p\in(0,1/2]$, we have $\Prob\big\{\mbox{$M_n$ is singular}\big\}=(1-p+o_n(1))^n$.
\end{itemize}
\end{theor*}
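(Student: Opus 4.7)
The plan is to follow the Rudelson--Vershynin framework and its quantitative refinements, decomposing the unit sphere $S^{n-1}$ into compressible and incompressible vectors, handling each part separately, and sharpening each estimate so as to attain the optimal exponent $(1-p)^n$. Fix parameters $\delta,\rho\in(0,1)$ and write $\mathrm{Comp}(\delta,\rho)$ for the set of unit vectors within Euclidean distance $\rho$ of some $\delta n$-sparse unit vector, and $\mathrm{Incomp}(\delta,\rho)=S^{n-1}\setminus\mathrm{Comp}(\delta,\rho)$. Then
$$
s_{\min}(M_n) = \min\Bigl(\inf_{v\in\mathrm{Comp}(\delta,\rho)}\|M_n v\|,\ \inf_{v\in\mathrm{Incomp}(\delta,\rho)}\|M_n v\|\Bigr),
$$
and the two infima call for different techniques.

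On the compressible side, extremely sparse vectors (support of constant size) are treated by direct computation: the event $\|M_n v\|\leq t\sqrt{n}$ forces many rows of $M_n$ to vanish on $\supp v$, an event of probability at most $(1-p+o(1))^n$; this is precisely where the sharp factor $(1-p)^n$ enters, mirroring the zero-row bound. For moderately sparse compressible vectors I would combine a lower bound on $\Exp\|M_n v\|^2$ with a tensorized small-ball inequality at a fixed $v$, and union-bound over an $\varepsilon$-net in $\mathrm{Comp}(\delta,\rho)$ of the appropriate cardinality.

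On the incompressible side, the classical reduction gives
$$
\inf_{v\in\mathrm{Incomp}(\delta,\rho)}\|M_n v\|\geq\frac{c\rho}{\sqrt{n}}\,\min_{k}\dist(X_k,H_k),
$$
where $X_k$ is the $k$-th column and $H_k$ the span of the remaining columns. Writing $\dist(X_k,H_k)=|\langle X_k,v^{(k)}\rangle|$ for a unit normal $v^{(k)}\perp H_k$, the inverse Littlewood--Offord machinery yields $\Prob\{|\langle X_k,v^{(k)}\rangle|\leq t\}\leq Ct/\sqrt{p(1-p)}+e^{-cpn}$ whenever $v^{(k)}$ has exponentially large least common denominator. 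It then remains to show that, with probability at least $1-C(1-p+\varepsilon)^n$, the random normal $v^{(k)}$ really is unstructured in this sense.

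The main obstacle is exactly this last step: ruling out structured normals with the \emph{sharp} probability $(1-p+\varepsilon)^n$, rather than the weaker $c^n$ with $c<1$ that older techniques provide. A naive net argument over near-sparse or near-constant candidate normals loses too much. The strategy I would implement is the ``inversion of randomness'' method: conditioning on most of the matrix, one extracts from any hypothetical structured $v^{(k)}$ a concrete linear-algebraic event whose probability can be computed to order $(1-p)^n$, matching the zero-row bound; one then shows that, after quotienting by the natural symmetries (permutations, sign flips, and scalings), the total contribution of all structured templates is still bounded by $(1-p+\varepsilon)^n$. The delicate combinatorial estimates and anti-concentration inputs needed to prevent polynomial or exponential blow-up when aggregating over all templates form the technical heart of the argument, and this is where I would expect to spend the most effort.
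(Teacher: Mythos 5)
This theorem is not proved in the paper under review: it is the main result of the cited reference \cite{KT-last} and is quoted in the introduction purely as background, so there is no ``paper's own proof'' to compare against. With that understood, let me assess the sketch against what is actually done in \cite{KT-last} and flag the places where it is misleading or incomplete.

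Your high-level framework --- the compressible/incompressible decomposition, invertibility via distance, and LCD-type anti-concentration for the random normal vector $v^{(k)}$ --- is indeed the skeleton of \cite{KT-last}, which is a sharpening of the Rudelson--Vershynin programme. You also correctly locate the bottleneck: showing that the event ``$v^{(k)}$ is structured'' has probability at most $(1-p+\varepsilon)^n$ (resp.\ $(1/2+\varepsilon)^n$), not merely $c^n$ for some $c<1$.

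There are, however, two concrete problems. First, your account of where the sharp base enters --- sparse vectors forcing ``many rows to vanish on $\supp v$'' with probability $(1-p+o(1))^n$ --- is a $0/1$ Bernoulli statement and simply does not apply to the $\pm1$ matrix $B_n$, whose entries never vanish. For $B_n$ the relevant events on the compressible side are, e.g., $\|B_n v\|=0$ for $v=(e_i\pm e_j)/\sqrt{2}$, i.e.\ two equal or opposite columns, which has probability $2^{-n}$ per pair; the ``vanishing on the support'' picture has to be replaced by ``orthogonality to $v|_{\supp v}$'' and the argument is different. Moreover, even for $M_n$, the compressible/sparse contribution only accounts for part of the base $(1-p)^n$; the other, harder, part comes from the incompressible side, where the anti-concentration estimate $\cf(\langle X_k,v^{(k)}\rangle,0)\approx(1-p)^n$ at a generic unstructured normal must be paired with a $(1-p+\varepsilon)^n$ bound on the probability of a structured normal. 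Second, ``inversion of randomness'' names a programme, not a mechanism, and the sketch stops exactly where the proof actually begins. What \cite{KT-last} does at that step is a \emph{randomized rounding} (lattice discretization) of the candidate normal combined with a careful averaging/double-counting argument over the rounding randomness, conditioned on $n-1$ columns; it is this averaging that keeps the union over lattice templates from blowing up exponentially, which was precisely what defeated earlier arguments. (The present paper's Proposition~\ref{l: stability of bal} and Theorem~\ref{th: gradual} are descendants of that device, further sharpened by conditioning on column-support cardinalities.) As written, your proposal gives a correct map of the terrain but leaves the decisive step unexplained and attributes the sharp base for $B_n$ to a mechanism that cannot occur in the $\pm1$ model.
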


\bigskip

Sparse analogs of the Rudelson--Vershynin invertibility theorem \cite{RV} were obtained, in particular, in works
\cite{TV sparse, GT, LiRi,  BasRud, BasRud circ, BasRud-sharp}, with the strongest small ball probability estimates
in the i.i.d.\ subgaussian setting available in \cite{BasRud, BasRud circ, BasRud-sharp}.
Here, we state a result of Basak--Rudelson \cite{BasRud} for
Bernoulli($p_n$) random matrices.
\begin{theor*}[Invertibility of sparse Bernoulli matrices, \cite{BasRud}]
There are universal constants $C,c>0$ with the following property. Let $n\in\N$ and let
$p_n\in(0,1)$ satisfy $C\log n/n\leq p_n\leq 1/2$. Further, let $M_n$ be the
random $n\times n$ matrix with i.i.d. Bernoulli($p_n$) entries (that is, $0/1$ random variables
with expectation $p$). Then
\begin{align*}
\Prob\big\{s_{\min}(M_n)\leq t\,\exp\big(-C\log(1/p_n)/\log(np_n)\big)\,\sqrt{p_n/n}\big\}\leq Ct+2\exp(-cnp_n),\quad t>0.
\end{align*}
\end{theor*}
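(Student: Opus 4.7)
The plan is to follow the Rudelson--Vershynin invertibility framework adapted to the sparse regime. The key ingredients are (i) a decomposition of the unit sphere into compressible (structured) and incompressible vectors, (ii) an invertibility via distance reduction for incompressible vectors, (iii) a sparse Littlewood--Offord anti-concentration inequality, and (iv) an analysis of the additive structure of normal vectors to random hyperplanes spanned by columns of $M_n$.

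First I would handle the compressible vectors --- those $x \in S^{n-1}$ close to a vector supported on a small set. Using an $\eps$-net combined with tensorization of one-dimensional tail bounds for Bernoulli($p_n$) sums, one shows that with probability at least $1-\exp(-c\,np_n)$ every compressible $x$ satisfies $\|M_n x\| \gtrsim \sqrt{np_n}$. The adjustments required in the sparse regime are that the net is built at scale $\sqrt{p_n/n}$ (rather than $1/\sqrt{n}$), and the sparsity parameter defining the compressible class must be tuned to the effective rank $np_n$ of a typical row. The lower bound $p_n \geq C\log n/n$ enters here to guarantee that $M_n$ has no zero rows or columns with sufficient probability, and that single-coordinate arguments leave enough room to absorb union bounds over the net.

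For incompressible vectors I would apply the distance reduction $s_{\min}(M_n) \geq c\,\min_{k}\dist(X_k, H_k)/\sqrt{n}$, where $X_k$ is the $k$-th column of $M_n$ and $H_k$ is the span of the remaining columns. Writing $v_k$ for a unit normal to $H_k$, we have $\dist(X_k,H_k) = |\langle X_k, v_k\rangle|$, so the task reduces to an anti-concentration estimate for this inner product. The appropriate sparse Littlewood--Offord inequality bounds the small ball probability by roughly $C(\eps + 1/D(v_k))/\sqrt{np_n}$, where $D(v_k)$ is a regularized least common denominator of $v_k$ adapted to the Bernoulli($p_n$) distribution: it measures how close a rescaling $\theta v_k$ comes to the integer lattice in an appropriately sparsity-weighted $\ell^2$ norm.

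The principal obstacle is step (iv): showing that with high probability $v_k$ has large sparse LCD. The argument proceeds by contradiction --- if $D(v_k)$ were small, then $v_k$ would lie in a thin set of structured vectors, and a delicate net estimate on this set must show that it is extremely unlikely for all $X_j$ with $j\neq k$ to be simultaneously orthogonal to any element of it. The maximum LCD achievable in the sparse regime is intrinsically limited (roughly $\exp(c\,np_n/\log(1/p_n))$), and this ceiling is precisely the source of the extra factor $\exp(-C\log(1/p_n)/\log(np_n))$ in the final bound. Executing this step rigorously requires a multi-scale decomposition of the set of possible normals, together with a union bound tuned to the sparsity of the rows --- this is the bulk of the technical work in \cite{BasRud} and what distinguishes the sparse analysis from the classical Rudelson--Vershynin treatment.
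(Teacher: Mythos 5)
This theorem is quoted from Basak--Rudelson \cite{BasRud} as background; the present paper cites it in the introduction but does not prove it, so there is no ``paper's own proof'' to compare against here. That said, your sketch is an essentially accurate outline of the argument in \cite{BasRud}: the compressible/incompressible decomposition with nets at scale $\sqrt{p_n/n}$, invertibility-via-distance for incompressible vectors, a sparse regularization of the least common denominator, and the observation that the ceiling on achievable LCD in the sparse regime is what produces the extra factor $\exp\big(-C\log(1/p_n)/\log(np_n)\big)$ in the threshold --- all of these are the right ingredients, and the quantitative conclusion matches.

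Since the surrounding paper's whole purpose is to sharpen this result, it is worth contrasting the routes. The present paper replaces the compressible/incompressible split by a partition into gradual non-constant vectors $\gncvectors_n(r,\gfn,\delta,\rho)$ and their complement $\ST$, replaces the LCD by the u-degree $\bal_n(v,m,K_1,K_2)$ --- an anti-concentration functional adapted to random $0/1$ vectors with \emph{prescribed} support size $m$ rather than i.i.d.\ Bernoulli coordinates --- and treats the structured class $\ST$ not by a single net estimate but by a finer case analysis (steep vectors, $\Rv$-vectors, almost-constant vectors) designed to isolate the dominant singularity mechanism (a zero row or column). The conditioning on column support size in the invertibility-via-distance step (Lemma~\ref{l: inv via dist}) is what makes the u-degree applicable. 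Your LCD-based argument gives a clean $2\exp(-cnp_n)$ residual but cannot resolve the constant in front of $n(1-p_n)^n$, precisely because the LCD ceiling in the sparse regime --- which you correctly identify --- caps the attainable anti-concentration; the paper's machinery is built to pass through that ceiling.
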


\medskip

%Now, let us discuss the limitations of the aforementioned results.
The singularity probabilities implied by the results \cite{KT-last,BasRud} %are optimal or close to optimal
%(in some range of $t$),
may be regarded as suboptimal
in a certain respect. Indeed, while \cite{KT-last} produced an
asymptotically sharp base of the power in the singularity probability
of $B_n$,
the estimate of \cite{KT-last}
is off by a factor $(1+o_n(1))^n$ which may (and in fact does, as analysis of the proof shows)
grow to infinity with $n$ superpolynomially fast.
Further, the upper bound on the singularity probability of sparse Bernoulli matrices implied by \cite{BasRud}
captures an exponential dependence on $n p_n$, but does not recover an asymptotically optimal base of the power.

A folklore conjecture for matrices $B_n$ asserts that
$\Prob\{\mbox{$B_n$ is singular}\}=(1+o_n(1))n^2 2^{1-n}$,
where the right hand side of the expression is the probability that two rows or two columns
of the matrix $B_n$ are equal up to a sign (see, for example, \cite{KKS95}).
This conjecture can be naturally extended to the model with Bernoulli($p_n$) ($0/1$) entries as follows.
\begin{conj}[Stronger singularity conjecture for Bernoulli matrices]
For each $n$, let $p_n\in (0,1/2]$, and let $M_n$ be the $n\times n$
matrix with i.i.d.\ Bernoulli($p_n$) entries. Then
\begin{align*}
\Prob\{&\mbox{$M_n$ is singular}\}\\
&=(1+o_n(1))
\Prob\big\{\mbox{a row or a column of $M_n$ equals zero, or two rows or columns are equal}\big\}.
\end{align*}
In particular, if $\limsup p_n<1/2$ then
\begin{align*}
\Prob\{\mbox{$M_n$ is singular}\}&=(1+o_n(1))
\Prob\big\{\mbox{either a row or a column of $M_n$ equals zero}\big\}.
%&=(2+o_n(1))n\,(1-p_n)^n.
\end{align*}
\end{conj}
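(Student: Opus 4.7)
I will focus on the ``In particular'' part of the conjecture (the range $C\log n/n \le p_n \le C^{-1}$ claimed by the abstract), where the statement reduces to
\[
\Prob\{M_n \text{ is singular}\} = (2+o(1))\,n(1-p_n)^n.
\]
The lower bound is elementary: any matrix with a zero row or column is singular, and inclusion--exclusion gives
\[
\Prob\{\text{a row or column of } M_n \text{ is zero}\} = 2n(1-p_n)^n - O\bigl(n^2(1-p_n)^{2n-1}\bigr),
\]
which equals $(2+o(1))n(1-p_n)^n$ whenever $np_n \to \infty$. Everything therefore reduces to the upper bound
\[
\Prob\bigl\{M_n \text{ is singular and no row or column of } M_n \text{ is zero}\bigr\} = o\bigl(n(1-p_n)^n\bigr).
\]

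To bound this, I condition on the event $\Event_0$ that no row or column is zero and, on this event, look for a unit vector $v \in S^{n-1}$ with $M_n v = 0$. Following the scheme of \cite{RV}, I would split $S^{n-1}$ into the compressible vectors (those within $\ell_2$-distance $\rho$ of the set of vectors supported on at most $\delta n$ coordinates) and the incompressible vectors. For compressible $v$, the event $M_n v = 0$ forces every one of the $n$ rows to lie in a low-dimensional pencil; an $\varepsilon$-net argument combined with the fact that, in the regime $p \le C^{-1}$, the probability that one row is orthogonal to a fixed nonzero vector is bounded away from $1$ (as used in \cite{BasRud,LLTTY:15}), yields a contribution of order $e^{-cn}$, which is far below $n(1-p_n)^n$ when $p_n \le C^{-1}$.

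For incompressible $v$, the relevant control is a small-ball probability for a single row via the least common denominator. Here I fix one row $R_1$ and apply an ``inversion of randomness'' step: condition on the remaining $n-1$ rows to identify a (nearly) unique candidate kernel vector $v = v(M_n^{(1)})$, and estimate $\Prob\{\langle R_1, v\rangle = 0 \mid v\}$ by a sparse Littlewood--Offord inequality. The crucial sharpening relative to \cite{BasRud} is to replace the bound $e^{-cnp_n}$ by one that matches $(1-p_n)^n$ up to a factor $o(n)$. This forces a refined structural classification of $v$: once zero rows and columns are excluded, one argues that the kernel vectors compatible with the sparsity profile of $M_n$ must have very large LCD at a scale tuned to $p_n$, giving the required sharp anti-concentration bound.

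The main obstacle is precisely closing the gap between the subexponential $\exp(-cnp_n)$ estimate of \cite{BasRud} and the sharp target $n(1-p_n)^n$. In the sparse regime the matrix typically has many ``moderate'' rows with very few nonzero entries (say between $1$ and some threshold $K$), which impose strong local constraints on $v$ and are the main source of near-degenerate behavior without producing an exactly zero row; handling them requires an iterative argument in which one peels off moderate rows, tracks how the effective dimension of the kernel candidate shrinks, and verifies at each step that the probability cost is correctly accounted for against $(1-p_n)^n$. Extending to the full conjecture (allowing $p_n \to 1/2$) would additionally require tracking the event that two rows or two columns of $M_n$ coincide, whose probability $\binom{n}{2}(p_n^2+(1-p_n)^2)^n$ becomes comparable to $n(1-p_n)^n$ near $p_n=1/2$ and must be absorbed into the main term rather than the error.
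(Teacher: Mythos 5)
The displayed statement is a \emph{conjecture}; the paper does not claim to prove it in full, but rather establishes the restricted version stated as Theorem~\ref{th: main} in the regime $C\ln n\le np\le C^{-1}$, and your proposal correctly targets that regime. Your lower bound argument (inclusion--exclusion on zero rows/columns) is correct and essentially matches Subsection~\ref{subs: lower b}.

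The central gap in your proposal is the anti-concentration step. With i.i.d.\ \Ber entries, for \emph{any} non-random vector $y$ we have $\cf(\langle X, y\rangle, 0) \ge \Prob\{X=\mathbf{0}\} = (1-p)^n$, because the atom at $X = \mathbf{0}$ is always present. So no matter how large the LCD of the kernel candidate $v(M_n^{(1)})$ is, the RV-style bound $\cf(\langle R_1, v\rangle, 0) \le C/\mathrm{LCD}(v) + \dots$ cannot drop below this floor; the best you can extract from the incompressible part via LCD and invertibility-via-distance is $O(n)\cdot(1-p)^n$, which has the same order as the main term. This prevents you from isolating the $(2+o(1))n(1-p)^n$ contribution coming specifically from zero rows/columns. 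Conditioning directly on the event $\Event_0$ that no row or column is zero destroys independence of entries and does not give a tractable model. Your proposed fix --- an iterative peeling of ``moderate'' rows --- is hand-waving rather than a mechanism: it does not explain how the $(1-p)^n$ atom is circumvented at each stage.

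The paper's actual route avoids this barrier by changing the model underlying the anti-concentration estimate: it conditions each column on having a prescribed support cardinality $m\in[pn/8,8pn]$ (a high-probability event, by Bennett's inequality), so the column becomes uniform on $0/1$ vectors with exactly $m$ ones. In that model there is no atom at zero, and a typical vector $y$ achieves $\cf(\langle X, y\rangle,0)=\binom{n}{m}^{-1}\ll (1-p)^n$. This requires a bespoke anti-concentration functional --- the u-degree $\bal_n$ of Section~\ref{s: unstructured} --- specifically designed for dependent Bernoulli variables with a fixed sum, together with a modified invertibility-via-distance lemma (Lemma~\ref{l: inv via dist}) that passes to pairs of columns and is compatible with this conditioning. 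Secondarily, the paper's split of $S^{n-1}$ is not the compressible/incompressible dichotomy of \cite{RV} but the gradual non-constant decomposition of \cite{LLTTY first part,LLTTY-TAMS}, since the structured part must be handled with nets that account for the large non-centered direction $\mathbf{1}$; without these ingredients, your sketch cannot reach the sharp $(2+o(1))n(1-p)^n$ asymptotic.
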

Conceptually, the above conjecture asserts that the main causes for singularity are local in the sense
that the linear dependencies typically appear within small subsets of rows or columns.
In a special regime $n p_n\leq \ln n+o_n(\ln \ln n)$, the conjecture was positively
resolved in \cite{BasRud-sharp}  (note that if $n p_n\leq \ln n$ then the matrix
has a zero row with probability at least $1-1/e-o_n(1)$). However,
%; we refer to that paper for precise conditions on $p_n$.
%In particular,
the regime $\liminf (np_n/\log n)> 1$ was not covered in \cite{BasRud-sharp}.

The main purpose of our paper is to develop methods capable of capturing the singularity probability with a
sufficient precision to answer the above question. Interestingly, this appears to be more accessible
in the sparse regime, when $p_n$ is bounded above by a small universal constant
(we  discuss this in the next section in more detail).
It is not difficult to show that when $\liminf (np_n/\ln n)>1$,
the events that a given row or a given column equals zero, almost do not intersect, so that
$$
\Prob\big\{\mbox{either a row or a column of $M_n$ equals zero}\big\}=(2+o_n(1))n\,(1-p_n)^n.
$$
Our  main result  can be formulated as follows.
\begin{theor}\label{th: main}
There are universal constants $C,\widetilde C\geq 1$ with the following property.
Let $n\geq 1$ and let $M_n$ be an $n\times n$ random matrix such that
\begin{equation}\tag{{\bf{}A}}\label{eq: assumptions}
\mbox{The entries of $M_n$ are i.i.d.\ \Ber, with $p=p_n$ satisfying $C\ln n\leq np\leq C^{-1}$.}
\end{equation}
Then
$$\Prob\big\{\mbox{$M_n$ is singular}\big\}=(2+o_n(1))n\,(1-p)^n, $$
where $o_n(1)$ is a quantity which tends to zero as $n\to\infty$.
Moreover, for every $t>0$,
$$
\Prob\big\{s_{\min}(M_n)\leq t\, \exp(-3\ln^2(2n))\big\}\leq t+(1+o_n(1))\Prob\big\{\mbox{$M_n$ is singular}\big\}
=t+(2+o_n(1))n\,(1-p)^n.
$$
\end{theor}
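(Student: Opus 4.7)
The lower bound follows from inclusion--exclusion applied to the events $R_i = \{\text{the } i\text{th row of }M_n\text{ is zero}\}$ and $C_j = \{\text{the } j\text{th column is zero}\}$: since $(1-p)^n \leq e^{-np} \leq n^{-C}$ under the hypothesis, pairwise intersections contribute at most $O\bigl(n^2 (1-p)^{2n-1}\bigr) = o(n(1-p)^n)$, so $\Prob\bigl(\bigcup_i R_i \cup \bigcup_j C_j\bigr) = (2+o_n(1))\,n\,(1-p)^n$, and this event is contained in the singularity event.

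For the upper bound the task is to show $\Prob(\mathcal{E}) \leq t + o(n(1-p)^n)$, where $\mathcal{E}$ is the event $\{s_{\min}(M_n) \leq t\exp(-3\ln^2(2n))\} \cap \{M_n\text{ has no zero row or column}\}$. The plan is to follow the geometric framework of Rudelson--Vershynin adapted to sparse matrices as in \cite{BasRud, BasRud-sharp}, decomposing the sphere $S^{n-1}$ into three regimes: (a) vectors very concentrated near one coordinate, $|x_i| > 1-\delta$ for some $i$; (b) compressible vectors whose support has size $\leq \kappa n$ but that are not of type (a); and (c) incompressible (spread) vectors. The same decomposition applies on the column side via $M_n^\top$.

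For class (a), the event $\|M_n x\| \leq \tau$ forces the $i$th column $\col_i$ of $M_n$ to have small $\ell_2$-norm; conditioning on $|\supp(\col_i)|=k$ and applying a conditional argument to the complementary minor, I would show that the contribution summed over $k\geq 1$ is $o(n(1-p)^n)$, so only $k=0$ remains, which is excluded by $\mathcal{E}$. For class (b), a net argument combined with sparse concentration bounds in the spirit of \cite{BasRud-sharp} should yield an estimate of the form $\exp(-c\,n\,\varphi(np))$, which is easily $o(n(1-p)^n)$ once $C$ is sufficiently large. For class (c), I would apply an invertibility--via--distance lemma to reduce the problem to bounding $\Prob(\dist(\row_j, H_j) \leq \delta)$, for the row $\row_j$ and the span $H_j$ of the remaining rows, via a sharp sparse Littlewood--Offord / essential LCD estimate on a unit normal to $H_j$.

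The principal obstacle is class (a) with small $k$ (say $k = 1, 2$): conditionally on $\col_i$ having exactly $k$ nonzero entries, the complementary minor is still a sparse Bernoulli matrix, and showing that its kernel direction stays near the affected coordinates with probability $o((np)^{-k})$ requires a delicate conditional Littlewood--Offord analysis. Obtaining the sharp constant $2$ in $2n(1-p)^n$, rather than an unspecified constant, demands that each error term aggregates to $o(1)\cdot n(1-p)^n$ rather than $O(1)\cdot n(1-p)^n$; in particular the LCD bound in class (c) must be truly sharp, with no hidden $(1+c)^n$ loss. The subexponential factor $\exp(-3\ln^2(2n))$ in $s_{\min}$ emerges from the scale at which nets compatible with the sparse LCD can be constructed, and this scale must be propagated consistently through all three regimes.
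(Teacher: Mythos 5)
Your high-level architecture---lower bound via inclusion--exclusion, decomposition of the sphere with a special ``near-coordinate-vector'' class as the dominant contributor, nets for structured vectors, invertibility-via-distance plus Littlewood--Offord for spread vectors---matches the skeleton of the paper's argument, and you correctly flag the central difficulty at the end: that the Littlewood--Offord bound on the unstructured part ``must be truly sharp, with no hidden $(1+c)^n$ loss.'' But you propose to achieve this via a ``sharp sparse Littlewood--Offord / essential LCD estimate,'' and this is precisely what cannot work: the paper is explicit that LCD-type functionals, applied to $\langle \col_1(M_n),\,{\bf Y}\rangle$ with $\col_1$ having i.i.d.\ \Ber entries, are inherently off by a sub-exponential-but-superpolynomial factor, because the atom of $\sum_i {\bf Y}_i b_i$ at $0$ already has mass $(1-p)^n$. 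The paper's key new idea, which your proposal has no substitute for, is to \emph{condition on the support cardinality of the column}. For a random $0/1$ vector with exactly $m$ nonzero entries, the anti-concentration is governed by $\binom{n}{m}^{-1}\ll (1-p)^n$, and the paper introduces a new functional---the ``u-degree'' $\bal_n(y,m,K_1,K_2)$ of \eqref{udeg}---together with Theorem~\ref{p: cf est} and the structural Theorem~\ref{th: gradual}, to exploit exactly this. Without such a tool your class~(c) estimate would deliver only $C\,n(1-p)^n$ for an uncontrolled constant $C$, not $(2+o_n(1))\,n(1-p)^n$.

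A second, subsidiary gap: you invoke the standard Rudelson--Vershynin invertibility-via-distance reduction to a single distance $\dist(\row_j,H_j)$. The paper replaces this with a \emph{two-column} version (Lemma~\ref{l: inv via dist}), which bounds the spread-vector event by $\frac{2}{(rn)^2}\sum_{i\neq j}\Prob\{\dist(H_i,\Col_i)\leq t b_n \,\text{and}\, \dist(H_j,\Col_j)\leq t b_n\}$. This pairwise form is not a cosmetic change: after conditioning on the support size of $\Col_1$ falling into $[pn/8,\,8pn]$ (which fails with probability comparable to $(1-p)^{n}$ itself), one needs the other column to absorb the bad event, so that the aggregate remains $o(n(1-p)^n)$ rather than $O(n(1-p)^n)$. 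The one-column reduction cannot produce the required $o(1)$ cushion. By contrast, your analysis of the ``near-coordinate'' class (a) is closer in spirit to the paper's treatment of $\st_0$ and $\stt_1$ (condition on no zero columns and on bounded row sums, then argue deterministically), and does not require the delicate conditional Littlewood--Offord you anticipate; the probability $o((np)^{-k})$ you worry about is simply not the quantity that arises.
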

In fact, our approach gives much better estimates on $s_{\min}$ in the regime when $p_n$ is constant,
see Theorem~\ref{const-p-th} below.
At the same time, we note that obtaining small ball probability estimates for $s_{\min}$
was not the main objective of this paper, and the argument was not fully optimized in that respect.

%\medskip

Geometrically, the main result of our work asserts that (under appropriate assumptions on $p_n$)
the probability that
a collection of $n$ independent random vectors $X_1^{(n)},\dots,X_n^{(n)}$ in $\R^n$,
with i.i.d Bernoulli($p_n$) components is linearly dependent is equal (up to $(1+o_n(1))$ factor)
to probability of the event that either $X_i^{(n)}$ is zero for some $i\leq n$ or $X_1^{(n)},\dots,X_n^{(n)}$ are contained
in the same coordinate hyperplane:
\begin{align*}
&\Prob\big\{\mbox{$X_1^{(n)},\dots,X_n^{(n)}$ are linearly dependent}\big\}
=(1+o_n(1))\,\Prob\big\{X_i^{(n)}={\bf 0}\mbox{ for some }i\leq n\big\}\\
&\hspace{1cm}+(1+o_n(1))\,\Prob\big\{\exists\,
\mbox{ a coordinate hyperplane $H$ such that }X_i^{(n)}\in H\mbox{ for all }i\leq n\big\}.
\end{align*}
Thus, the linear dependencies between the vectors, when they appear,
typically have the prescribed structure, falling into one of the two categories described above with the (conditional)
probability $\frac{1}{2}+o_n(1)$.

%\bigskip

The paper is organized as follows.
In the next section, we give an overview of the proof of the main result.
In Section~\ref{preliminaries}, we gather some preliminary facts and important notions to be used later.
In Section~\ref{s: unstructured}, we consider new anti-concentration inequalities for random $0/1$ vectors with prescribed
number of non-zero components, and introduce a functional ({\it u-degree} of a vector)
which enables us to classify vectors on the sphere according to anti-concentration properties of
inner products with the random $0/1$ vectors.
In the same section, we prove a key technical result --- Theorem~\ref{th: gradual} --- which states, roughly speaking,
that with very high probability a random unit vector orthogonal to $n-1$ columns of $M_n$ is either close
to being sparse or to being a constant multiple of $(1,1,\dots,1)$, or the vector is {\it very unstructured,}
i.e., has a very large u-degree.

In Section~\ref{steep:constant p}, we consider a special regime of constant probability of success $p$.
In this regime, estimating the event that $M_n$ has an almost null vector which is either close to sparse or almost constant,
is relatively simple. The reader who is interested only in the regime of constant $p$ can thus skip
the more technical Section~\ref{s: steep} and have the proof of the main result as a combination of
the theorems in Sections~\ref{s: unstructured} and~\ref{steep:constant p}. In Section~\ref{s: steep}, we consider the entire range for $p$.
Here, the treatment of almost constant and close to sparse null vectors is much more challenging and involves
a careful analysis of multiple cases. Finally, in Section~\ref{s: main th} we establish an {\it invertibility via distance}
lemma and prove the main result of the paper. Some open questions are discussed in Section~\ref{s: further}.

\section{Overview of the proof}\label{s: overview}

In this section, we provide a high-level overview of the proof; technical details will be discussed further in the text.
The proof utilizes some known approaches to the matrix invertibility, which involve, in particular,
a decomposition of the space into {\it structured}
and {\it  unstructured} parts, a form of {\it invertibility via distance} argument, small ball probability estimates based on the Esseen
lemma, and various forms of the $\varepsilon$--net argument.
The novel elements of the proof are anti-concentration inequalities for random vectors with a prescribed
cardinality of the support, a structural theorem for normals to random hyperplanes spanned by vectors with
i.i.d. \Ber components, and a sharp analysis of the matrix invertibility over the set of structured vectors.
We will start the description with our use of the partitioning trick, followed by a modified
invertibility-via-distance lemma, and then consider the anti-concentration inequality
and the theorem for normals (Subsection~\ref{ss: anti-c})
as well as invertibility over the structured vectors (Subsection~\ref{ss: steep overview}).

The use of decompositions of the
space $\R^n$ into structured and unstructured vectors has become rather standard in the literature.
A common idea behind such partitions is to apply the Littlewood--Offord theory to analyse the unstructured vectors
and to construct a form of the $\varepsilon$--net argument to treat the structured part.
Various definitions of structured and unstructured have been used in works dealing with the matrix invertibility.
One of such decomposition was introduced in \cite{LPRT} and further developed in \cite{RV}. In this splitting
the structured vectors are {\it compressible}, having a relatively small
Euclidean distance to the set of {\it sparse} vectors, while the vectors
in the complement are {\it incompressible}, having a large distance to sparse vectors and, as a consequence, many components
of roughly comparable magnitudes.
In our work, the decomposition of $\R^n$ is closer to the one introduced in \cite{LLTTY first part,LLTTY-TAMS}.

Let $x^*$ denote a non-increasing rearrangement of absolute values of components of
a vector $x$, and let $r,\delta,\rho\in(0,1)$ be some parameters.
Further, let $\gfn$ be a non-decreasing function from $[1,\infty)$ into $[1,\infty)$;
we shall call it {\it the growth function}. At this moment, the choice of the growth function is not important;
we can assume that $\gfn(t)$ grows roughly as $t^{\ln t}$.
Define the set of {\it gradual non-constant vectors} as
\begin{align}
\label{eq: gnc def}
\gncvectors_n=\gncvectors_n(r,\gfn,\delta,\rho)
&:=\big\{x\in\R^n\, :\, x^*_{\lfloor rn\rfloor}=1,
\;x^*_i\leq \gfn(n/i)\mbox{ for all $i\leq n$}, \,\,\, \mbox{ and } \nonumber
\\&\exists\,
Q_1,Q_2\subset[n]  \quad \mbox{ such that } \quad
 |Q_1|,|Q_2|\geq \delta n \qand  \max\limits_{i\in Q_2} x_i\leq \min\limits_{i\in Q_1}x_i- \rho
\big\}.
\end{align}
In a sense, constant multiples of the gradual non-constant vectors occupy most of the space $\R^n$,
they play role of the unstructured vectors in our argument. By negation, the structured vectors,
\begin{equation}\label{strvect}
 \ST=\ST(r,\gfn,\delta,\rho):=\R^n\setminus \bigcup_{\lambda\geq 0}(\lambda \,
\gncvectors_n(r,\gfn,\delta,\rho)),
\end{equation}
 are either almost constant (with most of components nearly equal)
or have a very large ratio of $x^*_i$ and $x^*_{\lfloor rn\rfloor}$ for some $i< rn$.

For simplicity, we only discuss the problem of singularity at this moment.
As $M_n$ and $M_n^\top$ are equidistributed, to show that
$\Prob\big\{\mbox{$M_n$ is singular}\big\}=(2+o_n(1))n\,(1-p)^n,$ it is sufficient to verify that
\begin{equation}\label{eq: aux 209582705982}
\begin{split}
\Prob\Big(&\big\{M_n x=0\mbox{ for some }x\in \gncvectors_n\big\}
\cap \Big\{M_n^\top x\neq 0\mbox{ for all }x\in \ST \Big\}\Big)=o_n(n)\,(1-p)^n,
\end{split}
\end{equation}
and
$$
\Prob\Big\{M_n x=0\mbox{ for some }x\in \ST
%\bigcup\limits_{\lambda\geq 0}\big(\lambda\,\gncvectors_n(r,\gfn,\delta,\rho)\big)
\Big\}=(1+o_n(1))n\,(1-p)^n.
$$
The first relation is dealt with by using a variation of the {\it invertibility via distance} argument
which was introduced in  \cite{RV} to obtain sharp small ball probability estimates for the smallest singular value.
 In the form given in \cite{RV}, the argument reduces the problem of invertibility over unstructured
vectors to estimating distances of the form $\dist(\col_i(M_n),H_i(M_n))$, where
$\col_i(M_n)$ is the $i$--th column of $M_n$, and
$H_i(M_n)$ is the linear
span of columns of $M_n$ except for the $i$--th. In our setting, however, the argument needs to be modified to pass to
estimating the distance {\it conditioned} on the size of the support of the column,
as this allows using much stronger anti-concentration inequalities (see the following subsection).
By the invariance of the distribution of $M_n$ under permutation of columns, it can be shown that in order to prove the
relation \eqref{eq: aux 209582705982}, it is enough to verify that
\begin{equation}\label{eq: aux -8572059873}
\Prob\big\{|\supp\Col_1(M_n)|\in [\mbox{$\frac{pn}{8}$}, 8pn]\mbox{ and }
\langle{\bf Y},\Col_1(M_n)\rangle=0\mbox{ and }
{\bf Y}/{\bf Y}^*_{\lfloor r n\rfloor}\in \gncvectors_n
%(r,\gfn,\delta,\rho)
\big\}=o_n(n)\,(1-p)^n,
\end{equation}
where ${\bf Y}$ is a non-zero random vector orthogonal to and measurable with respect to $H_1(M_n)$
(see Lemma~\ref{l: inv via dist} and the beginning of the proof of Theorem~\ref{th: main}).
In this form, the question can be reduced to studying the anti-concentration of the linear
combinations $\sum_{i=1}^n{\bf Y}_i b_i$, where the Bernoulli random variables $b_1,\dots,b_n$ are mutually independent with ${\bf Y}$
and conditioned to sum up to a fixed number in $[pn/8, 8pn]$.
This intermediate problem is discussed in the next subsection.

\subsection{New anti-concentration inequalities for random vectors with prescribed support cardinality}
\label{ss: anti-c}

The {\it Littlewood--Offord theory} --- the study of anti-concentration properties of random variables
--- has been a crucial ingredient of many recent results on invertibility of random matrices, starting
with the work of Tao--Vu \cite{TV ann math}.
In particular, the breakthrough result \cite{RV} of Rudelson--Vershynin mentioned in the introduction, is largely based on studying
the L\'evy function
$\cf(\langle \col_1(A),{\bf Y}\rangle,t)$, with $\col_1(A)$ being the first column of the random matrix $A$ and
${\bf Y}$ --- a random unit vector orthogonal to the remaining columns of $A$.

We recall that given a random vector $X$ taking values in $\R^n$, the {\it L\' evy concentration function}
$\cf(X,t)$ is defined by
$$
\cf(X,t):=\sup\limits_{y\in\R^n}\Prob\big\{\|X-y\|\leq t\big\},\quad t\geq 0;
$$
in particular for a scalar random variable $\xi$ we have $\cf(\xi,t):=\sup\limits_{\lambda\in\R}\Prob\{|\xi-\lambda|\leq t\}$.
A common approach is to determine structural properties of a fixed vector which would
imply desired upper bounds on the L\'evy function of its scalar product with a random vector (say, a matrix' column).
The classical result of Erd\H os--Littlewood--Offord \cite{Erdos,LO lemma} asserts that whenever $X$
is a vector in $\R^n$ with i.i.d.\ $\pm 1$ components, and $y=(y_1,\dots,y_n)\in\R^n$ is such that
$|y_i|\geq 1$ for all $i$, we have
$$
\cf(\langle X,y\rangle,t)\leq Ct\,n^{-1/2}+Cn^{-1/2},
$$
where $C>0$ is a universal constant.
It can be further deduced from the L\'evy--Kolmogorov--Rogozin inequality \cite{Rog} that the above assertion remains true
whenever $X$ is a random vector with independent components $X_i$ satisfying $\cf(X_i,c)\leq 1-c$
for some constant $c>0$.
More delicate structural properties, based on whether components of $y$ can be embedded into a generalized
arithmetic progression with prescribed parameters were employed in \cite{TV ann math} to prove
superpolynomially small upper bounds on the singularity probability of discrete random matrices.

The Least Common Denominator (LCD) of a unit vector introduced in \cite{RV} played a central role in establishing
the exponential upper bounds on the matrix singularity under more general assumptions on the entries' distributions.
We recall that the LCD of a unit vector $y$ in $\R^n$ can be defined as
\begin{equation}\label{eq: LCD overview}
{\rm LCD}(y):=\inf\big\{\theta>0:\;\dist(\theta y,\Z^n)\leq\min(c_1\|\theta y\|,c_2\sqrt{n})\big\}
\end{equation}
for some parameters $c_1,c_2\in(0,1)$. The small ball probability theorem of Rudelson and Vershynin \cite{RV}
states that given a vector $X$ with i.i.d.\ components of zero mean and unit variance satisfying some additional mild assumptions,
$$
\cf(\langle X,y\rangle,t)\leq Ct+\frac{C'}{{\rm LCD}(y)}+2e^{-c' n}
$$
for some constants $C,C',c'>0$ (see \cite{RV09} for a generalization of the statement).
The LCD, or its relatives, were subsequently used in studying
invertibility of non-Hermitian square matrices under broader assumptions \cite{RT,Livshyts,LTV},
and delocalization of eigenvectors of non-Hermitian random matrices
\cite{RV no gaps,LOR,LyTi}, among many other works.

\medskip

Anti-concentration properties of random linear combinations naturally play a central role in the current work, however,
the measures of unstructuredness of vectors existing in the literature do not allow to obtain the precise
estimates we are aiming for. Here, we develop a new functional for dealing with linear combinations of
{\it dependent} Bernoulli variables.

Given $n\in\N$, $1\leq m\leq n/2$, a vector $y\in\R^n$ and parameters $K_1,K_2\geq 1$, we define the {\it degree
of unstructuredness (u-degree)} of vector $y$ by
\begin{align}\label{udeg}
\bal_n(y,m,K_1,K_2):=\sup\bigg\{&t>0:\; A_{nm}\sum\limits_{S_1,\dots,S_m}\;
\int\limits_{-t}^t \prod\limits_{i=1}^{m}\psi_{K_2}\big(
\big|\Exp\exp\big(2\pi{\bf i}\,y_{\eta[S_i]}\,m^{-1/2} s\big)\big|\big)\,ds\leq K_1\bigg\},
\end{align}
where the sum is taken over all sequences $(S_i)_{i=1}^m$ of disjoint subsets $S_1,\dots,S_m\subset[n]$,
each of cardinality $\lfloor n/m\rfloor$ and
\begin{align}\label{anm}
  A_{nm} = \frac{\big((\lfloor n/m\rfloor)!\big)^m\,(n-m\lfloor n/m\rfloor)!}{n!}\cdot
\end{align}
Here $\eta[S_i]$, $i\leq m$, denote mutually independent integer random variables uniformly distributed on respective $S_i$'s.
The function $\psi_{K_2}$ in the definition acts as a smoothing of $\max(\frac{1}{K_2},t)$,
with $\psi_{K_2}(t)=\frac{1}{K_2}$ for all $t\leq \frac{1}{2K_2}$ and $\psi_{K_2}(t)= t$ for all $t\geq \frac{1}{K_2}$
(we prefer to skip discussion of this purely technical element of the proof in this section, and
refer to the beginning of Section~\ref{s: unstructured} for the full list of conditions imposed on $\psi_{K_2}$).

The functional $\bal_n(y,m,K_1,K_2)$ can be understood as follows.
The expression inside the supremum is the average value of the integral
$$
\int\limits_{-t}^t \prod\limits_{i=1}^{m}\psi_{K_2}\big(
\big|\Exp\exp\big(2\pi{\bf i}\,y_{\eta[S_i]}\,m^{-1/2} s\big)\big|\big)\,ds,
$$
with the average taken over all choices of sequences $(S_i)_{i=1}^m$.
The function under the integral, disregarding the smoothing $\psi_{K_2}$, is the
absolute value of the characteristic function of the random variable $\langle y,Z\rangle$, where
$Z$ is a random $0/1$--vector with exactly $m$ ones, and with the $i$-th one distributed uniformly on
$S_i$. A relation between the magnitude of the characteristic function and anti-concentration properties of a random variable (the  Esseen lemma (Lemma~\ref{ess} below)
has been commonly used in works on the matrix invertibility (see, for example, \cite{Rud13}),
and determines the shape of the functional $\bal_n(\cdot)$.
The definition of the u-degree is designed
specifically to work with random $0/1$--vectors having a fixed sum (equal to $m$).
The next statement follows from the definition of $\bal_n(\cdot)$
and the Esseen lemma.

\begin{theor}[A Littlewood--Offord-type inequality in terms of the u-degree]\label{p: cf est}
Let $m,n$ be positive integers with $m\leq n/2$, and let $K_1,K_2\geq 1$.
Further, let $v\in\R^n$, and let $X=(X_1,\dots,X_n)$ be a random $0/1$--vector in $\R^n$
uniformly distributed on the set of vectors with $m$ ones and $n-m$ zeros.
Then
$$
\cf\Big(\sum\limits_{i=1}^n v_i X_i,\sqrt{m}\,\tau\Big)
\leq C_{\text{\tiny\ref{p: cf est}}}\,\big(\tau+\bal_n(v,m,K_1,K_2)^{-1}\big)\quad \mbox{for all $\tau>0$},
$$
where $C_{\text{\tiny\ref{p: cf est}}}>0$ may only depend on $K_1$.
\end{theor}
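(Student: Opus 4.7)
The plan is to derive the statement from the definition of the u-degree via Esseen's lemma combined with a combinatorial identity that expresses the characteristic function of $\langle v, X\rangle$ as an average of products of single-coordinate characteristic functions. Starting from Esseen's lemma applied to $Y = \langle v, X\rangle$ at scale $\sqrt{m}\,\tau$ and the change of variables $t = 2\pi s/\sqrt{m}$, so that the exponent matches precisely the one inside definition \eqref{udeg}, I would reduce the claim to the estimate
\[
\cf\big(\langle v, X\rangle, \sqrt{m}\,\tau\big) \leq C\,\tau \int_{-1/(2\pi\tau)}^{1/(2\pi\tau)} |\phi(s)|\,ds, \qquad \phi(s):=\Exp\exp\big(2\pi{\bf i}\,\langle v, X\rangle\,m^{-1/2} s\big).
\]

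The central step is to establish the identity
\[
\phi(s) = A_{nm}\sum_{S_1,\dots,S_m}\prod_{j=1}^m \Exp\exp\big(2\pi{\bf i}\,v_{\eta[S_j]}\,m^{-1/2} s\big),
\]
with the sum taken over ordered sequences of disjoint subsets of $[n]$ of cardinality $\lfloor n/m\rfloor$. I would verify it by realising a uniform random such partition as $S_j=\pi(\{(j-1)k+1,\dots,jk\})$ with $k=\lfloor n/m\rfloor$ and $\pi$ a uniform random permutation of $[n]$, and then checking that for every ordered $m$-tuple $(t_1,\dots,t_m)$ of distinct indices one has $\Prob(\eta[S_j]=t_j\text{ for all }j)=(n-m)!/n!$. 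Summation over such tuples reproduces $\phi(s)$ via $1/\binom{n}{m}=m!(n-m)!/n!$, and conditional independence of $\eta[S_1],\dots,\eta[S_m]$ given $(S_i)$ factorises the inner expectation. Some care is needed for the leftover set of size $n-m\lfloor n/m\rfloor$, but the normalisation $A_{nm}$ is engineered precisely for this generality.

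Taking absolute values in this identity, using the inequality $\psi_{K_2}(t)\geq t$ valid for all $t\in[0,1]$ (which applies automatically to the modulus of any characteristic function), and Fubini yields, for every $T>0$,
\[
\int_{-T}^{T}|\phi(s)|\,ds \leq A_{nm}\sum_{S_1,\dots,S_m}\int_{-T}^{T}\prod_{j=1}^m \psi_{K_2}\Big(\big|\Exp\exp\big(2\pi{\bf i}\,v_{\eta[S_j]}\,m^{-1/2} s\big)\big|\Big)\,ds.
\]
Choosing $T=1/(2\pi\tau)$: if $T\leq \bal_n(v,m,K_1,K_2)$ the right-hand side is at most $K_1$ by the very definition of the u-degree, giving $\cf\leq CK_1\tau$; the complementary range $\tau<1/(2\pi\bal_n(v,m,K_1,K_2))$ is handled by monotonicity of $\cf(\cdot,r)$ in $r$, applied at the larger parameter $\tau'=1/(2\pi\bal_n(v,m,K_1,K_2))$, producing $\cf\leq CK_1/(2\pi\bal_n(v,m,K_1,K_2))$. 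Merging the two regimes delivers $\cf\leq C'(\tau+\bal_n(v,m,K_1,K_2)^{-1})$ with $C'$ depending only on $K_1$. The hard part is the combinatorial identity for $\phi$; once it is in place, everything else (Esseen's lemma, the $\psi_{K_2}\geq t$ inequality, Fubini, and monotonicity of the L\'evy function) is routine, and the theorem becomes essentially a reformulation of the definition of $\bal_n$ as a small-ball probability bound.
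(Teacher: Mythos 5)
Your plan is correct and is essentially the paper's own argument, with the two key ingredients in a slightly different order: the paper first decomposes $\Prob\{|\langle v,X\rangle-\lambda|\leq\tau\}$ as the $A_{nm}$-weighted average over the events $\{|\supp X\cap S_i|=1\ \forall i\}$ and then applies Esseen's lemma to each conditional sum $\sum_i v_{\eta[S_i]}$, whereas you apply Esseen once to $\langle v,X\rangle$ and then factor the resulting characteristic function $\phi$ via the combinatorial identity — these are the same computation packaged differently. Everything else (the change of variables to match the $m^{-1/2}s$ scale in the definition of $\bal_n$, the inequality $\psi_{K_2}(t)\geq t$, and the split into the regimes $\tau\geq\bal_n^{-1}$ and $\tau<\bal_n^{-1}$ using monotonicity of $\cf$) matches the paper's proof; the only cosmetic discrepancy is the $2\pi$ in your integration limits, which depends on which normalization of Esseen's lemma one cites and is absorbed into the constant.
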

The principal difference of the u-degree and the above
theorem from the notion of the LCD and \eqref{eq: LCD overview}
is that the former allow to obtain stronger anti-concentration inequalities in the same regime of sparsity,
assuming that the coefficient vector $y$ is sufficiently unstructured.
In fact, under certain conditions, {\it sparse random $0/1$ vectors with prescribed support cardinality
admit stronger anti-concentration inequalities compared to the i.i.d.\ model.}

The last principle can be illustrated by taking the coefficient vector $y$
as a ``typical'' vector on the sphere $S^{n-1}$.
First, assume that $b_1,\dots,b_n$ are i.i.d.\ \Ber, with $p< 1/2$. Then it is easy to see that
for almost all (with respect to normalized Lebesgue measure) vectors $y\in S^{n-1}$,
$$
\cf\Big(\sum_{i=1}^n y_i b_i,0\Big)=(1-p)^n.
$$
In words, for a typical coefficient vector $y$ on the sphere, the linear combination $\sum_{i=1}^n y_i b_i$
takes distinct values for any two distinct realizations of $(b_1,\dots,b_n)$, and thus the L\'evy
function at zero is equal to the probability measure of the largest atom of the distribution of $\sum_{i=1}^n y_i b_i$
which corresponds to all $b_i$ equal to zero.
In contrast, if the vector $(b_1,\dots,b_n)$ is uniformly distributed on the
set of $0/1$--vectors with support of size $d=pn$, then for almost all $y\in S^{n-1}$,
the random sum $\sum_{i=1}^n y_i b_i$ takes ${n\choose d}$ distinct values.
Thus,
$$
\cf\Big(\sum_{i=1}^n v_i b_i,0\Big)={n\choose np}^{-1},
$$
where ${n\choose np}^{-1}\ll (1-p)^n$ for small $p$.

The above example provides only qualitative estimates
and does not give an information on the location of the atoms of the distribution of $\sum_{i=1}^n y_i b_i$.
The notion of the u-degree addresses this problem.
The following theorem, which is the main result of Section~\ref{s: unstructured},
% --- Theorem~\ref{th: gradual} ---
asserts that with a very large probability the normal vector to the (say, last) $n-1$ columns of our matrix $M_n$
is either very structured or has a very large u-degree, much greater than
the critical value $(1-p)^{-n}$.
\begin{theor}
\label{th: gradual}
%[Theorem~\ref{th: gradual} of this paper]
Let $r,\delta,\rho\in(0,1)$, $s>0$, $R\geq 1$, and let $K_3\geq 1$. Then there are $n_0\in\N$, $C\geq 1$
and $K_1\geq 1$, $K_2\geq 4$ depending on $r,\delta,\rho,R,s,K_3$ such that the following holds.
Let $n\geq n_0$, $p\leq C^{-1}$, and  $s\ln n\leq pn$.
Let $\gfn \, : [1,\infty) \to [1,\infty)$ be an increasing (growth) function satisfying
\begin{align}\label{gfncond}
\forall a\geq 2\,\,\forall t\geq 1:\,\,\,\,  \gfn(a\,t)\geq \gfn(t)+a
%\quad\mbox{for any $a\geq 2,\;\;t\geq 1$;}
\quad \quad \qand \quad\quad
\prod_{j=1}^\infty \gfn(2^j)^{j\,2^{-j}}\leq K_3.
\end{align}
Assume that $M_n$ is
an $n\times n$ \Ber random matrix. Then
with probability at least $1-\exp(-R pn)$ one has
\begin{align*}
&\{\mbox{Set of normal vectors to $\col_2(M_n),\dots,\col_{n}(M_n)\}\cap\gncvectors_n(r,\gfn,\delta,\rho)\subset$}\\
&\hspace{3cm}\mbox{$\{x\in\R^n:\;x^*_{\lfloor rn\rfloor}=1,\,\, \,
\bal_n(x,m,K_1,K_2)\geq \exp(Rpn)\,\,$ for all $\,\, pn/8\leq m\leq 8pn\}$.}
\end{align*}
\end{theor}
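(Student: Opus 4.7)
The plan is an $\varepsilon$-net argument stratified by the u-degree. First I dyadically decompose the event to be controlled: for each integer $m\in[pn/8,8pn]$ and each $T\in\{2^k:0\le k\le Rpn/\ln 2\}$, set
$$
\gncvectors_n^{m,T} := \big\{x\in \gncvectors_n(r,\gfn,\delta,\rho):\;x^*_{\lfloor rn\rfloor}=1,\;\bal_n(x,m,K_1,K_2)\in [T,2T]\big\}.
$$
The target is a probability bound of order $e^{-2Rpn}$ for the event that some $x\in\gncvectors_n^{m,T}$ is orthogonal to $\col_2(M_n),\ldots,\col_n(M_n)$, for each pair $(m,T)$. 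A union bound over the $O((pn)^2 R)$ pairs then yields the conclusion.

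For each pair $(m,T)$ I would construct a net $\mathcal{N}_{m,T}\subset\gncvectors_n^{m,T}$ at Euclidean scale $\varepsilon_T\sim 1/T$, of cardinality polynomial in $T$ and $\varepsilon_T^{-1}$, with exponents depending only on $r,\delta,\rho,K_3$. The bound $\bal_n(x,m,K_1,K_2)\le 2T$ forces the averaged characteristic function appearing in the definition of $\bal_n$ to carry substantial mass on $[-2T,2T]$, which via a Rogozin--Esseen-type Fourier inversion constrains $x$ to be close to a rational vector of denominator of order $T$ lying in a controlled low-dimensional subspace. The growth bound $x^*_i\le \gfn(n/i)$ controls the $\ell^2$-norm and dictates the scales at which discretization can occur, while the gradual non-constant condition --- the existence of two blocks $Q_1,Q_2$ of size $\ge\delta n$ separated by a $\rho$-gap in coordinate values --- supplies the distinctness needed for a coordinate-swap argument to make the net polynomial in $T$ rather than exponential in $n$.

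Given the net, the individual probability estimate for fixed $x_0\in\mathcal{N}_{m,T}$ follows from Theorem~\ref{p: cf est} applied after conditioning on column support sizes. Since each $\col_j(M_n)$ is i.i.d.\ Bernoulli($p$), its support size $m_j$ lies in $[pn/8,8pn]$ except on an event of probability $\le e^{-cpn}$; conditionally on $|\supp\col_j|=m_j$ the column is uniform on $0/1$-vectors of that support size, so Theorem~\ref{p: cf est} gives, for every $\tau>0$,
$$
\cf\big(\langle x_0,\col_j(M_n)\rangle,\;\sqrt{m_j}\,\tau\;\big|\;|\supp\col_j|=m_j\big) \le C\big(\tau + \bal_n(x_0,m_j,K_1,K_2)^{-1}\big).
$$
On the high-probability event $\{\|M_n^{(1)}\|_{\rm op}\le C\sqrt{pn}\}$, where $M_n^{(1)}$ denotes the submatrix with columns $\col_2,\ldots,\col_n$, any $x\in\gncvectors_n^{m,T}\cap\ker M_n^{(1)}$ admits a net point $x_0$ with $|\langle x_0,\col_j(M_n)\rangle|\le C\varepsilon_T\sqrt{pn}$ for all $j$. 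Independence across columns then produces a bound of the shape
$$
\Prob\big\{\exists x\in\gncvectors_n^{m,T}\cap\ker M_n^{(1)}\big\} \le |\mathcal{N}_{m,T}|\cdot (C'/T)^{n-1} + e^{-cpn}.
$$

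I expect two obstacles. The primary one is the net bound $|\mathcal{N}_{m,T}|\le (T/\varepsilon_T)^{O(1)}$: this is where the smoothing $\psi_{K_2}$ and the averaging over partitions $(S_i)$ in the definition of $\bal_n$ are essential, and where the $Q_1/Q_2$ gap supplies the genuine distinctness that prevents the rounded vectors from collapsing. The secondary obstacle is a mismatch between the fixed level parameter $m$ and the variable column support sizes $m_j$: the L\'evy bound above uses $\bal_n(x_0,m_j,\ldots)$, whereas the level set only directly controls $\bal_n(x_0,m,\ldots)$. This should be handled either by a Lipschitz-type comparison of $\bal_n(x_0,\cdot,\ldots)$ across the support window $[pn/8,8pn]$, or by restricting the tensorization to the on-order-of $n/\sqrt{pn}$ columns whose support size actually equals $m$, which still suffices to dominate the net cardinality for $n$ above an $(r,\delta,\rho,R,s,K_3)$-dependent threshold.
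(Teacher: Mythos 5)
The overall skeleton (stratify by the u-degree level, build a discretization, apply per-point anti-concentration via Theorem~\ref{p: cf est}, take a union bound) is the same as the paper's, but the central cardinality claim in your plan is not achievable, and without it the argument does not close.

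You assert that $\mathcal{N}_{m,T}$ can be taken of cardinality \emph{polynomial} in $T$, on the grounds that $\bal_n(x,m,K_1,K_2)\le 2T$ constrains $x$ to lie near a rational vector of denominator $O(T)$ in a ``controlled low-dimensional subspace.'' That heuristic works for the Rudelson--Vershynin LCD, where a small LCD literally pins the vector near a scaled integer lattice of one-dimensional shape, but it does not transfer to $\bal_n$: a small u-degree is an averaged Fourier-mass condition (an average over exponentially many partitions $(S_1,\dots,S_m)$), and it does not confine $x$ to a low-dimensional structure. The honest cardinality of a $1/k$-discretization of $\gncvectors_n$ (with $k\sim T$) is $(Ck)^n$, exponential in $n$; see Lemma~\ref{l: Lambda_n card}. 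With that cardinality, the per-point bound $(C'/T)^{n-1}$ from Corollary~\ref{cor: cf tensor} leaves a product $\sim (CC')^n$, which is not small. What makes the paper's balance work is Proposition~\ref{prop: 09582593852}: a \emph{random} lattice vector uniformly distributed on $\Lambda_n$ has small u-degree with probability at most $\varepsilon^n$, so the relevant sub-lattice has cardinality $\lesssim \varepsilon^n(Ck)^n$ with $\varepsilon$ chosen much smaller than $(CC')^{-1}$. That probabilistic cardinality refinement is the genuinely hard ingredient (it occupies all of Subsection~4.3), and your proposal has no substitute for it. Simply observing that $\bal_n$ is small ``via a Rogozin--Esseen-type Fourier inversion'' does not produce a counting bound.

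Two further remarks. First, you correctly identify the mismatch between the fixed level parameter $m$ and the actual column support sizes $m_j$; the paper resolves it via the stability of the u-degree under randomized rounding (Proposition~\ref{l: stability of bal}), which yields a single lattice vector ${\bf Y}$ with $\bal_n({\bf Y},m,\cdot,\cdot)\ge N$ simultaneously for all $m\in[pn/8,8pn]$ while also $\bal_n({\bf Y},{\bf m},\cdot,\cdot)\le 2N$ at the realized ${\bf m}$. Your suggested alternatives (a Lipschitz comparison across $m$, or tensorizing only over columns with support size exactly $m$) are vaguer and the second one weakens the per-column exponent to roughly $n/\sqrt{pn}$, which is then not enough to beat an $(\varepsilon k)^n$ cardinality. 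Second, the residual term $e^{-cpn}$ in your final display has $c$ that a priori does not scale with $R$; to reach the target $e^{-Rpn}$ after the union bound, the constants in the norm bound (Corollary~\ref{cor: norm of centered}) and in the column-support conditioning (Lemma~\ref{l: column supports}) must be chosen $R$-dependent, as the paper does.
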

We would like to emphasize that the parameter $s$ in this theorem
can take values less than one, in the regime when the matrix $M_n$ typically has null rows and columns.
In this respect, the restriction $p\geq C\ln n/n$ in the main theorem comes from the treatment of structured vectors.

The proof of Theorem~\ref{th: gradual} is rather involved, and is based on a double counting argument and
specially constructed lattice approximations of the normal vectors.
We refer to Section~\ref{s: unstructured} for details.
Here, we only note that, by taking $R$ as a sufficiently large constant, the theorem implies the
relation \eqref{eq: aux -8572059873}, hence, accomplishes the treatment of unstructured vectors.

\subsection{Almost constant, steep and $\Rv$-vectors}
\label{ss: steep overview}

In this subsection we discuss our treatment of the set of structured vectors, $\ST$.
In the proof we partition the set $\ST$ into several subsets and work with them separately.
In a simplistic form, the structured vectors are dealt with in two ways:
either by constructing discretizations and taking the union bound (variations of the $\varepsilon$--net argument),
or via deterministic estimates
in the case when there are very few very large components in the vector.
We note here that the discretization procedure has to take into account the non-centeredness
of our random matrix model: while in case of centered matrices with i.i.d. components (and under appropriate moment conditions)
the norm of the matrix is typically of order $\sqrt{n}$ times the standard deviation
of an entry, for our \Ber model it has order $pn$ (i.e., roughly $\sqrt{p}n$ times the standard deviation
of an entry), which makes a direct application of the $\varepsilon$--net
argument impossible. Fortunately, this
   large norm is attained only in one direction --- the direction of the vector ${\bf 1}=(1, 1, \dots, 1)$ while
  on the orthogonal complement of ${\bf 1}$ the typical norm is $\sqrt{pn}$. Therefore it is enough
    to take a standard net in the Euclidean norm and to make it denser in that one direction, which almost does not
    affect the cardinality of the net.
    We refer to Section~\ref{net}  for details.

Let us first describe our approach in the (simpler) case when $p\in (q, c)$, where $c$ is a small enough absolute
constant and $q\in (0,c)$ is a fixed parameter (independent of $n$). We introduce four auxiliary sets and
show that the set of unit structured vectors, $\ST\cap S^{n-1}$, is contained in the closure of their union. %We will use normalization
%with respect to the Euclidean norm.

The first set, $\stt_1$, consists of unit vectors close to vectors of
the canonical basis, specifically, unit vectors $x$ satisfying $x_1^* > 6pn  x_2^*$,
where $x^*$ denotes the non-inreasing rearrangement of the vector $(|x_i|)_{i\leq n}$. For any such vector $x$
the individual bound is rather straightforward --- conditioned on the event that there are no zero columns in our matrix $M$,
and that the Euclidean norms of the matrix rows are not too large, we get
$Mx\neq 0$. This class is the main contributor
to the bound $(1+o_n(1))n(1-p)^n$ for non-invertibility over the structured vectors $\ST$.

For the other three sets we
use anti-concentration probability estimates and discretizations. An application of Rogozin's lemma (Proposition~\ref{rog})
implies that probability of having small inner product of a given row of our matrix with $x$ is small,
provided that there is a subset $A\subset [n]$ such that the maximal coordinate of $P_A x$ is
bounded above by $c\sqrt{p}\|P_A x\|$, where $\|\cdot\|$ denotes the
standard Euclidean norm and $P_A$ is the coordinate projection onto $\R^A$. % More precisely, the condition
%is $\|P_A x\|\leq \beta_p\|P_A x\|_\infty$, where $\beta_p = \sqrt{p}/C_0$ and $\|\cdot\|$ denotes the
%standard Euclidean norm.
Combined with tensorization Lemma~\ref{tens} this implies exponentially (in $n$)
small probability of the event that $\|Mx\|$ is close to zero --- see Proposition~\ref{rogozin} below.
Specifically, we define
%This leads to our definition
%of next two sets, $\stt_2$ and $\stt_3$, --- in
$\stt_2$ as the set of unit vectors satisfying the above condition with $A=[n]$, that is,
satisfying $x_1^*\leq c\sqrt{p}$, and for $\stt_3$ we take all unit vectors satisfying the condition
with $A=\sigma_x ([2, n])$, that is, satisfying $x_2^*\leq c\sqrt{p} \|P_{\sigma_x ([2, n])}x\|$,
where %$y_x$ is obtained from $x$ by removing the
%largest coordinate and
$\sigma_x$ is a permutation satisfying $x^*_i=|x_{\sigma_x(i)}|$, $i\leq n$. %corresponding to the decreasing rearrangement of
%$(|x_i|)_i$.
For vectors from these two sets we have very good individual probability estimates, but, unfortunately,
the complexity of both sets is large --- they don't admit nets of small cardinality. To overcome this issue, we
have to redefine these sets by intersecting them with specially chosen sets of vectors having many almost equal coordinates.
For the precise definition of such sets, denoted by $U(m, \gamma)$, see Subsection~\ref{net}. A set $U(m, \gamma)$ is a variant
of the class of {\it almost constant} vectors, $\BB(\rho)$ (see (\ref{acv}) below), introduced to deal with general $p$.
% although we prefer not to use this notion in Section~\ref{steep:constant p}, since
%in Section~\ref{s: steep} we introduce the concept of almost constant vectors for a similar, but slightly different set.
Having a large part of coordinates of a vector almost equal to each other reduces the complexity of the set making
possible to construct a net of small cardinality. This resolves the problem and allows us to deal with these
two classes of sets. The remaining class of vectors, $\stt_4$, consists of vectors $x$
with $x_1^*\geq x_2^*\geq c\sqrt{p} \|P_{\sigma_x ([2, n])}x\|$,
i.e., vectors with relatively big two largest components.
For such vectors we produce needed anti-concentration estimates for the matrix-vector products
by using only those two components, i.e.,
we consider anti-concentration for the vector $P_A x$, where $A=\sigma_x(\{1, 2\})$. Since the Rogozin lemma is
not suitable for this case, we compute the anti-concentration directly in Proposition~\ref{anti2}.
As for the classes $\stt_2,\stt_3$, we
actually intersect the fourth class with appropriately chosen sets of almost constant vectors
in order to control cardinalities of the nets.
%combine anti-concentration probability bounds with cardinality of a net constructed for this set intersected
%with the set of almost constant vectors.
The final step is to show that the set $\ST$ is contained in
the union of four sets described here. Careful analysis of this approach shows that the result can be proved with
all constants and parameters $r, \delta, \rho$ depending only on $q$. Thus, it works for $p$ being between the two constants
$q$ and $c$.

 The case of small $p$, that is, the case $C(\ln n)/n \leq p \leq c$, requires a more sophisticated splitting of $\ST$ ---
 we split it into  {\it steep vectors} and {\it $\Rv$-vectors}. The definition and the treatment of steep vectors
 essentially  follows \cite{LLTTY first part, LLTTY-TAMS}, with corresponding adjustments for our model. The set
 of {\it steep} vectors consists of vectors having a large jump between order statistics measured at certain indices.
The first subclass of steep
 vectors, $\st_0$, is the same as the class $\stt_1$ described above --- vectors having very large maximal coordinate --- and is
 treated as  $\stt_1$. Similarly to the case of constant $p$, this class is the main contributor
to the bound $(1+o_n(1))n(1-p)^n$ for non-invertibility
over structured vectors. Next we fix certain $m\approx 1/p$ and consider a sequence
$n_0=2$, $n_{j+1}/n_j = \ell_0$, $j\leq s_0-1$, $n_{s_0+1}=m$ for some
specially chosen parameters $\ell_0$ and $s_0$ depending on $p$ and $n$.
The class $\st_1$ will be defined as the class of vectors such that there exists $j$ with
$x^*_{n_{j+1}}> 6pn x^*_{n_j}$. To work with vectors from this class, we first show that for a given
$j$ the event that for every choice  of two disjoint sets $|J_1|=n_j$ and $|J_2|=n_{j+1}-n_j$,
a random \Ber matrix has a row with exactly one $1$ in components indexed by $J_1$ and no $1$'s among components
indexed by $J_2$, holds with a very high probability. Then, conditioned on this event, for every $x\in \st_1$,
we choose $J_1$ corresponding to $x_i^*$, $i\leq n_j$, and $J_2$ corresponding to
$x_i^*$, $n_{j} \leq i\leq n_{j+1}$, and the corresponding row. Then the inner product of this row
with $x$ will be large in absolute value due to the jump (see Lemma~\ref{l:T0} for the details).
Thus, conditioned on the described event, for every $x\in \st_1$ we have a good lower bound on $\|Mx\|$.
Then next two classes of steep vectors, $\st_2$ and $\st_3$,
consist of vectors having a jump of order $C\sqrt{pn}$, namely, vectors in $\st_2$ satisfy
$x_m^* > C\sqrt{pn}  x_k^*$ and  vectors in $\st_3$ satisfy
$x_k^* > C\sqrt{pn}  x_\ell^*$, where $k\approx \sqrt{n/p}$ and $\ell = \lfloor rn\rfloor$ ($r$ is the parameter
from the definition of $\gncvectors_n(r,\gfn,\delta,\rho)$). Trying to apply the same idea for these two subclasses
one sees that the size of corresponding sets $J_1$ and $J_2$ is too large to
have exactly one $1$ among a row's components indexed by $J_1\cup J_2$ with a high probability.
Therefore the proof of individual probability bounds is more delicate and technical as
well as a construction of corresponding nets for $\st_2,\st_3$.  We discuss the details in Subsection~\ref{subs: nets}.
%We explain roughly the way we treat $\st_2$. The idea is to split the set of
%Bernoulli matrices into subclasses

The class of $\Rv$-vectors  consists of non-steep  vectors to which Rogozin's lemma (Proposition~\ref{rog}) can be applied
when we project a vector on $n-k$ smallest coordinates with $m<k\leq n/\ln^2(pn)$, thus vectors from this class
satisfy  $\|P_A x\|\leq c\sqrt{p}\|P_A x\|_\infty$ for $A=\sigma_x([k, n]$ (we will take union over all choices of
integer $k$ in the interval $(m, n/\ln^2(pn)]$). Thus, the individual probability bounds for $\Rv$-vectors
will follow from Rogozin's lemma together with tensorization lemma as for classes $\stt_2$, $\stt_3$, described above.
Thus the remaining part is to construct a good net for $\Rv$-vectors. For simplicity, dealing with such vectors, we fix
the normalization $x^*_{\lfloor rn\rfloor}=1$. Since vectors are non-steep, we have a certain control of largest coordinates and, thus,
on the Euclidean norm of a vector. The upper bound on $k$ is chosen in such a way that the cardinality
on a net corresponding to largest coordinates of a vector is relatively small (it lies in $n/\ln^2(pn)$-dimensional
subspace). For the purpose of constructing of a net of small cardinality, we need to control the Euclidean norm of $P_A x$
for an $\Rv$-vector. Therefore we split $\Rv$-vectors into level sets according to the value of $\|P_A x\|$. There will be
two different types of level sets --- vectors with relatively large Euclidean norm of $P_A x$ and vectors with small $\|P_Ax\|$.
A net for level sets with large $\|P_A x\|$ is easier to construct, since we can zero all coordinates starting with
$x^*_{\lfloor rn\rfloor}=1$. If the Euclidean norm is small, we cannot do this, so we intersect this subclass with almost
constant vectors (in fact we incorporate this intersection into the definition of $\Rv$-vectors), defined by
\begin{equation}
\label{acv}
 \BB(\rho) :=\{ x\in \R^n\, : \, \exists \lambda \in \R \,  \mbox{ s. t. }\,
 |\lam|= x^*_{\lfloor rn\rfloor}  \,  \mbox{ and }\, |\{ i\leq n \, : \,
 |x_i - \lambda|\leq \rho|\lam|\}| >n- \lfloor rn\rfloor\}.
\end{equation}
 As in the
case of constant $p$, this essentially reduces the dimension corresponding to almost constant part to
one and therefore reduce the cardinality of a net. The rather technical construction of nets is presented in
Subsection~\ref{sub-nets}. In some aspects the construction follows ideas developed in \cite{LLTTY first part}.
%The formal proof that $\ST$ is indeed union of steep and $\Rv$-vectors is done in Theorem~$\ST$

\section{Preliminaries}
\label{preliminaries}

\subsection{General notation}
\label{gen-not}

By {\it universal} or {\it absolute} constants we always mean numbers independent of all involved parameters,
in particular independent of $p$ and $n$. Given positive integers $\ell<k$ we denote sets
$\{1, 2, \ldots , \ell\}$ and $\{\ell, \ell + 1,  \ldots , k\}$ by $[\ell]$ and $[\ell, k]$ correspondingly.
Having two functions $f$ and $g$ we write $f\approx g$ if there are two absolute positive constants
$c$ and $C$ such that $cf\le g\le Cf$. As usual, $\Pi_n$ denotes the permutation group
on $[n]$.

%By $\idmat$ we denote the identity $n\times  n$ matrix, we use $\bf 1$ for a vector of ones.
%For $I\subset [n]$, by $\cproj_I$ we denote the orthogonal projection on the coordinate subspace $\R^I$,
%and denote the complement of $I$ inside $[n]$ by $I^c$ ($n$ is always clear from the context).
For every vector $x=(x_i)_{i=1}^n\in \R^n$,
 by $(x_i^*)_{i=1}^n$ we denote the non-increasing rearrangement of the sequence $(|x_i|)_{i=1}^n$
 and we fix one permutation $\sigma_x$ satisfying $|x_{\sigma_x(i)}|= x_i^*$, $i\leq n$.
We use $\la\cdot, \cdot \ra$ for the standard inner product on $\R^n$, that is
$\la x, y \ra = \sum _{i= 1}^{n} x_i  y_i$.
Further, we write $\|x\|_{\infty}=\max_i |x_i|$ for the $\ell_{\infty}$-norm of $x$.
We also denote ${\bf 1}=(1, 1, \dots, 1)$.

%Working with classes of vectors, we often consider the Minkowski sum for two subsets $V$ and $W$
%of $\R^n$, which is defined as
%$$ V+W = \{v+w \, : \, v\in V, \, w\in W\}.$$

\subsection{Lower bound on the singularity probability}\label{subs: lower b}

Here, we provide a simple argument showing that for the sequence of random \Bern matrices $(M_n)$,
with $p_n$ satisfying  $(n p_n-\ln n)\longrightarrow \infty$ as $n\to \infty$, we have
$$
\Prob\big\{\mbox{$M_n$ contains a zero row or column}\big\}\geq (2-o_n(1))n\,(1-p)^n.
$$
Our approach is similar to that applied  in \cite{BasRud-sharp} in the related context.

Fix $n> 1$ and write $p=p_n$.
Let ${\bf 1}_R$ be the indicator of the event that there is zero row in the matrix $M_n$, and, similarly,
let ${\bf 1}_C$ be the indicator of the event that $M_n$ has a zero column.
Then, obviously,
$$
\Exp\,{\bf 1}_R=\Exp\,{\bf 1}_C=1-\big(1-(1-p)^n\big)^n,
$$
hence,
$$
\Exp({\bf 1}_R+{\bf 1}_C)^2\geq 2-2\big(1-(1-p)^n\big)^n.
$$
On the other hand,
$$
\Exp \,{\bf 1}_R\,{\bf 1}_C\leq \sum_{i=1}^n\sum_{j=1}^n\Prob\big\{\mbox{$i$--th row and
$j$--th column of $M_n$ are zero}\big\}= n^2(1-p)^{2n-1},
$$
implying
$$
\Exp({\bf 1}_R+{\bf 1}_C)^2=\Prob\big\{{\bf 1}_R+{\bf 1}_C=1\big\}+4\,\Prob\big\{{\bf 1}_R\,{\bf 1}_C=1\big\}
\leq \Prob\big\{{\bf 1}_R+{\bf 1}_C=1\big\}+4n^2(1-p)^{2n-1}.
$$
Therefore,
\begin{align*}
\Prob\big\{\mbox{$M_n$ contains a zero row or column}\big\}&\geq
\Prob\big\{{\bf 1}_R+{\bf 1}_C=1\big\}\\
&\geq \Exp({\bf 1}_R+{\bf 1}_C)^2-4n^2(1-p)^{2n-1}\\
&\geq 2-2\big(1-(1-p)^n\big)^n-4n^2(1-p)^{2n-1}.
\end{align*}
It remains to note that, with our assumption on the growth rate of $p=p_n$, we have
$n(1-p)^n\longrightarrow 0$, which implies
$$
\frac{1}{n(1-p)^n}\big(2-2\big(1-(1-p)^n\big)^n-4n^2(1-p)^{2n-1}\big)\longrightarrow 2.
$$

\subsection{Gradual  non-constant vectors}
%{Norm of a random Bernoulli matrix and Bennett's inequality}
\label{gradnac}

For any $r\in(0,1)$, we define $\normr_n(r)$ as the set of all vectors $x$ in $\R^n$ with $x^*_{\lfloor rn\rfloor}=1$.
We will call these vectors {\it $r$-normalized}.
By a {\it growth function} $\gfn$ we mean any non-decreasing function from $[1,\infty)$ into $[1,\infty)$.

Let $\gfn$ be an arbitrary growth function.
We will say that a vector $x\in \normr_n(r)$ is {\it gradual} (with respect to the function $\gfn$) if $x^*_i\leq \gfn(n/i)$
for all $i\leq n$.
Further, if $x\in \normr_n(r)$ satisfies
\begin{align}
\label{cond2}
\exists\,
%\mbox{and there are subsets }
Q_1,Q_2\subset[n]  \quad \mbox{ such that } \quad
 |Q_1|,|Q_2|\geq \delta n \qand  \max\limits_{i\in Q_2} x_i\leq \min\limits_{i\in Q_1}x_i- \rho
\end{align}
then we say that the vector $x$ is {\it essentially non-constant} or just {\it non-constant} (with  parameters $\delta,\rho$).
%Otherwise, if \eqref{cond2} is not satisfied then $x$ will be called {\it almost constant}.
Recall that the set $\gncvectors_n =\gncvectors_n(r,\gfn,\delta,\rho)$ was defined in (\ref{eq: gnc def}) as
%\begin{align}
$$
%\gncvectors_n(r,\gfn,\delta,\rho):=
\big\{x\in\normr_n(r):\,\, x \, \mbox{ is gradual with $\gfn$ and satisfies \eqref{cond2}}\big\}.
$$
%\label{eq: gnc def}\end{align}
Vectors from this set we call  {\it gradual non-constant vectors}.

\medskip

Recall that the set $\ST=\ST(r,\gfn,\delta,\rho)$ of structured vectors was defined in (\ref{strvect})
as the complement of scalar multiples of $\gncvectors_n(r,\gfn,\delta,\rho)$.
%$$\ST=\ST(r,\gfn,\delta,\rho)=\R^n\setminus \bigcup_{\lambda\geq 0}(\lambda \,\gncvectors_n(r,\gfn,\delta,\rho)).$$
The next simple lemma will allow us to reduce analysis of $\{x/\|x\|:\;x\in\ST\}$ to the treatment of the set
$\{x/\|x\|:\;x\in \normr_n(r)\setminus \gncvectors_n\}$.
\begin{lemma}\label{l:closure}
For any choice of parameters $r,\gfn,\delta,\rho$,
the set $\{x/\|x\|:\;x\in\ST\}$ is contained in the closure of the set $\{x/\|x\|:\;x\in \normr_n(r)\setminus \gncvectors_n\}$.
\end{lemma}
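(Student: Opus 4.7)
The plan is to split into two cases according to whether $x^*_{\lfloor rn\rfloor}$ is positive or zero, and in both cases exhibit a representative of $x/\|x\|$ (or an approximation of it) of the form $y/\|y\|$ with $y\in\normr_n(r)\setminus\gncvectors_n$.

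First, consider the easy case $\lambda:=x^*_{\lfloor rn\rfloor}>0$. I would simply set $y:=x/\lambda$. Then $y^*_{\lfloor rn\rfloor}=1$, so $y\in\normr_n(r)$. If $y$ were in $\gncvectors_n$, then $x=\lambda y$ would lie in $\lambda\cdot\gncvectors_n\subset\bigcup_{\mu\geq 0}\mu\cdot\gncvectors_n$, contradicting $x\in\ST$. Hence $y\in\normr_n(r)\setminus\gncvectors_n$, and $x/\|x\|=y/\|y\|$ actually lies in the target set itself --- no closure is needed in this case.

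The real content is in the degenerate case $x^*_{\lfloor rn\rfloor}=0$, where the support $S$ of $x$ has cardinality strictly less than $\lfloor rn\rfloor$. My plan here is to perturb by a small multiple of $\mathbf{1}$: set $y_\varepsilon:=x+\varepsilon{\bf 1}$ for $\varepsilon>0$. For $\varepsilon$ smaller than $\min_{i\in S,\,x_i<0}|x_i|$ (with the minimum over an empty set read as $+\infty$), every coordinate of $y_\varepsilon$ is nonzero, with $|(y_\varepsilon)_i|$ bounded below by a positive constant independent of $\varepsilon$ for $i\in S$, and equal to $\varepsilon$ for $i\notin S$. Since $|S|<\lfloor rn\rfloor$, for all sufficiently small $\varepsilon$ the $\lfloor rn\rfloor$-th largest absolute value of $y_\varepsilon$ falls in the off-$S$ block, so $\lambda_\varepsilon:=(y_\varepsilon)^*_{\lfloor rn\rfloor}=\varepsilon$. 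Setting $z_\varepsilon:=y_\varepsilon/\varepsilon$, we get $z_\varepsilon\in\normr_n(r)$, and $(z_\varepsilon)^*_1=(y_\varepsilon)^*_1/\varepsilon\to\infty$ because $(y_\varepsilon)^*_1\to x^*_1>0$. Hence for $\varepsilon$ small enough $(z_\varepsilon)^*_1>\gfn(n)$, violating the gradual bound at $i=1$; therefore $z_\varepsilon\notin\gncvectors_n$. Finally $z_\varepsilon/\|z_\varepsilon\|=y_\varepsilon/\|y_\varepsilon\|\to x/\|x\|$ as $\varepsilon\to 0$, placing $x/\|x\|$ in the claimed closure.

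I do not expect any genuine obstacle; the lemma is really a matter of unpacking the definitions. I note that the non-constantness parameters $\delta$ and $\rho$ play no role in the argument --- only the gradual growth bound is used, and only through the fact that the peak $(z_\varepsilon)^*_1$ is forced to blow up in the degenerate case, automatically knocking $z_\varepsilon$ out of $\gncvectors_n$.
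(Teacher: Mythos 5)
Your proof is correct, and Case 1 ($x^*_{\lfloor rn\rfloor}>0$) coincides with the paper's. In the degenerate case the two arguments diverge: the paper perturbs a \emph{single} coordinate, replacing $x_{\lfloor rn\rfloor}$ by $1/j$ and normalizing, whereas you add $\varepsilon\,{\bf 1}$ to \emph{every} coordinate. Both rely on the same escape mechanism --- the normalized peak $(\cdot)^*_1$ blows up, which kills membership in $\gncvectors_n$ via the growth bound at $i=1$ --- but your version is actually more robust. The paper's one-coordinate perturbation still has $\big(x(j)\big)^*_{\lfloor rn\rfloor}=0$ whenever $|\supp x|<\lfloor rn\rfloor-1$, so the normalization $x(j)/x(j)^*_{\lfloor rn\rfloor}$ is not well-defined as written in that sub-case and one would have to perturb more coordinates. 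Your additive shift by $\varepsilon\,{\bf 1}$ fills the entire tail at the common level $\varepsilon$ at once, and your identification $(y_\varepsilon)^*_{\lfloor rn\rfloor}=\varepsilon$ for small $\varepsilon$ is valid for every support size $|\supp x|<\lfloor rn\rfloor$. This is the kind of minor but genuine strengthening that is worth keeping.
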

\begin{proof}
Let $y$ be a unit vector such that $y=x/\|x\|$ for some $x\in \ST$.
If $x^*_{\lfloor rn\rfloor}\ne 0$ then $y=z/\|z\|$, where
$z=x/x^*_{\lfloor rn\rfloor}\in \normr_n(r)\setminus \gncvectors_n$.
If $x^*_{\lfloor rn\rfloor}= 0$, we can consider a sequence of vectors $(x(j))_{j\geq 1}$ in $\R^n$
defined by $x(j)_i=x_j$ for $i\ne \lfloor rn\rfloor$ and $x(j)_{\lfloor rn\rfloor}=1/j$. Let
$$
y(j):=x(j)/x(j)^*_{\lfloor rn\rfloor}\in \normr_n(r),\quad j\geq 1.
$$
Clearly, $y(j)^*_1\longrightarrow\infty$, so for all sufficiently large $j$ we have $y(j)\notin \gncvectors_n$.
Thus, for all large $j$,
$$
y(j)/\|y(j)\|\in \{x'/\|x'\|:\;x'\in \normr_n(r)\setminus \gncvectors_n\},
$$
whereas $y(j)/\|y(j)\|=x(j)/\|x(j)\|\longrightarrow x/\|x\|$. This implies the desired result.
\end{proof}

%The complement of gradual non-constant vectors (with arbitrary rescalings) will be called {\it steep} vectors, namely,
%$$\steep_n(r,L,\delta,\rho):=\R^n\setminus \bigcup\limits_{\lambda\in\R}\lambda\gncvectors_n(r,L,\delta,\rho).$$

We will need two following lemmas. The first one states that vectors which do not satisfy
(\ref{cond2}) are almost constant (that is, have large part of coordinates nearly equal to each other).
The second one is a simple combinatorial estimate, so we omit
its proof.

\begin{lemma}
\label{a-c-cond2}
Let $n\geq 1$, $\delta,\rho, r\in(0,1)$.
Denote $k=\lceil \delta n\rceil$ and $m=\lfloor rn \rfloor$ and assume
$n\geq 2m> 4k$. Assume $x\in\normr_n(r)$ does not satisfy  (\ref{cond2}).
Then there exist $A\subset [n]$ of cardinality $|A|> n-m$ and $\lambda$ with
$|\lambda|=1$ such that $|x_i-\lam|<\rho$ for every $i\in A$.
\end{lemma}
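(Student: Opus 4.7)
The starting observation is the normalization $x^*_m=1$: at least $m$ indices $i\in[n]$ satisfy $|x_i|\ge 1$ and strictly fewer than $m$ satisfy $|x_i|>1$. Split those ``large'' coordinates by sign by putting $N^+:=|\{i:x_i\ge 1\}|$ and $N^-:=|\{i:x_i\le -1\}|$, so $N^++N^-\ge m\ge 2k+1$. The plan is to use the failure of~\eqref{cond2} twice, first to force a sign choice and then to squeeze all but at most $2k-2$ coordinates into an interval of length $2\rho$ centred at $\pm 1$.

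For the first application: assume $N^+\ge k$. Taking $Q_1=\{i:x_i\ge 1\}$ (so $|Q_1|\ge\delta n$) and the candidate $Q_2=\{i:x_i\le 1-\rho\}$, the failure of~\eqref{cond2} rules out $\max_{Q_2}x_i\le\min_{Q_1}x_i-\rho$, forcing $|Q_2|<k$; that is, $|\{i:x_i>1-\rho\}|>n-k$. Symmetrically, $N^-\ge k$ implies $|\{i:x_i<-1+\rho\}|>n-k$. Because $\rho\le 1$ the two half-lines $\{x_i>1-\rho\}$ and $\{x_i<-1+\rho\}$ are disjoint, so having both $N^+\ge k$ and $N^-\ge k$ would force $2(n-k)<n$, contradicting $n\ge 2m>4k$. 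Exactly one of $N^+,N^-$ is therefore $\ge k$; assume WLOG $N^-<k$, so that $N^+\ge m-k+1\ge k+2$ and $|\{i:x_i>1-\rho\}|>n-k$.

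For the second application, take $Q_1'=\{i:x_i\ge 1+\rho\}$ and $Q_2=\{i:x_i\le 1\}$. The observation $|\{i:|x_i|>1\}|\le m-1$ gives $|Q_2|\ge n-m+1$, which is $\ge k$ since $n\ge 2m\ge 4k+2$. If $|Q_1'|\ge k$ then $\max_{Q_2}x_i\le 1\le \min_{Q_1'}x_i-\rho$ realizes~\eqref{cond2}, a contradiction; hence $|\{i:x_i\ge 1+\rho\}|<k$. Subtracting from $|\{i:x_i>1-\rho\}|\ge n-k+1$ produces
\[
|\{i:1-\rho<x_i<1+\rho\}|\ \ge\ (n-k+1)-(k-1)\ =\ n-2k+2\ >\ n-m,
\]
where the last inequality uses $m\ge 2k+1$. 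Setting $A=\{i:|x_i-1|<\rho\}$ and $\lambda=1$ yields the conclusion; the symmetric case $N^+<k$ delivers $\lambda=-1$.

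The main thing to watch is the bookkeeping of strict versus non-strict cardinality inequalities, because the conclusion demands $|A|>n-m$ rather than $|A|\ge n-m$, and each set used as $Q_2$ must contain at least $\lceil\delta n\rceil$ elements. Every such check reduces to the hypothesis $n\ge 2m>4k$ combined with the bound $|\{i:|x_i|>1\}|\le m-1$ extracted from $x\in\normr_n(r)$, so no additional estimates are required.
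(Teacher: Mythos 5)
Your proof is correct, and it takes a genuinely different route from the paper's. The paper sorts the coordinates \emph{without} taking absolute values, observes that the failure of \eqref{cond2} is equivalent to $x^{\#}_k - x^{\#}_{n-k+1}<\rho$ (where $x^{\#}$ is the non-increasing rearrangement of the $x_i$ themselves), and then takes $A$ to be the indices of the middle $n-2k$ ranks; the remaining work is to locate some $j\in A$ with $|x_j|=1$, which supplies $\lambda$. Your proof instead applies the negation of \eqref{cond2} directly, twice, with explicit pairs $(Q_1,Q_2)$: once to pin down the sign ($\{x_i\ge 1\}$ versus $\{x_i\le -1\}$) and then to trap all but fewer than $2k$ coordinates inside the open interval $(1-\rho,1+\rho)$ (resp.\ $(-1-\rho,-1+\rho)$). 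One thing your approach buys is that the choice $\lambda=\pm 1$ falls out explicitly, with no need for the separate argument that the middle chunk of the sorted sequence must hit a coordinate of absolute value exactly one (the step the paper dispatches in one line, but which actually rests on a small case analysis using both $|\{|x_i|>1\}|\le m-1$ and $|\{|x_i|\ge 1\}|\ge m$ together with the structure of the sorted order). In exchange your version is a bit longer and requires tracking several strict cardinality bounds; both the sign-dichotomy step and the final subtraction $n-2k+2>n-m$ use $m\ge 2k+1$ and $n\ge 2m$, exactly as you note. One cosmetic point: you write $\rho\le 1$ when the hypothesis is $\rho\in(0,1)$, i.e.\ $\rho<1$; the strict inequality is harmless here since you only need $1-\rho>-1+\rho$, which holds either way.
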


\begin{proof}
By $(x_i^{\#})_i$ denote the non-increasing  rearrangement
of $(x_i)_i$ (we would like to emphasize that  we do not take absolute values).
Note that there are two subsets $Q_1, Q_2\subset[n]$  with $|Q_1|,|Q_2|\geq k$
satisfying $\max_{i\in Q_2} x_i\leq \min_{i\in Q_1}x_i- \rho$ if and only if
$x_k^{\#}-x_{n-k+1}^{\#}\geq \rho$. Therefore, using that $x$ does not satisfy  (\ref{cond2}),
we observe $x_k^{\#}-x_{n-k+1}^{\#}< \rho$. Next consider the set
$$A:=\{x_i^{\#}\,\, :\,\, k<i\leq n-k\}.$$
Then $|A|=n-2k> n-m$.
%which is satisfied for  $m= \lfloor cp/2\ln(e/p)n\rfloor$.
Since  $x^*_m=1$ we obtain that
$$
  |\{i\, :\, |x_i|>1\}| <m\leq n-m \qand |\{i\, :\, |x_i|<1\}|\leq n-m.
$$
Therefore, there exists an index $j\in A$ such that $|x_j|=1$. Taking $\lam=x_j$,
we observe that  for every $i\in A$, $|x_i-\lam|<\rho$. This completes the proof.
%In other words, we have $2 x/\sqrt{m} \in U(m, (2\rho\sqrt{n}/\sqrt{m}))$.
\end{proof}

\begin{lemma}\label{l: aux 2498276098059385-}
For any $\delta\in(0,1]$ there are $n_\delta\in\N$, $c_\delta>0$ and $C_\delta\geq 1$ depending only on $\delta$ with the following property.
Let $n\geq n_\delta$ and let $m\in\N$ satisfy $n/m\geq C_\delta$.
Denote by $\mathcal S$ the collection of sequences $(S_1,\dots,S_m)\subset[n]$
with $|S_i|=\lfloor n/m\rfloor$ and $S_i\cap S_j=\emptyset\mbox{ for all }i\neq j$.
Let $A_{nm}$ be as in (\ref{anm}).
Then for any pair $Q_1,Q_2$ of disjoint subsets of $[n]$ of cardinality at least $\delta n$ each,
 one has
\begin{align*}
\Big|\Big\{&(S_1,\dots,S_m)\in \mathcal S:\;
\min(|S_i\cap Q_1|,|S_i\cap Q_2|)\geq \frac{\delta}{2}\lfloor n/m\rfloor
\mbox{ for at most $c_\delta m$ indices $i$}
\Big\}\Big|
%\\&
\leq e^{-c_\delta n} A_{nm}^{-1}.
%\,\,\frac{n!}{\big((\lfloor n/m\rfloor)!\big)^m\,(n-m\lfloor n/m\rfloor)!}.
\end{align*}
\end{lemma}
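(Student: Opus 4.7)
The plan is to recognize that $A_{nm}^{-1}$ equals the cardinality of $\mathcal S$: an element of $\mathcal S$ corresponds bijectively to an ordered partition of $[n]$ into $m+1$ blocks of prescribed sizes $\lfloor n/m\rfloor,\dots,\lfloor n/m\rfloor,n-m\lfloor n/m\rfloor$, of which there are $n!/((\lfloor n/m\rfloor)!)^m(n-m\lfloor n/m\rfloor)!=A_{nm}^{-1}$. Thus the inequality is equivalent to the statement that for a uniformly random $(S_1,\dots,S_m)\in\mathcal S$, the probability of the "bad" event (fewer than $c_\delta m$ indices $i$ are balanced in the stated sense) is at most $e^{-c_\delta n}$. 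I would generate such a random sequence by picking a uniform random permutation $\pi$ of $[n]$ and letting $S_i$ consist of the values of $\pi$ on positions $(i-1)s+1,\dots,is$, where $s:=\lfloor n/m\rfloor$.

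Next, a marginal estimate: for each fixed $i$, the distribution of $S_i$ is uniform on size-$s$ subsets of $[n]$, so $|S_i\cap Q_j|$ is hypergeometric with mean $\geq \delta s$. A standard hypergeometric Chernoff bound yields $\Prob\{|S_i\cap Q_j|<(\delta/2)s\}\leq e^{-c\delta^2 s}$ for some universal $c>0$; union-bounding over $j\in\{1,2\}$ gives $\Prob\{S_i\text{ unbalanced}\}\leq 2e^{-c\delta^2 s}$.

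The third and central step is to combine these marginal estimates into a joint tail bound. The indicators $\{\mathbf 1[S_i\text{ unbalanced}]\}_{i=1}^m$ are negatively associated, since they are monotone functions of disjoint coordinate subsets of $\pi$, and the indicators of "element $a$ lies in block $i$" form an NA family under the uniform random ordered partition (a standard consequence of the permutation distribution, cf.\ Dubhashi--Ranjan). Consequently, for every $I\subset[m]$,
$$
\Prob\Big(\bigcap_{i\in I}\{S_i\text{ unbalanced}\}\Big)\leq (2e^{-c\delta^2 s})^{|I|}.
$$
A union bound over $I$ of cardinality $\lceil (1-c_\delta)m\rceil$, combined with $\binom{m}{\lceil c_\delta m\rceil}\leq (e/c_\delta)^{c_\delta m}$, gives
$$
\Prob\{\#\text{balanced}<c_\delta m\}\leq (e/c_\delta)^{c_\delta m}\,(2e^{-c\delta^2 s})^{(1-c_\delta)m}.
$$

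Finally, since $sm\geq n-m\geq (1-1/C_\delta)n$, a careful choice of $c_\delta$ as a small multiple of $\delta^2$, together with a sufficiently large $C_\delta$ (depending on $\delta$), makes the right-hand side at most $e^{-c_\delta n}$ for all $n\geq n_\delta$. The main obstacle in this plan is step three: verifying that negative association applies in this setting. This is classical, and once invoked it immediately converts the marginal hypergeometric tail into a joint exponential tail with the correct order $e^{-\Omega(\delta^2 n)}$; without NA (or a comparable ingredient), a direct Markov or Chebyshev approach produces only the single-block rate $e^{-\Omega(\delta^2 s)}$, which is too weak whenever $m$ is comparable to $n$.
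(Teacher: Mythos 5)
The proposal is correct, and the reduction is exactly right: since $A_{nm}^{-1}=|\mathcal S|$, the claim is a tail bound for a uniformly random ordered partition, which you decompose into a marginal hypergeometric tail and a factorization over disjoint blocks. (The paper itself omits the proof, calling it "a simple combinatorial estimate," so there is no official route to compare against.)

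Two remarks on the argument. First, the phrase "monotone functions of disjoint coordinate subsets of $\pi$" should not be read as monotonicity in the permutation values themselves (which would be false, since $Q_1,Q_2$ are arbitrary); the correct underlying structure, which you also state, is negative association of the occupancy indicators $X_{a,i}=\mathbf 1[a\in S_i]$. Once one has NA of $\{X_{a,i}\}_{a,i}$, the event $\{S_i\text{ unbalanced}\}$ is a decreasing function of the disjoint block $\{X_{a,i}:a\in Q_1\cup Q_2\}$, and the product bound $\Prob\bigl(\bigcap_{i\in I}\{S_i\text{ unbalanced}\}\bigr)\le\prod_{i\in I}\Prob\{S_i\text{ unbalanced}\}$ follows. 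The NA of $\{X_{a,i}\}$ itself can be deduced directly from Joag-Dev--Proschan's NA for the permutation indicators $\{\mathbf 1[\pi(j)=a]\}_{j,a}$ together with the closure of NA under nondecreasing functions of disjoint coordinate sets, so the step is genuinely classical. Second, if one prefers to avoid NA altogether (keeping to what the authors likely meant by "simple combinatorial"), one can reveal $S_1,S_2,\dots$ sequentially and note that as long as fewer than, say, $\delta n/4$ elements have been removed, each $Q_j$ retains density at least $3\delta/4$ among the remaining elements, so the conditional probability that $S_i$ is unbalanced is still at most $2e^{-c\delta^2 s}$; restricting the intersection to the first $k_0\asymp\delta m$ indices of $I$ (which is permissible since $|I|\ge(1-c_\delta)m\gg\delta m$ once $c_\delta$ is small) gives $\Prob\bigl(\bigcap_{i\in I}\{S_i\text{ unbalanced}\}\bigr)\le(2e^{-c\delta^2 s})^{k_0}$, and since $k_0 s\gtrsim\delta n$ the rest of your computation goes through unchanged. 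Both routes deliver the exponent $e^{-\Omega(\delta^2 n)}$ needed; the NA route is slicker, the sequential route is self-contained.

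Your closing parameter bookkeeping (choosing $c_\delta$ a small multiple of $\delta^2$, $C_\delta$ large so that $s\ge C_\delta-1$ makes the hypergeometric tail meaningful and $ms\ge(1-1/C_\delta)n$, and $n_\delta$ to absorb rounding) is the right way to finish, and I see no gaps there.
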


\subsection{Auxiliary results for Bernoulli r.v. and random matrices }
%{Norm of a random Bernoulli matrix and Bennett's inequality}
\label{ben-ineq}

Let $p\in (0,1)$, $\delta$ is Bernoulli random variable taking
value $1$ with probability $p$ and $0$ with probability $1-p$.
We say that $\delta$ is a \Ber random variable.
A random matrix with i.i.d.\ entries distributed as $\delta$ will be
called {\it \Ber random matrix}.

Here we provide four lemmas needed below. We start with
notations for random matrices used throughout the paper.
The class of all $n\times n$ matrices having $0/1$ entries we
denote by $\Mc$.
We will consider a probability measure on $\Mc$ induced by the distribution of an $n\times n$
\Ber random matrix. We will use the same notation $\Prob$ for this probability measure;
the parameter $p$ will always be clear from the context.
Let $M=\{\mu_{ij}\}\in \Mc$. By $\row_i=\row_i(M)$
we denote the $i$-th row of $M$, and by $\col_i(M)$ --- the $i$-th column, $i\leq n$. By $\|M\|$ we always denote
the operator norm of $M$ acting as an operator $\ell_2\to \ell_2$. This
norm is also called  spectral norm and equals the largest singular number.
%Dealing with a vector, $\|\cdot\|$ always denotes the standard
%Euclidean norm.

We will need the following form of Bennett's inequality.

\begin{lemma}
\label{bennett}
Let $n\geq 1$, $0<q<  1$, and $\delta$ be a \Berq random variable.
Let $\delta_i$ and $\delta_{ij}$, $i,j\leq n$, be independent copies of $\delta$.
Define the function $h(u):=(1+u)\ln (1+u) -u$, $u\geq 0$.
Then for every $t>0$,
\begin{align*}
  \max\left(\p\left(\sum_{i=1}^n \delta_i  >qn+t \right),
 \p\left(\sum_{i=1}^n \delta_i  < qn-t \right) \right)
 &\leq
 \exp\left(-\frac{nq(1-q)}{\max^2(q, 1-q)}  \,  h\left(\frac{t\max(q, 1-q) }{nq(1-q)}\right)
 \right).
\end{align*}
In particular, for $0<\eps \leq  q\leq 1/2$,
\begin{align*}
  \max\left(\p\left(\sum_{i=1}^n \delta_i  >(q+ \eps) n \right),
 \p\left(\sum_{i=1}^n \delta_i  <(q- \eps) n \right) \right)
 &\leq
   \exp\left(-\frac{n\eps^2}{2q(1-q)}  \,
 \left(1-\frac{\eps}{3q}\right)\right),
\end{align*}
and for $q\leq 1/2$, $\tau >e$,
\begin{align*}
  \p\left(\sum_{i=1}^n \delta_i  >(\tau +1) qn \right)
 &\leq
   \exp\left(-    \tau \ln(\tau/e)qn \right).
\end{align*}
Furthermore, for $50/n\leq q\leq 0.1$,
\begin{align*}
  \p\Big(qn/8\leq \sum_{i=1}^n \delta_i  \leq 8 qn \Big)
   &\geq 1-   (1-q)^{n/2}.
% &\geq 1-  (2+\exp(0.45 qn)) (1-q)^n \geq 1-   \exp(-qn/2).
\end{align*}
Moreover, if $n\geq 30$ and $p=q \geq (4\ln n)/n$ then denoting
$$
 \Event_{sum}  :=\Big\{M=\{\delta_{ij}\}_{i,j\leq n}\in \Mc \, :\,
 \sum _{j=1}^n \delta_{ij} \leq 3.5 pn   \quad
  \mbox{ for every }\,\,\, i\leq n\Big\}
$$
we have $\p(\Event_{sum} )\geq 1- \exp(-1.5 np)$.
\end{lemma}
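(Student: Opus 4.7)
The plan is to prove the master Bennett inequality by the classical Chernoff argument and then derive each of the four consequences by substitution and elementary manipulation of $h(u)=(1+u)\ln(1+u)-u$. For the master bound I would use the boundedness $|\delta_i-q|\le \max(q,1-q)$ together with $\Var\,\delta_i=q(1-q)$ to obtain the one-sided MGF estimate $\E\exp(\lambda(\delta_i-q))\le \exp\big((q(1-q)/\max(q,1-q)^2)\cdot(e^{\lambda\max(q,1-q)}-1-\lambda\max(q,1-q))\big)$, apply Markov's inequality to the product MGF, and optimize over $\lambda>0$; the optimum is precisely the Legendre transform that produces $h$ with the stated prefactor. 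The lower-tail statement follows by the symmetric argument applied to $1-\delta_i$.

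The first ``in particular'' clause is obtained by the substitution $t=\eps n$ in the master bound: for $q\le 1/2$ we have $\max(q,1-q)=1-q$, so the argument of $h$ becomes $\eps/q$, the prefactor becomes $nq/(1-q)$, and the truncated Taylor estimate $h(u)\ge u^2/2-u^3/6$ (valid for all $u\ge 0$, via the alternating series $h(u)=\sum_{k\ge 2}(-1)^k u^k/(k(k-1))$) produces the factor $1-\eps/(3q)$. The $\tau>e$ clause uses $t=\tau qn$, which reduces the argument of $h$ to $\tau$ and the prefactor to $nq/(1-q)\ge nq$; combined with the elementary inequality $(1+\tau)\ln(1+\tau)\ge \tau\ln\tau$ this gives $h(\tau)\ge \tau(\ln\tau-1)=\tau\ln(\tau/e)$.

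For the $50/n\le q\le 0.1$ window, the upper tail $\p\{\sum_i\delta_i>8qn\}$ is controlled by the $\tau=7$ case of the preceding corollary, which gives $\exp(-7\ln(7/e)\,qn)\le \exp(-6.6\,nq)$ and is negligible compared to $(1-q)^{n/2}$. The lower tail $\p\{\sum_i\delta_i<qn/8\}$ needs a sharper estimate than the master Bennett directly supplies: the binomial/Stirling bound $\binom{n}{\lfloor qn/8\rfloor}q^{qn/8}(1-q)^{n-qn/8}\le (8e)^{qn/8}(1-q)^{n(1-q/8)}$ yields an exponent of approximately $-0.6\,nq$ for $q\le 0.1$, while the Taylor expansion $\ln(1-q)\ge -q-q^2/(2(1-q))$ gives $(1-q)^{n/2}\ge \exp(-0.53\,nq)$; the gap $0.6-0.53$ comfortably absorbs the polynomial prefactor and quadratic correction under $nq\ge 50$. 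Finally, $\Event_{sum}$ is handled row-by-row: the master Bennett with $t=2.5\,pn$ gives per-row probability $\exp(-(np/(1-p))\,h(5/2))\le \exp(-1.88\,np)$ using $h(5/2)\approx 1.885$, and a union bound over $n$ rows contributes a factor $e^{\ln n}\le e^{0.25\,np}$ under the hypothesis $np\ge 4\ln n$, leaving the exponent $-1.5\,np$.

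The main obstacle is really just tight numerical bookkeeping in the $(1-q)^{n/2}$ clause: the master Bennett inequality alone only yields $\exp(-0.30\,nq)$ for the lower tail at ratio $1/8$, which is insufficient to beat $(1-q)^{n/2}\sim \exp(-nq/2)$, so the proof must invoke the sharper binomial/Kullback--Leibler Chernoff bound and compare its exponent with $(n/2)\ln(1-q)$ through a careful second-order expansion. All other consequences reduce to direct substitution into the master inequality, so no genuine conceptual difficulty arises beyond careful tracking of the constants in the exponents.
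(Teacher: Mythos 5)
Your proposal is correct and follows essentially the same route as the paper's proof: apply Bennett's inequality (which the paper cites as a known result rather than rederiving via MGF/Chernoff, but that is a cosmetic difference) to the centered variables $\delta_i-q$ and $q-\delta_i$, derive the three corollaries by the substitutions $t=\eps n$, $t=\tau qn$, $t=2.5pn$ together with the elementary bounds $h(u)\ge u^2/2-u^3/6$ and $h(u)\ge u\ln(u/e)$, and for the $(1-q)^{n/2}$ clause replace Bennett's too-weak lower-tail bound (you correctly identify it as $\exp(-h(7/8)\,qn)\approx e^{-0.30qn}$) with a direct binomial Chernoff estimate and a union bound over rows for $\Event_{sum}$. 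Two small points worth tightening when writing this up: the alternating-series justification of $h(u)\ge u^2/2-u^3/6$ is immediate only for $u\le 1$, which does suffice here since the argument is $\eps/q\le 1$; and the binomial lower-tail estimate must bound the full sum $\sum_{i\le \lfloor qn/8\rfloor}\binom{n}{i}q^i(1-q)^{n-i}$ rather than the single largest term, which is where the polynomial prefactor you mention actually comes from (the paper bounds the number of terms by $qn/8$ and uses $\ln x\le x/e$ to absorb it into the exponent under $qn\ge 50$).
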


\begin{proof}
Recall that Bennett's
 inequality states that for
mean zero independent random variables $\xi_1$, \dots, $\xi_n$
satisfying $\xi_i \leq \rho$ (for a certain fixed $\rho>0$)
almost surely for $i\leq n$, one has for every $t>0$,
$$
 \p\left(\sum_{i=1}^n \xi_i  > t \right) \leq \exp\left(-\frac{\sigma^2}{\rho^2}\,
  h\left(\frac{\rho t}{\sigma^2}\right)\right),
$$
where $\sigma^2 = \sum_{i=1}^n\E \xi_i^2$
(see e.g. Theorem 1.2.1 on p. 28 in \cite{lama} or Exercise 2.2
on p. 11 in \cite{DevLug} or Theorem 2.9 in \cite{BLM}).
% and $h(u)=(1+u)\ln (1+u) -u$.
%Note that $h(\cdot)$ is an increasing function satisfying $h(u)\geq  u^2/2-u^3/6$ on $\R^+$.
Take $\xi_i = \delta_i -q$, $\xi_i'=-\xi_i$, $i\leq n$.  Then for every $i\leq n$,
$\xi_i'$ and $\xi_i$ are
centered, $|\xi _i'|= |\xi _i|=\max(q, 1-q)$,  and $\sigma^2= n q(1-q)$. Applying the Bennett inequality
with  $\rho=\max(q, 1-q)$ twice --- to $\xi_i$ and $\xi _i'$,  we observe the first inequality.
To prove the second inequality, we take $t=\eps n$ and use that
$h(\cdot)$ is an increasing function satisfying $h(u)\geq  u^2/2-u^3/6$
on $\R^+$. The third inequality follows by taking $t=\tau qn$ and using
$h(u)\geq u\ln (u/e)$.

For the ``furthermore" part, we apply the third inequality with $\tau=7$, to get
$$
\Prob\Big\{\sum_{i=1}^n \delta_i> 8qn\Big\}\leq \exp(-6qn).
$$
On the other hand, using $q\leq 0.1$,
\begin{align*}
   \Prob\Big\{\sum_{i=1}^n \delta_i< qn/8\Big\}
   &= \sum_{i=0}^{\lfloor qn/8\rfloor}{n\choose i}q^i(1-q)^{n-i}
   \leq (1-q)^n+\sum_{i=1}^{\lfloor qn/8\rfloor}\bigg(\frac{enq}{i(1-q)}\bigg)^i\,(1-q)^n\\
&\leq (1-q)^n+\frac{qn}{8}\bigg(\frac{8e}{1-q}\bigg)^{qn/8}\,(1-q)^n\leq
(1-q)^n+\frac{qn}{8}\bigg(\frac{80e}{9}\bigg)^{qn/8}\,(1-q)^n
\end{align*}
Since $(80 e/9)^{1/8}\leq e^{0.4}$, $(1-q)^n\leq \exp(-qn)$, $qn\geq 50$, and $\ln x \leq x/e$ on $[0, \infty)$,
 this implies
\begin{align*}
  \p\Big(qn/8\leq \sum_{i=1}^n \delta_i  <  qn /8\Big)
 &\leq  \exp(-6qn)+ (1+\exp(0.45 qn)) (1-q)^n \leq (1-q)^{n/2} .
\end{align*}

Finally, to get the last inequality, we take $t=2.5 qn=2.5pn$, then
$$
 \p\left(\sum _{j=1}^n \delta_{ij}   > 3.5 p n\right) \leq
 \exp\left(-\frac{n p}{1-p}\,
  h\left(2.5\right)\right) \leq \exp\left(- n p  \,
 \left(3.5 \ln 3.5 -2.5 \right)\right)\leq \exp\left(- 1.8 n p \r).
$$
Since under our assumptions, $n\exp\left(- 1.8 n p \r)\leq \exp\left(- 1.5 n p \r)$,
the bound on $\p(\Event_{sum} )$  follows by the union bound.
%Finally, note that
%$$
% \|M {\bf 1}\|^2 = \sum _{i=1}^n \left(\sum _{i=1}^n  \delta _{ij}\r)^2,
%$$
%which implies the last bound.
\end{proof}

We need the following simple corollary of the Bennet's lemma.

\begin{lemma}\label{l: column supports}
For any $R\geq 1$ there is $C_{\text{\tiny\ref{l: column supports}}}
=C_{\text{\tiny\ref{l: column supports}}}(R)\geq 1$ with the following property.
Let $n\geq 1$ and $p\in(0,1)$ satisfy $C_{\text{\tiny\ref{l: column supports}}}p\leq 1$
and $C_{\text{\tiny\ref{l: column supports}}}\leq p n$.
Further, let $M$ be an $n\times n$ be \Ber random matrix.
Then with probability at least $1-\exp(-n/C_{\text{\tiny\ref{l: column supports}}})$
one has
$$
 8pn\geq |\supp \col_i(M)|\geq pn/8 \quad \mbox{for all but $\,\,\, \lfloor(pR)^{-1}\rfloor\,\,\,$
 indices $\, i\in [n]\setminus\{1\}$.}
$$
\end{lemma}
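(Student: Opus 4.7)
The plan is a two-step Bennett-type argument using the collection of inequalities in Lemma~\ref{bennett}. First, for each fixed $i$, I would bound the probability that the individual column is \emph{bad} in the sense of violating $|\supp\col_i(M)|\in[pn/8,8pn]$. Second, I would exploit independence of the columns of $M$ to convert this per-column bound into a tail estimate for the number of bad columns among $\col_2(M),\dots,\col_n(M)$. The slack $\lfloor(pR)^{-1}\rfloor$ in the conclusion is precisely what a Chernoff-type bound on this count allows.

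For the first step, $|\supp\col_i(M)|$ is a sum of $n$ independent \Ber random variables. Choosing $C_{\text{\tiny\ref{l: column supports}}}$ large enough to force $50/n\leq p\leq 0.1$, the \emph{furthermore} clause of Lemma~\ref{bennett} applied to this sum gives
\begin{align*}
\Prob\bigl\{|\supp\col_i(M)|\notin [pn/8,\,8pn]\bigr\}\leq (1-p)^{n/2}\leq e^{-pn/2}.
\end{align*}
Denote this indicator by $\xi_i$. Since the columns are independent, $\xi_2,\dots,\xi_n$ are i.i.d.\ Bernoulli$(q)$ with $q\leq e^{-pn/2}$, and the claim reduces to the tail bound
\begin{align*}
\Prob\Bigl(\sum_{i=2}^n\xi_i>\lfloor (pR)^{-1}\rfloor\Bigr)\leq \exp\bigl(-n/C_{\text{\tiny\ref{l: column supports}}}\bigr).
\end{align*}

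For the tail I would invoke the third inequality of Lemma~\ref{bennett} applied to $\xi_2,\dots,\xi_n$, with $\tau$ chosen so that $(\tau+1)q(n-1)$ lands just above $\lfloor(pR)^{-1}\rfloor$; using $q\leq e^{-pn/2}$ this forces $\tau+1$ to be of order at least $e^{pn/2}/(Rpn)$, which is much larger than $e$ once $pn\geq C_{\text{\tiny\ref{l: column supports}}}$ with $C_{\text{\tiny\ref{l: column supports}}}$ sufficiently large in $R$. The resulting estimate
\begin{align*}
\Prob\Bigl(\sum_{i=2}^n\xi_i\geq (\tau+1)q(n-1)\Bigr)&\leq \exp\bigl(-\tau\ln(\tau/e)\,q(n-1)\bigr)\\
&\leq \exp\bigl(-(pR)^{-1}\,(pn/2-\ln(eRpn))\bigr)
\end{align*}
collapses to $\exp(-n/(4R))$ as soon as $pn\geq 4\ln(eRpn)+4$, which is again arranged by taking $C_{\text{\tiny\ref{l: column supports}}}$ large in $R$. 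The main (and only) obstacle is the bookkeeping: we need a single constant $C_{\text{\tiny\ref{l: column supports}}}$ that simultaneously supplies the preconditions of the first step, makes $pn$ dominate $\ln(eRpn)$ in the second step, and satisfies $4R\leq C_{\text{\tiny\ref{l: column supports}}}$ so that $\exp(-n/(4R))\leq \exp(-n/C_{\text{\tiny\ref{l: column supports}}})$. All three are monotone in $pn$ and hold for $C_{\text{\tiny\ref{l: column supports}}}$ an explicit increasing function of $R$. Conceptually, the slack $\lfloor(pR)^{-1}\rfloor$ is essential because in the sparse regime $pn\asymp C_{\text{\tiny\ref{l: column supports}}}$ the per-column failure probability $q$ is only a small positive constant, so a naive union bound over the $n$ columns would lose a factor of $n$ and could not reach $\exp(-n/C_{\text{\tiny\ref{l: column supports}}})$.
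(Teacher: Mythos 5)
Your proposal is correct and follows the same overall strategy as the paper: per-column failure probability via the ``furthermore'' clause of Lemma~\ref{bennett}, then a tail bound on the number of bad columns using independence. The only difference is in the second step, where the paper bounds $\Prob\{\sum_{i\geq 2}\xi_i\geq\lfloor(pR)^{-1}\rfloor\}$ by the combinatorial estimate $\binom{n-1}{\lfloor(pR)^{-1}\rfloor}(e^{-pn/2})^{\lfloor(pR)^{-1}\rfloor}$ (a first-moment count over subsets of bad indices) rather than the third Bennett inequality that you invoke; both routes produce the same $\exp(-n/(4R))$ estimate and require the identical bookkeeping to absorb the polynomial/binomial factor by choosing $C_{\text{\tiny\ref{l: column supports}}}$ large in $R$.
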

\begin{proof}
For each $i\in [n]\setminus\{1\}$, let $\xi_i$ be the indicator of the event
$$\big\{8pn< |\supp \col_i(M)|\quad\quad \mbox{ or }\quad \quad|\supp \col_i(M)|< pn/8\big\}.$$
By Lemma~\ref{bennett},  $\Exp\,\xi_i\leq e^{-pn/2}$. Since $\xi_i$'s are independent, by the Markov inequality,
\begin{align*}
\Prob\Big\{\sum_{i=2}^n \xi_i\geq \frac{1}{p R}\Big\}
\leq {n-1 \choose \lfloor(p R)^{-1}\rfloor}\big(e^{-pn/2}\big)^{\lfloor(p R)^{-1}\rfloor}
\leq {n-1 \choose \lfloor(p R)^{-1}\rfloor}\,e^{- n/(4R)}.
\end{align*}
The result follows.
\end{proof}

The following lemma provides a  bound on the norm of a random Bernoulli
matrix. It is similar to \cite[Theorem~1.14]{BasRud-sharp}, where the case of
symmetric matrices was treated. For the sake of completeness %and since we
%need a more delicate probability bounds
we sketch its proof.

%{\bf PERHAPS IT IS WORTH TO PROVIDE MORE DETAILS IN THE PROOF TO BE SELF-CONTAINED}

\begin{lemma}
\label{mnorm}
Let $n$ be large enough and
$(4\ln n)/n \leq p\leq 1/4$. Let
  $M=(\delta_{ij})_{i,j}$ be a \Ber random matrix.
Then for every $t\geq 30$ one has
\begin{equation*}\label{bdd}
  \p\big\{\|M - \E M\|\geq 2 t \sqrt{n p} \big\}\leq  4 e^{-t^2 pn/4}
   \quad \quad \mbox{ and } \quad \quad
   \p\big\{\|M\|\geq 2 t \sqrt{n p} + pn \big\}\leq  4 e^{-t^2 pn/4}.
\end{equation*}
In particular, taking $t=\sqrt{pn}$,
\begin{equation}\label{normofone}
    \p\left( \|M {\bf 1}\| \geq 3 p n^{3/2} \r)\leq  4 \exp(- n^2 p^2/4).
\end{equation}
\end{lemma}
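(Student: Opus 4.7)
The plan is to derive all three inequalities from a single concentration estimate for $\|M-\E M\|$. First, since $\E M=p\,{\bf 1}{\bf 1}^\top$, one has $\|\E M\|=pn$, so the second inequality will follow from the first by the triangle inequality $\|M\|\leq\|M-\E M\|+\|\E M\|$. Likewise, \eqref{normofone} will follow from the second applied with $t=\sqrt{pn}$ (valid once $n$ is large enough that $pn\geq 900$) combined with $\|M{\bf 1}\|\leq\|M\|\,\|{\bf 1}\|=\sqrt{n}\,\|M\|$, giving $\|M{\bf 1}\|\leq\sqrt{n}(2pn+pn)=3pn^{3/2}$ on the favorable event, with exceptional probability $4\exp(-pn\cdot pn/4)=4\exp(-p^2n^2/4)$.

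For the first inequality, I would combine Talagrand's concentration inequality for convex Lipschitz functions on $[0,1]^{n^2}$ with a sharp bound on the expected norm. Viewed as a function of the entries $(\delta_{ij})$, the map $M\mapsto\|M-\E M\|$ is convex and $1$-Lipschitz with respect to the Frobenius norm (since $\|A\|\leq\|A\|_F$ and perturbing a single entry of $M$ by $\tau$ changes $M-\E M$ in Frobenius norm by $|\tau|$). Talagrand's inequality then gives
$$\Prob\big\{\big|\|M-\E M\|-\mu\big|\geq s\big\}\leq 4\exp(-s^2/4),\qquad s>0,$$
where $\mu$ denotes a median of $\|M-\E M\|$.

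The critical step is to show that $\mu\leq 30\sqrt{pn}$. The centered entries $\delta_{ij}-p$ are bounded by $1$ in absolute value and have variance $p(1-p)\leq p$. A standard operator-norm estimate for matrices with independent entries---for instance, the noncommutative Khintchine inequality, Seginer's bound, or the sharp Bandeira--Van~Handel estimate---yields $\E\|M-\E M\|\leq C_1\sqrt{pn}+C_2\sqrt{\log n}$, and the hypothesis $pn\geq 4\ln n$ absorbs the second summand, producing $\E\|M-\E M\|\leq C\sqrt{pn}$ for a universal $C$; Talagrand's bound transfers the same estimate to $\mu$ up to an absolute constant. Once $\mu\leq 30\sqrt{pn}$ is secured, for any $t\geq 30$ one has $2t\sqrt{pn}-\mu\geq t\sqrt{pn}$, and substituting $s=t\sqrt{pn}$ into the Talagrand bound delivers the claim.

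The only nontrivial technical ingredient is the sharp control $\E\|M-\E M\|=O(\sqrt{pn})$, and this will be the main obstacle: one must invoke external moment estimates (or reprove them via the trace method for even moments of order $k\asymp\log n$) rather than derive it from scratch. Once that is in hand, the remaining steps---convex concentration via Talagrand, the triangle inequality for $\|M\|$, and the reduction of \eqref{normofone} to an application of the second inequality---are routine.
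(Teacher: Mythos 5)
Your proposal is correct and follows essentially the same route as the paper: Talagrand's concentration inequality for convex Lipschitz functions on the cube, combined with the Bandeira--Van Handel operator-norm estimate (after symmetrization/Gaussianization) to bound the mean or median of $\|M-\E M\|$ by a constant multiple of $\sqrt{pn}$, and then the triangle inequality and $\|M{\bf 1}\|\leq\sqrt{n}\,\|M\|$ for the remaining two claims. The only cosmetic differences are that the paper concentrates around the mean rather than the median (using the Ledoux form of Talagrand with the additive $4\sqrt{\pi}$) and carries out the symmetrization step explicitly, using Bennett's inequality to control the row/column variances appearing in the Bandeira--Van Handel bound.
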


\begin{proof}
Given an $n\times n$ random matrix $T=(t_{ij})_{i,j}$ with independent
entries taking values in $[0,1]$.
we consider it as a vector in
$\R^m$ with $m={n^2}$. Then the Hilbert--Schmidt norm of $T$ is the standard
Euclidean norm on $\R^m$. Let $f$ be any function in $\R^m$
which is convex and is $1$-Lipschitz with respect to the standard Euclidean norm.
Then the Talagrand inequality (see e.g. Corollary~4.10 and Proposition~1.8
in \cite{Ledoux}) gives that for every $s>0$,
$$
  \p\left(  f(T)  \geq \E  f(T) + s +4\sqrt{\pi} \r) \leq 4\exp (-s^2/4) .
$$
We apply this inequality twice, first with the function $f(T):=\|T\|$ to the matrix
$T:=M-\E M$.  At the end of this proof we show that $\E\|M-\E M\|\leq 20\sqrt{pn}$.
Therefore, taking $s=t\sqrt{pn}$ with $t\geq 30$, we obtain the first bound.
For the second bound, note that all entries of $\E M$ equal $p$,
hence $\|\E M\| = pn$. Thus, the second bound follows by  the triangle inequality.
%Finally,
%The ``in particular" part follows by the choice $t=\sqrt{pn}$
%and since $\|{\bf 1}\|=\sqrt{n}$.

%To get the ``moreover" part of the theorem,  we apply Talagrand
%inequality with the function
%$$  f(T)= \|M{\bf 1}\|= \sqrt{\sum _{i=1}^n \left(\sum _{i=1}^n  |t_{ij}|\r)^2},$$
%which is  $\ell_2^n(\ell_1^n)$-norm. Clearly,  $f$ is convex and it is easy to see
%that the Lipschitz constant of $f$ is $\sqrt{n}$ with respect to the Hilbert-Schmidt norm.
% Therefore,
%$$ \p\left(  f(M)  \geq \E  f(M) + (t+4\sqrt{\pi})\sqrt{n}\r) \leq 4\exp (-t^2/4) .$$
%Note that
%$$
%% % \left(\E f(M) \r)^{2}\leq \E  f(M)^2  =
% (\E  f(M))^2\leq \E \sum _{i=1}^n \left(\sum _{i=1}^n  \delta _{ij}\r)^2
% = n^3p^2 + n^2p(1-p)\leq 2 n^3p^2.$$
%Taking $t=pn$ implies the bound.

It remains to prove that $\E\|M-\E M\|\leq 20 \sqrt{pn}$.
Recall that $\delta_{ij}$ are the entries of $M$.
%Consider $\xi_{i,j}= \delta_{i,j}-p$, $i, j\leq n$. Then $A=M-\E M$ is distributed
%as $(\xi_{i,j})_{i,j}$.
Let $\delta_{ij}'$, $i, j\leq n$ be independent
copies of $\delta_{ij}$ and set $M':= (\delta_{ij}')_{i,j}$.
Denote by $r_{ij}$  independent Rademacher random variables
and by $g_{ij}$ independent standard Gaussian random variables.
We assume that all our variables are mutually independent and set
$\xi_{ij}:= \delta_{ij}-\delta_{ij}'$.
%Let $T$ be the matrix whose elements are $\xi_{i,j}$.
Since for every $i,j\leq n$,
 $\xi_{ij}$ is symmetric, it has the same distribution as
 $|\xi_{ij}| r_{ij}$ and the same as $\sqrt{2/\pi}|\xi_{ij}| r_{ij} \E |g_{ij}|$.
Then we have
$$
   \E_{\delta} \|M-\E M\| = \E_{\delta}\|M-\E_{\delta'} M' \|\leq
    \E_{\delta} \E_{\delta'}\|M- M' \|=
   \E_{\xi} \| (\xi_{ij})_{i,j}\| =
    \sqrt{2/\pi}\, \E_{\xi, r} \| (\xi_{ij}r_{ij} \E_g |g_{ij}|)_{i,j}\|
$$
$$
    \leq\sqrt{2/\pi}\,  \E_{\xi, r, g}  \| (\xi_{ij}r_{ij}  |g_{ij}|)_{i,j}\| =
     \sqrt{2/\pi}\,  \E_{\xi} \E_g \| (\xi_{ij} |g_{ij}|)_{i,j}\| .
$$
Applying a result of Bandeira and Van Handel (see the beginning of Section~3.1 in \cite{BVH}),
we obtain
$$
  \E_{\delta} \|M-\E M\| \leq \E_{\xi} (4 \max(\sigma_1, \sigma_2) + 15\sigma_* \sqrt{\ln (2n)}),
$$
where
$$
  \sigma_1 =\max_{i\leq n} \sqrt{\sum_{j=1}^n\xi_{ij}^2}, \quad
  \sigma_2 =\max_{j\leq n} \sqrt{\sum_{i=1}^n\xi_{ij}^2}, \qand
  \sigma_*=\max_{i, j\leq n} |\xi_{ij}|\leq 1.
$$
Note that $\xi_{ij}^2$ are \Berq\, random variables with $q=2p(1-p)$. Therefore,
using $(4\ln n)/n\leq p\leq 1/2$ and
applying the ``moreover part" of Lemma~\ref{bennett}, we obtain that
$\max(\sigma_1, \sigma_2)> 2\sqrt{pn}$ with probability at most
$2\exp(-1.5 nq)\leq 2/n^6$.
Moreover, since $\xi_{ij}^2\leq 1$, we have $\max(\sigma_1, \sigma_2)\leq \sqrt n$.
Therefore,
$$
  \E_{\xi} (4 \max(\sigma_1, \sigma_2) + 15\sigma_* \sqrt{\ln (2n)})
  \leq 8\sqrt{pn} + 4/n^5 + 15\sqrt{\ln (2n)}\leq 20\sqrt{pn}.
$$
\end{proof}
%
%
%Finally, the ``in particular" part follows by the choice $t=\sqrt{pn}/C$
%and since $\|{\bf 1}\|=\sqrt{n}$.

As an elementary corollary of the above lemma, we have the following statement where the restriction $pn\geq 4\ln n$ is removed.

\begin{cor}\label{cor: norm of centered}
For every $s>0$ and $R\geq 1$ there is $C_{\text{\tiny\ref{cor: norm of centered}}}\geq 1$
depending on $s,R$ with the following property.
Let $n\geq 16/s$ be large enough and let $p\in(0,1/4]$ satisfy $s\ln n\leq pn$.
Let $M_n$ be an $n\times n$  \Ber random matrix.
Then
$$
\Prob\big\{\|M_n-\E M_n\|\leq C_{\text{\tiny\ref{cor: norm of centered}}}\sqrt{pn}\big\}\geq 1-\exp(-Rpn).
$$
%p\,{\bf 1}\,{\bf 1}^\top
\end{cor}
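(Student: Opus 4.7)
The plan is to reduce to Lemma~\ref{mnorm} (which requires the stricter hypothesis $pn\geq 4\ln n$) by a standard augmentation trick: embed $M_n$ into a larger \Ber random matrix of dimension $N\geq n$ chosen so that $pN\geq 4\ln N$, and exploit the fact that the operator norm of a submatrix is controlled by that of the ambient matrix.

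Concretely, set $N := \max(n,\lceil 8n/s\rceil)$, so that $n\leq N\leq (1+8/s)n$. In both cases (namely $s\geq 8$ giving $N=n$, and $s<8$ giving $N=\lceil 8n/s\rceil$), the hypothesis $s\ln n\leq pn$ yields
\begin{equation*}
pN\,\geq\, 8\ln n\,\geq\, 4\ln N,
\end{equation*}
where the second inequality uses $N\leq n^{2}$, which is valid for $n\geq 16/s$ and $n$ sufficiently large. Now construct an $N\times N$ random matrix $\widetilde M$ with i.i.d.\ \Ber entries whose top-left $n\times n$ block coincides with $M_n$, the remaining entries being drawn independently of $M_n$. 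Since $M_n-\E M_n$ sits inside $\widetilde M-\E\widetilde M$ as an $n\times n$ submatrix, one has $\|M_n-\E M_n\|\leq \|\widetilde M-\E\widetilde M\|$.

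Apply the first bound of Lemma~\ref{mnorm} to $\widetilde M$ with $t:=\max(30,4\sqrt{R+1})$:
\begin{equation*}
\Prob\big\{\|\widetilde M-\E\widetilde M\|\geq 2t\sqrt{pN}\big\}\,\leq\,4\exp(-t^{2}pN/4)\,\leq\,4\exp(-(R+1)pN)\,\leq\,\exp(-RpN)\,\leq\,\exp(-Rpn),
\end{equation*}
where the penultimate inequality uses $pN\geq 8\ln n\geq \ln 4$ for $n$ large. On the complementary event,
\begin{equation*}
\|M_n-\E M_n\|\,\leq\,2t\sqrt{pN}\,\leq\,2t\sqrt{1+8/s}\,\sqrt{pn}\,\leq\,C_{\text{\tiny\ref{cor: norm of centered}}}\sqrt{pn},
\end{equation*}
for a constant $C_{\text{\tiny\ref{cor: norm of centered}}}$ depending only on $s$ and $R$. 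There is no serious obstacle to this argument; the only point to balance is that the inflated dimension $N$ must be large enough so that $pN\geq 4\ln N$ (allowing us to invoke Lemma~\ref{mnorm}) yet only a factor depending on $s$ larger than $n$ (so that $2t\sqrt{pN}$ stays of order $\sqrt{pn}$), and the choice $N=\max(n,\lceil 8n/s\rceil)$ meets both requirements.
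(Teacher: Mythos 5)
Your proof is correct and follows essentially the same route as the paper's: inflate the dimension by a factor of order $1/s$ so that the hypothesis $pN\geq 4\ln N$ of Lemma~\ref{mnorm} is met, apply that lemma with $t$ chosen large enough (in terms of $R$) to get the $\exp(-Rpn)$ probability bound, and then use that a submatrix has smaller operator norm. The paper writes the dilation as $\widetilde n = \max(1,\lceil 8/s\rceil)\,n$ rather than $N=\max(n,\lceil 8n/s\rceil)$, and leaves the choice of $t$ implicit, but these are cosmetic differences.
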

\begin{proof}
Let $w:=\max(1,\lceil 8/s\rceil)$, $\widetilde n:=w\,n$, and let $\widetilde M_n$ be $\widetilde n\times \widetilde n$
\Ber matrix. Assuming that $n$ is sufficiently large, we get
$$
p\,\widetilde n = wpn \geq s\max(1,\lceil 8/s\rceil) \ln n \geq 4\ln \widetilde n.
$$
Thus, the previous lemma is applicable, and we get
$$
\Prob\big\{\|\widetilde M_n-\Exp \widetilde M_n\|\leq C_{\text{\tiny\ref{cor: norm of centered}}}\sqrt{pn}\big\}\geq 1-\exp(-Rpn),
$$
for some $C_{\text{\tiny\ref{cor: norm of centered}}}>0$ depending only on $s,R$.
Since the norm of a matrix is not less than the norm of any of its submatrices, and because any $n\times n$ submatrix of
$\widetilde M_n$ is equidistributed with $M_n$, we get the result.
\end{proof}

\subsection{Anti-concentration}

In this subsection we combine anti-concentration inequalities with the
following tensorization lemma (see Lemma~3.2 in \cite{KT-last}, Lemma~2.2
in \cite{RV} and Lemma 5.4 in \cite{R-ann}).
%(it is a slight modification of the corresponding result from \cite{RV}).
We also provide Esseen's lemma.

\begin{lemma}[{Tensorization lemma}]
\label{tens}
\label{l: tensor}
Let $\lam, \gamma>0$.
Let $\xi_1, \xi_2, \ldots, \xi_m$ be independent random variables.
Assume that for all $j\leq m$, $\p(|\xi_j|\leq \lam)\leq \gamma$. Then for every $\eps\in (0,1)$
one has
$$
 \p(\|(\xi_1, \xi_2, ...,\xi_m)\|\leq \lam\sqrt{\eps m} )\leq (e/\eps)^{\eps m} \gamma^{m(1-\eps)}.
$$
Moreover, if there exists $\eps_0>0$ and $K>0$ such that for every  $\eps\geq \eps_0$ and
for all $j\leq m$ one has  $\p(|\xi_j|\leq \eps)\leq K \eps$ then
there exists an absolute constant $C_{\text{\tiny\ref{l: tensor}}}>0$ such that for every  $\eps\geq \eps_0$,
$$
  \p(\|(\xi_1, \xi_2, ...,\xi_m)\|\leq \eps\sqrt{m} )\leq (C_{\text{\tiny\ref{l: tensor}}} K\eps)^{ m}.
$$
\end{lemma}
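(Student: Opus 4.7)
The plan is to prove the two statements separately, the first by a direct combinatorial union bound and the second via an exponential Markov (Laplace transform) argument.

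For the first bound, I would start from the observation that if $\sum_j \xi_j^2 \leq \lambda^2 \eps m$, then the number of indices $j$ with $|\xi_j|>\lambda$ is at most $\eps m$, since each such index contributes more than $\lambda^2$ to the sum. Hence the event $\{\|(\xi_1,\dots,\xi_m)\|\leq \lambda\sqrt{\eps m}\}$ is contained in $\bigcup_{S}\{|\xi_j|\leq\lambda\text{ for all }j\in S\}$, the union taken over all $S\subset[m]$ with $|S|=\lceil(1-\eps)m\rceil$. Using independence and the hypothesis $\Prob(|\xi_j|\leq\lambda)\leq\gamma$, a union bound gives
\[
\Prob\big(\|(\xi_1,\dots,\xi_m)\|\leq \lambda\sqrt{\eps m}\big)\leq \binom{m}{\lceil(1-\eps)m\rceil}\,\gamma^{\lceil(1-\eps)m\rceil}\leq (e/\eps)^{\eps m}\,\gamma^{(1-\eps)m},
\]
where we use $\binom{m}{\lceil(1-\eps)m\rceil}=\binom{m}{\lfloor\eps m\rfloor}\leq(e/\eps)^{\eps m}$ and $\gamma\leq 1$ (the interesting case; otherwise the statement is vacuous). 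This yields the first inequality.

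For the ``moreover'' part, I would apply the exponential Markov inequality to the non-negative random variable $\sum_j\xi_j^2/\eps^2$. Since $\mathbf{1}_{t\leq m}\leq e^{m-t}$ for $t\geq 0$, independence gives
\[
\Prob\big(\|(\xi_1,\dots,\xi_m)\|\leq\eps\sqrt{m}\big)\leq e^{m}\prod_{j=1}^{m}\Exp\exp(-\xi_j^2/\eps^2).
\]
By the layer-cake formula and the substitution $u=-\ln t$,
\[
\Exp\exp(-\xi_j^2/\eps^2)=\int_0^\infty e^{-u}\,\Prob(|\xi_j|<\eps\sqrt{u})\,du.
\]
Since $\eps\geq \eps_0$, for every $u\geq 0$ we have $\max(\eps,\eps\sqrt{u})\geq \eps_0$, so the hypothesis applies and gives $\Prob(|\xi_j|<\eps\sqrt{u})\leq K\eps\max(1,\sqrt{u})$. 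Integrating against $e^{-u}$ bounds each factor by $CK\eps$ for a universal constant $C$, and combining with the factor $e^m$ yields the claimed $(C_{\text{\tiny\ref{l: tensor}}}K\eps)^m$.

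I do not expect a real obstacle here: the first assertion is a one-line pigeonhole plus union bound, and the second is a textbook Laplace-transform computation whose only mild subtlety is splitting the $u$-integral between $u\leq 1$ (where we use $\Prob(|\xi_j|<\eps\sqrt{u})\leq K\eps$, valid because $\eps\geq\eps_0$) and $u>1$ (where we use the sharper $\leq K\eps\sqrt{u}$) so that $\int_0^\infty e^{-u}\max(1,\sqrt{u})\,du$ becomes a harmless absolute constant.
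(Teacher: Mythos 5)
Your proof is correct and follows the standard route that the paper itself refers to (it cites [KT-last, Lemma~3.2], [RV, Lemma~2.2], and [R-ann, Lemma~5.4] rather than giving a proof): the first inequality via the pigeonhole/union-bound argument over $\binom{m}{\lceil(1-\eps)m\rceil}$ subsets, and the second via the Laplace-transform/exponential-Markov argument with the layer-cake representation of $\Exp e^{-\xi_j^2/\eps^2}$. Both steps check out, including the boundary cases ($\gamma\leq 1$ in the first part and $u\leq 1$ versus $u>1$ in the second), so nothing further is needed.
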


Recall that for a real-valued random variable $\xi$ its {\it L\'evy concentration function} $\cf(\xi,t)$ is defined as
$$
  \cf(\xi,t):=\sup\limits_{\lambda\in\R}\Prob\bigl\{|\xi-\lambda|\leq t\bigr\},\;\;t>0.
$$
We will need bounds on the L\'evy concentration function of sums of independent random variables.
Such inequalities were investigated in many works, starting with L\'evi, Doeblin, Kolmogorov, Rogozin.
We quote here a result due to  Kesten \cite{kesten}, who improved Rogozin's estimate \cite{Rog}.
%by ``adding'' a maximum in the numerator.

%\begin{prop}\label{prop: esseen}
%Let $\xi_1, \xi_2, \ldots, \xi_m$ be independent random variables and  $0<t_0\leq 2 t$.
%Then
%$$
% \cf\Bigl(\sum\limits_{i=1}^m \xi_i,t\Bigr)\leq \frac{C\,  t
% }{t_0 \, \sqrt{\sum_{i=1}^m (1-\cf(\xi_i,t_0))}} \, \,  \max_{i\leq m} \cf(\xi_i,t) ,
%$$
%where $C>0$ is a universal constant.
%\end{prop}

\begin{prop}\label{rog}\label{prop: esseen}
Let $\xi_1, \xi_2, \ldots, \xi_m$ be independent random variables and  $\lam, \lam _1, ...,\lam _m>0$
satisfy $\lam \geq \max_{i\leq m} \lam _i$. Then there exists an absolute positive  constant $C$ such that
$$
 \cf\Bigl(\sum_{i=1}^m \xi_i, \lam \Bigr)\leq \frac{C\,  \lam \, \max_{i\leq m} \cf(\xi_i, \lam)
 }{ \sqrt{\sum_{i=1}^m \lam_i^2(1-\cf(\xi_i, \lam _i))}} .
$$
\end{prop}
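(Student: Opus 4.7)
The plan is to prove this bound via the standard Fourier-analytic route, combining the Esseen concentration inequality with the well-known pointwise estimate linking the modulus of the characteristic function to the L\'evy concentration function, and then refining the argument to extract the factor $\max_{i\leq m}\cf(\xi_i,\lambda)$ characteristic of Kesten's version.

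First, I would apply Esseen's lemma to the sum $S=\sum_{i=1}^m\xi_i$, which gives
$$
\cf(S,\lambda)\leq C_1\lambda\int_{|t|\leq 1/\lambda}\prod_{i=1}^m|\phi_{\xi_i}(t)|\,dt,
$$
since $|\phi_S|=\prod_i|\phi_{\xi_i}|$ by independence. The key elementary inequality, proved via symmetrization (replace $\xi_i$ by $\xi_i-\xi_i'$, write $1-\cos u\geq c u^2$ on bounded intervals, and optimize in the location $\lambda$ appearing in $1-\cf$) is that for each $i$ and each $|t|\leq c_1/\lambda_i$,
$$
|\phi_{\xi_i}(t)|^2\leq 1-c_2\lambda_i^2 t^2\bigl(1-\cf(\xi_i,\lambda_i)\bigr).
$$
Using $1-x\leq e^{-x}$ and taking the product over $i$, one obtains the Gaussian-type majorization
$$
\prod_{i=1}^m|\phi_{\xi_i}(t)|\leq \exp\Bigl(-\tfrac{c_2}{2}\,t^2\sum_{i=1}^m\lambda_i^2\bigl(1-\cf(\xi_i,\lambda_i)\bigr)\Bigr)
$$
on $|t|\leq c_1/\lambda\leq c_1/\max_i\lambda_i$. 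Integrating this Gaussian against $C_1\lambda\,dt$ and using $\int e^{-a t^2}dt\leq C/\sqrt{a}$ already yields the classical Rogozin bound without the $\max_i\cf(\xi_i,\lambda)$ factor in the numerator.

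The main obstacle, and the heart of Kesten's refinement, is producing that extra factor $\max_i\cf(\xi_i,\lambda)$. My plan here is to single out the index $i_0$ achieving the maximum of $\cf(\xi_i,\lambda)$, and to split one of the factors from the product using the pointwise bound $|\phi_{\xi_{i_0}}(t)|\leq 1$ only on a small piece and using a uniform bound involving $\cf(\xi_{i_0},\lambda)$ elsewhere. Concretely, one uses a second appearance of Esseen's inequality in reverse for the single factor $\phi_{\xi_{i_0}}$, noting that
$$
\int_{|t|\leq 1/\lambda}|\phi_{\xi_{i_0}}(t)|\,dt\leq \frac{C_3}{\lambda}\,\cf(\xi_{i_0},\lambda),
$$
an estimate obtained by comparing the integral to the L\'evy function at scale $\lambda$ via a standard kernel argument. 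Combining this with the Gaussian decay of the remaining product $\prod_{i\neq i_0}|\phi_{\xi_i}(t)|$ by H\"older's inequality (splitting the exponent of the Gaussian majorant between the $i_0$-factor and the rest, using $\lambda_{i_0}^2(1-\cf(\xi_{i_0},\lambda_{i_0}))\leq \lambda^2$) gives the required numerator. A technical subtlety is that one must verify that dropping one term from the sum $\sum_i\lambda_i^2(1-\cf(\xi_i,\lambda_i))$ in the denominator costs at most a constant factor, which follows since that dropped term is bounded above by $\lambda^2$, while the whole sum, unless already $O(\lambda^2)$ (in which case the claim is trivial since $\cf\leq 1$), dominates it by a constant.

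Integrating the resulting bound over $|t|\leq 1/\lambda$, the Gaussian factor contributes $C/\sqrt{\sum_i\lambda_i^2(1-\cf(\xi_i,\lambda_i))}$, while the isolated factor contributes $\cf(\xi_{i_0},\lambda)/\lambda$, and multiplying by the prefactor $C_1\lambda$ from Esseen yields exactly the stated inequality. The hardest technical step is the H\"older-style splitting that preserves both the maximum factor in the numerator and the full Gaussian concentration in the denominator without loss of sharpness; everything else is a routine application of the Fourier-analytic toolbox for concentration functions.
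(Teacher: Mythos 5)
The paper does not prove this proposition at all: immediately before the statement it writes that it ``quote[s] here a result due to Kesten \cite{kesten}, who improved Rogozin's estimate \cite{Rog}'', so there is no in-paper argument to compare against; the statement is used as a black box. Your task therefore reduces to whether the sketch would work on its own.

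The first half of your outline --- Esseen's lemma, symmetrize to obtain a quadratic lower bound on $1-|\phi_{\xi_i}(t)|^2$ involving $1-\cf(\xi_i,\lambda_i)$, then integrate the Gaussian majorant --- is the standard Fourier route to Rogozin's inequality (the version without the extra factor $\max_i\cf(\xi_i,\lambda)$). There is a well-known technical pothole there (the bound $1-\cos u\ge cu^2$ fails for large $u$, so the contribution of $|\xi_i-\xi_i'|>\pi/|t|$ must be handled), but it is surmountable and I won't press on it.

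The genuine gap is at exactly the step that distinguishes Kesten's bound from Rogozin's. Your claimed ``reverse Esseen'' estimate
\[
\int_{|t|\leq 1/\lambda}|\phi_{\xi_{i_0}}(t)|\,dt\leq \frac{C_3}{\lambda}\,\cf(\xi_{i_0},\lambda)
\]
is false. Take $\xi$ uniform on $[0,M]$ with $M\gg\lambda$: then $|\phi_\xi(t)|=2|\sin(tM/2)|/(|t|M)$, so the left-hand side is of order $M^{-1}\log(M/\lambda)$, while $\cf(\xi,\lambda)/\lambda \asymp M^{-1}$; the ratio blows up. The correct companion estimate to Esseen's lemma has $|\phi|^2$ inside the integral --- it comes from integrating the characteristic function of the symmetrized variable $\xi-\xi'$ against a Fej\'er kernel --- and gives $\int_{|t|\lesssim 1/\lambda}|\phi_\xi(t)|^2\,dt\lesssim\cf(\xi,\lambda)/\lambda$. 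If you replace your display with this correct version and extract the $i_0$ factor by Cauchy--Schwarz, the remainder $\bigl(\int\prod_{i\neq i_0}|\phi_{\xi_i}|^2\bigr)^{1/2}$ integrates, after the Gaussian majorization, to a \emph{fourth} root of $\sum_{i\neq i_0}\lambda_i^2(1-\cf(\xi_i,\lambda_i))$, not the square root the statement requires, so the H\"older splitting you describe also does not close. Producing the factor $\max_i\cf(\xi_i,\lambda)$ in the numerator while keeping the full square-root denominator is the actual content of Kesten's paper and needs a substantially more careful argument than a single factor-splitting inside Esseen's integral.
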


This proposition together with Lemma~\ref{tens} immediately implies the following consequence,
in which, given $A\subset [m]$ and $x\in \R^m$,
$x_A$ denotes coordinate projection of $x$ on $\R^A$.

\begin{prop}\label{rogozin}
There exists and absolute constant $C_0\geq 1$ such that the following holds.
Let $p\in (0, 1/2)$.  Let $\delta$ be a \Ber random variable. Let
 $\delta_j$, $j\leq n$,
 and $\delta_{ij}$, $i,j\leq n$,
be independent copies of $\delta$. Let  $M=(\delta_{ij})_{ij}$.
 Let $A\subset [n]$ and $x\in \R^n$ be such that $\|x_A\|_\infty \leq   C_0^{-1}\sqrt{p}\, \|x_A\|$.
 Then
$$
   \p\Bigl(\|Mx\| \leq \frac{\sqrt{pn}}{3\sqrt{2} C_0} \|x_A\|  \Bigr)\leq  e^{-3n}.
$$
Moreover, if $\lam := \frac{\sqrt{p}\, \|x_A\|}{ 3 C_0} \leq 1/3$ then
$
   \cf\Bigl(\sum_{j=1}^n \delta_j x_j, \lam \Bigr)\leq  e^{-8}.
%   \quad  \qand \quad
$
\end{prop}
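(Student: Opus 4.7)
The plan is a two-step argument: obtain a single-row anti-concentration bound $\cf(\sum_j \delta_j x_j, \lambda) \leq e^{-8}$ by a careful application of Kesten's inequality (Proposition~\ref{rog}), and then apply the tensorization lemma (Lemma~\ref{tens}) to the $n$ independent row sums forming $Mx$. The ``moreover'' conclusion is exactly the output of the first step; the first conclusion follows from combining the two steps.

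For the anti-concentration step, set $\lambda := \sqrt{p}\,\|x_A\|/(3C_0)$, take $\lambda_i := |x_i|/3$ for every $i \in A$ with $x_i \neq 0$, and apply Proposition~\ref{rog} to the summands $\{\delta_i x_i : i \in A,\ x_i \neq 0\}$. Since $\lambda_i < |x_i|/2$, the two atoms $0$ and $x_i$ of $\delta_i x_i$ cannot both lie in an interval of length $2\lambda_i$, so $\cf(\delta_i x_i, \lambda_i) = \max(1-p,p) = 1-p$ and hence $1 - \cf(\delta_i x_i, \lambda_i) = p$. The hypothesis $\|x_A\|_\infty \leq C_0^{-1}\sqrt{p}\,\|x_A\|$ is exactly the condition $\lambda \geq \max_i \lambda_i$, while the trivial bound $\max_i \cf(\delta_i x_i, \lambda) \leq 1$ controls the numerator. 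Proposition~\ref{rog} therefore gives
\begin{align*}
\cf\Big(\sum_{i \in A}\delta_i x_i,\,\lambda\Big)
\;\leq\; \frac{C\,\lambda}{\sqrt{\sum_{i\in A} (x_i/3)^2\,p}}
\;=\; \frac{3C\,\lambda}{\sqrt{p}\,\|x_A\|}
\;=\; \frac{C}{C_0}.
\end{align*}
Adding the independent summands $\{\delta_j x_j : j \notin A\}$ cannot increase the L\'evy function, so the same bound holds for $\cf\bigl(\sum_{j=1}^n \delta_j x_j, \lambda\bigr)$. Choosing the absolute constant $C_0 \geq C\,e^{8}$ forces the right-hand side below $e^{-8}$, yielding the ``moreover'' part.

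For the first conclusion, write $\|Mx\|^2 = \sum_{i=1}^n \eta_i^2$, where the $\eta_i := \langle \row_i(M), x\rangle$ are i.i.d.\ copies of $\sum_j \delta_j x_j$. Specialising $t=0$ in the definition of $\cf$, the previous step gives $\p(|\eta_i| \leq \lambda) \leq e^{-8}$ for every $i$. Applying the first assertion of Lemma~\ref{tens} with $\gamma = e^{-8}$ and $\varepsilon = 1/2$ yields
\begin{align*}
\p\big(\|Mx\| \leq \lambda\sqrt{n/2}\big)
\;\leq\; (2e)^{n/2}\,(e^{-8})^{n/2}
\;=\; (2e^{-7})^{n/2}
\;\leq\; e^{-3n},
\end{align*}
since $\tfrac12\ln(2e^{-7}) = \tfrac12(\ln 2 - 7) < -3$. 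Observing that $\lambda\sqrt{n/2} = \sqrt{pn}\,\|x_A\|/(3\sqrt{2}\,C_0)$ matches the threshold in the statement completes the proof.

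The only delicate design decision is the individualised choice of the $\lambda_i$'s in Kesten's inequality. A uniform choice $\lambda_i \equiv \lambda$ would reduce the denominator to $\lambda\sqrt{p\,|\{i\in A:|x_i|>2\lambda\}|}$, and lower-bounding that cardinality would require $pn$ to be at most of order $C_0^2$, excluding the dense regime. Tying $\lambda_i$ to $|x_i|$ makes the denominator telescope into $\tfrac{1}{3}\sqrt{p}\,\|x_A\|$, a quantity determined purely by $p$ and $\|x_A\|$, producing a dimension-free bound that scales precisely against $\lambda$.
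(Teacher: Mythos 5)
Your proof is correct and follows essentially the same route as the paper's: apply Kesten's inequality (Proposition~\ref{rog}) with the individualized choice $\lambda_i = |x_i|/3$ to obtain the single-row anti-concentration estimate, then tensorize with $\varepsilon = 1/2$. The one cosmetic difference is that the paper first reduces to the case $\sqrt{p}\,\|x_A\| \leq C_0$ so it can invoke the ``moreover'' part (which carries the hypothesis $\lambda \leq 1/3$) and then removes that restriction by homogeneity; you sidestep this by applying Kesten's bound directly, which is valid since Proposition~\ref{rog} requires only $\lambda \geq \max_i \lambda_i$ and the resulting ratio $3C\lambda/(\sqrt{p}\|x_A\|) = C/C_0$ is independent of the size of $\lambda$. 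This is a mild streamlining, not a genuinely different argument.
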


\begin{proof} We start with the ``moreover" part. Assume ${\sqrt{p}\, \|x_A\|} \leq C_0$.
Let $\lam _j=|x_j|/3$.  Clearly, for every $j\leq n$, $\cf(x_j \delta _j,|x_j|/3)= \cf(\delta _j, 1/3) = 1-p$.
Proposition~\ref{rog} implies that for every $\lam$ satisfying  $\max_{j\in A} \lam _j\leq \lam \leq 1/3$
one has
$$
  \cf\Bigl(\sum_{j=1}^n x_j\delta_j, \lam \Bigr)\leq
   \cf\Bigl(\sum_{j\in A} x_j\delta_j, \lam \Bigr)\leq
   \frac{C\,  \lam
 }{ \sqrt{\sum_{j\in A} \lam_j^2 \,  p}} =
  \frac{3 C\,  \lam
 }{ \sqrt{ p}\, \|x_A\|}.
$$
Choosing $C_0=C e^8$ and $\lam =   \sqrt{ p}\, \|x_A\|/ (3C_0)$
(note that the assumption on $\|x_A\|_\infty$
ensures that $\lambda\geq \lambda_j$ for all $j\in A$) we obtain the ``moreover" part.

\smallskip

Now apply Lemma~\ref{tens} with $\xi _i= (Mx)_i = \sum_{j=1}^n x_j\delta_{ij}$, $\eps =1/2$,
$\gamma = e^{-8}$, $m=n$. We have
$$
 \p\Bigl(\|Mx\| \leq \lam \sqrt{n/2}
 %\frac{\sqrt{pn}}{3\sqrt{2} C_0} \|x_A\|
 \Bigr)\leq (2e)^{n/2} \exp(-4 n )\leq \exp(-3 n ).
$$
This implies the bound under assumption  ${\sqrt{p}\, \|x_A\|} \leq C_0$, which can be removed
by normalizing $x$.
\end{proof}

 We also will need the following combination of a simple anti-concentration fact
 with Lemma~\ref{tens}.

\begin{prop}\label{anti2}
Let $p\in (0, 1/20)$ and $\alpha >0$.  Let $\delta$ be a \Ber random variable. Let
 $\delta_j$, $j\leq n$,
 and $\delta_{ij}$, $i,j\leq n$,
be independent copies of $\delta$. Let  $M=(\delta_{ij})_{ij}$.
Let $x\in \R^n$ be such that $x_2^*\geq \alpha$.
Then
$$
   \cf\Bigl(\sum_{j=1}^n x_j \delta_j, \alpha/2.1 \Bigr)\leq  \exp(-1.9 p)
   \quad \qand \quad
   \p\Bigl(\|Mx\| \leq \frac{\alpha \sqrt{pn}}{7\sqrt{\ln(e/p)}}  \Bigr)\leq  \exp( - 1.6np).
$$
\end{prop}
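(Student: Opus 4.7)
The plan is to first establish the L\'evy concentration estimate by isolating the two largest entries of $x$, and then to bootstrap to a tail bound on $\|Mx\|$ via a coordinatewise application of the tensorization lemma (Lemma~\ref{tens}).

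Since $x_2^*\geq \alpha$, there exist distinct indices $i_1,i_2$ with $|x_{i_1}|,|x_{i_2}|\geq \alpha$. Using the standard fact $\cf(\eta+\eta',t)\leq \cf(\eta,t)$ for independent $\eta,\eta'$ (obtained by conditioning on $\eta'$), bounding $\cf(\sum_j x_j\delta_j,\alpha/2.1)$ reduces to bounding $\cf(\xi,\alpha/2.1)$, where $\xi:=x_{i_1}\delta_{i_1}+x_{i_2}\delta_{i_2}$ is supported on the four points $\{0,\,x_{i_1},\,x_{i_2},\,x_{i_1}+x_{i_2}\}$ with masses $(1-p)^2,\,p(1-p),\,p(1-p),\,p^2$. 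The four pairs
\[\{0,x_{i_1}\},\ \{0,x_{i_2}\},\ \{x_{i_1},x_{i_1}+x_{i_2}\},\ \{x_{i_2},x_{i_1}+x_{i_2}\}\]
are all separated by at least $\alpha$, so any interval of diameter $2(\alpha/2.1)<\alpha$ contains atoms from at most one of the two remaining ``close'' pairs $\{0,x_{i_1}+x_{i_2}\}$ and $\{x_{i_1},x_{i_2}\}$, and no other atom. For $p<1/2$ the heavier of these is the former, hence $\cf(\xi,\alpha/2.1)\leq (1-p)^2+p^2=1-2p(1-p)$. The chain $e^{-1.9p}\geq 1-1.9p\geq 1-2p+2p^2=1-2p(1-p)$ is valid on $(0,1/20]$ (the middle inequality is the only nontrivial one, and amounts to $0.1p\geq 2p^2$), which gives the first claim.

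For the second assertion, the rows $(Mx)_i=\sum_j x_j\delta_{ij}$, $i\leq n$, are independent and, by the first claim applied to each, satisfy $\Prob\{|(Mx)_i|\leq \alpha/2.1\}\leq e^{-1.9p}$. Apply Lemma~\ref{tens} with $\lambda=\alpha/2.1$, $\gamma=e^{-1.9p}$, $m=n$, and $\varepsilon:=0.09\,p/\ln(e/p)$, chosen so that $\lambda\sqrt{\varepsilon n}=\alpha\sqrt{pn}/(7\sqrt{\ln(e/p)})$. A direct estimate yields $\ln(e/\varepsilon)\leq 2\ln(e/p)$ on $(0,1/20]$, so $\varepsilon\ln(e/\varepsilon)\leq 0.18\,p$ and $(e/\varepsilon)^{\varepsilon n}\leq e^{0.18\,pn}$, while $\gamma^{n(1-\varepsilon)}\leq e^{-1.9\,p(1-\varepsilon)n}$. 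The total exponent is therefore at most $(0.18-1.9(1-\varepsilon))\,pn\leq -1.6\,pn$ for $p\in(0,1/20)$, which is the second bound.

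The main technical obstacle is purely numerical: the constants $1.9$, $2.1$, $7$ and the threshold $p<1/20$ are tuned so that the scalar inequality $1-2p(1-p)\leq e^{-1.9p}$ and the tensorization tradeoff between $(e/\varepsilon)^{\varepsilon n}$ and $\gamma^{n(1-\varepsilon)}$ simultaneously produce the advertised exponents $1.9$ and $1.6$. Once the two-atom reduction is in place, the remaining steps --- the atom-separation argument, the conditioning identity, and the invocation of Lemma~\ref{tens} --- are routine.
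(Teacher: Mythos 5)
Your proposal is correct and follows essentially the same route as the paper: restrict to the two largest coordinates, note that the resulting four-atom distribution splits into the pair $\{0,x_{i_1}+x_{i_2}\}$ (mass $(1-p)^2+p^2$) and the pair $\{x_{i_1},x_{i_2}\}$ (mass $2p(1-p)$) separated by at least $\alpha$, bound by $1-2p(1-p)\le e^{-1.9p}$ using $p\le 1/20$, and then tensorize with $\varepsilon\asymp p/\ln(e/p)$. The only differences are cosmetic: you list all pairwise separations instead of the paper's cleaner $E_1,E_2$ decomposition, and you take $\varepsilon=0.09p/\ln(e/p)$ rather than the paper's $p/(10\ln(e/p))$, which leads to the same numerics.
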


\begin{proof}
Without loss of generality we assume that $x_1^* = |x_1|$ and $x_2^*=|x_2|$. Note that
$x_1\delta_1+x_2 \delta_2$ takes value in $E_1:=\{0, x_1+x_2\}$ with probability $(1-p)^2+p^2\leq 1-1.9p$
and in  $E_2:=\{x_1, x_2\}$ with probability $2p(1-p)\leq 1-1.9p$. Since the distance between $E_1$ and $E_2$
is $\min(|x_1|, |x_2|) =|x_2|$ and since  $\cf\bigl(\sum_{j=1}^n x_j\delta_j, \lam \bigr)\leq
 \cf\bigl(\sum_{j=1}^2 x_j\delta_j, \lam \bigr)$, the first inequality follows.

 Now apply Lemma~\ref{tens} with $\xi _i= (Mx)_i = \sum_{j=1}^n x_j\delta_{ij}$, $\eps =p/(10\ln (e/p))$,
$\gamma = e^{-1.9 p}$, $m=n$. We note that then $\eps\ln (e/\eps) \leq p/4$ and therefore we  have
$$
 \p\Bigl(\|Mx\| \leq \frac{\alpha \sqrt{pn}}{2.1\sqrt{10\ln(e/p)}}
 %\frac{\sqrt{pn}}{3\sqrt{2} C_0} \|x_A\|
 \Bigr)\leq (e/\eps)^{\eps n} \exp(- 1.9 p n(1-\eps) )
$$
$$
\leq
 \exp(pn/4 - 1.9np(1-\eps)) \leq \exp( - 1.6np),
 $$
 which completes the proof.
\end{proof}

Finally we provide Esseen's lemma  \cite{Esseen-type}, needed to prove Theorem~\ref{p: cf est}.

\begin{lemma}[Esseen] \label{ess}
There exists an absolute constant $C>0$ such that the following holds.
Let $\xi_i$, $i\leq m$ be independent random variables. Then for every $\tau>0$,
\begin{align*}
\cf\Big(\sum\limits_{i=1}^m \xi_i,\tau\Big)
&\leq C\int\limits_{-1}^1 \prod\limits_{i=1}^m|\Exp\exp(2\pi{\bf i}\xi_i s/\tau)|\,ds.
\end{align*}
\end{lemma}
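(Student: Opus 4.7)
The plan is to prove the Esseen bound via Fourier duality: one bounds the indicator of the interval $[-\tau,\tau]$ from above by a nonnegative function $g$ whose Fourier transform is supported in $[-1/\tau, 1/\tau]$, and then integrates against the characteristic function of $S:=\sum_{i=1}^m \xi_i$. Since $S$ is a sum of independent $\xi_i$'s, the characteristic function $\phi(s):=\Exp e^{2\pi \mathbf{i} s S}$ factors as $\phi(s)=\prod_{i=1}^m \Exp e^{2\pi\mathbf{i} s\xi_i}$, which (after a change of variable $s\mapsto s/\tau$) immediately produces the integrand on the right-hand side.

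Concretely, I would first construct an auxiliary smoothing function. Take
$$
g(x):=\frac{\pi^2}{4}\,\Big(\frac{\sin(\pi x/(2\tau))}{\pi x/(2\tau)}\Big)^2,
$$
with the convention $g(0)=\pi^2/4$. This function is nonnegative, and a direct estimate $|\sin(\pi y/2)/(\pi y/2)|\geq 2/\pi$ on $|y|\leq 1$ shows that $g(x)\geq 1$ on $[-\tau,\tau]$, hence $g\geq\mathbbm{1}_{[-\tau,\tau]}$ pointwise. Moreover, $g$ is (a rescaling of) the squared sinc, so its Fourier transform $\hat g$ is a triangular function supported in $[-1/(2\tau),1/(2\tau)]\subset[-1/\tau,1/\tau]$, with $\|\hat g\|_\infty\leq C_0\tau$ for some absolute $C_0$.

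Next, for any $\lambda\in\R$, write
$$
\Prob\bigl(|S-\lambda|\leq \tau\bigr)\leq \Exp\, g(S-\lambda).
$$
By Fourier inversion $g(x)=\int \hat g(s)\,e^{2\pi\mathbf{i} x s}\,ds$, so Fubini gives
$$
\Exp\, g(S-\lambda)=\int \hat g(s)\,e^{-2\pi\mathbf{i}\lambda s}\,\phi(s)\,ds.
$$
Passing to absolute values, using independence to factor $|\phi(s)|=\prod_{i=1}^m|\Exp e^{2\pi\mathbf{i} s\xi_i}|$, using the support and sup-norm of $\hat g$, and then making the change of variable $s=u/\tau$, yields
$$
\Prob\bigl(|S-\lambda|\leq \tau\bigr)\leq C_0\tau\!\!\int_{-1/(2\tau)}^{1/(2\tau)}\!\!\prod_{i=1}^m\bigl|\Exp e^{2\pi\mathbf{i} s\xi_i}\bigr|\,ds\leq C\!\!\int_{-1}^{1}\!\prod_{i=1}^m\bigl|\Exp e^{2\pi\mathbf{i}\xi_i u/\tau}\bigr|\,du.
$$
Taking the supremum over $\lambda$ finishes the proof.

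There is no real obstacle here; the entire content of the statement is encoded in the choice of the smoothing function $g$ (a Fej\'er / squared-sinc kernel rescaled to have spectrum in $[-1/\tau,1/\tau]$). The only small care is to balance two requirements on $g$: its Fourier transform must be supported in $[-1/\tau,1/\tau]$ so that the final integral can be written over $[-1,1]$ after rescaling, while simultaneously $g$ must dominate $\mathbbm{1}_{[-\tau,\tau]}$ with an $L^\infty$ control on $\hat g$ of order $\tau$, which together produce the absolute constant $C$ in the inequality.
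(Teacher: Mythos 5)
Your proof is correct and complete. Note that the paper does not actually prove Lemma~\ref{ess}: it is stated with a citation to Esseen's original article, so there is no internal proof to compare against. What you have reconstructed is precisely the standard Fourier-analytic argument behind Esseen's concentration-function inequality, which is what the cited reference proves.

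The one detail worth spelling out, since it carries the whole argument, is the verification that $g$ dominates the indicator and that $\hat g$ has the right support and scale. Writing $g(x)=\tfrac{\pi^2}{4}\,\mathrm{sinc}^2\big(x/(2\tau)\big)$ with $\mathrm{sinc}(u)=\sin(\pi u)/(\pi u)$, one has $g\geq 1$ on $[-\tau,\tau]$ because $\sin(y)/y\geq 2/\pi$ for $|y|\leq\pi/2$, and
$$
\hat g(\xi)=\frac{\pi^2\tau}{2}\,\Lambda(2\tau\xi),\qquad \Lambda(\xi)=\max(0,1-|\xi|),
$$
so $\hat g$ is nonnegative, supported in $[-1/(2\tau),1/(2\tau)]$, with $\|\hat g\|_\infty=\pi^2\tau/2$. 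Fubini is justified since $\hat g$ is bounded and compactly supported and the characteristic function has modulus at most one; after the substitution $s=u/\tau$ the factor $\tau$ cancels, the domain becomes $[-1/2,1/2]\subset[-1,1]$, and the nonnegativity of the integrand lets you enlarge it. This gives the stated bound with the explicit constant $C=\pi^2/2$. No gap; this matches what one would expect from the cited source.
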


\subsection{Net argument}
\label{net}

Here we discuss special nets that will be used and corresponding approximations.
We fix the following notations. Let $\edv={\bf 1} /\sqrt{n}$ be the unit vector in the direction
of ${\bf 1}$. Let $P_\edv$ be the projection on $\edv^\perp$ and
$P_\edv^\perp$ be the projection on $\edv$, that is $P_\edv^\perp = \la \cdot , \edv\ra \edv$.
Similarly, for $j\leq n$, let $P_j$ be the projection on $e_j^\perp$ and
$P_j^\perp$ be the projection on $e_j$. Recall that for $x\in \R^n$, the permutation $\sigma_x$
satisfies $|x_{\sigma_x(i)}|= x_i^*$, $i\leq n$. Define a (non-linear)
operator $Q:\R^n\to \R^n$ by $Qx = P_{F(x)} x$ --- the coordinate projection on $\R^{F(x)}$,
 where $F(x)=\sigma _x ([2, n])$, in other words $Q$ annihilates the largest coordinate
 of a vector.
Consider the triple norm on $\R^n$ defined by
$$
  ||| x |||^2 := \|P_\edv x\|^2 + p n \|P_\edv^\perp x\|^2
$$
(note that $\|P_\edv^\perp x\| = |\la x , \edv\ra|$).
We will use the following notion of shifted sparse vectors
(recall here that $\sigma_x$ is the permutation responsible for
the non-increasing rearrangement). Given $m\leq n$ and a parameter $\gamma >0$, define
$$
  U(m, \gamma):=\Big\{ x\in \R^n \,\,  :  \, \, \exists A\subset [n], |A|=n- m,\,\, \exists |\lam|\leq \frac{2}{\sqrt m} \, \,
  \forall i \in A
%\sigma_x([m+1, n])
  \,\, \mbox{ one has }
   \,  \,  |x_i- \lam | \leq \frac{\gamma}{\sqrt n} \Big\}.
$$
Further, given another parameter $\beta >0$, define the set
$$
   V(\beta):=\{ x\in \R^n \,  :  \, \|x\|_\infty \leq 1 \, \,
   \mbox{ and }  \, \, \|Qx\|\leq \beta \}.
$$
%Finally, $U_m$ denotes the set of all $m$-sparse vectors.

%{\bf seems enough to take any subset of cardinality $n-m$, not necessarily
%smallest coordinates}

\begin{lemma}\label{cardnet}
Let $0<8\gamma\leq \eps \leq \beta$ and $1\leq m\leq n$.
Then there exists an $\eps$-net %in $(\eps/2)$-net
in $V(\beta)\cap U(m, \gamma)$
with respect to $|||\cdot|||$ of cardinality at most
$$
   \frac{ 2^{10} \sqrt{p}\, n^2 }{\eps^2\, \sqrt{m}} \left(\frac{9 \beta}{\eps} \r) ^{m} {n\choose m}.
$$
\end{lemma}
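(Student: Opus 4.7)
The plan is a three-layer product-net construction, with one discretization for each natural parameter of a vector $x\in V(\beta)\cap U(m,\gamma)$: the support set of its ``big'' coordinates, its level on the ``almost constant'' block, and its restriction to the big coordinates. The triple norm couples these pieces nontrivially through the $\edv$-direction, which is where the main bookkeeping lives.

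For each $x\in V(\beta)\cap U(m,\gamma)$ I first fix $A\subset[n]$ with $|A|=n-m$ and $\lambda$ with $|\lambda|\le 2/\sqrt m$ witnessing membership in $U(m,\gamma)$; this choice is discretized by enumerating over all $A$, contributing the factor $\binom{n}{m}$. I will then build the approximation $\hat x=\hat\lambda\,\mathbf{1}_A+\hat v+c\edv$, where $\hat v\in\R^{A^c}$ (extended by $0$ on $A$) approximates $x|_{A^c}$ in the Euclidean norm, $\hat\lambda$ approximates $\lambda$, and the scalar $c$ is chosen so that $\langle \hat x,\edv\rangle=\hat\mu$ for a discretized value $\hat\mu$ of $\mu:=\langle x,\edv\rangle$. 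Since $|\mu|\le\sqrt n$ (using $\|x\|_\infty\le 1$), I take $\hat\mu$ on a grid in $[-\sqrt n,\sqrt n]$ with step of order $\eps/\sqrt{pn}$, giving $O(\sqrt p\,n/\eps)$ points. For $\hat\lambda$ I take a grid on $[-2/\sqrt m,2/\sqrt m]$ whose step will be chosen as the binding of the Euclidean constraint below; this yields $O(n/(\sqrt m\,\eps))$ points. For $\hat v$ I use a standard volumetric $\eps/9$-net of the Euclidean ball of radius $\|x|_{A^c}\|\le\|Qx\|+\|x\|_\infty\le\beta+1$ in $\R^{A^c}$, which (absorbing the ``$+1$'' via $\eps\le\beta$, and the remaining slack into the constant $2^{10}$) gives at most $(9\beta/\eps)^m$ points.

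To verify $|||x-\hat x|||\le\eps$, I use the orthogonal decomposition $|||y|||^2=\|P_\edv y\|^2+pn|\langle y,\edv\rangle|^2$. By construction, $\langle \hat x,\edv\rangle=\hat\mu$, so the $\edv$-part contributes $\sqrt{pn}|\mu-\hat\mu|\le\eps/2$. For the $\edv^\perp$-part, bound $\|P_\edv(x-\hat x)\|\le\|x-\hat x\|$ and estimate the latter coordinate-wise: on $A$, $|x_i-\hat\lambda-c/\sqrt n|\le|x_i-\lambda|+|\lambda-\hat\lambda|+|c|/\sqrt n\le\gamma/\sqrt n+\Delta_\lambda+|c|/\sqrt n$, which after squaring and summing over $A$ gives a contribution of order $\gamma^2+n\Delta_\lambda^2+c^2$; on $A^c$ the contribution is at most $(\eps/9)^2$ plus a $c^2$ term. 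The condition $\gamma\le\eps/8$ bounds the first piece, and the grid choices $\Delta_\lambda\lesssim\eps/\sqrt n$ and $|c|\lesssim\eps/\sqrt n$ control the remaining pieces. The scalar $c$ corresponds to the shift needed to realign $\langle\hat x,\edv\rangle$ to $\hat\mu$, and a direct computation shows $c=[\sqrt n(\hat\mu-\mu)+(n-m)(\lambda-\hat\lambda)+\sum_{A^c}(x_i-\hat v_i)+\sum_A(\lambda-x_i)]/n$, which is automatically of the required size $O(\eps/\sqrt n)$ once the three grid steps are fixed as above.

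Multiplying the four counts gives the claimed cardinality bound, with all ``$\lesssim$'' absorbed into $2^{10}$. The main obstacle in the argument is precisely the coupling in the $\edv$-direction: a perturbation $|\lambda-\hat\lambda|$ affects roughly $n-m$ coordinates in unison, producing an $O(n|\lambda-\hat\lambda|)$ contribution to $\langle\hat x,\edv\rangle$, so $\hat\lambda$ must be discretized on a scale significantly finer than one would naively expect from the Euclidean constraint alone. Balancing this against the $\sqrt{pn}$ weighting in the triple norm is what produces the tight joint factor $\sqrt p\,n^2/(\eps^2\sqrt m)$ in front of the $\binom{n}{m}(9\beta/\eps)^m$ volumetric piece; the $c$-shift device is what allows the $\hat\mu$ and $\hat\lambda$ grids to be decoupled in the counting while remaining jointly consistent in the approximation.
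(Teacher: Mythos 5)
Your overall architecture parallels the paper's: index a product net by the support $A^{c}$ of the ``big'' coordinates (giving $\binom{n}{m}$), discretize the almost-constant level $\lambda$ and the restriction $x|_{A^c}$ separately, and handle the $\edv$-direction with its own one-dimensional net. The $c$-shift device, choosing $c$ so that $\langle\hat x,\edv\rangle=\hat\mu$ exactly, is a clean alternative to the paper's two-stage treatment (approximate in Euclidean norm first, then cover a short residual $\edv$-segment). But there is a genuine gap in the $\hat v$-step. You take a net of $B(0,\beta+1)\subset\R^{A^c}$ and claim the ``$+1$'' can be absorbed via $\eps\le\beta$; it cannot. A $\delta$-net of $B(0,\beta+1)$ in $\R^m$ has cardinality of order $(1+2(\beta+1)/\delta)^m$, and the ratio to $(9\beta/\eps)^m$ is exponential in $m$, so constant-factor slack is not absorbable. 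Worse, when $\beta\approx\eps$ is small the true radius is of order $1\gg\beta$, and a $(9\beta/\eps)^m$-size net simply cannot be an $\eps$-net of that ball. The root cause is that $\|Qx\|\le\beta$ controls $x$ only after zeroing $\sigma_x(1)$, which typically sits in $A^c$ and is bounded only by $\|x\|_\infty\le1$. The paper resolves this by enumerating the location $j=\sigma_x(1)$ (factor $n$) and taking a separate one-dimensional $\eps/4$-net $\mathcal{N}_1$ on $[-1,1]e_j$ (factor $O(1/\eps)$); the remaining block $P_{E\setminus\{j\}}(V_j)$ then lives in $B(0,\beta)$, giving cleanly $(1+8\beta/\eps)^{m}\le(9\beta/\eps)^m$. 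Those two extra factors live in the prefactor, not the $m$-th power, and are precisely what produces the $n^2$ in $2^{10}\sqrt{p}\,n^2/(\eps^2\sqrt m)$.

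The bookkeeping also has a hidden inconsistency that cancels this omission only superficially. You claim $O(n/(\sqrt m\,\eps))$ points for the $\hat\lambda$-grid, but the constraint you actually derive (once the $c$-shift removes the $\edv$-coupling) is $\Delta_\lambda\lesssim\eps/\sqrt n$, which yields only $O(\sqrt n/(\sqrt m\,\eps))$ points; compare the paper's $\mathcal{N}_3$ of size $16\sqrt n/(\eps\sqrt m)$. Meanwhile your $\hat\mu$-grid covers the whole range $[-\sqrt n,\sqrt n]$, giving $O(\sqrt p\, n/\eps)$, whereas the paper's $\mathcal{N}_4$ needs only $8\sqrt{pn}$ points because it covers a residual $\edv$-segment of length $\eps$ rather than $2\sqrt n$. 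Your two over-counts ($\sqrt n$ and $\sqrt n/\eps$) happen to multiply to the $n/\eps$ you are missing from the max-coordinate separation, which is why your claimed total matches the lemma --- but the proof as written does not establish the claimed bound. Once you insert the $n\cdot O(1/\eps)$ factor to fix $\hat v$, you must also switch to the residual $\eps$-segment for $\hat\mu$ (the $c$-shift is compatible with this: after fixing $\hat\lambda,\hat v$, and the max coordinate, $\langle\hat x,\edv\rangle$ is already within $O(\eps)$ of $\mu$), or the final count will overshoot by $\sqrt n/\eps$.
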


\begin{proof}
Denote $V:=V(\beta)\cap U(m, \gamma)$.
For each $x\in V$ let $A(x)$ be a set from the definition of $U(m, \gamma)$
(if the choice of $A(x)$ is not unique, we fix one of them).
%The construction

Fix $E\subset [n]$ of cardinality $m$.
%Without loss of generality we assume that $E=[m]$.
We first consider vectors   $x\in V$ satisfying $A(x)=E^c$.
Fix $j\leq n$ and denote
%$$  V_j= \{ x\in V \,  :  \, j=\sigma_x(1)\,\,  \mbox{ and }  \, \, \sigma_x((A(x))^c)=E\}$$
$$
   V_j=V_j(E):= \{ x\in V \,  :  \, j=\sigma_x(1)\,\,  \mbox{ and }  \, \, A(x)=E^c\}
$$
(thus $x_1^*=|x_j|$ on $V_j$).
We now construct a net for $V_j$. It will be obtained as the sum of four nets, where the first one
deals with just one coordinate, $j$, ``killing" the maximal coordinate; the second one deals with
non-constant part of the vector, consisting of at most $m$ coordinates (excluding $x_1^*$);
the third one deals with almost constant coordinates (corresponding to $A(x)$); and the fourth net deals with the direction of the constant vector.
This way, three
of our four nets are $1$-dimensional.
Let $P_W$ be the coordinate projection onto $\R^{W}$, where $W=E\setminus\{j\}$.
Note that the definition of $V(\beta)$ implies that $\|P_W(x)\|\leq \beta$ for every $x\in V_j$.
Let, as before, $P_j^\perp$ be the projection onto $e_j$.

Let $\mathcal{N}_1$ be an $\eps/4$-net in
$P_j^\perp (V_j)\subset [-1, 1]e_j$
%$V_j\cap (P_j^\perp B_2^n)=V_j\cap [-1, 1]e_j$
of cardinality at most $8/\eps$.
Let $\mathcal{N}_2$ be an $\eps/4$-net (with respect to the Euclidean metric)
in $P_F( V_j)$ of cardinality
at most
$
 \left(1+8 \beta/\eps\r)^{m}.
$

Further, let $\mathcal{N}_3$ be an $\eps/(8\sqrt{n})$-net in the segment
$[-2/\sqrt{m}, 2/\sqrt{m}]\sum_{i\in E^c\setminus\{j\}} e_i$
with cardinality at most $16\sqrt{n}/(\eps\sqrt{m})$.
Then by the construction of the nets and by the definition of $U(m, \gamma)$
for every $x\in V_j$ there exist $y^i_x\in \mathcal{N}_i$, $i\leq 3$,
such that for $y_x=y^1_x+y^2_x+y^3_x$,
$$
  \| x - y_x \| ^2\leq \frac{\eps^2}{16}+\frac{\eps^2}{16}+\sum_{i\in E^c\setminus\{j\}}
  \left(\frac{\gamma}{\sqrt{n}} + \frac{\eps}{8\sqrt{n}}\r)^2\leq
  \frac{3\eps^2}{16};
$$
in particular, $\|P_\edv (x-y_x)\|\leq \sqrt{3/16}\eps$.
Finally, let $\mathcal{N}_4$ be an $\eps/(4\sqrt{pn})$-net in the segment
$(\eps/2)[-\edv, \edv]$ with cardinality at most $8\sqrt{pn}$.
Then for every $x\in V_j$ there exists $y_x$ as above and $y_x^4\in \mathcal{N}_4$
with
$$
  ||| x - y_x -y^4_x||| ^2=
   ||| P_\edv (x - y_x) +P_\edv^\perp (x-y_x)  -y^4_x|||^2=
   \| P_\edv (x - y_x)\|^2 +pn \|P_\edv^\perp (x-y_x)  -y^4_x\|^2\leq \eps^2/4.
$$
Thus the set
$
  \mathcal{N}_{E,j} = \mathcal{N}_1 + \mathcal{N}_2 + \mathcal{N}_3+ \mathcal{N}_4
$
is an $(\eps/2)$-net for $V_j$ with respect to $|||\cdot|||$ and its cardinality is bounded by
$$
\frac{2^{10} \sqrt{p}\, n}{\eps^2\sqrt{m}}
 \left(1+\frac{8 \beta}{\eps}\r)^{m}.
$$
Taking union of such nets over all choices of $E\subset [n]$ and all $j\leq n$ we obtain an $(\eps/2)$-net  $\mathcal{N}_0$
in $|||\cdot|||$ for $V$ of desired cardinality.
Using standard argument, we pass to an  $\eps$-net $\mathcal{N} \subset V$ for $V$.
\end{proof}

Later we apply  Lemma~\ref{cardnet} with the following proposition.

\begin{prop}\label{nettri}
Let $n$ be large enough and $(4\ln n)/n\leq p<1/2$, and $\eps >0$.
%Let $M$ be a \Ber random matrix.
Denote
$$
  \Event_{nrm}:= \{ M\in \Mc \, : \, \|M - p {\bf 1}{\bf 1}^\top\|\leq 60 \sqrt{n p}  \qand
  \|M {\bf 1}\| \leq 3 p n^{3/2} \}.
$$
Then for every $x\in \R^n$ satisfying $|||x|||\leq \eps$ and every $M\in \Event_{nrm}$ one has
%$$   \p\left( \|M x\| \leq  100\sqrt{ p n} \eps \r)\geq  1- 8\exp(-10pn).$$
$   \|M x\| \leq  100\sqrt{ p n} \eps .$
\end{prop}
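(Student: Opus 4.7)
The plan is to split $x$ into its components along $\edv$ and $\edv^\perp$, and then use each of the two bounds defining $\Event_{nrm}$ on the respective piece.

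First I would write $x = P_\edv x + P_\edv^\perp x$, so that $Mx = M(P_\edv x) + M(P_\edv^\perp x)$. The key observation for the first term is that $P_\edv x \perp {\bf 1}$, which means ${\bf 1}^\top P_\edv x = 0$ and hence $M(P_\edv x) = (M - p{\bf 1}{\bf 1}^\top)(P_\edv x)$. Thus
$$\|M(P_\edv x)\| \leq \|M - p{\bf 1}{\bf 1}^\top\|\,\|P_\edv x\| \leq 60\sqrt{np}\,\|P_\edv x\|$$
using $M\in \Event_{nrm}$.

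For the second term, since $P_\edv^\perp x = \la x,\edv\ra\edv = \la x,\edv\ra{\bf 1}/\sqrt{n}$, we get
$$\|M(P_\edv^\perp x)\| = \frac{|\la x,\edv\ra|}{\sqrt n}\,\|M{\bf 1}\| \leq \frac{|\la x,\edv\ra|}{\sqrt n}\cdot 3pn^{3/2} = 3pn\,\|P_\edv^\perp x\|,$$
again using $M\in \Event_{nrm}$.

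Finally I combine: from $|||x|||^2 = \|P_\edv x\|^2 + pn\|P_\edv^\perp x\|^2 \leq \eps^2$ we get $\|P_\edv x\|\leq \eps$ and $\|P_\edv^\perp x\|\leq \eps/\sqrt{pn}$, whence
$$\|Mx\| \leq 60\sqrt{np}\,\eps + 3pn\cdot\frac{\eps}{\sqrt{pn}} = 63\sqrt{pn}\,\eps \leq 100\sqrt{pn}\,\eps.$$
There is no real obstacle here: the proposition is essentially a bookkeeping computation that explains why the triple norm $|||\cdot|||$ is the natural metric for the net construction of Lemma~\ref{cardnet} — it exactly compensates for the direction ${\bf 1}$ in which $M$ has spectral norm of order $pn$ rather than $\sqrt{pn}$.
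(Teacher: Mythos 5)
Your proof is correct and follows essentially the same approach as the paper: decompose $x$ via $P_\edv$ and $P_\edv^\perp$, replace $M$ by $M - p{\bf 1}{\bf 1}^\top$ on the $\edv^\perp$-component (since that piece is annihilated by ${\bf 1}{\bf 1}^\top$), and bound the $\edv$-component using $\|M{\bf 1}\|$. Your constant bookkeeping is even slightly tighter ($63$ versus the paper's $73$), but the argument is the same.
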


\begin{proof}
Let $w=P_\edv^\perp x$. Then, by the definition of the triple norm,
$\|w\|\leq |||x|||/\sqrt{pn}\leq \eps /\sqrt{pn}$. %Since all coordinates
%of $w$ are equal
%to $(1/n)\sum _i x_i$, it is not difficult to check that we also have
Clearly,
$$
 ( p{\bf 1}{\bf 1}^\top)  (x-w)= ( p{\bf 1}{\bf 1}^\top)P_\edv x = 0.
$$
%This and Lemma~\ref{mnorm} applied with $t=30$ imply that
%with probability at least $1-4e^{-10pn}$,
Therefore, using that $M\in \Event_{nrm}$, we get
$$
  \|  M  (x-w)\| =\|  (M-p{\bf 1}{\bf 1}^\top)  (x-w)\|\leq  60 \sqrt{p n}\|x-w\|\leq 70 \sqrt{p n} \eps.
$$
Since $w={\bf 1} \|w\|/\sqrt{n}$ and $\|w\|\leq \eps /\sqrt{pn}$, using again that
$M\in \Event_{nrm}$,
% (\ref{normofone}),
we observe that
%with probability at least $1-4\exp(-n^2 p^2/4)$,
$$
  \|Mw\| \leq \frac{\eps}{\sqrt{p}\, n} \|M {\bf 1} \| \leq
  3 \sqrt{pn}  \eps .
$$
%Since $n$ is large enough, $pn\geq 4\ln n$, the triangle inequality completes the proof.
The proposition follows by the triangle inequality.
\end{proof}

\section{Unstructured vectors}\label{s: unstructured}

The goal of this section is to prove Theorem~\ref{th: gradual}.

Recall that given growth function $\gfn$ and parameters $r,\delta,\rho\in (0,1)$,
 the set of vectors $\gncvectors_n=\gncvectors_n(r,\gfn,\delta,\rho)$ was defined in (\ref{eq: gnc def}).
%\begin{align*}\gncvectors_n(r,\gfn,\delta,\rho)
%:=\Big\{&x\in\normr_n(r):\,\, x \,\,\, \mbox{ satisfies $x^*_i\leq \gfn(n/i)$ for all $i\leq n$, and}\\
%&\mbox{there are } Q_1,Q_2\subset[n]  \mbox{ such that }
% |Q_1|,|Q_2|\geq \delta n \mbox{ and }  \max\limits_{i\in Q_2} x_i\leq \min\limits_{i\in Q_1}x_i- \rho\Big\},
%\end{align*}
%where $\gfn\geq 1$ is an increasing {\it growth function}.
In the next two sections (dealing with invertibility over structured vectors) we   work with two
different growth functions; one will be applied to the case of constant $p$ and the other
one (giving a worse final estimate) is
suitable in the general case. For this reason, and to increase flexibility of our argument,
rather than fixing a specific growth function here, we will work with an arbitrary
non-decreasing function  $\gfn\, : \, [1,\infty)\to [1,\infty)$
satisfying the  additional assumption (\ref{gfncond})
%\begin{align}\label{gfncond}
%\forall a\geq 2\,\,\forall t\geq 1:\,\,\,\,  \gfn(a\,t)\geq \gfn(t)+a
%\quad \quad \qand \quad\quad
%\prod_{j=1}^\infty \gfn(2^j)^{j\,2^{-j}}\leq K_3,
%\end{align}
with a ``global'' parameter $K_3\geq 1$.

\subsection{Degree of unstructuredness: definition and basic properties}

Below, for any non-empty finite integer subset $S$, we denote by $\eta[S]$ a random variable uniformly
distributed on $S$.
Additionally, for any $K_2\geq 1$, we fix a smooth version of $\max(\frac{1}{K_2},t)$.
More precisely,  let us fix a function $\psi_{K_2}:\R_+\to\R_+$ satisfying
\begin{itemize}

\item The function $\psi_{K_2}$ is twice continuously differentiable, with
$\|\psi_{K_2}'\|_\infty =1$ and $\|\psi_{K_2}''\|_\infty <\infty$;

\item $\psi_{K_2}(t)=\frac{1}{K_2}$ for all $t\leq \frac{1}{2K_2}$;

\item $\frac{1}{K_2}\geq \psi_{K_2}(t)\geq t$ for all $\frac{1}{K_2}\geq t\geq \frac{1}{2K_2}$;

\item $\psi_{K_2}(t)= t$ for all $t\geq \frac{1}{K_2}$.

\end{itemize}
In what follows, we view the maximum of the second derivative of $\psi_{K_2}$
as a function of $K_2$ (the nature of this function is completely irrelevant as we do not attempt to track
magnitudes of constants involved in our arguments).

\medskip

Fix an integer $n\geq 1$ and an integer $m\leq n/2$. Recall that
given a vector $v\in\R^n$ and parameters $K_1,K_2\geq 1$,  the {\it degree of unstructuredness (u-degree)}
$\bal_n = \bal_n(v,m,K_1,K_2)$ of $v$ was defined in (\ref{udeg}).
%\begin{align*}
%\bal_n(v,m,K_1,K_2):=\sup\bigg\{&t>0:\;\frac{\big((\lfloor n/m\rfloor)!\big)^m\,(n-m\lfloor n/m\rfloor)!}{n!}\cdot\\
%&\sum\limits_{S_1,\dots,S_m}\;
%\int\limits_{-t}^t \prod\limits_{i=1}^{m}\psi_{K_2}\big(
%\big|\Exp\exp\big(2\pi{\bf i}\,v_{\eta[S_i]}\,m^{-1/2} s\big)\big|\big)\,ds\leq K_1\bigg\},
%\end{align*}
%where the sum is taken over all sequences $(S_i)_{i=1}^m$ of disjoint subsets $S_1,\dots,S_m\subset[n]$,
%each of cardinality $\lfloor n/m\rfloor$.
%We will say that a vector $v$ is {\it $(L,m,K_1,K_2)$--balanced} for some $L>0$ if $\bal_n(v,m,K_1,K_2)\geq L$.
The quantity $\bal_n$ will serve as a measure of unstructuredness of the vector $v$
and in its spirit is similar to the notion of the essential least common denominator introduced
earlier by Rudelson and Vershynin \cite{RV}.
Here {\it unstructuredness} refers to the uniformity in the locations of components of $v$
on the real line.
The larger the degree is, the better
 anti-concentration properties of an associated random linear combination are.
The functions $\psi_{K_2}$ employed in the definition will be important when discussing certain
stability properties of $\bal_n$.

We start with a proof of Theorem~\ref{p: cf est} which connects the definition of the u-degree with
anti-concentra\-tion properties.

\begin{proof}[Proof of  Theorem~\ref{p: cf est}]
For any sequence of disjoint subsets $S_1,\dots,S_m$ of $[n]$ of cardinality $\lfloor n/m\rfloor$ each, set
$$
\Event_{S_1,\dots,S_m}:=\big\{\supp X\cap S_i=1\mbox{ for all $i\leq m$}\big\}.
$$
Note that each point $\omega$ of the probability space belongs to the same number of events from the collection
$\{\Event_{S_1,\dots,S_m}\}_{S_1,\dots,S_m}$, therefore, for $A_{nm}$ defined in (\ref{anm}) we have
 for any $\lambda\in\R$ and $\tau>0$,
\begin{equation}\label{eq: aux-92-502}
\begin{split}
\Prob\Big\{&\Big|\sum\limits_{i=1}^n v_i X_i-\lambda\Big|\leq\tau\Big\}=
%\\&=\frac{\big((\lfloor n/m\rfloor)!\big)^m\,(n-m\lfloor n/m\rfloor)!}{n!}
A_{nm} \,
\sum\limits_{S_1,\dots,S_m}\Prob\Big\{\Big|\sum\limits_{i=1}^n v_i X_i-\lambda\Big|\leq\tau\;\big|\;\Event_{S_1,\dots,S_m}\Big\}.
\end{split}
\end{equation}
Further, conditioned on an event $\Event_{S_1,\dots,S_m}$, the random sum $\sum\limits_{i=1}^n v_i X_i$
is equidistributed with $\sum\limits_{i=1}^m v_{\eta[S_i]}$
(where we assume that $\eta[S_1],\dots,\eta[S_m]$ are jointly independent with $\Event_{S_1,\dots,S_m}$).
On the other hand, applying Lemma~\ref{ess}, we observe that for every $\tau>0$,
\begin{align*}
\cf\Big(\sum\limits_{i=1}^m v_{\eta[S_i]},\tau\Big)
&\leq C'\int\limits_{-1}^1 \prod\limits_{i=1}^m|\Exp\exp(2\pi{\bf i}v_{\eta[S_i]} s/\tau)|\,ds\\
&= C'\,m^{-1/2}\,\tau\int\limits_{-\sqrt{m}/\tau}^{\sqrt{m}/\tau} \prod\limits_{i=1}^m|\Exp\exp(2\pi{\bf i}v_{\eta[S_i]}\,m^{-1/2} s)|\,ds,
\end{align*}
for a universal constant $C'>0$.
Combining this with \eqref{eq: aux-92-502}, we get  for every $\tau>0$,
\begin{align*}
\cf\Big(\sum\limits_{i=1}^n v_i X_i,\tau\Big)
&\leq A_{nm}\,
%\frac{\big((\lfloor n/m\rfloor)!\big)^m\,(n-m\lfloor n/m\rfloor)!}{n!}
\sum\limits_{S_1,\dots,S_m}\cf\Big(\sum\limits_{i=1}^n v_i X_i,\tau\;\big\vert\;\Event_{S_1,\dots,S_m}\Big)\\
&\leq \frac{C'\tau
%\big((\lfloor n/m\rfloor)!\big)^m\,(n-m\lfloor n/m\rfloor)!
A_{nm}}{\sqrt{m}}
\sum\limits_{S_1,\dots,S_m}
\int\limits_{-\sqrt{m}/\tau}^{\sqrt{m}/\tau} \prod\limits_{i=1}^m|\Exp\exp(2\pi{\bf i}v_{\eta[S_i]} \,m^{-1/2}s)|\,ds.
\end{align*}
Setting $\tau:=\sqrt{m}/\bal_n$, where $\bal_n=\bal_n(v,m,K_1,K_2)$, we obtain
\begin{align*}
\cf&\Big(\sum\limits_{i=1}^n v_i X_i,\sqrt{m}/\bal_n\Big)
%\\&
\leq \frac{C'
%\big((\lfloor n/m\rfloor)!\big)^m\,(n-m\lfloor n/m\rfloor)!
A_{nm}}{\bal_n}\,\sum\limits_{S_1,\dots,S_m}
\int\limits_{-\bal_n}^{\bal_n} \prod\limits_{i=1}^m|\Exp\exp(2\pi{\bf i}v_{\eta[S_i]}\,m^{-1/2} s)|\,ds
%\\&
\leq \frac{C' K_1}{\bal_n},
\end{align*}
in view of the definition of $\bal_n(v,m,K_1,K_2)$.
The result follows.
\end{proof}

For the future use we state an immediate consequence
of Theorem~\ref{p: cf est} and  Lemma~\ref{l: tensor}.
\begin{cor}\label{cor: cf tensor}
Let $n,\ell\in\N$, let $m_1,\dots,m_\ell$ be integers with $m_i\leq n/2$ for all $i$,
and let $K_1,K_2\geq 1$.
Further, let $v\in\R^n$, and let $B$ be an $\ell\times n$ random matrix with independent
rows such that the $i$-th row is
uniformly distributed on the set of vectors with $m_i$ ones and $n-m_i$ zeros.
Then for any non-random vector $Z\in\R^\ell$ we have
$$
\Prob\big\{\| Bv-Z\|\leq \sqrt{\ell}\, t\big\}\leq
\Big(2C_{\text{\tiny\ref{l: tensor}}}C_{\text{\tiny\ref{p: cf est}}} t/\sqrt{\min\limits_i m_i}\Big)^\ell
\quad \mbox{for all }t\geq \max\limits_i\frac{\sqrt{m_i}}{\bal_n(v,m_i,K_1,K_2)}.
$$
\end{cor}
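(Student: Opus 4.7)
The plan is to combine the small-ball estimate from Theorem~\ref{p: cf est} (applied row-by-row) with the moreover part of the tensorization lemma (Lemma~\ref{l: tensor}). Since the rows of $B$ are independent, the coordinates of $Bv - Z$ are independent scalar random variables $\xi_i := \langle \row_i(B), v\rangle - Z_i$, and $\|Bv - Z\|^2 = \sum_{i=1}^\ell \xi_i^2$. This reduces the claim to producing a uniform L\'evy-concentration bound of the form $\Prob\{|\xi_i| \leq \eps\} \leq K\eps$ valid in a suitable range of $\eps$, and then invoking tensorization with $m = \ell$.

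First I would fix $\eps_0 := \max_i \sqrt{m_i}/\bal_n(v,m_i,K_1,K_2)$ and $K := 2C_{\text{\tiny\ref{p: cf est}}}/\sqrt{\min_i m_i}$. For any $\eps \geq \eps_0$ and any $i \leq \ell$, applying Theorem~\ref{p: cf est} with $\tau = \eps/\sqrt{m_i}$ (and noting that $\xi_i$ is just a shift of $\sum_j v_j (\row_i(B))_j$, which does not affect the L\'evy function) yields
\[
\Prob\{|\xi_i| \leq \eps\} \leq \cf(\xi_i, \eps) \leq C_{\text{\tiny\ref{p: cf est}}}\bigl(\eps/\sqrt{m_i} + \bal_n(v,m_i,K_1,K_2)^{-1}\bigr).
\]
Since $\eps \geq \eps_0 \geq \sqrt{m_i}/\bal_n(v,m_i,K_1,K_2)$, the second term is dominated by the first, giving $\Prob\{|\xi_i| \leq \eps\} \leq 2 C_{\text{\tiny\ref{p: cf est}}} \eps/\sqrt{m_i} \leq K\eps$, which is the hypothesis required by the moreover part of Lemma~\ref{l: tensor}.

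Then the tensorization lemma yields, for every $\eps \geq \eps_0$,
\[
\Prob\bigl\{\|Bv - Z\| \leq \eps\sqrt{\ell}\bigr\} \leq (C_{\text{\tiny\ref{l: tensor}}} K \eps)^\ell = \bigl(2 C_{\text{\tiny\ref{l: tensor}}} C_{\text{\tiny\ref{p: cf est}}} \eps/\sqrt{\min_i m_i}\bigr)^\ell.
\]
Setting $\eps = t$ (which is $\geq \eps_0$ by the hypothesis on $t$) gives the stated bound. There is essentially no obstacle here: the only points to verify are that the hypothesis $t \geq \max_i \sqrt{m_i}/\bal_n(v,m_i,K_1,K_2)$ in the corollary is exactly what is needed to apply Theorem~\ref{p: cf est} in the ``absorption'' regime for every $i$ simultaneously, and that passing from $m_i$ to $\min_i m_i$ in the denominator only makes the bound larger. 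The translation by $Z$ is harmless because $\cf$ is translation invariant.
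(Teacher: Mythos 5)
Your proof is correct and follows exactly the intended route (the paper even labels the corollary as ``an immediate consequence of Theorem~\ref{p: cf est} and Lemma~\ref{l: tensor}''): apply Theorem~\ref{p: cf est} to each row with $\tau = \eps/\sqrt{m_i}$, absorb the $\bal_n^{-1}$ term into the linear term using the hypothesis $t\geq \max_i\sqrt{m_i}/\bal_n(v,m_i,K_1,K_2)$, pass to the common constant $K = 2C_{\text{\tiny\ref{p: cf est}}}/\sqrt{\min_i m_i}$, and then invoke the ``moreover'' part of the tensorization lemma. The only points worth being explicit about -- translation invariance of $\cf$ to handle the shift by $Z_i$, and that the bound $\Prob\{|\xi_i|\leq\eps\}\leq K\eps$ is verified for every $\eps\geq\eps_0$ (not just $\eps=t$) as the tensorization lemma requires -- you have both noted, so nothing is missing.
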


The parameter $K_2$
which did not participate in any way in the proof of  Theorem~\ref{p: cf est} is needed to guarantee
a certain stability property of $\bal_n(v,m,K_1,K_2)$.
We would like to emphasize that the use of functions $\psi_{K_2}$ is a technical element of the argument.
\begin{prop}[Stability of the u-degree]\label{l: stability of bal}
For any $K_2\geq 1$ there are $c_{\text{\tiny\ref{l: stability of bal}}},c_{\text{\tiny\ref{l: stability of bal}}}'>0$
depending only on $K_2$ with the following property.
Let $K_1\geq 1$, $v\in\R^n$, $k\in\N$, $m\leq n/2$, and assume that $\bal_n(v,m,K_1,K_2)\leq c_{\text{\tiny\ref{l: stability of bal}}}' k$.
Then there is a vector $y\in \big(\frac{1}{k}\Z\big)^n$ such that $\|v-y\|_\infty\leq \frac{1}{k}$, and such that
$$
\bal_n(y,m,c_{\text{\tiny\ref{l: stability of bal}}} K_1,K_2)\leq \bal_n(v,m,K_1,K_2)\leq
 \bal_n(y,m,c_{\text{\tiny\ref{l: stability of bal}}}^{-1} K_1,K_2)
$$
\end{prop}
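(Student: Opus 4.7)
My plan is to construct $y$ as the coordinate-wise rounding of $v$ to the nearest point of $\big(\frac{1}{k}\Z\big)^n$, so that $\|v-y\|_\infty \leq \frac{1}{2k}$. Setting $t_0 := \bal_n(v,m,K_1,K_2)$ and
$$
F_w(t):=A_{nm}\sum\limits_{S_1,\dots,S_m}\int\limits_{-t}^{t}\prod_{i=1}^{m}\psi_{K_2}\Big(\big|\Exp\exp\big(2\pi\iii\,w_{\eta[S_i]}\,m^{-1/2}s\big)\big|\Big)\,ds,
$$
the proposition (via the sup characterization of $\bal_n$ and continuity of $F_w$) reduces to the bilateral comparison $c\,F_v(t_0)\leq F_y(t_0)\leq c^{-1}F_v(t_0)$ for some $c>0$ depending only on $K_2$. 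Once this is established, the two inequalities $\bal_n(y,m,cK_1,K_2)\leq t_0$ and $t_0\leq \bal_n(y,m,c^{-1}K_1,K_2)$ follow immediately.

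The heart of the argument is a pointwise comparison of the integrands. Fix $(S_1,\dots,S_m)$ and write $\phi_i(s):=\Exp\exp(2\pi\iii\,v_{\eta[S_i]}\,m^{-1/2}s)$ and $\widetilde{\phi}_i(s):=\Exp\exp(2\pi\iii\,y_{\eta[S_i]}\,m^{-1/2}s)$. From $|e^{\iii\alpha}-e^{\iii\beta}|\leq|\alpha-\beta|$, the bound $\|v-y\|_\infty\leq \frac{1}{k}$, and the range $|s|\leq t_0\leq c' k$, I get
$$
|\widetilde{\phi}_i(s)-\phi_i(s)|\leq \frac{2\pi|s|}{k\sqrt{m}}\leq \frac{2\pi c'}{\sqrt{m}}.
$$
Since $\psi_{K_2}$ is $1$-Lipschitz and bounded below by $1/K_2$, this yields the multiplicative control
$$
\frac{\psi_{K_2}(|\widetilde{\phi}_i(s)|)}{\psi_{K_2}(|\phi_i(s)|)}=1+O_{K_2}\!\left(\frac{c'}{\sqrt{m}}\right).
$$

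The principal technical obstacle is the passage from these per-factor bounds to a uniform-in-$m$ comparison of the $m$-fold products. Simply multiplying the above ratios gives a factor $\exp(O_{K_2}(c'\sqrt{m}))$, which is \emph{not} bounded independently of $m$ and is therefore insufficient. To circumvent this, I would perform a second-order Taylor expansion of $\log\psi_{K_2}$ (the smoothness of $\psi_{K_2}$ is used precisely here): the quadratic remainder contributes only $\sum_{i=1}^m O_{K_2}((c'/\sqrt{m})^2)=O_{K_2}(c'^2)$, which is uniformly bounded, while the first-order term $\sum_i (\log\psi_{K_2})'(|\phi_i(s)|)\,(|\widetilde\phi_i(s)|-|\phi_i(s)|)$ needs to be controlled by exploiting cancellation that arises from averaging $A_{nm}\sum_{(S_i)}$ over all partitions $(S_1,\dots,S_m)$. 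The rounding errors $\delta_j:=v_j-y_j$ are deterministic but essentially signless, and the partition-averaging plays the role of a variance reduction on the linear term.

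Once this uniform ratio bound
$$
\prod_{i=1}^{m}\psi_{K_2}(|\widetilde{\phi}_i(s)|)\,\leq\,C_{K_2}\,\prod_{i=1}^{m}\psi_{K_2}(|\phi_i(s)|)
$$
(and its reverse) is established for every $|s|\leq t_0$ after the partition averaging, integrating in $s$ over $[-t_0,t_0]$ produces $F_y(t_0)\asymp_{K_2} F_v(t_0)$. Choosing $c'>0$ sufficiently small in terms of $K_2$ so that the second-order remainder absorbs into a constant, I then set $c$ to be the corresponding comparison constant, obtaining both inequalities on $\bal_n(y,m,\cdot,K_2)$ simultaneously.
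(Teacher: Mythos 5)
You have correctly identified the central obstacle — the first-order term in a Taylor expansion would, if multiplied naively, produce a factor $\exp(O(c'\sqrt{m}))$ that is not uniform in $m$ — but the mechanism you propose to overcome it does not work, and this is a genuine gap. Averaging over partitions $(S_1,\dots,S_m)$ does \emph{not} provide the cancellation you need on the linear term. The rounding errors $\delta_j:=v_j-y_j$ are fixed deterministic numbers in $[-\tfrac{1}{2k},\tfrac{1}{2k}]$; nothing forces them to have zero average or to behave like mean-zero noise, and the sign of the resulting perturbation $|\widetilde\phi_i(s)|-|\phi_i(s)|$ can be coherent across $i$ and across partitions (e.g.\ when all $\delta_j$ share a sign). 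Partition averaging redistributes which indices appear in which block, but it cannot turn a systematic bias into cancellation. So the bound you claim, that the first-order sum contributes only a constant, is unjustified.

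The paper avoids this problem by not rounding to the nearest lattice point: instead it uses \emph{randomized} rounding, where $Y_i$ equals $\tfrac{1}{k}\lfloor kv_i\rfloor$ or $\tfrac{1}{k}\lfloor kv_i\rfloor+\tfrac{1}{k}$ with probabilities chosen so that $\Exp Y_i=v_i$. Then for each fixed $S_i$ the random perturbation $W:=\tfrac{1}{|S_i|}\sum_{j\in S_i}\exp(2\pi\iii\,Y_j s)-z$ has $\Exp W=0$, and Lemma~\ref{l: aux 0948523059873205} (together with Lemma~\ref{l: aux 29853205983475938}) shows that $\Exp|z+W|$ differs from $|z|$ only at second order in $s/k$. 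This is precisely the linear-term cancellation that deterministic rounding cannot produce; it comes from the mean-zero structure of the random rounding, not from averaging over partitions. A further subtlety you omit: even after this, one does not directly get a two-sided pointwise comparison for a single deterministic $y$. The paper instead obtains two realizations $Y'$ (with $g_t(Y')\geq I_1$) and $Y''$ (with $g_t(Y'')\leq I_2$) and then walks along a coordinate-changing path from $Y'$ to $Y''$, using the per-coordinate Lipschitz bound $g_t(\widetilde Y)\leq 4K_2\,g_t(\hat Y)$ to locate an intermediate realization ${\bf Y}$ with $I_1\leq g_t({\bf Y})\leq 4K_2 I_2$, which yields both inequalities simultaneously. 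Without this path argument you only know that good realizations exist separately, not that a single $y$ works for both directions.
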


To prove the proposition we need two auxiliary lemmas.
$u>0$, $\varepsilon\in(0,u/2]$,

\begin{lemma}\label{l: aux 0948523059873205}
Let $0\ne z\in \C$, $\eps \in [0, |z|/2]$ and let $W$ be a random vector in $\C$ with
$\Exp W=0$ and with  $|W|\leq \varepsilon$ everywhere on the probability space.
Then
$$
  \big|\Exp |z+W|-|z|\big|\leq \frac{\varepsilon^2}{|z|}.
$$
\end{lemma}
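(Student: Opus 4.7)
The strategy is a second-order Taylor expansion of the map $f:\C\to\R$, $f(w)=|z+w|$, around $w=0$, exploiting the cancellation of the linear term provided by the centering hypothesis $\Exp W=0$. Writing $z=a+ib$ and $w=u+iv$, so that $f(u,v)=\sqrt{(a+u)^2+(b+v)^2}$, a direct computation gives $\nabla f(0)=(a,b)/|z|$, and hence the linear Taylor term at $W$ equals $\Re(\bar z W)/|z|$, which has zero expectation. All nontrivial content therefore sits in the quadratic remainder.

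Next, I would compute the Hessian of $f$ explicitly: at any point $w=u+iv$ with $|z+w|>0$,
$$
\mathrm{Hess}\,f(u,v)=\frac{1}{|z+w|^{3}}\begin{pmatrix}(b+v)^2 & -(a+u)(b+v)\\ -(a+u)(b+v) & (a+u)^2\end{pmatrix}.
$$
This matrix has determinant $0$ and trace $((a+u)^2+(b+v)^2)/|z+w|^3=1/|z+w|$, so its two eigenvalues are $0$ and $1/|z+w|$; in particular it is positive semidefinite (as it must be, since $f$ is convex), and its operator norm equals $1/|z+w|$.

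By Taylor's theorem with Lagrange remainder, for each realization $W$ on the probability space there exists $\xi$ on the segment from $0$ to $W$ with
$$
f(W)=|z|+\nabla f(0)\cdot W+\tfrac12 W^{\top}\mathrm{Hess}\,f(\xi)\,W.
$$
Since $|W|\leq\varepsilon\leq|z|/2$ almost surely, we have $|\xi|\leq|W|\leq|z|/2$, and therefore $|z+\xi|\geq|z|/2$. Consequently $\|\mathrm{Hess}\,f(\xi)\|_{\mathrm{op}}\leq 2/|z|$, which gives the pointwise remainder bound
$$
0\leq\tfrac12 W^{\top}\mathrm{Hess}\,f(\xi)\,W\leq\frac{|W|^2}{|z|}\leq\frac{\varepsilon^2}{|z|}.
$$

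Taking expectations, the linear Taylor term disappears because $\Exp W=0$, leaving $\Exp f(W)-|z|=\Exp\bigl(\tfrac12 W^{\top}\mathrm{Hess}\,f(\xi)W\bigr)\in[0,\varepsilon^2/|z|]$, which yields the claimed bound $|\Exp|z+W|-|z||\leq\varepsilon^2/|z|$. The only step that is not completely formal is the Hessian computation, but this reduces to observing that the matrix above is rank-one positive semidefinite, so its single nonzero eigenvalue equals its trace $1/|z+w|$; I do not expect any genuine obstacle.
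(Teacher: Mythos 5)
Your proof is correct but routes through different machinery than the paper. The paper normalizes $z=(z_1,0)$ with $z_1=|z|$ and applies the elementary scalar inequality $\sqrt{a^2+b^2}\leq a+\frac{b^2}{2a}$ (valid for $a>0$, here with $a=z_1+W_1\geq|z|-\varepsilon\geq|z|/2$), squeezing $|z+W|$ between $z_1+W_1$ and $z_1+W_1+\frac{\varepsilon^2}{|z|}$ pointwise, and then takes expectations so that $\Exp(z_1+W_1)=|z|$. You instead compute the full Hessian of $w\mapsto|z+w|$, identify it as a rank-one positive semidefinite matrix with nonzero eigenvalue $1/|z+w|$, and invoke Taylor's theorem with Lagrange remainder. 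Both arguments kill the first-order term by the centering $\Exp W=0$ and bound the quadratic error using $|z+\xi|\geq|z|/2$ along the segment. The paper's route is more elementary and self-contained (a single scalar inequality, no multivariable Taylor), while yours makes the convexity and the exact curvature $1/|z+w|$ visible, which is conceptually illuminating; the constant in the final bound is the same. Note your approach yields the slightly sharper one-sided statement $0\leq\Exp|z+W|-|z|\leq\varepsilon^2/|z|$ (from positive semidefiniteness of the Hessian), which the paper's chain of inequalities also implicitly establishes, so there is no loss.
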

\begin{proof}
We can view both $z$ and $W$ as vectors in $\R^2$, and can assume without loss of generality that $z=(z_1,0)$, with $z_1=|z|$. Then  $|z_1 + W_1|= z_1 + W_1$ and
$$
   z_1+W_1 \leq |z+W|=\sqrt{(z_1+W_1)^2+W_2^2}
\leq (z_1+W_1)+\frac{W_2^2}{2|z_1+W_1|}\leq  (z_1+W_1)+\frac{\varepsilon^2}{2(|z|-\varepsilon)}.
$$
Hence,
$$
  |z|=z_1  = \Exp (z_1+W_1) \leq \Exp |z+W| \leq \Exp (z_1+W_1) + \frac{\varepsilon^2}{|z|}
  =|z|+\frac{\varepsilon^2}{|z|},
$$
which implies the desired estimate.
\end{proof}

\begin{lemma}\label{l: aux 29853205983475938}
Let $\lam, \mu\in\R$, and let $\xi$ be a random variable in $\R$ with $\Exp\xi=\mu$ and with
$|\xi-\mu|\leq \lam$ everywhere on the probability space.
Then for any $s\in\R$ we have
$$
\big|\Exp\exp\big(2\pi{\bf i}\,\xi\,s\big)-\exp\big(2\pi{\bf i}\,\mu\,s\big)\big|\leq (2\pi \lam  s)^2.
$$
\end{lemma}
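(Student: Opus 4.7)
The plan is to reduce to a centered random variable and apply the standard second-order Taylor estimate for the complex exponential. Set $\eta := \xi-\mu$, so $\Exp\eta = 0$ and $|\eta|\leq \lambda$ almost surely. Since $\exp(2\pi{\bf i}\,\xi\,s)=\exp(2\pi{\bf i}\,\mu\,s)\exp(2\pi{\bf i}\,\eta\,s)$ and $|\exp(2\pi{\bf i}\,\mu\,s)|=1$, it suffices to show that
$$
\big|\Exp\exp(2\pi{\bf i}\,\eta\,s)-1\big|\leq (2\pi\lambda s)^2.
$$

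The key elementary inequality is $|e^{{\bf i}u}-1-{\bf i}u|\leq u^2/2$ for all real $u$, which follows by integrating the bound $|e^{{\bf i}v}-1|\leq |v|$ twice (equivalently, by writing $e^{{\bf i}u}-1-{\bf i}u = -\int_0^u (u-v)\,e^{{\bf i}v}\,dv$ and taking absolute values). Applying this with $u=2\pi\eta s$ gives the pointwise bound
$$
\big|\exp(2\pi{\bf i}\,\eta\,s)-1-2\pi{\bf i}\,\eta\,s\big|\leq \tfrac{1}{2}(2\pi\eta s)^2\leq \tfrac{1}{2}(2\pi\lambda s)^2.
$$

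Taking expectations and using that $\Exp\eta=0$ to eliminate the linear term, we obtain
$$
\big|\Exp\exp(2\pi{\bf i}\,\eta\,s)-1\big|
=\big|\Exp[\exp(2\pi{\bf i}\,\eta\,s)-1-2\pi{\bf i}\,\eta\,s]\big|
\leq \tfrac{1}{2}(2\pi\lambda s)^2\leq (2\pi\lambda s)^2.
$$
Combined with the factorization above, this yields the claim. There is no genuine obstacle; the only point worth emphasizing is that centering $\xi$ is what allows one to use the quadratic (rather than linear) Taylor remainder, which is exactly why the bound scales like $(\lambda s)^2$ and not merely $|\lambda s|$.
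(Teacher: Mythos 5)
Your proof is correct and follows essentially the same approach as the paper: center the variable so the linear term vanishes, then apply a second-order Taylor estimate. The paper works with the real and imaginary parts separately (using $|\sin x|\leq|x|$ together with $\cos x - 1 = -2\sin^2(x/2)$ and $|\sin x - x|\leq x^2/2$), which after the triangle inequality sums to exactly $(2\pi\lambda s)^2$; your one-shot complex Taylor bound $|e^{{\bf i}u}-1-{\bf i}u|\leq u^2/2$ is slightly cleaner and in fact gives the sharper constant $\tfrac{1}{2}(2\pi\lambda s)^2$.
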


\begin{proof} Denote $\xi'=\xi-\mu$. Then $\Exp\xi=0$ and $|\xi'|\leq \lam$. Therefore, using that
$|\sin x|\leq |x|$ and $|\sin x - x |\leq x^2/2$ for every $x\in \R$, we obtain
\begin{align*}
 \big|\Exp\exp&\big(2\pi{\bf i}\,\xi\,s\big)-\exp\big(2\pi{\bf i}\,\mu\,s\big)\big|=
 \big|\Exp\exp\big(2\pi{\bf i}\,\xi' s\big)-1\big|=
 \big|\Exp \cos\big(2\pi\xi's\big)-1 + {\bf i}\, \Exp \sin\big(2\pi\xi' s\big)\big|
 \\ &=
 \big|-2 \Exp \sin^2 \big(\pi\xi's\big) + {\bf i}\, \Exp \big(\sin\big(2\pi\xi's\big)- 2\pi\xi's\big)\big|
 \leq 2 (\pi\lam s)^2 + (2\pi\lam s)^2/2 = (2\pi\lam s)^2.
\end{align*}

%Setting  $$f(r):=\exp\big(2\pi{\bf i}\,\mu\,s\big)+2\pi (r-\mu)s\,\exp\big(2\pi{\bf i}\,\mu\,s+
%\pi{\bf i}/2\big),\quad r\in\R,$$ we obtain a function $f:\R\to\C$ such that
%$$ \big|f(r)-\exp\big(2\pi{\bf i}\,r\,s\big)\big|=\big|1+2\pi (r-\mu)s{\bf i}-\exp\big(2\pi{\bf %i}\,(r-\mu)\,s\big)\big|\leq Cs^2$$
%whenever $|r-\mu|\leq 1$, for some universal constant $C>0$.
%On the other hand, it is easy to see that $\Exp f(\xi)=\exp\big(2\pi{\bf i}\,\mu\,s\big)$.
%Thus,
%$$\big|\Exp\exp\big(2\pi{\bf i}\,\xi\,s\big)-\exp\big(2\pi{\bf i}\,\mu\,s\big)\big|
%= \big|\Exp\exp\big(2\pi{\bf i}\,\xi\,s\big)-\Exp f(\xi)\big|\leq Cs^2.$$
\end{proof}

\begin{proof}[Proof of Proposition~\ref{l: stability of bal}]
To prove the proposition, we will use the {\it randomized rounding} which is a well known
notion in computer science, and was recently applied in the random matrix context in
\cite{Livshyts} (see also \cite{KT-last,LTV}).
Define a random vector $Y$ in $\big(\frac{1}{k}\Z\big)^n$ with independent components $Y_1,\dots,Y_n$
such that each component $Y_i$ has distribution
$$
Y_i=\begin{cases}
\frac{1}{k}\lfloor k v_i\rfloor,&\mbox{ with probability $\lfloor k v_i\rfloor-kv_i+1$},\\
\frac{1}{k}\lfloor k v_i\rfloor+\frac{1}{k},&\mbox{ with probability $kv_i-\lfloor k v_i\rfloor$}.
\end{cases}
$$
Then $\Exp Y_i=v_i$, $i\leq n$ and,  deterministically, $\|v-Y\|_\infty\leq1/k$.

Fix for a moment a number $s\in(0,k/(14\pi K_2)]$ and a subset $S\subset[n]$ of cardinality $\lfloor n/m\rfloor$.
Our intermediate goal is to estimate the quantity
$$
\Exp\,\psi_{K_2}\Big(\Big|\frac{1}{\lfloor n/m\rfloor}\sum_{j\in S}\exp\big(2\pi{\bf i}\,Y_{j} \,s\big)\Big|
\Big).
$$
Denote
$$
  V=V_S:=\left|\frac{1}{\lfloor n/m\rfloor}\sum_{j\in S}\exp\big(2\pi{\bf i}\,v_j \,s\big)\r|
  = \big|\Exp\exp\big(2\pi{\bf i}\,v_{\eta[S]} \,s\big)\big|
$$
and consider two cases.

\smallskip

\noindent
{\it Case 1. }
 $V\leq \frac{1}{2K_2}-\frac{2\pi\,s}{k}$.
Using that $|e^{{\bf i} x}-1|\leq |x|$ for every $x\in \R$, we observe that deterministically
\begin{equation}\label{ineqktwo}
  |\exp\big(2\pi{\bf i}\,v_j \,s\big)-\exp\big(2\pi{\bf i}\,Y_j \,s\big)|\leq 2\pi s/k.
\end{equation}
 Therefore, by the definition of the function $\psi_{K_2}$, in this case we have on the entire probability space
$$
\psi_{K_2}\Big(\Big|\frac{1}{\lfloor n/m\rfloor}\sum_{j\in S}\exp\big(2\pi{\bf i}\,Y_{j} \,s\big)\Big|
\Big)=\psi_{K_2}(V)=\frac{1}{K_2}.
$$

\smallskip

\noindent
{\it Case 2. }
$V> \frac{1}{2K_2}-\frac{2\pi\,s}{k} \geq \frac{1}{4K_2}$.
Set
$$
z:=\frac{1}{\lfloor n/m\rfloor}\Exp\,\sum_{j\in S}\exp\big(2\pi{\bf i}\,Y_{j} \,s\big) \qand
W:=\frac{1}{\lfloor n/m\rfloor}\sum_{j\in S}\exp\big(2\pi{\bf i}\,Y_{j} \,s\big)-z.
$$
Then $\Exp W=0$ and, using again $|e^{{\bf i} x}-1|\leq |x|$, we see that $|W|\leq 2\pi s/k$ everywhere.
By Lemma~\ref{l: aux 29853205983475938},
%$$
%\Big|z-\frac{1}{\lfloor n/m\rfloor}\sum_{j\in S}\exp\big(2\pi{\bf i}\,v_j\,s\big)\Big|\leq \frac{(2\pi s)^2}{k^2},
%$$
$|z-V|\leq (2\pi s /k)^2$, in particular, $z\geq V-(2\pi s /k)^2\geq 1/(3K_2) \geq 4\pi s /k \geq |W|/2$.
Therefore we may apply Lemma~\ref{l: aux 0948523059873205}, to obtain
$$
 \big|\Exp|W+z|-|z|\big|\leq \frac{4\pi^2 s^2}{|z|k^2}\leq \frac{12\pi^2 K_2 s^2}{k^2}.
$$
This implies,
\begin{align}
\Big|\Exp\Big|&\frac{1}{\lfloor n/m\rfloor}\sum_{j\in S}\exp\big(2\pi{\bf i}\,Y_{j} \,s\big)\Big|
-V\Big|
%\nonumber\\
=
\Big|\Exp|W+z|-|z|+|z| -V\Big|
\leq \frac{16\pi^2 K_2 s^2}{k^2}.\label{eq: aux 5987325938}
\end{align}
To convert the last relation to estimating $\psi_{K_2}(\cdot)$, we will use the
assumption that the second derivative of $\psi_{K_2}$ is uniformly bounded.
Applying Taylor's expansion around the point $V$,
we get
\begin{align*}
\Exp\,\psi_{K_2}\Big(\Big|\frac{1}{\lfloor n/m\rfloor}\sum_{j\in S}\exp\big(2\pi{\bf i}\,Y_{j} \,s\big)\Big|
\Big)
=
\psi_{K_2}\big(V\big) &+\Exp\Big(\Big|\frac{1}{\lfloor n/m\rfloor}\sum_{j\in S}\exp\big(2\pi{\bf i}\,Y_{j} \,s\big)\Big|-V\Big)\,\psi_{K_2}'(V)
\\&+C''\Big\|\,\,\Big|\frac{1}{\lfloor n/m\rfloor}\sum_{j\in S}\exp\big(2\pi{\bf i}\,Y_{j} \,s\big)\Big|-V\Big\|_\infty^2,
\end{align*}
for some $C''>0$ which may only depend on $K_2$.
Here, $\|\cdot\|_\infty$ denotes the essential supremum of the random variable,
and is bounded above by $2\pi  s/k$ by \ref{ineqktwo}.
Together with \eqref{eq: aux 5987325938} and with $\|\psi_{K_2}'\|_\infty\leq 1$, this gives
$$
\Big|\Exp\,\psi_{K_2}\Big(\Big|\frac{1}{\lfloor n/m\rfloor}\sum_{j\in S}\exp\big(2\pi{\bf i}\,Y_{j} \,s\big)\Big|\Big)-\psi_{K_2}(V)
\Big|\leq \frac{\bar C\,s^2}{k^2},
$$
where $\bar C$ depends only on $K_2$.

Since $\psi_{K_2}'\geq 1/(2K_2)$, in both cases we obtain for some $\hat C>0$ depending only  on $K_2$,
\begin{align*}
\Big|&\Exp\,\psi_{K_2}\Big(\Big|\frac{1}{\lfloor n/m\rfloor}\sum_{j\in S}\exp\big(2\pi{\bf i}\,Y_{j} \,s\big)\Big|\Big)
-\psi_{K_2}(V)\Big| \leq \frac{\hat C\,s^2}{k^2}\psi_{K_2}(V).
\end{align*}

 Using this inequality together with definition of $V=V_S$, integrating over $s$, and summing over all choices of disjoint subsets $S_1,\dots,S_m$ of cardinality $\lfloor n/m\rfloor$, for every $t\in(0,k/(14\pi K_2)]$ we get the relation
\begin{align*}
\sum\limits_{S_1,\dots,S_m}\;
&\int\limits_{-t}^t \max\bigg(0,1-\frac{c_0\,s^2}{k^2}\bigg)^m\prod\limits_{i=1}^{m}\psi_{K_2}\big(
\big|\Exp\exp\big(2\pi{\bf i}\,v_{\eta[S_i]} \,s\big)\big|\big)\,ds\\
&\leq
\sum\limits_{S_1,\dots,S_m}\;
\int\limits_{-t}^t \prod\limits_{i=1}^{m}
\Exp_Y\,\psi_{K_2}\Big(\Big|\frac{1}{\lfloor n/m\rfloor}\sum_{j\in S_i}\exp\big(2\pi{\bf i}\,Y_{j} \,s\big)\Big|\Big)\,ds\\
&\leq
\sum\limits_{S_1,\dots,S_m}\;
\int\limits_{-t}^t \bigg(1+\frac{C_0\,s^2}{k^2}\bigg)^m\prod\limits_{i=1}^{m}\psi_{K_2}\big(
\big|\Exp\exp\big(2\pi{\bf i}\,v_{\eta[S_i]} \,s\big)\big|\big)\,ds,
\end{align*}
 where $C_0,c_0>7\pi K_2$ are constants that may only depend on $K_2$.
Using independence of the components of $Y$, we can take the expectation with respect to $Y$ out of the integral.

Given a vector $Q=(q_1,\dots,q_n)\in\R^n$ and $t\in(0,k/(14\pi K_2)]$, denote
$$
g_t(Q):=
\sum\limits_{S_1,\dots,S_m}\;
\int\limits_{-t}^t \prod\limits_{i=1}^{m}
\,\psi_{K_2}\Big(\Big|\frac{1}{\lfloor n/m\rfloor}\sum_{j\in S_i}\exp\big(2\pi{\bf i}\,q_{j} \,s\big)\Big|\Big)\,ds.
$$
%Assume that $t$ is fixed (we discuss its value below).
The above relation implies that there are
two (non-random) realizations $Y'$ and $Y''$ of $Y$ such that for
\begin{align*}
g_t(Y') &\geq  I_1:= \max\bigg(0,1-\frac{c_0\,t^2}{k^2}\bigg)^m\sum\limits_{S_1,\dots,S_m}\;
\int\limits_{-t}^t\prod\limits_{i=1}^{m}\psi_{K_2}\big(
\big|\Exp\exp\big(2\pi{\bf i}\,v_{\eta[S_i]} \,s\big)\big|\big)\,ds
\end{align*}
and
\begin{align*}
g_t(Y'')\leq I_2:=\bigg(1+\frac{C_0\,t^2}{k^2}\bigg)^m\sum\limits_{S_1,\dots,S_m}\;
\int\limits_{-t}^t\prod\limits_{i=1}^{m}\psi_{K_2}\big(
\big|\Exp\exp\big(2\pi{\bf i}\,v_{\eta[S_i]} \,s\big)\big|\big)\,ds.
\end{align*}
Using properties of the function $\psi_{K_2}$, we note
that for any two non-random vectors $\widetilde Y$ and $\hat Y$ in the range of $Y$
such that they differ on a single coordinate, one has
$
g_t(\widetilde Y)\leq 4K_2\, g_t(\hat Y).
%\quad \qand \quad  g_t(\hat Y)\leq 4K_2\, g_t(\widetilde Y).
$
Consider a path $Y^{(1)}=Y',Y^{(2)},Y^{(3)},\dots,Y''$
from $Y'$ to $Y''$ consisting of a sequence of non-random vectors in
the range of $Y$ such that each adjacent pair $Y^{(i)},Y^{(i+1)}$ differs on a single coordinate and
let
$$S := \{i\, : \, g_t(Y^{(i)})> 4K_2  I_2\}\subset [1, n-1].$$
 If
$S=\emptyset$, take ${\bf Y}=Y^{(1)}$. Otherwise, let
$\ell = \max\{i\, : \, g_t(Y^{(i)})> 4K_2  I_2\}$. Then take ${\bf Y}=Y^{(\ell+1)}$ and note
$g_t(Y^{(\ell+1)}) \geq g_t(Y^{(\ell)})/(4K_2)\geq I_2\geq I_1$.
Thus the vector ${\bf Y}$ is in the range of $Y$ and
\begin{align*}
     I_1\leq    g_t({\bf Y})&\leq 4K_2 I_2.
\end{align*}
Making substitutions $s'=\sqrt{m} s$, $t'=\sqrt{m} t$ in the integrals in $I_1, I_2$, and assuming that
$t'\leq k/\max(2C_0,2c_0)$ (in this case the condition $t\leq k/(14\pi K_2)$ is satisfied),
we can rewrite the last inequalities as
\begin{multline*}
\frac{1}{2}\sum\limits_{S_1,\dots,S_m}\;
\int\limits_{-t'}^{t'} \prod\limits_{i=1}^{m}\psi_{K_2}\big(
\big|\Exp\exp\big(2\pi{\bf i}\,v_{\eta[S_i]} m^{-1/2}\,s\big)\big|\big)\,ds\\
\leq
\sum\limits_{S_1,\dots,S_m}\;
\int\limits_{-t'}^{t'} \prod\limits_{i=1}^{m}
\,\psi_{K_2}\Big(\Big|\frac{1}{\lfloor n/m\rfloor}\sum_{j\in S_i}\exp\big(2\pi{\bf i}\,{\bf Y}_{j} m^{-1/2}\,s\big)\Big|\Big)\,ds\\
\leq 6K_2 \,\sum\limits_{S_1,\dots,S_m}\;
\int\limits_{-t'}^{t'} \prod\limits_{i=1}^{m}\psi_{K_2}\big(
\big|\Exp\exp\big(2\pi{\bf i}\,v_{\eta[S_i]} m^{-1/2}\,s\big)\big|\big)\,ds.
\end{multline*}
The result follows by the definition of $\bal_n(\cdot)$.
\end{proof}

The last statement to be considered in this subsection asserts that the u-degree of  any
vector from $\gncvectors_n(r,\gfn,\delta,\rho)$ is at least of order $\sqrt{m}$.
\begin{prop}[Lower bound on the u-degree]\label{p: low bound on bal}
For any $r,\delta,\rho$ there is
$C_{\text{\tiny\ref{p: low bound on bal}}}>0$ %, $c_{\text{\tiny\ref{p: low bound on bal}}}>0$
depending only on $r,\delta,\rho$ with the following property.
Let $K_2\geq 2$, $1\leq m\leq n/C_{\text{\tiny\ref{p: low bound on bal}}}$,
$K_1\geq C_{\text{\tiny\ref{p: low bound on bal}}}$
and let $x\in \gncvectors_n(r,\gfn,\delta,\rho)$.
Then $$\bal_n(x,m,K_1,K_2)\geq \sqrt{m}.$$
\end{prop}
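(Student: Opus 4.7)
My plan is to prove $\bal_n(x,m,K_1,K_2)\ge\sqrt{m}$ by showing that the integral defining $\bal_n$, evaluated at $t=\sqrt{m}$, is bounded by a constant $K_1=K_1(r,\delta,\rho)$. Unrolling the definition and making the substitution $u=s/\sqrt{m}$ reduces the problem to establishing
$$F:=\sqrt{m}\int_{-1}^{1}\Exp_\pi\prod_{i=1}^{m}\psi_{K_2}(\varphi_{S_i}(u))\,du\le K_1,\qquad \varphi_S(u):=\bigl|\Exp\exp(2\pi{\bf i}\,x_{\eta[S]}\,u)\bigr|,$$
where $\Exp_\pi$ is the average over uniform random ordered partitions of $[n]$ into $m$ blocks of size $\lfloor n/m\rfloor$. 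Equivalently, I need the inner integral to be $O(1/\sqrt{m})$ with constant depending only on $r,\delta,\rho$.

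First I will split the partition according to the witnesses of gradual non-constancy. Fix $Q_1,Q_2\subset[n]$ realizing the non-constant condition \eqref{cond2} for $x$, and call a block $S$ \emph{balanced} if $\min(|S\cap Q_1|,|S\cap Q_2|)\ge(\delta/2)\lfloor n/m\rfloor$. Lemma~\ref{l: aux 2498276098059385-} applied to $(Q_1,Q_2)$ shows that, under the uniform random ordered partition, fewer than $(1-c_\delta)m$ balanced blocks occur with probability at most $e^{-c_\delta n}$. On the exceptional ``bad'' event I use the trivial bound $\prod_i\psi_{K_2}(\varphi_{S_i}(u))\le 1$, contributing at most $2\sqrt{m}\,e^{-c_\delta n}=o(1)$ to $F$ provided $m\le n/C_{\text{\tiny\ref{p: low bound on bal}}}$ with $C_{\text{\tiny\ref{p: low bound on bal}}}$ large.

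On a good partition I will exploit a variance lower bound: for balanced $S$, the pair $(\eta[S],\eta'[S])$ lands in $Q_1\times Q_2$ (or its reverse) with probability at least $\delta^2/2$, forcing $|x_{\eta[S]}-x_{\eta'[S]}|\ge\rho$ there and hence $\Var(x_{\eta[S]})\ge\delta^2\rho^2/8$. Using $1-\varphi_S(u)^2=2\Exp\sin^2(\pi u(x_{\eta[S]}-x_{\eta'[S]}))$ together with $\sin^2 t\ge(2t/\pi)^2$ on $|t|\le\pi/2$ and the gradual bound $\|x\|_\infty\le\gfn(n)$, one obtains a pointwise quadratic estimate $\varphi_S(u)^2\le 1-c_1 u^2$ for $|u|\le u_0:=c(\delta,\rho)/\gfn(n)$ and every balanced $S$. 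Combining $\psi_{K_2}(t)\le\max(t,K_2^{-1})$ with the Jensen-type inequality $\prod_i(1-g_i)^{1/2}\le\exp(-\tfrac14\sum_i g_i)$ and keeping only the balanced blocks yields
$$\int_{|u|\le u_0}\prod_{i=1}^{m}\psi_{K_2}(\varphi_{S_i}(u))\,du\le\int_{\R}e^{-c_2 mu^2}\,du + 2\,K_2^{-c_3 m}\le C(\delta,\rho)/\sqrt{m}.$$

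The hard part will be handling $|u|\in[u_0,1]$, where the oscillations of $\sin^2(\pi u d)$ preclude any pointwise quadratic decay of $\varphi_S(u)$. My plan here is to exploit the integrated lower bound $\int_{-1}^{1}(1-\varphi_S(u)^2)\,du\ge c_4(\delta,\rho)$ for balanced $S$ (which follows from $1-\mathrm{sinc}(2d)\ge c(\rho)>0$ whenever $|d|\ge\rho$), together with the near-independence of distinct balanced blocks under $\Exp_\pi$. The smoothing $\psi_{K_2}$ plays a crucial role: it lets one discard from the product those blocks with $\varphi_{S_i}(u)<K_2^{-1}$ at a cost of a factor $K_2^{-|\text{removed}|}$, and a Markov/second-moment argument on the number of balanced blocks for which $\varphi_{S_i}(u)$ is close to $1$ converts the integrated spread into an exponentially small (in $m$) contribution on $[u_0,1]$, easily absorbed by the outer $\sqrt{m}$. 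Converting integrated decay of $1-\varphi_S^2$ into pointwise-in-$u$ decay of the product is the main technical hurdle, and is precisely the reason for including both the $\psi_{K_2}$ floor in the definition of $\bal_n$ and the cardinality bound of Lemma~\ref{l: aux 2498276098059385-}.
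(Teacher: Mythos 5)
Your framing shares the first step with the paper — split the sum over ordered partitions via Lemma~\ref{l: aux 2498276098059385-} into the $e^{-c_\delta n}$--rare unbalanced ones (trivially bounded) and the balanced ones — but the treatment of the balanced partitions has a genuine gap, and the paper's mechanism is different in an essential way.

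Your pointwise Gaussian estimate $\varphi_S(u)^2\le 1-c_1 u^2$ only holds for $|u|\le u_0\asymp 1/\gfn(n)$, because it hinges on $\sin^2 t\ge (2t/\pi)^2$, which requires $|\pi u(x_j-x_{j'})|\le \pi/2$, i.e.\ $|u|\lesssim 1/\|x\|_\infty \lesssim 1/\gfn(n)$. Since $\gfn(n)$ can grow like $n^{3/2}$ or even $\exp(\ln^2 n)$, the interval $[-u_0,u_0]$ is negligibly short, and essentially the entire integral $\int_{-1}^1$ lives in the regime you defer to ``the hard part.'' For that regime your only quantitative input is the integrated bound $\int_{-1}^1(1-\varphi_S(u)^2)\,du\ge c_4$. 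That bound is provably insufficient: since $0\le 1-\varphi_S^2\le 1$, Markov gives only $\bigl|\{u\in[-1,1]:\varphi_S(u)^2\ge 1-\varepsilon\}\bigr|\le (2-c_4)/(1-\varepsilon)$, a measure bound that stays bounded away from zero for all $\varepsilon$. It therefore cannot preclude all the $\varphi_{S_i}(u)$ being simultaneously close to $1$ on a set of $u$ of macroscopic measure, which is exactly what would make $\prod_i\psi_{K_2}(\varphi_{S_i}(u))$ close to $1$ there. A "Markov/second-moment on the number of near-resonant blocks'' cannot rescue this: for a fixed partition the $\varphi_{S_i}(u)$ are deterministic in $u$, and the expected number of near-resonant blocks is controlled precisely by the superlevel-set measure you do not have.

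The missing ingredient, and what the paper actually uses, is Lemma~\ref{l: bal lower aux}: for a single balanced block $S$, the superlevel set satisfies $\bigl|\{u:\,\psi_{K_2}(\varphi_S(u))\ge 1-b^2\}\bigr|\le \widetilde C\,b$, a bound that \emph{does} tend to zero with $b$. This is obtained by pairing elements of $S\cap Q_1$ with elements of $S\cap Q_2$ (so each pair has $|x_j-x_{j'}|\ge\rho$) and bounding, for each pair, the measure of $u$ on which $|1+e^{2\pi{\bf i}(x_j-x_{j'})u}|\ge 2-2u^2$. Crucially, the paper never tries to bound the product $\prod_i\psi_{K_2}(\varphi_{S_i}(u))$ pointwise in $u$. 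Instead, keeping only the $m_0=\lceil c_\delta m\rceil$ balanced blocks, it applies H\"older to pass to a \emph{single} function raised to a power, $\int_{-\sqrt m}^{\sqrt m}\prod_{i\le m_0}f_i\,ds\le\max_i\int_{-\sqrt m}^{\sqrt m}f_i^{m_0}\,ds$, and evaluates the latter via the layer-cake formula together with the superlevel-set bound, giving a Beta-function integral $\widetilde C\sqrt m\,m_0 B(3/2,m_0)=O(1)$. This sidesteps the resonance problem entirely: a single $f_i$ may indeed equal $1$ at some $u$, but only on a set of measure $O(b)$ at level $1-b^2$, and the $m_0$-th power kills the contribution from everywhere else. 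Without both the superlevel-set lemma and the H\"older reduction, the argument over $[u_0,1]$ does not close.
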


\begin{lemma}\label{l: bal lower aux}
For any $\rho>0$ and $\kappa\in (0, 1/2]$ there is a constant $\widetilde C>0$ depending only on $\rho$ and $\kappa$
with the following property. Let $S\neq \emptyset$ be a finite subset of $\Z$, and let
$(y_w)_{w\in S}$ be a real vector (indexed by $S$).
 Assume further that $S_1,S_2$ are two disjoint subsets of $S$, each of cardinality
at least $\kappa|S|$ such that $\min\limits_{w\in S_1}y_w\geq \max\limits_{w\in S_2}y_w+\rho$. Let $K_2\geq 2$ and
$f$ be a function on $[0,1]$ defined by
$$
 f(t):=\psi_{K_2}\Big(\Big|\frac{1}{|S|}\sum_{w\in S}\exp(2\pi{\bf i}\,y_w\,t)\Big|\Big),\quad t\in[0,1].
$$
Then for every $b>0$ one has
$$
  \big|\big\{t\in[0,1]:\;f(t)\geq 1-b^2\big\}\big|\leq \widetilde C b.
$$
\end{lemma}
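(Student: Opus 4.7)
The plan is to reduce the question to analyzing the set where $|V(t)| \geq 1 - b^2$, with $V(t) := \frac{1}{|S|}\sum_{w\in S}\exp(2\pi{\bf i}\,y_w t)$. By the properties of $\psi_{K_2}$, one has $\psi_{K_2}(s) \leq 1/K_2$ for $s \leq 1/K_2$ and $\psi_{K_2}(s) = s$ for $s \geq 1/K_2$; hence $f(t) \geq 1 - b^2$ together with $K_2 \geq 2$ forces $|V(t)| \geq 1 - b^2$ whenever $b^2 < 1/2$, while the range $b \geq 1/\sqrt{2}$ is handled by choosing $\widetilde C$ large enough (since $|\{t\in[0,1]:f(t)\geq 1-b^2\}| \leq 1$ trivially).

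Assuming $|V(t)| \geq 1 - b^2$ on the superlevel set, I would pick a phase $\theta(t)\in\R$ with $V(t) = |V(t)|\,e^{2\pi{\bf i}\theta(t)}$, so that $\Exp[1 - \cos(2\pi(y_{\eta[S]} t - \theta(t)))] \leq b^2$. The elementary bound $1 - \cos(2\pi x) \geq 8\,\|x\|^2$ (with $\|x\|$ denoting the distance from $x$ to the nearest integer) then yields
$$
\frac{1}{|S|}\sum_{w\in S}\|y_w t - \theta(t)\|^2 \leq \frac{b^2}{8}.
$$
Since $|S_j|\geq \kappa|S|$ for $j=1,2$, the restricted average satisfies $\frac{1}{|S_j|}\sum_{w\in S_j}\|y_w t - \theta(t)\|^2 \leq b^2/(8\kappa)$, and Markov's inequality produces subsets $T_j(t)\subset S_j$ with $|T_j(t)|\geq |S_j|/2$ on which $\|y_w t - \theta(t)\|\leq b/(2\sqrt{\kappa})$. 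The triangle inequality for distance-to-integer then gives $\|(y_{w_1} - y_{w_2})\,t\|\leq b/\sqrt{\kappa}$ for every $w_1\in T_1(t)$ and $w_2\in T_2(t)$.

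The final step is a Fubini/pigeonhole argument on pairs. Writing $A := \{t\in[0,1]:f(t)\geq 1-b^2\}$, integrating the indicator of $\{w_1\in T_1(t),\,w_2\in T_2(t)\}$ over $t\in A$ and summing over $(w_1,w_2)\in S_1\times S_2$ gives a total of at least $|A|\,|S_1|\,|S_2|/4$, so some fixed pair $(w_1^*, w_2^*)\in S_1\times S_2$ satisfies
$$
\big|\{t\in A:\, w_1^*\in T_1(t),\, w_2^*\in T_2(t)\}\big|\geq |A|/4.
$$
Setting $\Delta := y_{w_1^*} - y_{w_2^*}\geq \rho$, this forces
$$
|A|/4\leq \big|\{t\in[0,1]:\|\Delta t\|\leq b/\sqrt{\kappa}\}\big|.
$$
A direct computation, decomposing $[0,1]$ into the $\lfloor \Delta\rfloor+1$ pieces on which the distance-to-integer function is affine, bounds the right-hand side by $4b/(\sqrt{\kappa}\,\min(\rho,1))$, yielding $|A| \leq 16\,b/(\sqrt{\kappa}\,\min(\rho,1))$ with $\widetilde C$ depending only on $\rho$ and $\kappa$.

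The only subtlety I anticipate is the bookkeeping around $\psi_{K_2}$ at marginal values of $b$, and verifying that the Markov extraction of $T_j(t)$ is legitimate uniformly for $t\in A$; the geometric core of the argument---phase concentration forces $y_w t$ to cluster modulo $1$ on $A$, while the gap $\rho$ between the values on $S_1$ and $S_2$ makes such clustering rare in $t$---is essentially forced by the hypotheses.
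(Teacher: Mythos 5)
Your argument is correct, and it reaches the same single-pair measure estimate as the paper, but it gets there by a genuinely different route. The paper picks an explicit injective pairing $T\subset S_1\times S_2$ of size $m=\lceil\kappa|S|\rceil$ and uses the pointwise triangle inequality
$$
\Big|\sum_{w\in S}e^{2\pi{\bf i}y_w t}\Big|\leq\sum_{(q,j)\in T}\big|1+e^{2\pi{\bf i}(y_j-y_q)t}\big|+\big(|S|-2m\big),
$$
then shows that on the superlevel set at least $m/2$ of these pairs must have $|1+e^{2\pi{\bf i}(y_j-y_q)t}|$ near $2$, and bounds the measure of such $t$ by a per-pair estimate plus a counting argument. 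You instead extract a phase $\theta(t)$, pass to the second-moment bound $\frac1{|S|}\sum_w\|y_wt-\theta(t)\|^2\le b^2/8$, restrict to $S_1$ and $S_2$ and apply Markov at each $t$ to obtain large sets $T_j(t)$, and then use a Fubini/pigeonhole step to locate a single fixed pair $(w_1^*,w_2^*)$ with $\Delta=y_{w_1^*}-y_{w_2^*}\ge\rho$ capturing at least a quarter of the superlevel set. The two approaches are essentially equal in strength and both yield a constant of the form $\widetilde C\asymp 1/(\sqrt{\kappa}\,\min(\rho,1))$. Your route makes the phase-clustering mechanism transparent, but requires the mild measurability bookkeeping you already flag (taking $T_j(t)$ to be the \emph{full} set where $\|y_wt-\theta(t)\|\le b/(2\sqrt{\kappa})$ keeps everything Borel, and the ``Fubini'' is just linearity of the integral over the finite sum of indicators). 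The paper's route is pointwise and avoids phases altogether, at the cost of the slightly less intuitive matching construction.
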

\begin{proof}
Clearly we may assume that $b\leq 1/\sqrt{2}$.
Denote $m= \lceil \kappa|S|\rceil$ and
$$
g(t):=\Big|\sum_{w\in S}\exp(2\pi{\bf i}\,y_w\,t)\Big|,\quad t\in\R.
$$
Let $T\subset S_1\times S_2$ be of cardinality $T=m$ and such that for all $(q,j),(q',j')\in T$ with
$(q,j)\neq (q',j')$ one has $q\neq q'$ and $j\neq j'$.
Then for all $t\in\R$,
$$
 g(t)= \Big|\sum_{w\in S_1\cup S_2}\exp(2\pi{\bf i}\,y_w\,t)+\sum_{w\notin S_1\cup S_2}\exp(2\pi{\bf i}\,y_w\,t)
 \Big| \leq \sum_{(q,j)\in T}\big|1+\exp(2\pi{\bf i}\,(y_j-y_q)\,t)\big|+|S|-2m.
$$
Further, take any $u\in (0, 1/\sqrt{2\kappa})$ and observe that for each $(q,j)\in T$, since $|y_j-y_q|\geq \rho$,
we have
$$
\big|\big\{t\in[0,1]:\;\big|1+\exp(2\pi{\bf i}\,(y_j-y_q)\,t)\big|\geq 2-2u^2\big\}\big|\leq C'u,
$$
where $C'>0$ may only depend on $\rho$.
This implies that
$$
\Big|\Big\{t\in[0,1]:\;\big|1+\exp(2\pi{\bf i}\,(y_j-y_q)\,t)\big|\geq 2-2u^2\mbox{ for at least $m/2$
pairs $(q,j)\in T$}\Big\}\Big|\leq 2C'u.
$$
On the other hand, whenever $t\in[0,1]$ is such that $\big|1+\exp(2\pi{\bf i}\,(y_j-y_q)\,t)\big|\geq 2-2u^2$
for at most $m/2$ pairs $(q,j)\in T$, we have
$$
g(t)\leq \frac{m}{2}(2-2u^2)+\frac{m}{2}\cdot 2+|S|-2m
= |S|-m u^2 \leq |S|(1-\kappa u^2),
$$
whence $f(t)\leq \max\big(\frac{1}{K_2}, 1-\kappa u^2\big) = 1-\kappa u^2$.
Taking $u=\frac{b}{\sqrt{\kappa}}$ we obtain the
desired result with  $\widetilde C= \frac{2C'}{\sqrt{\kappa}}$.
\end{proof}

\begin{proof}[Proof of Proposition~\ref{p: low bound on bal}]
Let $A_{nm}$ be defined as in (\ref{anm}) and $n_\delta$, $C_\delta$, $\mathcal S$ be from
Lemma~\ref{l: aux 2498276098059385-}. We assume that $n\geq n_\delta$
and $n/m\geq C_\delta$. For every $i\leq m$ denote
$$
  f_i (s)  = \psi_{K_2}\big(
\big|\Exp\exp\big(2\pi{\bf i}\,x_{\eta[S_i]}\,m^{-1/2} s\big)\big|\big).
$$
Further, let subsets $Q_1$ and $Q_2$ be taken from the definition of non-constant vectors applied to $x$.
Then by Lemma~\ref{l: aux 2498276098059385-} and since $\psi_{K_2}(1)\leq 1$,
\begin{align*}
&A_{nm}\,
%\frac{\big((\lfloor n/m\rfloor)!\big)^m\,(n-m\lfloor n/m\rfloor)!}{n!}
\sum\limits_{(S_1,\dots,S_m)\in\mathcal S}\;
\int\limits_{-\sqrt{m}}^{\sqrt{m}} \prod\limits_{i=1}^{m}f_i\,ds
%\\
\leq \,e^{-c_\delta n}\,2\sqrt{m}
+
A_{nm}\,
%\frac{\big((\lfloor n/m\rfloor)!\big)^m\,(n-m\lfloor n/m\rfloor)!}{n!}
\sum\limits_{(S_1,\dots,S_m)\in\mathcal S'}\;
\int\limits_{-\sqrt{m}}^{\sqrt{m}} \prod\limits_{i=1}^{m}f_i\,ds,
\end{align*}
where $\mathcal S'$ is set of all sequences $(S_1,\dots,S_m)\in\mathcal S$ such that
is the subset of $S$ such that
\begin{equation}\label{manyind}
\min(|S_i\cap Q_1|,|S_i\cap Q_2|)\geq \frac{\delta}{2}\lfloor n/m\rfloor\,\,
\mbox{ for at least $\,\, c_\delta m\,\,$ indices $\,i$.}
\end{equation}
Take any $(S_1,\dots,S_m)\in\mathcal S'$ and denote $m_0:=\lceil c_\delta m\rceil$.
 Without loss of generality
we  assume that (\ref{manyind}) holds for all $i\leq m_0$.
Applying Lemma~\ref{l: bal lower aux}
with $\kappa:=\delta/2$ and $b=\sqrt{1-u}$, we get for all $u\in (0,1]$ and $i\leq m_0$,
$$
 \mu (u):= \Big|\Big\{s\in[-\sqrt{m},\sqrt{m}]:\;
  f_i \geq u\Big\}\Big|\leq \widetilde C\sqrt{m}\sqrt{1-u},
$$
where $\widetilde C>0$ depends only on $\delta$ and $\rho$.
This estimate implies that for $i\leq m_0$,
$$
\int\limits_{-\sqrt{m}}^{\sqrt{m}}
(f_i(s))^{m_0}\,ds  = \int\limits_{0}^{1}  m_0\, u^{m_0-1}\, \mu_u \,ds  \leq \widetilde C\sqrt{m}\, m_0\,
B(3/2, m_0 )\leq C_2,
$$
where $B$ denotes the Beta-function and  $C_2>0$ is a constant depending  only on $\rho$ and $\delta$.
Applying H\"older's inequality, we  obtain
\begin{align*}
\int\limits_{-\sqrt{m}}^{\sqrt{m}} \prod\limits_{i=1}^{m}\psi_{K_2}\big(
\big|\Exp\exp\big(2\pi{\bf i}\,x_{\eta[S_i]}\,m^{-1/2} s\big)\big|\big)\,ds
&\leq \int\limits_{-\sqrt{m}}^{\sqrt{m}} \prod\limits_{i=1}^{ m_0}\psi_{K_2}\big(
\big|\Exp\exp\big(2\pi{\bf i}\,x_{\eta[S_i]}\,m^{-1/2} s\big)\big|\big)\,ds
\leq C_2,
\end{align*}
which impies the desired result.
\end{proof}

\subsection{No moderately unstructured normal vectors}\label{s: no mod}

Let $M_n$ be an $n\times n$ \Ber random matrix..
For each $i\leq n$, denote by $H_i=H_i(M_n)$ the span of columns $\col_j(M_n)$, $j\neq i$.
The goal of this subsection is to prove Theorem~\ref{th: gradual}, which asserts that
under appropriate restrictions on $n$ and $p$
with a very large probability (say, at least $1-2e^{-2pn}$), the subspace $H_i^\perp$ is either
structured or very unstructured.
% (in the sense discussed below).
%
%The section is deliberately made independent from the treatment of ``steep'' vectors in the first part of the paper,
%as it is based on completely different arguments involving a modification of the notion
%of the essential least common denominator due to Rudelson--Vershynin.
%
%\smallskip
%
%The main result of the section is the following theorem.
%
%\begin{theor}\label{th: gradual}
%Let $r,\delta,\rho\in(0,1)$, $s>0$ and $R\geq 1$. Then there are $n_0\in\N$, $C\geq 1$
%and $K_1\geq 1$, $K_2\geq 4$ depending on $r,\delta,\rho,R,s$ such that the following holds.
%Let $n\geq n_0$, and let $s\ln n\leq pn\leq C^{-1}$. Assume that $M_n$ is
%an $n\times n$ random matrix with i.i.d.\ \Ber entries. Then
%with probability at least $1-\exp(-R pn)$ one has
%\begin{align*}
%&\mbox{$H_1^\perp\cap\gncvectors_n(r,\gfn,\delta,\rho)\subset$}\\
%&\hspace{3cm}\mbox{$\{x\in\normr_n(r):\;
%\bal_n(x,m,K_1,K_2)\geq \exp(Rpn)$ for all $pn/4\leq m\leq 4pn\}$.}
%\end{align*}\end{theor}
%
The main ingredient of the proof --- Proposition~\ref{prop: 09582593852} --- will be considered in the next subsection.
Here, we will only state the proposition to be used as a blackbox
and for this we need to introduce an additional product structure, which, in  a sense, replaces
the set $\gncvectors_n(r,\gfn,\delta,\rho)$.

\bigskip

Fix a permutation $\sigma\in\Pi_n$, two disjoint subsets $Q_1,Q_2$ of cardinality $\lceil\delta n\rceil$ each,
and a number $h\in\R$ such that
\begin{equation}\label{eq: h admissible}
%\begin{split}
%-\gfn(n/\sigma^{-1}(i))&\leq h-\rho-2\;\;\mbox{ for every }i\in Q_2;\\
%h+2 &\leq \gfn(n/\sigma^{-1}(i))\;\;\mbox{ for every }i\in Q_1.
%\end{split}
\forall i\in Q_1: \, \, h+2 \leq \gfn(n/\sigma^{-1}(i)) \quad \qand \quad
\forall i\in Q_2: \, \, -\gfn(n/\sigma^{-1}(i))\leq h-\rho-2.
\end{equation}
Define the sets $\Lambda_n=\Lambda_n(k,\gfn,Q_1,Q_2,\rho,\sigma,h)$ by
\begin{equation}\label{eq: param l def}
\begin{split}
%\Lambda_n(k,L,Q_1,Q_2,\rho,\sigma,h):=
\Lambda_n :=\bigg\{x\in\frac{1}{k}\Z^n:\;&|x_{\sigma(i)}|\leq \gfn(n/i)%L^{\sqrt{n/i}}
\;\;\mbox{for all }i\leq n,\;
%\\&
\;\;\min\limits_{i\in Q_1}x_i\geq h,\,\,\, \mbox{ and }\,\,\,\max\limits_{i\in Q_2} x_i\leq h-\rho\bigg\}.
\end{split}
\end{equation}
In what follows, we adopt the convention that $\Lambda_n=\emptyset$
whenever $h$ does not satisfy \eqref{eq: h admissible}.
%In what follows, any choice of parameters $\sigma,Q_1,Q_2,L,k,\rho,h$ satisfying the above conditions
%(including \eqref{eq: h admissible}) will be referred to as {\it admissible} (the value of $\delta$ will be clear from the context).

\smallskip

\begin{lemma}\label{l: permut}
There exists an absolute constant $C_{\text{\tiny\ref{l: permut}}}\geq 1$ such that for every
 $n\geq 1$ there is a subset $\bar \Pi_n\subset\Pi_n$ of cardinality at most
 $\exp({C_{\text{\tiny\ref{l: permut}}}n})$ with the following property.
For any two partitions $(S_i)_{i=1}^m$ and $(S_i')_{i=1}^m$ of $[n]$ with $2^{-i+1}n\geq |S_i|=|S_i'|$, $i\leq m$,
there is $\sigma\in \bar \Pi_n$ such that $\sigma(S_i)=S_i'$, $i\leq m$.
\end{lemma}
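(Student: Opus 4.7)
The plan is to take $\bar{\Pi}_n$ to consist of all compositions $\tau_{\mathcal{S}'}^{-1}\circ\tau_{\mathcal{S}}$, where each $\tau_{\mathcal{S}}$ is a pre-selected permutation that sends an admissible partition $\mathcal{S}=(S_i)_{i=1}^m$ of $[n]$ to a fixed canonical target. Concretely, call a finite sequence $(s_1,\dots,s_m)$ of non-negative integers \emph{admissible} if $\sum_i s_i=n$ and $s_i\le 2^{-i+1}n$ for every $i$, and for each admissible size sequence fix once and for all a canonical partition $(T_i)_{i=1}^m$ of $[n]$ into consecutive intervals of the prescribed sizes. For every admissible partition $\mathcal{S}$ choose any $\tau_{\mathcal{S}}\in\Pi_n$ mapping $S_i$ onto $T_i$ for each $i$, and set
$$\bar{\Pi}_n:=\bigl\{\tau_{\mathcal{S}'}^{-1}\circ\tau_{\mathcal{S}}\;:\;\mathcal{S},\mathcal{S}'\text{ admissible with the same size sequence}\bigr\}.$$
The defining property of the lemma is then immediate: if $(S_i)_{i=1}^m$ and $(S_i')_{i=1}^m$ are partitions as in the hypothesis, their common size vector is admissible and $\tau_{\mathcal{S}'}^{-1}\circ\tau_{\mathcal{S}}$ maps $S_i$ to $S_i'$ for every $i$.

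The cardinality bound reduces to counting admissible partitions, since $|\bar{\Pi}_n|$ is at most the square of this count. For a fixed admissible size sequence the number of ordered partitions with those sizes is the multinomial coefficient $\binom{n}{s_1,\dots,s_m}$, and the standard entropy inequality gives $\binom{n}{s_1,\dots,s_m}\le 2^{nH(s_1/n,\dots,s_m/n)}$. The key observation is that under the constraint $p_i\le 2^{-i+1}$ with $\sum_i p_i=1$ the Shannon entropy
$$H(p_1,\dots,p_m)=\sum_{i=1}^m p_i\log_2\frac{1}{p_i}$$
is bounded by an absolute constant: the test profile $p_i=2^{-i}$ already gives $H=\sum_{i\ge 1} i\,2^{-i}=2$, and a short KKT analysis (the optimizer has the form $p_i=\min(p^*,2^{-i+1})$) shows the supremum is at most $5/2$. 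Hence $\binom{n}{s_1,\dots,s_m}\le 2^{Cn}$ for a universal $C$.

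Since $s_i\ge 1$ forces $i\le\log_2 n+1$ (empty blocks can be discarded without affecting the hypothesis, as one can pad with $\emptyset$ afterward), the number of admissible size sequences is at most $(n+1)^{\lceil\log_2 n\rceil+1}=e^{o(n)}$, so summing the above multinomial bound over all admissible size sequences keeps us within $e^{C'n}$ for a universal $C'$. Squaring yields $|\bar{\Pi}_n|\le\exp(C_{\text{\tiny\ref{l: permut}}}n)$ with $C_{\text{\tiny\ref{l: permut}}}:=2C'$. The main technical obstacle is the universal entropy bound: although the index $m$ can grow as $\log_2 n$, the rapid geometric decay of the caps $2^{-i+1}$ prevents the mass from spreading out far enough for $H$ to blow up, and making this precise is the only non-cosmetic step. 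Every other ingredient --- fixing the canonical target, the composition trick, and the multiplicative counting --- is entirely routine.
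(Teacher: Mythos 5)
Your proof is correct and follows the same route the paper sketches: the paper's proof is a one-line remark that the lemma follows from the fact that the number of admissible partitions is exponential in $n$, and you supply exactly this counting argument together with the composition trick $\tau_{\mathcal{S}'}^{-1}\circ\tau_{\mathcal{S}}$. Your entropy analysis (with supremum $5/2$, giving roughly $C\approx 5\ln 2$) even yields a sharper constant than the paper's stated $C=23$.
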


This lemma immediately follows from the fact that the total number of partitions $(S_i)_{i=1}^m$ of $[n]$
satisfying $2^{-i+1}n\geq |S_i|$, $i\leq m$, is exponential in $n$ (one can take $C_{\text{\tiny\ref{l: permut}}}=23$).
Using Lemma~\ref{l: permut}, we provide an efficient approximation of $\gncvectors_n(r,\gfn,\delta,\rho)$.

\begin{lemma}\label{l: disc with l}
For any $x\in\gncvectors_n=\gncvectors_n(r,\gfn,\delta,\rho)$, $k\geq 4/\rho$,
and any $y\in\frac{1}{k}\Z^n$ with $\|x-y\|_\infty\leq 1/k$ one has
$$
y \in
\bigcup\limits_{q=\lfloor-4\gfn(6 n)/\rho\rfloor}^{
\lceil4\gfn(6 n)/\rho \rceil}\;
\bigcup\limits_{\bar\sigma\in\bar\Pi_n}\bigcup\limits_{|Q_1|,|Q_2|=\lceil\delta n\rceil}
\Lambda_n(k,\gfn(6\, \cdot),Q_1,Q_2,\rho/4,\bar\sigma,\rho q/4),
$$
where the set of permutations $\bar\Pi_n$ is taken from Lemma~\ref{l: permut}.
%and $C_{\text{\tiny\ref{l: disc with l}}}\geq 1$ is a universal constant.
\end{lemma}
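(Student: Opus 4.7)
The plan is to produce a permutation $\bar\sigma\in\bar\Pi_n$, subsets $Q_1',Q_2'\subset[n]$ of cardinality $\lceil\delta n\rceil$, and an integer $q$ with $|q|\leq\lceil 4\gfn(6n)/\rho\rceil$ such that $y\in\Lambda_n(k,\gfn(6\cdot),Q_1',Q_2',\rho/4,\bar\sigma,\rho q/4)$. The critical task is to build $\bar\sigma$ drawn from the controlled family $\bar\Pi_n$ of Lemma~\ref{l: permut} so that $|y_{\bar\sigma(\cdot)}|$ still obeys the growth bound $\gfn(6n/\cdot)$; once this is secured, the choice of $Q_1',Q_2',q$ is essentially automatic from the non-constancy of $x$ and the proximity $\|x-y\|_\infty\leq 1/k$.

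Let $\sigma_y\in\Pi_n$ sort $|y|$ in non-increasing order and fix the dyadic partition $I_j=\{\lfloor n/2^j\rfloor+1,\dots,\lfloor n/2^{j-1}\rfloor\}$ of $[n]$, $j=1,\dots,\lceil\log_2 n\rceil$, which satisfies $|I_j|\leq 2^{-j+1}n$. Lemma~\ref{l: permut} applied to the two partitions $(I_j)$ and $(\sigma_y(I_j))$ produces $\bar\sigma\in\bar\Pi_n$ with $\bar\sigma(I_j)=\sigma_y(I_j)$ for every $j$. For $i\in I_j$ the image $\bar\sigma(i)\in\sigma_y(I_j)$ gives $|y_{\bar\sigma(i)}|\leq y^*_{\min I_j}\leq x^*_{\min I_j}+1/k\leq \gfn(n/\min I_j)+1/k$; since $6n/i\geq 3\cdot n/\min I_j$, condition \eqref{gfncond} gives $\gfn(6n/i)\geq \gfn(n/\min I_j)+3$, which comfortably absorbs the $1/k$ and yields $|y_{\bar\sigma(i)}|\leq \gfn(6n/i)$. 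Next, choose any subsets $Q_1'\subset Q_1$, $Q_2'\subset Q_2$ of exact cardinality $\lceil\delta n\rceil$. From $\|x-y\|_\infty\leq 1/k$ and \eqref{cond2} one has $\min_{Q_1'}y-\max_{Q_2'}y\geq \rho-2/k\geq \rho/2$ using $k\geq 4/\rho$, so the interval $[\max_{Q_2'}y+\rho/4,\,\min_{Q_1'}y]$ of length $\geq\rho/4$ contains some $h=\rho q/4$, $q\in\Z$; the bound $|h|\leq \|y\|_\infty\leq \gfn(n)+1/k$ together with $\gfn(6n)\geq \gfn(n)+6$ forces $|q|\leq\lceil 4\gfn(6n)/\rho\rceil$, and the membership inequalities $\min_{Q_1'}y\geq h$, $\max_{Q_2'}y\leq h-\rho/4$ hold by construction.

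The remaining, and main, obstacle is checking the admissibility condition \eqref{eq: h admissible} for $(\gfn(6\cdot),\rho/4,\bar\sigma,h)$, which is precisely why a factor $6$ (not $2$) is used in the growth argument. For any $i\in[n]$, the indices $\sigma_y^{-1}(i)$ and $\bar\sigma^{-1}(i)$ lie in a common $I_j$, hence $\bar\sigma^{-1}(i)\leq 2\sigma_y^{-1}(i)$ and $6n/\bar\sigma^{-1}(i)\geq 3n/\sigma_y^{-1}(i)$; \eqref{gfncond} then gives $\gfn(6n/\bar\sigma^{-1}(i))\geq \gfn(n/\sigma_y^{-1}(i))+3\geq 4$. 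For $i\in Q_1'$, when $h\leq 2$ the estimate $\gfn(6n/\bar\sigma^{-1}(i))\geq 4\geq h+2$ already suffices, while for $h>2$ the combination of $|y_i|\geq y_i\geq h$ with $\gfn(n/\sigma_y^{-1}(i))\geq |y_i|-1/k$ yields $\gfn(6n/\bar\sigma^{-1}(i))\geq h+2$. The symmetric line for $i\in Q_2'$ uses $|y_i|\geq -y_i\geq -h+\rho/4$. Both inequalities in \eqref{eq: h admissible} therefore hold with $(\gfn(6\cdot),\rho/4)$, so $y\in \Lambda_n(k,\gfn(6\cdot),Q_1',Q_2',\rho/4,\bar\sigma,\rho q/4)$ and the lemma follows.
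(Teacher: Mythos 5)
Your proof is correct and follows essentially the same strategy as the paper's: select subsets $Q_1',Q_2'$ of exact cardinality $\lceil\delta n\rceil$, locate $h\in\frac{\rho}{4}\Z$ inside the gap created by non-constancy, and invoke Lemma~\ref{l: permut} against the dyadic partition $(I_j)$ to replace the sorting permutation by a controlled one, absorbing the factor $6$ via \eqref{gfncond}. The one cosmetic deviation is that you match $\bar\sigma$ to $\sigma_y$ rather than to $\sigma_x$ as the paper does; this forces you to invoke the $1$-Lipschitz property of the non-increasing rearrangement to transfer the growth bound from $x$ to $y$, and to run a separate case split for the admissibility condition \eqref{eq: h admissible}, whereas the paper's choice of $\sigma_x$ delivers the slightly stronger bound $|y_{\bar\sigma(i)}|\leq \gfn(6n/i)-2$ in one step and reads off admissibility immediately — but both routes land in the same place.
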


\begin{proof}
Let $x\in \gncvectors_n$, and assume that $y\in\frac{1}{k}\Z^n$ satisfies
$\|x-y\|_\infty\leq 1/k$.
Then, by the definition of $\gncvectors_n$, there exist sets $Q_1,Q_2\subset[n]$, each of cardinality
$\lceil \delta n\rceil$, satisfying
$$\max\limits_{i\in Q_2}y_i-\frac{1}{k}\leq \max\limits_{i\in Q_2}x_i
\leq \min\limits_{i\in Q_1}x_i-\rho\leq \min\limits_{i\in Q_1}y_i-\rho+\frac{1}{k}.$$
Then
$
\max\limits_{i\in Q_2}y_i\leq \min\limits_{i\in Q_1}y_i-\frac{\rho}{2},
$
hence we can find a number $h\in \frac{\rho}{4}\Z$ such that
$$\min\limits_{i\in Q_1}y_i\geq h \quad \qand \quad \max\limits_{i\in Q_2}y_i\leq h-\frac{\rho}{4}.$$
By the definition of $\gncvectors_n$ we also have $|x_{\sigma_x(i)}|\leq \gfn(n/i)$ for all $i\in[n]$.
%Further, let $\sigma\in \Pi_n$ be a permutation such that $|x_{\sigma(i)}|\leq \gfn(n/i)$ for all $i\in[n]$.
By the definition of $\bar\Pi_n$, we can find a permutation $\bar\sigma\in \bar\Pi_n$ such that
$$\sigma_x\big(\{\lfloor n/2^{\ell}\rfloor+1,\dots,\lfloor n/2^{\ell-1}\rfloor\}\big)
=\bar\sigma\big(\{\lfloor n/2^{\ell}\rfloor+1,\dots,\lfloor n/2^{\ell-1}\rfloor\}\big)\quad\mbox{ for all }\ell\geq 1.$$
Clearly for such a permutation we have
$|x_{\bar\sigma(i)}|\leq \gfn(2 n/i)$ for every $i\leq n$.
Using (\ref{gfncond}), we obtain
$$
|y_{\bar\sigma(i)}|\leq |x_{\bar\sigma(i)}|+\frac{1}{k}\leq
\gfn(2n/i)+\frac{1}{k}\leq \gfn(6n/i)-2.
$$
Thus
$$
  \forall i\in\bar\sigma^{-1}(Q_1): \, \, h\leq \min\limits_{i\in Q_1}y_i \leq \gfn(6n/i)-2\qand
  \forall i\in\bar\sigma^{-1}(Q_2): \, \, h-\frac{\rho}{4}\geq \max\limits_{i\in Q_2}y_i\geq 2-\gfn(6n/i).
$$
Since $h=\rho q/4$ for some $q\in \Z$, this implies the desired result.
\end{proof}

The following statment, together with Theorem~\ref{p: cf est} and Proposition~\ref{l: stability of bal},
is the main ingredient of the proof of Theorem~\ref{th: gradual}.

\begin{prop}\label{prop: 09582593852}
Let $\varepsilon\in(0,1/8]$, $\rho,\delta\in(0,1/4]$ and let the growth function $\gfn$
satisfies \eqref{gfncond}. There exist
$K_{\text{\tiny\ref{prop: 09582593852}}}=K_{\text{\tiny\ref{prop: 09582593852}}}(\delta,\rho)\geq 1$,
$n_{\text{\tiny\ref{prop: 09582593852}}}=n_{\text{\tiny\ref{prop: 09582593852}}}(\varepsilon,\delta,\rho,K_3)$, and
$C_{\text{\tiny\ref{prop: 09582593852}}}=C_{\text{\tiny\ref{prop: 09582593852}}}(\varepsilon,\delta,\rho,K_3)\in\N$
with the following property.
Let $\sigma\in\Pi_n$, $h\in\R$, and let $Q_1,Q_2\subset[n]$ be
such that $|Q_1|,|Q_2|=\lceil \delta n\rceil$.
Let $8\leq K_2\leq 1/\varepsilon$,
 $n\geq n_{\text{\tiny\ref{prop: 09582593852}}}$, $m\geq C_{\text{\tiny\ref{prop: 09582593852}}}$ with
$n/m\geq C_{\text{\tiny\ref{prop: 09582593852}}}$, $1\leq k\leq\min\big(
(K_2/8)^{m/2},2^{n/C_{\text{\tiny\ref{prop: 09582593852}}}}\big)$,
and let $X=(X_1,\dots,X_n)$ be a random vector
uniformly distributed on $\Lambda_n(k,\gfn,Q_1,Q_2,\rho,\sigma,h)$.
Then
$$
\Prob\big\{\bal_n(X,m,K_{\text{\tiny\ref{prop: 09582593852}}},
K_2)< k m^{1/2}/C_{\text{\tiny\ref{prop: 09582593852}}}\big\}\leq \varepsilon^n.
$$
\end{prop}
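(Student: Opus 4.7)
The proposition extends the deterministic lower bound $\bal_n\geq\sqrt{m}$ established in Proposition~\ref{p: low bound on bal} for gradual non-constant vectors to the sharper probabilistic bound $\bal_n \geq k\sqrt{m}/C$ on the finer lattice $\Lambda_n$. The additional factor $k$ has a natural origin: the components of $x\in\Lambda_n$ live on the grid $\frac{1}{k}\Z$, so each characteristic function $\E\exp(2\pi{\bf i}\,x_j m^{-1/2} s)$ is periodic in $s$ with period $k\sqrt{m}$, and the integration range $[-T,T]$ with $T:=k\sqrt{m}/C$ captures only a $2/C$-fraction of a single period. My plan is to combine the Hal\'asz-style arguments underlying the proof of Proposition~\ref{p: low bound on bal} with a probabilistic enhancement exploiting the independence of the coordinates of $X$ under a suitable product reference measure.

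The argument proceeds in three stages. First, by monotonicity of $\int_{-t}^{t}(\cdot)\,ds$ in $t$, the bad event $\{\bal_n(X,m,K_1,K_2)<T\}$ is contained in $\{F(X)>K_1\}$, where
\[
F(x):=A_{nm}\sum_{(S_1,\dots,S_m)}\int_{-T}^{T}\prod_{i=1}^{m}\psi_{K_2}\bigl(|\phi_i(s;x)|\bigr)\,ds,\qquad \phi_i(s;x):=\frac{1}{|S_i|}\sum_{j\in S_i}e^{2\pi{\bf i}\,x_j m^{-1/2} s}.
\]
Second, I would restrict the sum to ``balanced'' sequences (those for which $\min(|S_i\cap Q_1|,|S_i\cap Q_2|)\geq\frac{\delta}{2}\lfloor n/m\rfloor$ for a positive proportion of indices $i$) via Lemma~\ref{l: aux 2498276098059385-}, discarding the exponentially rare unbalanced contribution, and then replace the uniform measure on $\Lambda_n$ by the product measure on the surrounding lattice box $\prod_j\bigl(\frac{1}{k}\Z\cap[-\gfn(n/\sigma^{-1}(j)),\gfn(n/\sigma^{-1}(j))]\bigr)$ at the cost of a multiplicative constant depending only on $\delta,\rho$, since the gap conditions $\min_{Q_1} x_i\geq h$, $\max_{Q_2} x_i\leq h-\rho$ preserve a positive-fraction subset of the box. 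Third, under the product measure the factors $\phi_i(\cdot;X)$ for disjoint $S_i$ are mutually independent as functions of $X$, so $\E_X \prod_i\psi_{K_2}(|\phi_i(s;X)|)=\prod_i g_i(s)$ with $g_i(s):=\E_X\psi_{K_2}(|\phi_i(s;X)|)$. Hoeffding concentration for the complex average $\phi_i(s;X)$ (a sum of $|S_i|\asymp n/m$ independent bounded terms) places $|\phi_i(s;X)|$ within $O(\sqrt{m/n})$ of its mean $\bar\phi_i(s)$ with overwhelming probability, and the gap between $Q_1,Q_2$ together with the growth bound $|x_j|\leq\gfn(n/\sigma^{-1}(j))$ forces $|\bar\phi_i(s)|$ to stay bounded away from $1$ on a subset of $[-T,T]$ of measure $\asymp T$. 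This is exactly the mechanism of Lemma~\ref{l: bal lower aux}, now used on the enlarged integration range by invoking the exact periodicity of characteristic functions of lattice-valued random variables.

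The principal obstacle is upgrading the resulting first-moment estimate $\E_X F(X)=O(1)$ into the exponential tail $\varepsilon^n$. A naive application of Markov's inequality would yield only a polynomial bound. To recover the exponential decay one must either take a very high moment $\E_X F(X)^q$ with $q=\Theta(n)$, exploiting the genuine independence of the $n$ coordinates of $X$ under the product measure and the multiplicative factorization over disjoint $S_i$'s through a chaining/decoupling argument in $s$, or adopt an inverse Littlewood--Offord style discretization in which vectors with anomalously small u-degree are grouped according to a hidden arithmetic structure (arithmetic progressions with prescribed common difference dictated by the locations of the peaks of $|\phi_i|$) and counted class by class against the total cardinality of $\Lambda_n$. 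It is at this amplification step that the parameter constraints $n/m\geq C_{\text{\tiny\ref{prop: 09582593852}}}$, $k\leq(K_2/8)^{m/2}$, $K_2\leq 1/\varepsilon$ and the summability bound \eqref{gfncond} on $\gfn$ enter in a coordinated way, ensuring that Hoeffding concentration, the lattice period $k\sqrt{m}$, and the plateau $1/K_2$ of $\psi_{K_2}$ all sit in mutually compatible scales so that the product of $m$ small factors, integrated against $[-T,T]$ and averaged over the combinatorial average $A_{nm}\sum_{(S_i)}$, produces the required $\varepsilon^n$ decay.
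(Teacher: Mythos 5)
Your overall architecture is recognizable but the proposal has several gaps that prevent it from being a proof, and a couple of genuine misconceptions about where the difficulty lies.

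First, the measure-comparison step in your second stage is both unnecessary and incorrect. The uniform measure on $\Lambda_n(k,\gfn,Q_1,Q_2,\rho,\sigma,h)$ \emph{is already a product measure}: the defining conditions decouple coordinate by coordinate, with each $X_i$ uniform on some integer-lattice interval $\frac{1}{k}\Z\cap[a_i,b_i]$ (the interval depending on whether $i\in Q_1$, $i\in Q_2$, or neither). There is nothing to replace. Moreover, the argument you sketch for the replacement — that the gap constraints ``preserve a positive-fraction subset of the box'' — is false as stated: the gap constraints are hard constraints on the support, and the Radon--Nikodym comparison to the unconstrained box measure is $1/\Prob(\text{box}\in\Lambda_n)$, which is exponentially large in $n$ whenever $h$ sits near an endpoint of the admissible window in \eqref{eq: h admissible}. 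So this step would destroy the exponent you are trying to win.

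Second, you overextend the reach of the gap mechanism. The argument of Lemma~\ref{l: bal lower aux} (and its lattice analogue Lemma~\ref{l: aux 98508746104921-4}) gives a quadratic drop $|\bar\phi_i(s)|\lesssim 1-c\rho^2 s^2$ only for $s$ in an $O(1)$-window near the origin, and the paper uses it only for the ``small-$s$'' integral (Lemma~\ref{l: aux -29802609872}). It does \emph{not} ``force $|\bar\phi_i(s)|$ to stay bounded away from $1$ on a subset of $[-T,T]$ of measure $\asymp T$'' with $T\asymp k\sqrt{m}$. For larger $s$ — which is the bulk of the enlarged integration range and the entire reason the bound improves from $\sqrt{m}$ to $k\sqrt{m}/C$ — the control has to come from genuine anti-concentration of the lattice-valued $X_j$: the paper's Lemmas~\ref{l: aux 43098275}, \ref{l: aux 9871039481}, \ref{l: aux 0876958237}, \ref{l: aux 2398205987305}, \ref{l: aux 20985059837} are all in service of this and are absent from your sketch. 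Your periodicity remark is correct but idle; the integration range is deliberately kept inside a fraction of a single period, and periodicity is never invoked in the paper.

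Third, and most importantly, the amplification step — where you concede ``a naive application of Markov's inequality would yield only a polynomial bound'' and propose either $\Theta(n)$-th moments with decoupling or an inverse Littlewood--Offord classification — is exactly where all the work has to happen, and neither option is carried out nor obviously feasible. The sum over partitions $(S_1,\dots,S_m)$ and the $s$-integral make $F(X)^q$ intractable, and you give no hint of a chaining scheme that would close this; the inverse-Littlewood--Offord route is a fundamentally different (and substantially longer) program that you only name. The paper's mechanism is more elementary and different in kind: for each fixed $s$ in a suitable net of $[z,\varepsilon'k]$, the product $\prod_i\psi_{K_2}(|\phi_i(s;X)|)$ exceeds a threshold like $(K_2/4)^{-m/2}$ with probability at most $\varepsilon^n$ directly, because each of the $m$ factors is a normalized sum of $\lfloor n/m\rfloor$ independent unimodular terms and fails to be small with probability $\varepsilon^{\Theta(n/m)}$; a union bound over the $m$ factors and over a Lipschitz net in $s$ (Lemma~\ref{l: lip of prod}) then gives a measure-of-bad-set bound of $O(1)$, which combines with the small-$s$ gap argument and the integral Markov lemmas (Lemmas~\ref{l: int markov}, \ref{l: sum markov}) to finish. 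Your intuition about ``exploiting independence and the multiplicative factorization over disjoint $S_i$'s'' is on the right track, but without the net-in-$s$, the pointwise-probability-bound-per-$s$ and the measure-theoretic aggregation, the proposal does not yield $\varepsilon^n$.
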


Let us describe the proof of Theorem~\ref{th: gradual} informally.
Assume that the hyperplane $H_1$ admits a normal vector $X$ which belongs to $\gncvectors_n(r,\gfn,\delta,\rho)$.
We need to show that with a large probability the u-degree $\bal_n(X,m,K_1,K_2)$ of $X$ is very large,
say, at least $\varepsilon^{-m}$ for a small $\varepsilon>0$.
The idea is to split the collection $\gncvectors_n(r,\gfn,\delta,\rho)$ into about $\log_2(\varepsilon^{-m})$ subsets according
to the magnitude of the u-degree (that is, each subset
$\mathcal T_N$ will have a form
$\mathcal T_N=\big\{x\in \gncvectors_n(r,\gfn,\delta,\rho):\;\bal_n(x,m,K_1,K_2)\in [N,2N)\big\}$
for an appropriate $N$).
To show that for each $N\ll \varepsilon^{-m}$ the probability of $X\in \mathcal T_N$
is very small, we define a discrete approximation $\AA$ of $\mathcal T_N$ consisting of all vectors $y\in \frac{1}{k}\Z^n$
such that $\|y-x\|_\infty\leq 1/k$ for some $x\in \mathcal T_N$ and additionally,
in view of Proposition~\ref{l: stability of bal},
$\bal_n(y,m,c_{\text{\tiny\ref{l: stability of bal}}} K_1,K_2)\leq 2N$ and
$\bal_n(y,m,c_{\text{\tiny\ref{l: stability of bal}}}^{-1} K_1,K_2)\geq N$.
We can bound the cardinality of such set $\AA$ by $(\tilde\varepsilon\,k)^n$, for a small $\tilde\varepsilon>0$,
by combining Proposition~\ref{prop: 09582593852} with Lemma~\ref{l: disc with l} and with the following simple fact.

\begin{lemma}\label{l: Lambda_n card}
Let $k\geq 1$, $h\in\R$, $\rho,\delta\in(0,1)$,  $Q_1,Q_2\subset[n]$ with
$|Q_1|,|Q_2|=\lceil \delta n\rceil$, and  $\gfn$
satisfies \eqref{gfncond} with some $K_3\geq 1$. Then
$|\Lambda_n(k,\gfn,Q_1,Q_2,\rho,\sigma,h)|\leq \big(C_{\text{\tiny\ref{l: Lambda_n card}}}k\big)^n$,
where $C_{\text{\tiny\ref{l: Lambda_n card}}}\geq 1$ depends only on $K_3$.
\end{lemma}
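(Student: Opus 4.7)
The plan is to simply drop the two inequality constraints involving $h$ and $\rho$, since these only decrease the size of $\Lambda_n$, and count the points of $\frac{1}{k}\Z^n$ satisfying only the coordinate bounds $|x_{\sigma(i)}|\leq \gfn(n/i)$. For each fixed $i$, the number of elements of $\frac{1}{k}\Z$ lying in $[-\gfn(n/i),\gfn(n/i)]$ is at most $2k\,\gfn(n/i)+1\leq 3k\,\gfn(n/i)$ (using $k,\gfn\geq 1$), so by independence of coordinates
\[
|\Lambda_n|\;\leq\;\prod_{i=1}^n \big(3k\,\gfn(n/i)\big)\;=\;(3k)^n\prod_{i=1}^n \gfn(n/i).
\]

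The remaining task is to show $\prod_{i=1}^n \gfn(n/i)\leq C'^{\,n}$ for some constant $C'$ depending only on $K_3$, which is where the hypothesis \eqref{gfncond} enters. The natural tool is a dyadic decomposition of the index set: for each integer $j\geq 1$, group those $i\in[n]$ with $n/i\in[2^{j-1},2^j)$. The cardinality of this group is at most $n/2^{j-1}$, and on it $\gfn(n/i)\leq \gfn(2^j)$. Hence
\[
\prod_{i=1}^n \gfn(n/i)\;\leq\;\prod_{j=1}^{\infty} \gfn(2^j)^{n/2^{j-1}}\;=\;\exp\Big(2n\sum_{j=1}^\infty 2^{-j}\ln \gfn(2^j)\Big).
\]

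Finally I would observe that since $\gfn\geq 1$, the logarithms are nonnegative, so for every $j\geq 1$ the trivial inequality $2^{-j}\leq j\,2^{-j}$ gives
\[
\sum_{j=1}^\infty 2^{-j}\ln \gfn(2^j)\;\leq\;\sum_{j=1}^\infty j\,2^{-j}\ln \gfn(2^j)\;=\;\ln\Big(\prod_{j=1}^\infty \gfn(2^j)^{j\,2^{-j}}\Big)\;\leq\;\ln K_3,
\]
by the second half of \eqref{gfncond}. Combining everything yields $|\Lambda_n|\leq (3k)^n K_3^{2n}=(3K_3^2\,k)^n$, so one can take $C_{\text{\tiny\ref{l: Lambda_n card}}}:=3K_3^2$. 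There is no substantive obstacle here — the result is a short counting exercise; the only subtle point is to notice that the inequality $1\leq j$ makes the convergent-product hypothesis in \eqref{gfncond} automatically strong enough to control $\sum 2^{-j}\ln\gfn(2^j)$, so no separate argument is needed to handle the ``tail'' of the dyadic sum.
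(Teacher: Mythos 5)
Your proof is correct and complete. The paper states this lemma without giving a proof, so there is no ``official'' argument to compare against; but your route --- drop the $h,\rho$ constraints, bound each coordinate count by $3k\,\gfn(n/i)$, and control $\prod_i\gfn(n/i)$ by a dyadic grouping of $i$ according to the scale of $n/i$ --- is the natural one and matches the spirit in which the author invoke the lemma (it is described in the surrounding text as a ``simple fact''). All the estimates check out: the number of points of $\frac1k\Z$ in $[-A,A]$ is $2\lfloor kA\rfloor+1\le 3kA$ once $kA\ge 1$, which holds since $k\ge 1$ and $\gfn\ge 1$; the dyadic block $\{i:\,n/i\in[2^{j-1},2^j)\}$ has cardinality at most $n/2^{j-1}$ and on it $\gfn(n/i)\le\gfn(2^j)$ by monotonicity; and your observation that $2^{-j}\le j\,2^{-j}$ for $j\ge1$ lets you read off the bound $\sum_j 2^{-j}\ln\gfn(2^j)\le\ln K_3$ directly from the convergent-product hypothesis in \eqref{gfncond}, with no separate tail argument. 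The conclusion $C_{\text{\tiny\ref{l: Lambda_n card}}}=3K_3^2$ is a valid constant depending only on $K_3$.
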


On the other hand, for each fixed vector $y$ in the set $\AA$
we can estimate the probability that it ``approximates'' a normal vector to $H_1$
by using Corollary~\ref{cor: cf tensor}:
$$
\Prob\big\{\mbox{$y$ is an ``approximate'' normal vector to $H_1$}\big\}\leq (C'/k)^n\quad \mbox{for every }y\in \AA,
$$
for some constant $C'\ll \tilde\varepsilon^{-1}$.
Taking the union bound, we obtain
$$
\Prob\big\{X\in \mathcal T_N\big\}\leq \Prob\big\{\mbox{$\AA$ contains an ``approximate''
normal vector to $H_1$}\big\}\leq (C'/k)^n\,(\tilde\varepsilon\,k)^n\ll 1.
$$
Below, we make this argument rigorous.

\bigskip

\begin{proof}[Proof of Theorem~\ref{th: gradual}]
We start by defining parameters. We always assume that $n$ is large enough, so
all statements used below work for our $n$.
Fix any $R\geq 1$, $r>0$ and $s>0$, and set $b:=\lfloor(2p R)^{-1}\rfloor$.
Let $K_2= 32 \exp(16R).$
Note that the function $\gfn(6\, \cdot)$ is a growth function that satisfies condition \eqref{gfncond}
with parameter $K_3'=(K_3)^8$. In particular, choosing $j$ so taht $2^{j-1}\leq 6n\leq 2^j$, we have
$$\gfn(6n)\leq \gfn(2^j)\leq (K_3')^{2^j/j}\leq (K_3')^{12n/\log_2(6n)}\leq K_3^n.$$
For brevity, we denote $$C_{\text{\tiny\ref{cor: norm of centered}}}:=C_{\text{\tiny\ref{cor: norm of centered}}}(s,2R),\,\,
C_{\text{\tiny\ref{l: column supports}}}:=C_{\text{\tiny\ref{l: column supports}}}(2R),\,\,
c_{\text{\tiny\ref{l: stability of bal}}}':=c_{\text{\tiny\ref{l: stability of bal}}}'(K_2),\,\,
c_{\text{\tiny\ref{l: stability of bal}}}:=c_{\text{\tiny\ref{l: stability of bal}}}(K_2),\,\,
C_{\text{\tiny\ref{l: Lambda_n card}}}
=C_{\text{\tiny\ref{l: Lambda_n card}}}(K_3').$$
Set
$$
K_1:=\max\big(K_{\text{\tiny\ref{prop: 09582593852}}}(\delta,\rho/4)/c_{\text{\tiny\ref{l: stability of bal}}},
C_{\text{\tiny\ref{p: low bound on bal}}}(r,\delta,\rho)\big),
$$
and
$$
\varepsilon:= \min\Big(K_2^{-1},\, c_{\text{\tiny\ref{l: stability of bal}}}' \big( 384e K_3\, \exp({C_{\text{\tiny\ref{l: permut}}}})\,
  C_{\text{\tiny\ref{l: Lambda_n card}}}
  C_{\text{\tiny\ref{l: tensor}}}C_{\text{\tiny\ref{p: cf est}}}C_{\text{\tiny\ref{cor: norm of centered}}}
 \big)^{-1}\exp(-3R)\Big)
$$
We will assume that $pn$ is sufficiently large so that
$$5\exp(-2Rpn)\leq\exp(-Rpn)\quad \mbox{
and }\quad \exp(-3Rpn)\leq\frac{1}{2Rpn}\exp(-2Rpn).$$
%and that $n$ is large enough so that
%$(6n)^{C_{\text{\tiny\ref{l: disc with l}}}L+1/2}\cdot 2+1\leq 2^n$.
Moreover, we will assume that
\begin{equation}\label{eq: aux 2-9582-598}
\mbox{$2R C_{\text{\tiny\ref{l: column supports}}}p\leq 1\quad $
and $\quad C_{\text{\tiny\ref{l: column supports}}}\leq p n$}
\end{equation}
%(to be able to apply Lemma~\ref{l: column supports} below),
and
\begin{align*}
&\frac{1}{8p}\geq \max(C_{\text{\tiny\ref{prop: 09582593852}}}(\varepsilon,\delta,\rho/4,K_3'),
C_{\text{\tiny\ref{p: low bound on bal}}}(r,\delta,\rho));\quad
pn\geq 16C_{\text{\tiny\ref{prop: 09582593852}}}(\varepsilon,\delta,\rho/4,K_3')^2;\\
&e^{2Rp}\leq 2^{1/C_{\text{\tiny\ref{prop: 09582593852}}}(\varepsilon,\delta,\rho/4,K_3')};\quad
c_{\text{\tiny\ref{l: stability of bal}}}'/3\geq \exp(-Rpn);\quad \lfloor \exp(Rpn)/c_{\text{\tiny\ref{l: stability of bal}}}'\rfloor\,n\leq 2^n.
\end{align*}
%(In particular, we will need a subset of the above conditions to apply Proposition~\ref{prop: 09582593852}).

\medskip

Define two auxiliary random objects as follows.
Set
$$Z:=\mbox{$\{x\in\R^n:\;x^*_{\lfloor rn\rfloor}=1,\,\, \,
\bal_n(x,m,K_1,K_2)\geq \exp(Rpn)\,\,$ for all $\,\, pn/8\leq m\leq 8pn\}$,}$$
and let $X$ be a random vector measurable with respect to $H_1$ and such that
\begin{itemize}

\item $X\in \big(\gncvectors_n(r,\gfn,\delta,\rho)\cap H_1^\perp\big)\setminus Z\quad\mbox{whenever }\;
\big(\gncvectors_n(r,\gfn,\delta,\rho)\cap H_1^\perp\big)\setminus Z\neq\emptyset$;

\item $X\in \big(\gncvectors_n(r,\gfn,\delta,\rho)\cap H_1^\perp\big)\cap Z\quad\mbox{whenever }\;
\big(\gncvectors_n(r,\gfn,\delta,\rho)\cap H_1^\perp\big)\setminus Z=\emptyset$ and
$\gncvectors_n(r,\gfn,\delta,\rho)\cap H_1^\perp\neq \emptyset$;

\item $X={\bf 0}\quad\mbox{whenever }\quad
\gncvectors_n(r,\gfn,\delta,\rho)\cap H_1^\perp= \emptyset$.

\end{itemize}
(Note that $H_1^\perp$ may have dimension larger than one with non-zero probability,
and thus  $\pm X$ is not uniquely defined). Note that to prove the theorem, it is sufficient to show that
with probability at least $1-\exp(-R pn)$ one has either $X={\bf 0}$ or $X\in Z$.

Next, we denote
$$
\xi:=\begin{cases}\min\limits_{8pn \geq m\geq pn/8}\bal_n(X,m,K_1,K_2),&\mbox{whenever }\;X\neq {\bf 0};\\
+\infty,&\mbox{otherwise.}\end{cases}
$$
Then, proving the theorem amounts to showing that $\xi<\exp(R pn)$ with probability at most $\exp(-R pn)$.

We say that a collection of indices $I\subset[n]$ is admissible if $1\notin I$ and $|I|\geq n-b-1$.
For  admissible sets $I$ consider disjoint collection of events $\{\Event_I\}_I$ defined by
\begin{align*}
\Event_I:=\big\{\forall i\in I :\, \, |\supp \col_i(M_n)|\in [pn/8,8pn]
%\;\mbox{for all $i\in I$, and }
\qand
\forall i\notin I :\, \, |\supp \col_i(M_n)|\notin [pn/8,8pn]
%\;\mbox{for all $i\notin I$}
\big\}.
\end{align*}
Further, denote
$$
\widetilde \Event:=\big\{\|M_n-\E M_n\|\leq C_{\text{\tiny\ref{cor: norm of centered}}}\sqrt{pn}\big\}.
$$
%where $\widetilde B$ is the $n\times n$ matrix with all entries equal to $p$,
%and, we recall, $C_{\text{\tiny\ref{cor: norm of centered}}}=C_{\text{\tiny\ref{cor: norm of centered}}}(s,2R)$.
According to Corollary~\ref{cor: norm of centered}, $\Prob(\widetilde \Event)\geq 1-\exp(-2Rpn)$, while
 by Lemma~\ref{l: column supports} and  \eqref{eq: aux 2-9582-598},
$$
\Prob\Big(\bigcup_{I}\Event_I\Big)\geq 1-\exp(-n/C_{\text{\tiny\ref{l: column supports}}})\geq 1-\exp(-2Rpn).
$$
Denote by $\mathcal I$ the collection of all admissible $I$ satisfying
$2\Prob(\Event_I\cap\widetilde\Event)\geq \Prob(\Event_I)$. Then for $I\in \mathcal I$, we have
$\Prob(\Event_I)\geq 2\Prob(\Event_I\cap\widetilde\Event^c)$, and, using that events $\Event_I$ are disjoint,
$$
\Prob\Big(\bigcup_{I\in \mathcal I}\Event_I\Big)\geq 1-\exp(-2Rpn)-2\Prob(\widetilde\Event^c)\geq 1-3\exp(-2Rpn).
$$
Hence,
\begin{align*}
\Prob\big\{\xi<\exp(Rpn)\big\}
&\leq \sum\limits_{I\in \mathcal I}\Prob\big(\big\{\xi<\exp(Rpn)\big\}\cap \Event_I\cap \widetilde \Event\big)
+\Prob\Big(\bigcap_{I\in \mathcal I}\Event_I^c\Big)+\Prob(\widetilde \Event^c)\\
&\leq \sum\limits_{I\in \mathcal I}\Prob\big(\big\{\xi<\exp(Rpn)\big\}\;|\;\Event_I\cap \widetilde \Event\big)
\Prob(\Event_I\cap \widetilde \Event)+4\exp(-2Rpn).
\end{align*}
Therefore, to prove the theorem it is sufficient to show that for any $I\in\mathcal I$,
$$
\Prob\big(\big\{\xi<\exp(Rpn)\big\}\;|\;\Event_I\cap \widetilde \Event\big)\leq \exp(-2Rpn).
$$
Fix an admissible $I\in\mathcal I$,
denote by $B_I$ the $|I|\times n$ matrix obtained by transposing
columns $\col_i(M_n)$, $i\in I$, and let $\widetilde B_I$ be the non-random $|I|\times n$ matrix
with all elements equal to $p$.
Note that, in view of our definition of $K_1$, the assumptions on $p$
and Proposition~\ref{p: low bound on bal}, we have a {\it deterministic} relation
$$
\xi\geq \sqrt{pn/8}
$$
everywhere on the probability space.
For each real number $N\in J_p:=[\sqrt{pn/8},\exp(Rpn)/2]$, denote by $\Event_{N,I}$
the event
$$
\Event_{N,I}:=\big\{\xi\in[N,2N)\big\}\cap \Event_I\cap \widetilde \Event.
$$
Splitting the interval $J_p$ into subintervals, we observe that   it is sufficient to show that for every
 $N\in J_p$ we have
$$
\Prob\big(\Event_{N,I}\;|\;\Event_I\cap \widetilde \Event\big)\leq \exp(-3Rpn)\leq\frac{1}{2Rpn}\exp(-2Rpn).
$$

\medskip

The rest of the argument is devoted to estimating probability of $\Event_{N,I}$ for fixed  $N\in J_p$ and
fixed $I\in \mathcal I$.
Set $k:=\lceil 2N/c_{\text{\tiny\ref{l: stability of bal}}}'\rceil$.
%where, we recall, $c_{\text{\tiny\ref{l: stability of bal}}}'=c_{\text{\tiny\ref{l: stability of bal}}}'(K_2)$.
Let ${\bf m}:\Event_{N,I}\to [pn/8,8pn]$ be a (random) integer such that
$$\bal_n(X,{\bf m},K_1,K_2)\in [N,2N)\;\,\,\, \mbox{ everywhere on }\,\,\, \;\Event_{N,I}.$$
Since on  $\Event_{N,I}$ we have
$\bal_n(X,{\bf m},K_1,K_2)\leq 2N\leq c_{\text{\tiny\ref{l: stability of bal}}}'k$, applying
Proposition~\ref{l: stability of bal}, we can construct a random vector ${\bf Y}:\Event_{N,I}\to \frac{1}{k}\Z^n$
having the following properties:
\begin{itemize}
\item $\|{\bf Y}-X\|_\infty\leq 1/k$ everywhere on $\Event_{N,I}$,
\item $\bal_n({\bf Y},{\bf m},c_{\text{\tiny\ref{l: stability of bal}}} K_1,K_2)\leq 2N$ everywhere on $\Event_{N,I}$,
\item $\bal_n({\bf Y},m,c_{\text{\tiny\ref{l: stability of bal}}}^{-1} K_1,K_2)\geq N$
for all $m\in [pn/8,8pn]$ and everywhere on $\Event_{N,I}$.
\end{itemize}
 The first condition together with the inclusion $\Event_{N,I}\subset \widetilde \Event$ implies that
$$
\|(B_I-\widetilde B_I)({\bf Y}-X)\|\leq C_{\text{\tiny\ref{cor: norm of centered}}}\sqrt{p}n/k.
$$
Using that $B_I X=0$ and that $\widetilde B_I ({\bf Y}-X)=p(\sum_{i=1}^n ({\bf Y}_i-X_i))\,{\bf 1}_I$,
we observe that there is a random number ${\bf z}:\Event_{N,I}\to [-pn/k, pn/k]\cap \frac{\sqrt{pn}}{k}\Z$
such that everywhere on $\Event_{N,I}$ one has
$$
  \|B_I {\bf Y}-{\bf z}\,{\bf 1}_I\|\leq 2C_{\text{\tiny\ref{cor: norm of centered}}}\sqrt{p}n/k.
$$

Let $\Lambda$ be a subset of
$$
\bigcup\limits_{q=\lfloor-4\gfn(6 n)/\rho \rfloor}^{
\lceil 4\gfn(6 n)/\rho \rceil}\;
\bigcup\limits_{\bar\sigma\in\bar\Pi_n}\bigcup\limits_{|Q_1|,|Q_2|=\lceil\delta n\rceil}
\Lambda_n(k,\gfn(6\, \cdot),Q_1,Q_2,\rho/4,\bar\sigma,\rho q/4),
$$
consisting of all vectors $y$ such that
\begin{itemize}
\item $\bal_n(y,m,c_{\text{\tiny\ref{l: stability of bal}}} K_1,K_2)\leq 2N$ for {\it some } $m\in[pn/8,8pn]$;
\item $\bal_n(y,m,c_{\text{\tiny\ref{l: stability of bal}}}^{-1} K_1,K_2)\geq N$
for all $m\in [pn/8,8pn]$.
\end{itemize}
Note that by Lemma~\ref{l: disc with l} the entire range of ${\bf Y}$
on $\Event_{N,I}$ falls into $\Lambda$.

Combining  the above observations,
$$\Event_{N,I}\subset
\big\{\|B_I y-z{\bf 1}_I\|\leq 2C_{\text{\tiny\ref{cor: norm of centered}}}\sqrt{p}n/k\;\mbox{ for some }y\in\Lambda,
\mbox{ $z\in [-pn/k, pn/k]\cap \frac{\sqrt{pn}}{k}\Z$}\big\},
$$
whence, using that $2\Prob(\Event_I\cap \widetilde\Event)\geq \Prob(\Event_I)$ by the definition of $\mathcal I$,
\begin{align*}
\Prob(\Event_{N,I}\,|\,\Event_I\cap \widetilde\Event)
&\leq 2\Prob\big\{\|B_I y-z{\bf 1}_I\|\leq 2C_{\text{\tiny\ref{cor: norm of centered}}}\sqrt{p}n/k\;\mbox{ for some }y\in\Lambda,
\mbox{ $z\in [-pn/k, pn/k]\cap \frac{\sqrt{pn}}{k}\Z$}\;|\;\Event_I\big\}\\
&\leq 6|\Lambda|\sqrt{pn}\,\max\limits_{z\in \frac{\sqrt{pn}}{k}\Z}\,\,\max\limits_{y\in \Lambda}\Prob\big\{
\|B_I y-z{\bf 1}_I\|\leq 2C_{\text{\tiny\ref{cor: norm of centered}}}\sqrt{p}n/k\;|\;\Event_I\big\}.
\end{align*}
To estimate the last probability, we apply Corollary~\ref{cor: cf tensor}
with $t:= C_{\text{\tiny\ref{cor: norm of centered}}} \sqrt{8pn}/{N}$ (note that $k\geq 2N$, $2|I|\geq n$, and
that $t$ satisfies the assumption of the corollary).
We obtain  that for all admissible $y$ and $z$,
\begin{align*}
\Prob\big\{
\|B_I y-z{\bf 1}_I\|\leq 2C_{\text{\tiny\ref{cor: norm of centered}}}\sqrt{p}n/k\;|\;\Event_I\big\}
&\leq\Prob\bigg\{
\|B_I y-z{\bf 1}_I\|\leq \frac{C_{\text{\tiny\ref{cor: norm of centered}}}\sqrt{8pn}}{N}\,\sqrt{|I|}\;|\;\Event_I\bigg\}\\
&\leq (16C_{\text{\tiny\ref{l: tensor}}}C_{\text{\tiny\ref{p: cf est}}}C_{\text{\tiny\ref{cor: norm of centered}}}/N)^{|I|}.
\end{align*}
On the other hand, the cardinality of $\Lambda$ can be estimated by combining
Lemma~\ref{l: Lambda_n card}, Lemma~\ref{l: permut}
and Proposition~\ref{prop: 09582593852}
(note that our choice of parameters guarantees applicability of these statements):
$$
  |\Lambda|\leq 8pn \varepsilon^n\, (9\gfn(6n)/\rho ) \exp({C_{\text{\tiny\ref{l: permut}}}n})\,2^{2n}
  (C_{\text{\tiny\ref{l: Lambda_n card}}}k)^n
   \leq (72pn/\rho ) \varepsilon^n\, K_3^n\, \exp({C_{\text{\tiny\ref{l: permut}}}n})\,2^{2n}
  (C_{\text{\tiny\ref{l: Lambda_n card}}}k)^n,
$$
where $C_{\text{\tiny\ref{l: Lambda_n card}}}
=C_{\text{\tiny\ref{l: Lambda_n card}}}(K_3')$.
Thus, using our choice of parameters and assuming in addition that $2^n\geq 72pn/\rho $
\begin{align*}
\Prob(\Event_{N,I}\,|\,\Event_I\cap \widetilde\Event)
&\leq   \varepsilon^n\, (8 K_3\, \exp({C_{\text{\tiny\ref{l: permut}}}})\,
  C_{\text{\tiny\ref{l: Lambda_n card}}}k)^n
\,(16C_{\text{\tiny\ref{l: tensor}}}C_{\text{\tiny\ref{p: cf est}}}C_{\text{\tiny\ref{cor: norm of centered}}}/N)^{|I|}\\
&\leq
 \varepsilon^n\, (8 K_3\, \exp({C_{\text{\tiny\ref{l: permut}}}})\,
  C_{\text{\tiny\ref{l: Lambda_n card}}}k)^n\,
(48C_{\text{\tiny\ref{l: tensor}}}C_{\text{\tiny\ref{p: cf est}}}C_{\text{\tiny\ref{cor: norm of centered}}}
/(c_{\text{\tiny\ref{l: stability of bal}}}'k))^{n}N^{1+\lfloor (2pR)^{-1}\rfloor}\\
&\leq  \varepsilon^n\, (384 K_3\, \exp({C_{\text{\tiny\ref{l: permut}}}})\,
  C_{\text{\tiny\ref{l: Lambda_n card}}}
  C_{\text{\tiny\ref{l: tensor}}}C_{\text{\tiny\ref{p: cf est}}}C_{\text{\tiny\ref{cor: norm of centered}}}
/(c_{\text{\tiny\ref{l: stability of bal}}}'))^{n}\, e^n\\
&\leq \exp(-3Rn),
\end{align*}
by our choice of parameters. The result follows.
\end{proof}

\subsection{Anti-concentration on a lattice}

The goal of this subsection is to prove Proposition~\ref{prop: 09582593852}. Thus, in this subsection,
we fix $\rho,\delta\in(0,1/4]$, a growth function $\gfn$ satisfying \eqref{gfncond}, which in particular  
means that $\gfn(n)\leq K_3 ^{2n/\log_2 n}$,
a permutation $\sigma\in\Pi_n$, a number $h\in\R$,  two sets $Q_1,Q_2\subset[n]$
such that $|Q_1|,|Q_2|=\lceil \delta n\rceil$, and we do not repeat these assumptions in  lemmas below.
We also always use short notation $\Lambda_n$ for the set $\Lambda_n(k,\gfn,Q_1,Q_2,\rho,\sigma,h)$
defined in \eqref{eq: param l def}.

\smallskip

%will be interested in unstructuredness of a typical vector from a set
%$\Lambda_n(k,L,Q_1,Q_2,\rho,\sigma,h)$.
We start with  auxiliary probabilistic statements which are just special forms of Markov's inequality.
\begin{lemma}[Integral form of Markov's inequality, I]\label{l: int markov}
For each $s\in[a,b]$, let $\xi(s)$ be a non-negative random variable with $\xi(s)\leq 1$ a.e.
Assume that the random function $\xi(s)$ is integrable on $[a,b]$ with probability one.
Assume further that for some integrable function $\phi(s):\, [a,b]\to\R_+$ and some $\varepsilon>0$ we have
$$
\Prob\big\{\xi(s)\leq \phi(s)\big\}\geq 1-\varepsilon
$$
for all $s\in[a,b]$. Then for all $t>0$,
$$
\Prob\bigg\{\int_a^b \xi(s)\,ds\geq \int_a^b \phi(s)\,ds+t(b-a)\bigg\}\leq \varepsilon/t.
$$
\end{lemma}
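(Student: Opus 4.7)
The plan is to reduce everything to a standard (scalar) Markov inequality applied to the non-negative random variable $\int_a^b (\xi(s)-\phi(s))^+\,ds$. The key observation is that if $\int_a^b \xi(s)\,ds \geq \int_a^b \phi(s)\,ds + t(b-a)$, then necessarily $\int_a^b (\xi(s)-\phi(s))^+\,ds \geq t(b-a)$, since $\xi(s)\leq \phi(s)+(\xi(s)-\phi(s))^+$ pointwise. Thus it suffices to bound the probability of the latter event.

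To bound the expected value of $\int_a^b (\xi(s)-\phi(s))^+\,ds$, I would first apply Fubini's theorem (valid because the integrand is non-negative and bounded) to swap integration in $s$ and expectation in $\omega$. For each fixed $s\in [a,b]$, I would then exploit two facts: the bound $\xi(s)\leq 1$ almost everywhere together with $\phi(s)\geq 0$ implies $(\xi(s)-\phi(s))^+\leq \mathbf{1}_{\{\xi(s)>\phi(s)\}}$, and the assumption gives $\Prob\{\xi(s)>\phi(s)\}\leq\varepsilon$. Hence $\Exp\,(\xi(s)-\phi(s))^+\leq \varepsilon$ for every $s$, and integrating over $[a,b]$ yields
\[
\Exp\int_a^b (\xi(s)-\phi(s))^+\,ds \leq \varepsilon(b-a).
\]

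Finally, applying Markov's inequality to the non-negative random variable $\int_a^b (\xi(s)-\phi(s))^+\,ds$ at level $t(b-a)$ gives
\[
\Prob\bigg\{\int_a^b (\xi(s)-\phi(s))^+\,ds \geq t(b-a)\bigg\}\leq \frac{\varepsilon(b-a)}{t(b-a)}=\frac{\varepsilon}{t},
\]
which in view of the first paragraph implies the desired estimate. There is no real obstacle here: the statement is essentially a packaging of Fubini plus Markov, and the only conditions to verify carefully are the measurability/integrability required to apply Fubini, which are covered by the hypothesis that $\xi(s)$ is integrable on $[a,b]$ with probability one together with the uniform bound $\xi(s)\leq 1$.
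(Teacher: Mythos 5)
Your proof is correct and is essentially the same argument as the paper's: the paper works directly with the random set $I=\{s:\xi(s)>\phi(s)\}$ and notes $\int\xi\leq|I|+\int\phi$ (using $\xi\leq 1$ on $I$), then applies Fubini and Markov to $|I|$; you work with $\int(\xi-\phi)^+\,ds$ and bound the integrand by $\mathbf{1}_{\{\xi>\phi\}}$ for the same reason, then apply the same Fubini and Markov steps. These are two packagings of the identical idea.
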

\begin{proof}
Consider a random set
$$
I:=\big\{s\in[a,b]:\;\xi(s)> \phi(s)\big\}.
$$
Since $\Prob\{s\in I\}\leq \varepsilon$ for any $s\in[a,b]$,
we have
$
\Exp|I|\leq \varepsilon(b-a).
$
Therefore, by the Markov inequality,
$
\Prob\big\{|I|\geq t(b-a)\big\}\leq \varepsilon/t
$
for all $t>0$.
The result follows by noting that
$$
\int_a^b \xi(s)\,ds\leq |I|+\int_a^b \phi(s)\,ds.
$$
\end{proof}
\begin{lemma}[Integral form of Markov's inequality, II]\label{l: sum markov}
Let $I$ be a finite set, and for each $i\in I$, let $\xi_i$ be a non-negative random variable with $\xi_i\leq 1$ a.e.
Assume further that for some $\phi(i):I\to\R_+$ and some $\varepsilon>0$ we have
$$
\Prob\big\{\xi_i\leq \phi(i)\big\}\geq 1-\varepsilon
$$
for all $i\in I$. Then for all $t>0$,
$$
\Prob\bigg\{\frac{1}{|I|}\sum_{i\in I} \xi_i\geq \frac{1}{|I|}\sum_{i\in I} \phi(i)+t\bigg\}\leq \varepsilon/t.
$$
\end{lemma}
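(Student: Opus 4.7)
The plan is to mirror the proof of the preceding integral Markov inequality (Lemma~\ref{l: int markov}), with the normalized counting measure on $I$ replacing Lebesgue measure on $[a,b]$. First I would introduce the random ``bad'' set
\[
J:=\{i\in I:\;\xi_i>\phi(i)\},
\]
whose indicator ${\bf 1}_{\{i\in J\}}$ at each fixed index $i$ has expectation at most $\varepsilon$ by the hypothesis. Linearity of expectation then yields $\Exp|J|\leq \varepsilon|I|$, and one application of the ordinary Markov inequality gives
\[
\Prob\big\{|J|\geq t|I|\big\}\leq \varepsilon/t
\]
for every $t>0$.

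Next I would use the almost-sure bound $\xi_i\leq 1$ on indices $i\in J$ together with $\xi_i\leq \phi(i)$ on $I\setminus J$ to derive the pointwise (on the probability space) deterministic inequality
\[
\sum_{i\in I}\xi_i \leq |J|+\sum_{i\in I}\phi(i).
\]
Dividing by $|I|$ shows that on the event $\{|J|<t|I|\}$ one has $\frac{1}{|I|}\sum_{i\in I}\xi_i<\frac{1}{|I|}\sum_{i\in I}\phi(i)+t$. Therefore the complementary event where the conclusion fails is contained in $\{|J|\geq t|I|\}$, and the probability bound follows.

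I do not foresee a genuine obstacle: the statement is simply the counting-measure analogue of Lemma~\ref{l: int markov}, and the proof is a direct transcription. The only point worth flagging is the essential role of the almost-sure bound $\xi_i\leq 1$, which is what lets us control the contribution of the bad indices by $|J|$ without any further information on the $\xi_i$.
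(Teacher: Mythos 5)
Your proof is correct and is essentially the argument the paper has in mind: the paper explicitly omits the proof of Lemma~\ref{l: sum markov} on the grounds that it is ``almost identical'' to the proof of Lemma~\ref{l: int markov}, and your proof is precisely that transcription to counting measure (bad set $J$, linearity of expectation, ordinary Markov on $|J|$, and the deterministic bound using $\xi_i\leq 1$). Nothing further is needed.
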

The proof of Lemma~\ref{l: sum markov} is almost identical to that of Lemma~\ref{l: int markov}, and we omit it.

Our next statement will be important in an approximation (discretization) argument used later in the proof.
\begin{lemma}[Lipschitzness of the product $\prod\psi_{K_2}(\cdot)$]\label{l: lip of prod}
Let $y_1,\dots,y_n\in \R$  and set $y:=\max\limits_{w\leq n}|y_w|$.
Further, let $S_1,\dots,S_m$ be some non-empty subsets of $[n]$.
For $i\leq m$ denote
$$
 f_i(s):=\psi_{K_2}\bigg(\Big|\frac{1}{|S_i|}
\sum_{w\in S_i}\exp(2\pi{\bf i}\,y_w s)\Big|\bigg)\quad \mbox{ and let }  \quad
f(s):=\prod\limits_{i=1}^m f_i(s).
$$
Then $f$ (viewed as a function of $s$) is $(8 K_2\pi y\,m)$-Lipschitz.
\end{lemma}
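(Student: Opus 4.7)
My plan is to combine three one-line estimates: a derivative bound for each inner exponential sum, the $1$-Lipschitzness of $\psi_{K_2}$, and the standard telescoping identity for a product of bounded Lipschitz functions.

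First I would set $g_i(s):=\frac{1}{|S_i|}\sum_{w\in S_i}\exp(2\pi\mathbf{i}\,y_w s)$, differentiate termwise, and use $|y_w|\leq y$ to conclude $|g_i'(s)|\leq 2\pi y$, so $g_i$ (as a $\C$-valued function of $s$) is $2\pi y$-Lipschitz. Since $|\cdot|$ is $1$-Lipschitz on $\C$ and $\psi_{K_2}$ is $1$-Lipschitz on $\R_+$ (because $\|\psi_{K_2}'\|_\infty=1$), composition yields that each $f_i=\psi_{K_2}(|g_i(\cdot)|)$ is $2\pi y$-Lipschitz. I would justify Lipschitzness of $|g_i|$ at zeros of $g_i$ by integrating the bound on $g_i'$ and invoking the reverse triangle inequality, rather than attempting pointwise differentiation.

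Second I would observe that $|g_i(s)|\leq 1$ everywhere, and that the defining properties of $\psi_{K_2}$ imply $\psi_{K_2}(t)\leq \max(1/K_2,t)\leq 1$ for all $t\in[0,1]$; hence $0\leq f_i(s)\leq 1$ on all of $\R$.

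Finally I would apply the telescoping identity
\begin{equation*}
f(s)-f(t)=\sum_{j=1}^m\Big(\prod_{i<j}f_i(t)\Big)\bigl(f_j(s)-f_j(t)\bigr)\Big(\prod_{i>j}f_i(s)\Big),
\end{equation*}
bound each partial product by $1$, and each factor $|f_j(s)-f_j(t)|$ by $2\pi y\,|s-t|$. This gives $|f(s)-f(t)|\leq 2\pi y\,m\,|s-t|$, and since $K_2\geq 1$ the desired estimate $|f(s)-f(t)|\leq 8K_2\pi y\,m\,|s-t|$ follows a fortiori. There is no real obstacle here; the argument is purely routine, and the factor $K_2$ in the stated constant is simply slack that may be convenient for later invocations of the lemma.
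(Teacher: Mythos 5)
Your proof is correct and actually yields a sharper Lipschitz constant, namely $2\pi y\,m$ rather than the $8K_2\pi y\,m$ stated in the lemma; since $K_2\geq 1$, the lemma then follows a fortiori. The paper takes a genuinely different route. It bounds the \emph{ratio} $f_i(s)/f_i(s+\Delta s)\leq 1+4K_2\pi y|\Delta s|$, which requires the lower bound $f_i\geq 1/(2K_2)$ coming from the definition of $\psi_{K_2}$; this is precisely where $K_2$ enters the constant. It then multiplies the ratio bounds over $i\leq m$, obtains $(1+4K_2\pi y|\Delta s|)^m\leq 1+8K_2\pi y m|\Delta s|$ for small increments, and upgrades to a global Lipschitz estimate using $f\leq 1$. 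Your telescoping identity for the difference of products avoids the lower bound entirely (it only needs $0\leq f_i\leq 1$ and the per-factor Lipschitz bound), which is why the $K_2$ factor never appears and why the argument is global from the start rather than local-then-global. Both proofs rely on the same two elementary ingredients — the $2\pi y$ bound on $|g_i'|$ and the $1$-Lipschitzness of $\psi_{K_2}$ — but your decomposition of the product is the more direct one, and it is arguably the argument one would want here, since the lemma is only used with a crude constant anyway.
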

\begin{proof}
By our definition, $\psi_{K_2}$ is $1$-Lipschitz for any $K_2\geq 1$,
hence
$f_i$ (viewed as a function of $s$) is
$2\pi y$-Lipschitz.
Since  $\big|\sum_{w\in S_i}\exp(2\pi{\bf i}\,y_w s)\big|\leq |S_i|$,
by the definition of the function  $\psi_{K_2}$, we have
$1/(2K_2)\leq f_i \leq 1$, hence, for all $s,\Delta s\in\R$,
$$
   \frac{f_i(s)}{f_i(s+\Delta s)} = 1 +  \frac{f_i(s)-f_i(s+\Delta s)}{f_i(s+\Delta s)}
   \leq 1+4 K_2\pi y\,|\Delta s|.
$$
%$$\psi_{K_2}\bigg(\Big|\frac{1}{|S_i|}
%\sum_{w\in S_i}\exp(2\pi{\bf i}\,y_w s)\Big|\bigg)/
%\psi_{K_2}\bigg(\Big|\frac{1}{|S_i|}
%\sum_{w\in S_i}\exp(2\pi{\bf i}\,y_w (s+\Delta s)\Big|\bigg)
%\leq 1+4 K_2\pi y\,|\Delta s|
%$$
 Taking the product, we obtain that
$$
   \frac{f(s)}{f(s+\Delta s)}  \leq \big(1+4 K_2\pi y\,|\Delta s|\big)^m
   \leq 1+8 K_2\pi y\,m\,|\Delta s|
$$
%\begin{align*}
%\prod\limits_{i=1}^m\psi_{K_2}\bigg(\Big|\frac{1}{|S_i|}
%\sum_{w\in S_i}\exp(2\pi{\bf i}\,y_w s)\Big|\bigg)/
%\prod\limits_{i=1}^m\psi_{K_2}\bigg(\Big|\frac{1}{|S_i|}
%%\sum_{w\in S_i}\exp(2\pi{\bf i}\,y_w (s+\Delta s))\Big|\bigg)
%&\leq \big(1+4 K_2\pi y\,|\Delta s|\big)^m\\
%&\leq 1+8 K_2\pi y\,m\,|\Delta s|,
%\end{align*}
whenever $8 K_2\pi y\,m\,|\Delta s|\leq 1/2$.
This, together with the bound $f\leq 1$ implies for all $s,\Delta s\in\R$,
$$
    f(s) - f(s+\Delta s)   \leq  8 K_2\pi y\,m\,|\Delta s|,
$$
which completes the proof.
%$f(s)\geq (2K_2)^{-m}$, implies that the function $f(s)$
%is $8 K_2\pi y\,m\,(2K_2)^{m}$-Lipschitz.
\end{proof}

In the next two lemmas we initiate the study of random variables $\exp(2\pi{\bf i}\,\eta[I_w]\,s_j/k)$,
more specifically, we will be interested in the property that, under appropriate assumptions on $s_j$'s,
the sum of such variables is close to zero on average.

\begin{lemma}\label{l: aux 43098275}
Let $\varepsilon\in(0,1]$,  $k\geq 1$, $\ell\geq 2/\varepsilon$. Let $I$ be an integer interval
and let $s_1,\dots,s_\ell$ be real numbers such that for all $j\neq u$,
$$ \frac{k}{\eps |I|} \leq |s_j-s_u|\leq \frac{k}{2}.$$
Then
\begin{align*}
\Exp\,\Big|\sum_{j=1}^\ell \exp(2\pi{\bf i}\,\eta[I]\,s_j/k)\Big|^2\leq\varepsilon\ell^2.
\end{align*}
\end{lemma}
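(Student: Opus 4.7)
The plan is to expand the square and use the standard closed-form expression for the characteristic function of a uniform random variable on an integer interval.

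First I would write
\begin{align*}
\Exp\,\Big|\sum_{j=1}^\ell \exp(2\pi{\bf i}\,\eta[I]\,s_j/k)\Big|^2
=\sum_{j,u=1}^\ell \Exp\exp\big(2\pi{\bf i}\,\eta[I]\,(s_j-s_u)/k\big),
\end{align*}
and separate the $\ell$ diagonal terms (each equal to $1$) from the off-diagonal ones. For an off-diagonal pair $(j,u)$, setting $t=(s_j-s_u)/k$ and writing $I=\{a,a+1,\dots,a+|I|-1\}$, the geometric sum formula gives
\begin{align*}
\big|\Exp\exp(2\pi{\bf i}\,\eta[I]\,t)\big|
=\frac{1}{|I|}\,\frac{|\sin(\pi|I|t)|}{|\sin(\pi t)|}
\leq \frac{1}{2|I|\,|t|},
\end{align*}
where the last inequality uses $|t|\leq 1/2$ (so $|\sin(\pi t)|\geq 2|t|$) which is guaranteed by the hypothesis $|s_j-s_u|\leq k/2$.

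Next I would plug in the lower bound $|s_j-s_u|\geq k/(\varepsilon|I|)$, which gives $|t|\geq 1/(\varepsilon|I|)$, hence each off-diagonal term is bounded in absolute value by $\varepsilon/2$. Summing, the total is at most
\begin{align*}
\ell + \ell(\ell-1)\cdot\frac{\varepsilon}{2}\leq \ell+\frac{\varepsilon\,\ell^2}{2}.
\end{align*}
The assumption $\ell\geq 2/\varepsilon$ then ensures $\ell\leq \varepsilon\ell^2/2$, so the total is at most $\varepsilon\ell^2$, as required.

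There is no real obstacle here: the statement is essentially a textbook Fejer-kernel/Weyl-type bound packaged in the conventions of the paper. The only thing to be careful about is that the hypotheses are tailored precisely so that both bounds on the off-diagonal phases (from $|s_j-s_u|\leq k/2$ giving $|t|\leq 1/2$, and from $|s_j-s_u|\geq k/(\varepsilon|I|)$ giving the denominator $|t||I|\geq 1/\varepsilon$) and the counting condition $\ell\geq 2/\varepsilon$ combine in a balanced way to yield exactly $\varepsilon\ell^2$.
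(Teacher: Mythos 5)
Your proof is correct and follows essentially the same route as the paper: expand the square, isolate the $\ell$ diagonal terms, bound each off-diagonal term via the geometric sum (equivalently the Dirichlet-kernel closed form) using $|\sin(\pi t)|\geq 2|t|$ for $|t|\leq 1/2$, and then invoke $\ell\geq 2/\varepsilon$ to absorb the diagonal contribution. The only cosmetic difference is that you bound $|\sin(\pi|I|t)|\leq 1$ in the numerator where the paper bounds $|1-e^{2\pi i |I| t}|\leq 2$; these are the same estimate.
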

\begin{proof}
We will determine the restrictions on parameter $R$ at the end of the proof.
We have
\begin{equation}\label{eq: aux 976120975}
\begin{split}
\Exp\,\Big|\sum_{j=1}^\ell \exp(2\pi{\bf i}\,\eta[I]\,s_j/k)\Big|^2
&=\sum_{j=1}^{\ell}\sum_{u=1}^{\ell}\Exp \exp\big(2\pi{\bf i}\,\eta[I]\,(s_j-s_u)/k\big)\\
&\leq \ell+\Big|\sum\limits_{j\neq u}\Exp \exp\big(2\pi{\bf i}\,\eta[I]\,(s_j-s_u)/k\big)\Big|.
\end{split}
\end{equation}
Further, denoting $a=\min I$ and $b=\min I$, we observe for any $j\neq u$,
\begin{align*}
\Exp &\exp\big(2\pi{\bf i}\,\eta[I]\,(s_j-s_u)/k\big)\\
&=\frac{1}{|I|}\sum_{v=a}^{b}\exp\big(2\pi{\bf i}\,v\,(s_j-s_u)/k\big)\\
&=\frac{1}{|I|}\exp\big(2\pi{\bf i}\,a\,(s_j-s_u)/k\big)\cdot
\frac{1-\exp\big(2\pi{\bf i}\,(b-a+1)\,(s_j-s_u)/k\big)}{1-\exp\big(2\pi{\bf i}\,(s_j-s_u)/k\big)}.
\end{align*}
In view of assumptions on  $|s_j-s_u|$
%$\leq k/2$ and $|I|\,|s_j-s_u|\geq k/\eps $, we have
$$\big|1-\exp\big(2\pi{\bf i}\,(s_j-s_u)/k\big)\big|= \big|2\sin(\pi\,(s_j-s_u)/k)
\big| \geq \frac{4|s_j-s_u|}{k}\geq \frac{4}{\eps |I|}.$$
Therefore,
$$
 \big|\Exp \exp\big(2\pi{\bf i}\,\eta[I]\,(s_j-s_u)/k\big)\big|\leq \frac{\eps}{2}.
$$
Using \eqref{eq: aux 976120975}, we complete the proof.
\end{proof}

\begin{lemma}\label{l: aux 9871039481}
For every $\varepsilon\in(0, 1/2]$ there are $R_{\text{\tiny\ref{l: aux 9871039481}}}
=R_{\text{\tiny\ref{l: aux 9871039481}}}(\varepsilon)>0$
and $\ell:=\ell_{\text{\tiny\ref{l: aux 9871039481}}}(\varepsilon)\in\N$, $\ell\geq 1000$,
with the following property.
Let $k\geq 1$, $u\geq \ell$, let $I_w$ ($w=1,2,\dots,u$) be integer intervals, and let
$s_1,\dots,s_\ell$ be real numbers such that $|I_w|\,|s_j-s_q|\geq R_{\text{\tiny\ref{l: aux 9871039481}}}k$,
and $|s_j-s_q|\leq k/2$ for all $j\neq q$ and $w\leq u$.
Then, assuming that random variables $\eta[I_w]$, $w\leq u$, are mutually independent, one has
$$
\Prob\Big\{\Big|\frac{1}{u}\sum_{w=1}^u
\exp(2\pi{\bf i}\,\eta[I_w]\,s_j/k)\Big|\geq \varepsilon\;\mbox{ for at least $\varepsilon\ell$ indices }j\Big\}
\leq \varepsilon^{u}.
$$
\end{lemma}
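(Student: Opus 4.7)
The event to be bounded forces the random vector $\vec Y\in\C^\ell$ with components $Y_j:=\frac{1}{u}\sum_{w=1}^u \exp(2\pi{\bf i}\,\eta[I_w]\,s_j/k)$ to satisfy $\|\vec Y\|_2^2\geq \varepsilon^3\ell$: on this event the counting function $N:=\#\{j\le \ell\,:\,|Y_j|\geq\varepsilon\}$ is at least $\varepsilon\ell$, hence $\sum_j|Y_j|^2\geq \varepsilon^2 N\geq \varepsilon^3\ell$. Using $|\exp(2\pi{\bf i}\,\eta[I_w]s_j/k)|=1$, I would expand
$$\|\vec Y\|_2^2=\frac{\ell}{u}+W,\qquad W:=\frac{1}{u^2}\sum_{w_1\ne w_2} F\big(\eta[I_{w_1}]-\eta[I_{w_2}]\big),\quad F(m):=\sum_{j=1}^\ell e^{2\pi{\bf i}\,m s_j/k}.$$
Choosing $\ell_{\text{\tiny\ref{l: aux 9871039481}}}(\varepsilon)$ so large that $\ell/u\leq 1\leq \varepsilon^3\ell/2$ (possible since $u\geq\ell$), the claim reduces to proving $\Prob\{|W|\geq\varepsilon^3\ell/2\}\leq \varepsilon^u$.

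The main analytic input is the elementary Fourier estimate obtained from summing the geometric series: for any $t\in\R$ with $0<|t|\leq k/2$,
$$\big|\Exp\,e^{2\pi{\bf i}\,\eta[I]\,t/k}\big|\leq\frac{k}{2|I||t|}$$
(using $|\sin x|\geq 2|x|/\pi$). Combined with the hypothesis $|I_w|\,|s_j-s_{j'}|\geq Rk$, this gives $|\Exp\,e^{2\pi{\bf i}\,\eta[I_w](s_j-s_{j'})/k}|\leq 1/(2R)$ whenever $j\neq j'$, and factoring via independence of $\eta[I_{w_1}],\eta[I_{w_2}]$ for $w_1\neq w_2$ yields
$$\Exp\,|F(\eta[I_{w_1}]-\eta[I_{w_2}])|^2\leq \ell+\frac{\ell^2}{4R^2}\leq \frac{\ell^2}{R}\quad\text{for $R$ large and $\ell\geq R^2$,}$$
so that the kernel of the U-statistic $W$ has small $L^2$-norm.

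The key step is to upgrade this second-moment bound into a high-moment bound of the form
$$\Exp|W|^{2M}\leq \Big(\frac{C}{\sqrt{R}}\Big)^{2M}\ell^{2M}\qquad\text{for all }M\leq u/C_0,$$
with absolute constants $C,C_0$. The plan is to expand $W^{2M}$ as a $4M$-fold sum over tuples $(\vec w^+,\vec w^-,\vec j)$ and use independence of $\{\eta[I_w]\}_{w=1}^u$ to factor the joint expectation as a product over $w$ of factors $\Exp\exp(2\pi{\bf i}\,\eta[I_w]\,T_w/k)$, where $T_w:=\sum_{m:w_m^+=w}s_{j_m}-\sum_{m:w_m^-=w}s_{j_m}$. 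Tuples in which every nonzero $T_w$ is a generic integer combination of differences $s_j-s_{j'}$ are suppressed by the Fourier bound above, while ``degenerate'' tuples (those with many coincidences among the $w_m^\pm$, or with some $T_w$ accidentally close to a nonzero multiple of $k$) are controlled combinatorially by counting the number of such tuples. Making this case-analysis rigorous -- in particular handling the resonance case $T_w\approx k\Z\setminus\{0\}$, which can arise since the $s_j$ are not individually bounded but only pairwise separation is controlled -- is the main technical obstacle. Granted this moment bound, Markov's inequality with $M:=\lceil u/2\rceil$ and $R_{\text{\tiny\ref{l: aux 9871039481}}}(\varepsilon):=4C^2\varepsilon^{-8}$ gives
$$\Prob\{|W|\geq\varepsilon^3\ell/2\}\leq \frac{\Exp|W|^{2M}}{(\varepsilon^3\ell/2)^{2M}}\leq \Big(\frac{2C}{\sqrt{R}\,\varepsilon^3}\Big)^{2M}\leq \varepsilon^{2M}\leq \varepsilon^u,$$
completing the proof.
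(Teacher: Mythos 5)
Your plan takes a genuinely different route from the paper, but it has a real gap that you yourself flag: the high-moment bound
\[
\Exp|W|^{2M}\leq\Big(\frac{C}{\sqrt{R}}\Big)^{2M}\ell^{2M}\qquad\text{for }M\text{ up to order }u,
\]
on the degree-two U-statistic $W=\frac{1}{u^2}\sum_{w_1\neq w_2}F(\eta[I_{w_1}]-\eta[I_{w_2}])$ is asserted as ``the key step'' but never proved, and you explicitly call the resonance case $T_w\in k\Z\setminus\{0\}$ ``the main technical obstacle.'' That step is not routine. To deduce $\varepsilon^u$ from Markov you need $M\sim u/2$, which is at the extreme boundary of what subgaussian-type concentration can give for a degree-two U-statistic built from only $u$ independent variables, and the combinatorics of controlling the degenerate tuples (coincidences among the $w_m^{\pm}$, and the case $T_w$ close to a nonzero multiple of $k$, which the hypothesis $|s_j-s_q|\leq k/2$ does not rule out for multi-term combinations) is exactly the content you skip. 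Your preliminary reductions are correct -- the identity $\|\vec Y\|_2^2=\ell/u+W$, the reduction to $\Prob\{|W|\geq\varepsilon^3\ell/2\}$, and the Fourier estimate $|\Exp e^{2\pi{\bf i}\eta[I]t/k}|\leq k/(2|I||t|)$ -- but without the moment bound the argument is incomplete.

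For comparison, the paper sidesteps any moment estimate on $W$. It fixes a subset $Q$ of $\lfloor\varepsilon\ell/4\rfloor$ indices $j$ and a sign quadrant $z\in\{-1,1\}^2$, observes that the event forces $\big|\sum_{j\in Q}\sum_{w}\exp(\cdots)\big|$ to be large, which in turn forces many individual $w$'s to satisfy $\big|\sum_{j\in Q}\exp(2\pi{\bf i}\eta[I_w]s_j/k)\big|\geq\frac{\varepsilon}{4}\lfloor\varepsilon\ell/4\rfloor$. For each single $w$ this last event has probability $O(\varepsilon_1)$ by the second-moment estimate of Lemma~\ref{l: aux 43098275} plus Markov (this is where your Fourier bound enters, but applied only at the level of a {\it single} $\eta[I_w]$), and independence across $w$ plus a union bound over $\binom{\ell}{\lfloor\varepsilon\ell/4\rfloor}\binom{u}{\lceil\varepsilon u/4\rceil}$ choices then yields $\varepsilon^u$. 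The two routes differ in where the combinatorial cost is paid: the paper pays it up front in two explicit union bounds whose binomial factors are absorbed by the $\varepsilon^{2u}$ gain from a clean per-$w$ second-moment estimate, whereas your plan defers everything into a $2M$-th moment of a U-statistic and would need to recover the same cancellation through a diagram expansion. If you want to salvage your approach, you would need to supply the full combinatorial argument for the moment bound; as written, the argument is incomplete.
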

\begin{proof}
Fix any $\varepsilon\in(0,1/2]$, and set $\varepsilon_1:=2^{-10}e^{-6}\varepsilon^{4+9/\varepsilon}$.
Set $R:=1/\varepsilon_1$
and $\ell:=\lceil 2/\varepsilon_1\rceil$.
Assume that $u\geq \ell$, and let numbers $s_j$ and integer intervals $I_w$ satisfy the assumptions of the lemma.
Denote the event
$$
\Big\{\Big|\frac{1}{u}\sum_{w=1}^u
\exp(2\pi{\bf i}\,\eta[I_w]\,s_j/k)\Big|\geq \varepsilon\;\mbox{ for at least $\varepsilon\ell$ indices }j\Big\}
$$
by $\Event$, and additionally, for any subset $Q\subset[\ell]$ of cardinality $\lfloor \varepsilon\ell/4\rfloor$
and any vector $z\in\{-1,1\}^2$, set
$$
\Event_{Q,z}:=\Big\{\Big\langle\Big(\frac{1}{u}\sum_{w=1}^u
\cos(2\pi\,\eta[I_w]\,s_j/k),
\frac{1}{u}\sum_{w=1}^u
\sin(2\pi\,\eta[I_w]\,s_j/k)\Big),z\Big\rangle\geq \varepsilon\;\mbox{ for all }j\in Q\Big\}.
$$
It is not difficult to see that
$$
\Event\subset\bigcup\limits_{Q,z}\Event_{Q,z},
$$
whence it is sufficient to show that for any admissible $Q,z$,
\begin{equation}\label{entosh}
\Prob(\Event_{Q,z})\leq \frac{1}{4}{\ell\choose \lfloor \varepsilon\ell/4\rfloor}^{-1}\varepsilon^u.
\end{equation}
Without loss of generality, we can consider $Q=Q_0:=\big[\lfloor \varepsilon\ell/4\rfloor\big]$. %and $z=z_0:=(1,1)$.
Event $\Event_{Q_0,z}$ is contained inside the event
$$
\Big\{\Big|\sum_{j\in Q_0}\sum_{w=1}^u
\exp(2\pi{\bf i}\,\eta[I_w]\,s_j/k)\Big|\geq 2^{-1/2}\varepsilon u\,\lfloor \varepsilon\ell/4\rfloor\Big\},
$$
while the latter is contained inside the event
$$
\Big\{\Big|\sum_{j\in Q_0}
\exp(2\pi{\bf i}\,\eta[I_w]\,s_j/k)\Big|\geq \frac{\varepsilon}{4}\,\lfloor \varepsilon\ell/4\rfloor
\mbox{ for at least $\varepsilon u/4$ indices $w$}\Big\}.
$$
Thus, taking the union over all admissible choices of $\lceil\varepsilon u/4\rceil$ indices $w\in[u]$, we get
$$
\Prob(\Event_{Q_0,z})
\leq {u\choose \lceil\varepsilon u/4\rceil}\max\limits_{F\subset[u],\,|F|=\lceil\varepsilon u/4\rceil}\Prob\Big\{
\Big|\sum_{j\in Q_0}
\exp(2\pi{\bf i}\,\eta[I_w]\,s_j/k)\Big|\geq \frac{\varepsilon}{4}\,\lfloor \varepsilon\ell/4\rfloor\mbox{ for all $w\in F$}
\Big\}.
$$
To estimate the last probability, we  apply Markov's inequality, together with the bound for the second moment from
 Lemma~\ref{l: aux 43098275} (applied with  $\varepsilon_1$), and using independence of $\eta[I_w]$,
$w\leq u$.
We then get
$$
\max\limits_{F\subset[u]\atop |F|=\lceil\varepsilon u/4\rceil}\Prob\Big\{
\Big|\sum_{j\in Q_0}
\exp(2\pi{\bf i}\,\eta[I_w]\,s_j/k)\Big|\geq \frac{\varepsilon}{4}\,\lfloor \varepsilon\ell/4\rfloor\mbox{ for all $w\in F$}
\Big\}\leq
\bigg(\frac{\varepsilon_1 \ell^2}{(\varepsilon^2 \ell/32)^2}\bigg)^{\lceil \varepsilon u/4\rceil}
\leq
e^{-3\eps u/2}  \varepsilon^{2u}.
$$
In view of (\ref{entosh}) this implies the result, since using  $8\leq \ell \leq u$ and $\eps<1/2$,
we have
$$
  4 {\ell\choose \lfloor \varepsilon\ell/4\rfloor}\eps^{-u} {u\choose \lceil\varepsilon u/4\rceil} e^{-3\eps u/2}  \varepsilon^{2u}
  \leq 4e^{-3\eps u/2}  \bigg(\frac{4e}{\varepsilon }\bigg)^{\varepsilon \ell/4}
   \bigg(\frac{2e}{\varepsilon }\bigg)^{\varepsilon u/2}\varepsilon^{u} \leq 4 (16e^{-3})^{\eps u/4}\,
   \varepsilon^{u/4}\leq 1.
$$
%which holds since $\eps<1/16$.
%whence, applying our restrictions on parameters,
%$$\Prob(\Event_{Q,z})\leq
%\bigg(\frac{eu}{\lceil\varepsilon u/2\rceil}\bigg)^{\lceil\varepsilon u/2\rceil}
%\varepsilon^{2u}\leq\varepsilon^{\varepsilon\ell/2}\varepsilon^u\leq
%\bigg(\frac{\varepsilon}{32e}\bigg)^{\lfloor \varepsilon\ell/4\rfloor}\varepsilon^u
%\leq\frac{1}{4}{\ell\choose \lfloor \varepsilon\ell/4\rfloor}^{-1}\varepsilon^u.$$
%The result follows.
\end{proof}

%Recall that $\Lambda_n=\Lambda_n(k,\gfn,Q_1,Q_2,\rho,\sigma,h)$ was defined in \eqref{eq: param l def}.
Our next step is to  show that for the vector $X=(X_1,\dots,X_n)$ uniformly distributed on  $\Lambda_n$
the random product $\prod\limits_{i=1}^m\psi_{K_2}\big(\big|\frac{1}{\lfloor n/m\rfloor}
\sum_{w\in S_i}\exp(2\pi{\bf i}X_w s)\big|\big)$ is, in a certain sense,
 typically small (for most choices of $s$). To do this we first show that given a collection
of distinct numbers $s_1,\dots,s_\ell$ which are pairwise well separated, the above product is small
for at least one $s_j$ with very high probability.

\begin{lemma}\label{l: aux 0876958237}
For any $\varepsilon\in(0,1/2]$ there are $R_{\text{\tiny\ref{l: aux 0876958237}}}
=R_{\text{\tiny\ref{l: aux 0876958237}}}(\varepsilon)\geq 1$
and $\ell:=\ell_{\text{\tiny\ref{l: aux 0876958237}}}(\varepsilon)\in\N$
with the following property.
Let $k,m,n\in\N$ be with $n/m\geq \ell$. Let $1\leq K_2\leq 2/\varepsilon$,
$X=(X_1,\dots,X_n)$ be a random vector
uniformly distributed on $\Lambda_n$, and
let $s_1,\dots,s_\ell$ be real numbers in $[0,k/2]$ such that $|s_j-s_q|\geq R_{\text{\tiny\ref{l: aux 0876958237}}}$ for all $j\neq q$.
Fix disjoint subsets $S_1,\dots,S_m$ of $[n]$, each of cardinality $\lfloor n/m\rfloor$.
Then
$$
\Prob\Big\{\forall j\leq \ell\, : \, \, \prod\limits_{i=1}^m\psi_{K_2}\bigg(\Big|\frac{1}{\lfloor n/m\rfloor}
\sum_{w\in S_i}\exp(2\pi{\bf i}X_w s_j)\Big|\bigg)\geq (K_2/2)^{-m/2}\Big\}
\leq \varepsilon^n.
$$
\end{lemma}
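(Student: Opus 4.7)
Throughout, write $\mathcal E$ for the event in the lemma; we must show $\Prob(\mathcal E)\le\varepsilon^n$. The plan is to reduce this to Lemma~\ref{l: aux 9871039481} via the product structure of $\Lambda_n$, an elementary count of ``big'' factors, and a double counting argument.

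First, the set $\Lambda_n$ is a Cartesian product $\prod_{w=1}^n T_w$ of one-dimensional sets in $(1/k)\Z$: all three defining constraints decouple across coordinates, and the admissibility condition~\eqref{eq: h admissible} forces each $T_w$ to have diameter at least $2$ in $\R$. Hence $X$ has independent components with $kX_w=\eta[I_w]$ for some integer interval $I_w$ of length $|I_w|\ge 2k$, and in particular the blocks $(X_w)_{w\in S_i}$ are mutually independent across $i\le m$. Second, if $K_2<2$ then every factor of the product is at most $1<(K_2/2)^{-m/2}$, so $\mathcal E=\emptyset$ and we may assume $K_2\ge 2$. By the piecewise structure of $\psi_{K_2}$, a factor exceeds $1/K_2$ iff its argument does; a short arithmetic check then shows that on $\mathcal E$, for each $j\le\ell$ the number of indices $i$ with $|\tfrac1{|S_i|}\sum_{w\in S_i}\exp(2\pi{\bf i}X_w s_j)|\le 1/K_2$ is at most $\kappa_0 m$, where $\kappa_0:=\tfrac12-\tfrac{1}{2\log_2 K_2}$; crucially, $\kappa_0<\tfrac12$ uniformly in $K_2\in[2,2/\varepsilon]$.

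Now apply Lemma~\ref{l: aux 9871039481} to each block $S_i$ with parameter $\varepsilon_1:=\varepsilon^4$; since $K_2\le 2/\varepsilon\le\varepsilon^{-4}$ for $\varepsilon\le 1/2$, one has $\varepsilon_1\le 1/K_2$. Setting $R_{\text{\tiny\ref{l: aux 0876958237}}}(\varepsilon):=R_{\text{\tiny\ref{l: aux 9871039481}}}(\varepsilon_1)/2$ turns the separation hypothesis $|I_w||s_j-s_q|\ge R_{\text{\tiny\ref{l: aux 9871039481}}}(\varepsilon_1)k$ into the given $|s_j-s_q|\ge R_{\text{\tiny\ref{l: aux 0876958237}}}(\varepsilon)$ (using $|I_w|\ge 2k$), and $\ell_{\text{\tiny\ref{l: aux 0876958237}}}(\varepsilon):=\ell_{\text{\tiny\ref{l: aux 9871039481}}}(\varepsilon_1)$ gives $|S_i|=\lfloor n/m\rfloor\ge\ell$. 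Letting $\chi_i=1$ iff at least $\varepsilon_1\ell$ of the $s_j$'s satisfy $|\tfrac1{|S_i|}\sum_{w\in S_i}\exp(2\pi{\bf i}X_w s_j)|\ge\varepsilon_1$ (and $\chi_i=0$ otherwise), the lemma yields $\Prob(\chi_i=1)\le\varepsilon_1^{|S_i|}$, and the $\chi_i$'s are mutually independent by the product structure of $X$.

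Finally, a double count of the ``small'' pairs $(i,j)$ (those whose $i$-th factor is $\le 1/K_2$) finishes the argument: on $\mathcal E\cap\bigcap_{i\in K}\{\chi_i=0\}$, each $i\in K$ produces at least $(1-\varepsilon_1)\ell$ small pairs (namely those $j$ with sum below $\varepsilon_1\le 1/K_2$), while the factor-counting bound caps the total number of small pairs at $\kappa_0 m\ell$. Hence $|K|(1-\varepsilon_1)\le\kappa_0 m$, which combined with $\kappa_0<1/2$ and $\varepsilon_1\le 1/4$ yields $\mathcal E\subseteq\bigl\{\sum_i\chi_i\ge m/3\bigr\}$. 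By independence,
\[
\Prob(\mathcal E)\le\binom{m}{\lceil m/3\rceil}\varepsilon_1^{\lceil m/3\rceil\lfloor n/m\rfloor}\le 2^m\,\varepsilon^{4(n-m)/3},
\]
which is at most $\varepsilon^n$ provided $n/m$ exceeds a constant depending on $\varepsilon$ (a condition absorbed into $\ell_{\text{\tiny\ref{l: aux 0876958237}}}(\varepsilon)$, noting $\ell\ge 1000$ anyway). The main technical obstacle is purely bookkeeping: coordinating $\varepsilon_1$, $R_{\text{\tiny\ref{l: aux 0876958237}}}(\varepsilon)$ and $\ell_{\text{\tiny\ref{l: aux 0876958237}}}(\varepsilon)$ with $K_2$ so that the hypotheses of Lemma~\ref{l: aux 9871039481} are met uniformly in $K_2\in[2,2/\varepsilon]$.
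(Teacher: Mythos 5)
Your proof is correct and follows essentially the same route as the paper's: both exploit the product structure of $\Lambda_n$ to reduce to independent integer intervals, introduce for each block $S_i$ the event that at least $\Theta(\ell)$ of the $s_j$ give $\big|\frac{1}{|S_i|}\sum_{w\in S_i}\exp(2\pi{\bf i}X_w s_j)\big|$ above a threshold, bound these via Lemma~\ref{l: aux 9871039481}, and finish with a double count of the bad pairs $(i,j)$ together with a union bound over a constant fraction of the $m$ blocks. The differences are only parameter-level — you use the threshold $1/K_2$ and the sharper cap $\kappa_0 m$ (rather than $2/K_2$ and $m/2$), feed $\varepsilon^4$ into Lemma~\ref{l: aux 9871039481} (rather than $\varepsilon^5$), extract $\geq m/3$ bad blocks (rather than $m/4$), and exploit $|I_w|\geq 2k$ to halve $R$ — none of which alters the structure of the argument.
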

\begin{proof}
Fix any $\varepsilon\in(0,1/2]$ and set $\ell:=\ell_{\text{\tiny\ref{l: aux 9871039481}}}(\varepsilon^5)\geq 1000$
and $R:=R_{\text{\tiny\ref{l: aux 9871039481}}}(\varepsilon^5)$.
Assume that $n/m\geq \ell$.
Note that, by our definition of $\Lambda_n$, the coordinates
of $X$ are independent and, moreover,
each variable $kX_w$ is distributed on an integer interval of cardinality at least $k$.
Thus, it is sufficient to prove that for any collection of integer intervals $I_j$, $j\leq n$, such that
$|I_j|\geq k$, the event
$$
\Event:=\Big\{\forall j\leq \ell\, : \, \, \prod\limits_{i=1}^m\psi_{K_2}\bigg(\Big|\frac{1}{\lfloor n/m\rfloor}
\sum_{w\in S_i}\exp(2\pi{\bf i}\,\eta[I_w]\, s_j/k)\Big|\bigg)\geq (K_2/2)^{-m/2}\Big\}.
$$
has probability at most $\varepsilon^n$, where, as usual, we assume that the variables $\eta[I_w]$, $w\in S_i$, $i\leq m$,
are jointly independent.
Observe that, as $\psi_{K_2}(t)\leq 1$ for all $t\leq 1$, the event $\Event$ is contained inside the event
$$
\Event':=\Big\{\forall j\leq \ell\, : \, \,
%\Big|\frac{1}{\lfloor n/m\rfloor} \sum_{w\in S_i}\exp(2\pi{\bf i}\,\eta[I_w]\, s_j/k)\Big|
a_{ij}\geq 2/K_2\mbox{ for at least $m/2$ indices $i$}\Big\},
$$
where $a_{ij}:=\Big|\frac{1}{\lfloor n/m\rfloor} \sum_{w\in S_i}\exp(2\pi{\bf i}\,\eta[I_w]\, s_j/k)\Big|$,
$i\leq m$, $j\leq \ell$. Denoting $b_{ij}=1$ if $a_{ij}\geq 2/K_2$ and $b_{ij}=0$ otherwise and using
a simple counting argument for the matrix $\{b_{ij}\}_{ij}$, we obtain that
%The latter event, in turn, is contained inside
%$$\Big\{\Big|\sum_{w\in S_i}\exp(2\pi{\bf i}\,\eta[I_w]\, s_j/k)\Big|\geq (2/K_2)\,
%\lfloor n/m\rfloor\mbox{ for at least $\lceil m/2\rceil\,\ell$ pairs of indices $(i,j)$}\Big\},$$
%which, by a simple counting argument, falls into the event
$$
\Event\subset\Event'\subset \Event'':=
\Big\{\Big|\Big\{i\,:\,\,\, a_{ij}\geq 2/K_2\,\,\, \mbox{ for at least $\ell/4$ indices $j$}\Big\}\Big|\geq m/4\Big\}.
%\Big\{\Big|\Big\{i:\Big|\sum_{w\in S_i}\exp(2\pi{\bf i}\,\eta[I_w]\, s_j/k)\Big|\geq
%(2/K_2)\,\lfloor n/m\rfloor\mbox{ for at least $\ell/4$ indices $j$}\Big\}\Big|\geq m/4\Big\}.
$$
To estimate $\Prob(\Event'')$ we use Lemma~\ref{l: aux 9871039481} with $\eps^5$.
Note that $\eps^5\leq \min(2/K_2, 1/2)$, and that by  our choice of $R$, for any $j\neq q$ we have
$ |I_w|\,|s_j-s_q|\geq k\,|s_j-s_q|\geq R_{\text{\tiny\ref{l: aux 9871039481}}}(\varepsilon^5)k$,
while $|s_j-s_q|\leq k/2$.
Thus,
$$
\forall i\leq m \, : \quad \Prob\Big\{ a_{ij} \geq 2/K_2\,\,\, \mbox{ for at least $\ell/4$ indices $j$}
\Big\}\leq \varepsilon^{5\lfloor n/m\rfloor}.
$$
Hence,
$$
\Prob(\Event'')\leq {m\choose \lceil m/4\rceil}\varepsilon^{5\lfloor n/m\rfloor\,m/4}
\leq 2^m\varepsilon^{5\lfloor n/m\rfloor\,m/4}\leq \varepsilon^n,
$$
which completes the proof.
\end{proof}

\begin{lemma}[Very small product everywhere except for a set of measure $O(1)$]\label{l: aux 2398205987305}
For any $\varepsilon\in(0,1/2]$ there are
$R_{\text{\tiny\ref{l: aux 2398205987305}}}
=R_{\text{\tiny\ref{l: aux 2398205987305}}}(\varepsilon)\geq 1$,
$\ell=\ell_{\text{\tiny\ref{l: aux 2398205987305}}}(\varepsilon)\in\N$
and $n_{\text{\tiny\ref{l: aux 2398205987305}}}=n_{\text{\tiny\ref{l: aux 2398205987305}}}(\varepsilon,K_3)\in\N$ with the following property.
Let $k,m,n\in\N$, $n\geq n_{\text{\tiny\ref{l: aux 2398205987305}}}$, $k\leq 2^{n/\ell}$,
$n/m\geq \ell$, and $4\leq K_2\leq 2/\varepsilon$.
Let $X=(X_1,\dots,X_n)$ be a random vector
uniformly distributed on $\Lambda_n$.
Fix disjoint subsets $S_1,\dots,S_m$ of $[n]$, each of cardinality $\lfloor n/m\rfloor$.
Then
$$
\Prob\bigg\{\Big|\Big\{s\in[0,k/2]:\;\prod\limits_{i=1}^m\psi_{K_2}\bigg(\Big|\frac{1}{\lfloor n/m\rfloor}
\sum_{w\in S_i}\exp(2\pi{\bf i}X_w s)\Big|\bigg)\geq (K_2/4)^{-m/2}\Big\}\Big|\leq R_{\text{\tiny\ref{l: aux 2398205987305}}}\bigg\}
\geq 1-(\varepsilon/2)^n.
$$
\end{lemma}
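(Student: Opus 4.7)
The plan is to reduce the measure estimate on the random ``bad'' set
\[
B := \big\{s \in [0,k/2]\ :\ f(s) \geq (K_2/4)^{-m/2}\big\},\qquad f(s) := \prod_{i=1}^m \psi_{K_2}\!\Big(\Big|\tfrac{1}{\lfloor n/m\rfloor}\sum_{w\in S_i}\exp(2\pi\mathbf i X_w s)\Big|\Big),
\]
to a union bound over a fine deterministic net, with the pointwise probability furnished by Lemma~\ref{l: aux 0876958237}. By Lemma~\ref{l: lip of prod}, $f$ is $L_f$-Lipschitz with $L_f \leq 8\pi K_2\, Y\, m$, where $Y=\max_w|X_w|$. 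The crucial observation is that on $\Lambda_n$ the coordinates of $X$ are deterministically bounded by the growth function, so $Y \leq \gfn(n)\leq K_3^{2n/\log_2 n}$, a subexponential quantity. I will take $\delta := (K_2/4)^{-m/2}/(4L_f)$ and a $\delta$-net $\mathcal N\subset[0,k/2]$ of cardinality at most $16\pi K_2 Y m k\,(K_2/4)^{m/2}+1$. The Lipschitz bound then guarantees that any $s\in B$ has a net point $\tilde s\in\mathcal N$ with $f(\tilde s)\geq \tfrac34(K_2/4)^{-m/2}\geq (K_2/2)^{-m/2}$, using $K_2\geq 4$ and $m\geq 2$.

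Next, fix $\varepsilon^*\in(0,1/2]$ to be chosen (depending on $\varepsilon$) and set $\ell^*:=\ell_{\text{\tiny\ref{l: aux 0876958237}}}(\varepsilon^*)$, $R^*:=R_{\text{\tiny\ref{l: aux 0876958237}}}(\varepsilon^*)$. I declare $R_{\text{\tiny\ref{l: aux 2398205987305}}}:=4\ell^* R^*$. If $|B|>R_{\text{\tiny\ref{l: aux 2398205987305}}}$, a greedy extraction (repeatedly picking a point of $B$ and discarding its $2R^*$-neighborhood) produces $s_1,\dots,s_{\ell^*}\in B$ pairwise $2R^*$-separated. For $n$ large enough $\delta\leq R^*/2$, so the associated net points $\tilde s_1,\dots,\tilde s_{\ell^*}\in\mathcal N$ are pairwise $R^*$-separated and still lie in $[0,k/2]$. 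Summing over the at most $|\mathcal N|^{\ell^*}$ ordered choices of net points and applying Lemma~\ref{l: aux 0876958237} to each fixed tuple yields
\[
\Prob\{|B|>R_{\text{\tiny\ref{l: aux 2398205987305}}}\}\ \leq\ |\mathcal N|^{\ell^*}\,(\varepsilon^*)^n.
\]

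The remaining task is to choose parameters so that $|\mathcal N|^{\ell^*}(\varepsilon^*)^n\leq(\varepsilon/2)^n$, and this is the main obstacle because the factor $(K_2/4)^{m\ell^*/2}$ from the net cardinality is genuinely exponential in $n$. I handle it by selecting $\ell_{\text{\tiny\ref{l: aux 2398205987305}}}$ to be a sufficiently large multiple of $\ell^*$: the hypotheses $n/m\geq \ell_{\text{\tiny\ref{l: aux 2398205987305}}}$ and $k\leq 2^{n/\ell_{\text{\tiny\ref{l: aux 2398205987305}}}}$ then force $m\leq n/\ell_{\text{\tiny\ref{l: aux 2398205987305}}}$ and $k^{\ell^*}\leq 2^{n\ell^*/\ell_{\text{\tiny\ref{l: aux 2398205987305}}}}$. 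Combined with $K_2\leq 2/\varepsilon$ and the subexponential estimates on $Y^{\ell^*}$ and on polynomial factors, this yields
\[
|\mathcal N|^{\ell^*}(\varepsilon^*)^n \ \leq\ \Big[\varepsilon^*\cdot 2^{\ell^*/\ell_{\text{\tiny\ref{l: aux 2398205987305}}}}\cdot (K_2/4)^{\ell^*/(2\ell_{\text{\tiny\ref{l: aux 2398205987305}}})}\Big]^n\cdot D_n^n,
\]
with $D_n\to 1$ as $n\to\infty$. Picking $\ell_{\text{\tiny\ref{l: aux 2398205987305}}}$ large enough makes the two bracketed exponential correction terms arbitrarily close to $1$, after which choosing $\varepsilon^*$ small enough relative to $\varepsilon$ (and $n_{\text{\tiny\ref{l: aux 2398205987305}}}$ large enough to absorb $D_n$) yields the desired $(\varepsilon/2)^n$ bound. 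Since $\ell^*$ itself depends on $\varepsilon^*$, the choice is carried out sequentially: first pick $\varepsilon^*$ satisfying the inequality assuming $\ell^*$ has been fixed, then confirm consistency by choosing $\ell_{\text{\tiny\ref{l: aux 2398205987305}}}$ large compared to the resulting $\ell^*$.
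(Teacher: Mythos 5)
Your argument is correct and mirrors the paper's own proof: both proofs discretize $[0,k/2]$ on a grid whose spacing comes from the Lipschitz estimate of Lemma~\ref{l: lip of prod}, apply Lemma~\ref{l: aux 0876958237} to tuples of well-separated grid points via a union bound, and convert a large bad set into such a tuple by a greedy/pigeonhole extraction with the same threshold $R=4\ell^* R^*$. The only cosmetic differences are the specific grid spacing (the paper's $\beta$ versus your $\delta$, both fine) and that the paper fixes $\ell=2\ell^*$ and $\varepsilon^*=\varepsilon^{3/2}/32$ explicitly rather than appealing to a qualitative "choose $\ell$ large, then $\varepsilon^*$ small" argument, which in fact simplifies away the circularity you worried about.
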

\begin{proof}
Fix any $\varepsilon\in(0,1/2]$, and define $\widetilde\varepsilon:=\varepsilon^{3/2}/32$,
$\widetilde\ell:=\ell_{\text{\tiny\ref{l: aux 0876958237}}}(\widetilde\varepsilon)$,
$\ell:=2\widetilde\ell$,
and $R:=4 R_{\text{\tiny\ref{l: aux 0876958237}}}(\widetilde\varepsilon)
\ell_{\text{\tiny\ref{l: aux 0876958237}}}(\widetilde\varepsilon)> 1$.

Assume that the parameters $k,m,n$ and $S_1,\dots,S_m$ satisfy the assumptions of the lemma.
In particular, we assume that $n$ is large enough so that $(8K_2\pi n)^{\widetilde \ell}\leq 2^n$ and $\gfn(n)^{\widetilde\ell}\leq 2^n$.
Denote
$$
   \beta:=(8K_2\pi m\gfn(n))^{-1}(2K_2)^{-m/2}  \qand
   a_{ij}:=\Big|\frac{1}{\lfloor n/m\rfloor} \sum_{w\in S_i}\exp(2\pi{\bf i}\,\eta[I_w]\, s_j/k)\Big|,
   \, \, i\leq m,\, j\leq \widetilde\ell .
$$
Let $T:=[0,k/2]\cap\,\beta\Z$. By Lemma~\ref{l: aux 0876958237}
 for any collection $s_1,\dots,s_{\widetilde\ell}$ of points from $T$ satisfying $|s_j-s_q|\geq
R_{\text{\tiny\ref{l: aux 0876958237}}}(\widetilde\varepsilon)$
for all $j\neq q$, we have
$$
\Prob\bigg\{\forall j\leq \widetilde\ell \,:\,\, \, \prod\limits_{i=1}^m\psi_{K_2}(a_{ij})
%\bigg(\Big|\frac{1}{\lfloor n/m\rfloor}\sum_{w\in S_i}\exp(2\pi{\bf i}X_w s_j)\Big|\bigg)
\geq (K_2/2)^{-m/2}\bigg\}
\leq \widetilde\varepsilon^{\,n}.
$$
Taking the union bound over all possible choices of $s_1,\dots,s_{\widetilde\ell}$ from $T$, we get
\begin{equation}\label{eq: aux 2058032958}
\begin{split}
\Prob\bigg\{\prod\limits_{i=1}^m\psi_{K_2}(a_{ij})
%\Big|\frac{1}{\lfloor n/m\rfloor}\sum_{w\in S_i}\exp(2\pi{\bf i}X_w s_j)\Big|
&\geq (K_2/2)^{-m/2}\mbox{ for all $j\leq {\widetilde\ell}$ and for
some $s_1,\dots,s_{\widetilde\ell}\in T$}\\
&\mbox{with $|s_p-s_q|\geq R_{\text{\tiny\ref{l: aux 0876958237}}}(\widetilde\varepsilon)$
for all $p\neq q$}\bigg\}
\leq \widetilde\varepsilon^{\,n}|T|^{\widetilde\ell}.
\end{split}
\end{equation}
Further, in view of Lemma~\ref{l: lip of prod},
for any realization of $X_w$'s
the product
$$
 f(s):=\prod\limits_{i=1}^m \psi_{K_2}\bigg(\Big|\frac{1}{\lfloor n/m\rfloor}
 \sum_{w\in S_i}\exp(2\pi{\bf i}X_w s)\Big|\bigg),
$$
viewed as a function of $s$, is $(8 K_2\pi \gfn(n)m)$-Lipschitz.
This implies that for any pair $(s,s')\in\R_+^2$,
satisfying $|s-s'|\leq \beta$, we have
$$
 f(s)\geq (K_2/2)^{-m/2}\quad \quad  \mbox{whenever}\quad
%\prod\limits_{i=1}^m\psi_{K_2}(a_{ij})
%\psi_{K_2}\bigg(\Big|\frac{1}{\lfloor n/m\rfloor}
%\sum_{w\in S_i}\exp(2\pi{\bf i}X_w s')\Big|\bigg)
 f(s')\geq (K_2/4)^{-m/2}.
$$
Moreover,  for any collection $s_1',\dots,s_{\widetilde\ell}'$ of numbers from $[0,k/2]$
satisfying $|s_p'-s_q'|\geq 2R_{\text{\tiny\ref{l: aux 0876958237}}}(\widetilde\varepsilon)$ for all
$p\neq q$ there are numbers $s_1,\dots,s_{\widetilde\ell}\in T$ with $|s_q-s_q'|\leq \beta$
$|s_p-s_q|\geq R_{\text{\tiny\ref{l: aux 0876958237}}}(\widetilde\varepsilon)$ for all $p\neq q$
(we used also $2\beta\leq  1\leq R_{\text{\tiny\ref{l: aux 0876958237}}}(\widetilde\varepsilon)$).
This, together with \eqref{eq: aux 2058032958}, yields
\begin{align*}
  \Prob\bigg\{&\prod\limits_{i=1}^m\psi_{K_2}\bigg(\Big|\frac{1}{\lfloor n/m\rfloor}
    \sum_{w\in S_i}\exp(2\pi{\bf i}X_w s_j')\Big|\bigg)\geq (K_2/4)^{-m/2}\mbox{ for all
    $j\leq {\widetilde\ell}$ and some $s_1',\dots,s_{\widetilde\ell}'\in [0,k/2]$}\\
  &\mbox{with $|s_p'-s_q'|\geq 2R_{\text{\tiny\ref{l: aux 0876958237}}}(\widetilde\varepsilon)$
       for all $p\neq q$}\bigg\}\\
  &\hspace{1cm}\leq \widetilde\varepsilon^{\,n}|T|^{\widetilde\ell}\leq \widetilde\varepsilon^{\,n}\,
    (k/\beta)^{\widetilde\ell}
  \leq \widetilde\varepsilon^{\,n} \,2^n\, (8K_2\pi m\gfn(n))^{\widetilde\ell} (2K_2)^{m\widetilde\ell/2}\\
  &\hspace{1cm}\leq \widetilde\varepsilon^{\,n}\, 8^n\,(4/\varepsilon)^{m\widetilde\ell/2}\,
   \leq \widetilde\varepsilon^{\,n}\, \varepsilon^{-n/2} \,16^n\,
    \leq (\varepsilon/2)^n.
\end{align*}
The event whose probability is estimated above, clearly contains the event in question ---
$$
\bigg\{\Big|\Big\{s\in[0,k/2]:\;\prod\limits_{i=1}^m\psi_{K_2}\Big(\Big|\frac{1}{\lfloor n/m\rfloor}
\sum_{w\in S_i}\exp(2\pi{\bf i}X_w s)\Big|\Big)\geq (K_2/4)^{-m/2}\Big\}\Big|
\geq 4 R_{\text{\tiny\ref{l: aux 0876958237}}}(\widetilde\varepsilon){\widetilde\ell}\bigg\}.
$$
This, and our choice of parameters, implies the result.
\end{proof}

\begin{lemma}[Moderately small product for almost all $s$]\label{l: aux 20985059837}
For any $\varepsilon\in(0,1]$ and $z\in(0,1)$ there are $\varepsilon'=\varepsilon'(\varepsilon)\in(0,1/2]$,
$n_{\text{\tiny\ref{l: aux 20985059837}}}
=n_{\text{\tiny\ref{l: aux 20985059837}}}(\varepsilon,z)\geq 10$,
and $C_{\text{\tiny\ref{l: aux 20985059837}}}=C_{\text{\tiny\ref{l: aux 20985059837}}}(\varepsilon,z)\geq 1$
with the following property.
Let $n\geq n_{\text{\tiny\ref{l: aux 20985059837}}}$, $2^n\geq k\geq 1$,
$C_{\text{\tiny\ref{l: aux 20985059837}}}\leq m\leq n/4$, and  $4\leq K_2\leq 1/\varepsilon$.
Let $X=(X_1,\dots,X_n)$ be a random vector
uniformly distributed on $\Lambda_n$.
Fix disjoint subsets $S_1,\dots,S_m$ of $[n]$, each of cardinality $\lfloor n/m\rfloor$.
Then
$$
\Prob\bigg\{ \forall s\in [z,\varepsilon' k] \, : \, \, \,\prod\limits_{i=1}^m\psi_{K_2}\bigg(\Big|\frac{1}{\lfloor n/m\rfloor}
\sum_{w\in S_i}\exp(2\pi{\bf i}X_w s)\Big|\bigg)\leq e^{-\sqrt{m}}\bigg\}
\geq 1-(\varepsilon/2)^n.
$$
\end{lemma}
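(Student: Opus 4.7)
The plan is to combine a pointwise probabilistic estimate --- exploiting independence of the $m$ factors coming from disjointness of the $S_i$'s --- with the Lipschitz bound of Lemma~\ref{l: lip of prod} and a union bound over a net of $[z,\varepsilon'k]$. For each fixed $s$, the product $f(s) := \prod_{i=1}^m \psi_{K_2}\bigl(\bigl|\lfloor n/m\rfloor^{-1}\sum_{w\in S_i}\exp(2\pi{\bf i}X_w s)\bigr|\bigr)$ decomposes into $m$ mutually independent factors across $i$ since the $S_i$'s are disjoint and the coordinates of $X$ are independent on $\Lambda_n$; Lemma~\ref{l: lip of prod} bounds the Lipschitz constant of $f(\cdot)$ by $L := 8K_2\pi\gfn(n)m$. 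Hence it suffices to choose a net $\mathcal N \subset [z,\varepsilon'k]$ of step $\delta := e^{-\sqrt m}/(2L)$, so that $|\mathcal N|\leq 2L\varepsilon'k\, e^{\sqrt m}$, and prove the pointwise bound $\Prob\{f(s)>e^{-\sqrt m}/2\}\leq (\varepsilon/2)^n/|\mathcal N|$ for each $s\in\mathcal N$.

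For the pointwise estimate I would set $\mu_w(s):=\Exp\exp(2\pi{\bf i}X_w s)$ and $\bar\mu_i(s):=|S_i|^{-1}\sum_{w\in S_i}\mu_w(s)$. Since $kX_w$ is uniform on an integer interval of length at least $k$, the sinc formula gives $|\mu_w(s)|\leq \min(1,(2s)^{-1})$ when $\varepsilon'\leq 1/4$. After choosing $\varepsilon'$ small enough and working first in the regime $s\geq C(K_2,z)$, the bound $|\bar\mu_i(s)|\leq 1/(2K_2)$ holds uniformly in $i$; a Hoeffding concentration for the sum of $|S_i|\geq \lfloor n/m\rfloor$ independent unit-modulus complex random variables then yields $\Prob\{\phi_i(s)>1/K_2\}\leq \exp(-c|S_i|/K_2^2)$. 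By independence across $i$ and a Chernoff bound on the count $Z:=\#\{i:\psi_{K_2}(\phi_i(s))=1/K_2\}$, $Z$ is close to $m$ with very high probability, so $f(s)\leq K_2^{-Z}\leq e^{-\sqrt m}/2$ once $K_2\geq 4$ and $m\geq C_{\text{\tiny\ref{l: aux 20985059837}}}$. The complementary sub-interval $s< C(K_2,z)$, where the sinc bound does not directly force $|\bar\mu_i(s)|\leq 1/(2K_2)$, is handled via the sinc integral $\int_0^{k/2}|\mu_w(s)|^2\,ds\leq \pi^2/8$, which together with Cauchy--Schwarz and Markov's inequality confines the ``resonant'' set $\{s:|\bar\mu_i(s)|>1/(2K_2)\}$ to Lebesgue measure $O(K_2^2)$ per index $i$; either by invoking Lemma~\ref{l: aux 2398205987305} with a slightly enlarged auxiliary $K_2$ or by further tuning $\varepsilon'$ and $C_{\text{\tiny\ref{l: aux 20985059837}}}$, the Lipschitz-based net absorbs the resonant contribution.

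The main technical obstacle is balancing the pointwise probability estimate against the net cardinality: the latter can be as large as $e^{n\log 2+o(n)}$ because $k$ is allowed up to $2^n$ and $\gfn(n)$ may be superpolynomial in $n$. Ensuring that the Chernoff/Bernstein exponent --- roughly $c'n/(mK_2^2)$ in the best case --- dominates this quantity uniformly over the full range $4\leq K_2\leq 1/\varepsilon$ and $C_{\text{\tiny\ref{l: aux 20985059837}}}\leq m\leq n/4$ requires a careful choice of $\varepsilon', n_{\text{\tiny\ref{l: aux 20985059837}}}$, and $C_{\text{\tiny\ref{l: aux 20985059837}}}$ depending on both $\varepsilon$ and $z$, together with a refined treatment of the resonant frequencies near the lower endpoint of $[z,\varepsilon'k]$.
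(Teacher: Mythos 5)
Your overall strategy --- a pointwise probability estimate combined with the Lipschitz bound of Lemma~\ref{l: lip of prod} and a union bound over a net of $[z,\varepsilon'k]$ --- is the same as the paper's. The decomposition into independent factors across disjoint $S_i$'s is also the right starting point. But the Hoeffding-based pointwise estimate is too weak, by orders of magnitude, to survive the union bound, and this gap is not repairable by ``careful tuning'' of parameters.

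Concretely: the net must have spacing of order $e^{-\sqrt m}/L$ with $L=8K_2\pi\gfn(n)m$, and since $k$ can be as large as $2^n$ and $\gfn(n)\le K_3^{2n/\log_2 n}$ can itself be $\approx 2^n$, the net cardinality is of order $4^n e^{\sqrt m}$ up to polynomial factors. Your pointwise estimate says $\Prob\{\phi_i(s)>1/K_2\}\le 2e^{-cN/K_2^2}$ with $N=\lfloor n/m\rfloor$ and $c$ a universal Hoeffding constant. Requiring $\Omega(m)$ of the $m$ indices to be ``good'' and applying Chernoff gives at best $\exp(-c'n/K_2^2)$ with $c'$ absolute. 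Since the statement allows any $K_2\ge 4$, the exponent is at most $c'n/16$, while $c'\le 1$; this is far smaller than the $2n\ln 2+n\ln(2/\varepsilon)$ needed to dominate the net cardinality times $(2/\varepsilon)^n$. There is no free parameter in the Hoeffding bound that can be tuned to close this gap. The paper's pointwise estimate has exponent $\frac{n}{8}\ln\frac{1}{4C''\varepsilon'}$, which \emph{is} tunable via the net scale $\varepsilon'$; this is precisely what your estimate lacks.

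The reason the paper's exponent depends on $\varepsilon'$ is that it does \emph{not} use concentration around the mean. Instead it observes that $f_i(s)\ge 1-2/\sqrt m$ forces $\cos(2\pi s(X_w-X_{w'}))\ge 1-32/\sqrt m$ for $\Omega(n/m)$ pairs, i.e.\ $\dist(s(X_w-X_{w'}),\Z)\le 2m^{-1/4}$; since $kX_w$ is uniform on an integer interval of length $\ge k$ and $s\le\varepsilon'k$, a direct counting argument bounds the per-coordinate probability by $\max(C''/(zm^{1/4}),C''\varepsilon')$. The factor $\varepsilon'$ appears because shrinking the $s$-range shrinks the measure of the resonant set of lattice positions --- an anti-concentration phenomenon, not a concentration one. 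This is the key missing idea in your proposal.

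Two further problems: (i) your treatment of the ``resonant'' sub-interval $s<C(K_2,z)$ via the sinc integral and Markov produces only a measure-theoretic bound on the bad set of $s$, whereas the Lipschitz/net machinery requires a \emph{pointwise} probability bound at every net point; the paper instead handles small $s$ by imposing $m\ge(\varepsilon'z)^{-4}$ so that $C''/(zm^{1/4})\le C''\varepsilon'$, keeping the argument pointwise. (ii) Invoking Lemma~\ref{l: aux 2398205987305} to absorb the resonant contribution would be circular: Lemmas~\ref{l: aux 2398205987305} and~\ref{l: aux 20985059837} are parallel, independent ingredients of Proposition~\ref{prop: 09582593852}, and neither is derivable from the other.
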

\begin{proof}
Let $\varepsilon'>0$ will be chosen later. Fix any $s\in [z,\varepsilon ' k]$.
Assume $m\geq (\eps' z)^{-4}\geq 10$.
For $i\leq m$ denote 
$$
 \gamma _i(s):=\Big|\frac{1}{\lfloor n/m\rfloor}\sum_{w\in S_i}\exp(2\pi{\bf i}X_w s)\Big|,
 \quad \quad f_i(s):=\psi_{K_2}\big( \gamma _i(s)\big),
\quad \mbox{ and  }  \quad
f(s):=\prod\limits_{i=1}^m f_i(s)
$$
Observe that  by the definition of $\psi_{K_2}$ for each $i\leq m$ we have
$f_i(s)=\gamma _i(s)$, 
provided $\gamma _i(s)\geq 1/K_2$. 
Next note that if for some complex unit numbers $z_1 ,..., z_N$ their average
$v:=\sum _{i=1}^N z_i /N$ has length $1-\alpha>0$ then, taking the unit complex number
$z_0$ satisfying $\la z_0, v\ra =|v|$ we have 
$$N(1-\alpha) \leq \sum _{i=1}^N Re \la z_i, v\ra \leq N,$$
therefore there are at least $N/2+1$ indices $i$ such that $Re \la z_i, v\ra \geq 1-4\alpha$.
This in turn implies that there exists an index $j$ such that there are at least $N/2$
indices $i$ with  $Re \la z_i, \bar z_j\ra \geq 1-16\alpha$. Thus that the event
$\big\{f_i(s)\geq 1-\frac{2}{\sqrt{m}}\big\}$ is contained in the event
%\psi_{K_2}\bigg(\Big|\frac{1}{\lfloor n/m\rfloor}
%\sum_{w\in S_i}\exp(2\pi{\bf i}X_w s)\Big|\bigg)
$$
  \Big\{\exists\;\; w'\in S_i :\;\;\; \cos(2\pi  s(X_w- X_{w'}))\geq
  1-\frac{32}{\sqrt{m}}\;\;\mbox{for at least } \, \frac{n}{2m}\,
  \mbox{ indices \, }w\in S_i\setminus\{w'\}\Big\}.
$$
 To estimate the  probability of the later event, we take the union bound over all choices of
$n/(2m)$ indices from $S_i$, and over all choices of $w'$. We then get
\begin{align*}
\Prob\bigg\{ f_i(s)
%\psi_{K_2}\bigg(\Big|\frac{1}{\lfloor n/m\rfloor}
%\sum_{w\in S_i}\exp(2\pi{\bf i}X_w s)\Big|\bigg)
 \geq 1-\frac{2}{\sqrt{m}}\bigg\}
%\\&
 &\leq \frac{n}{m}\,2^{\lfloor n/m\rfloor}\,
 \max\limits_{w'\in S_i,\,F\subset S_i\setminus\{w'\},\atop |F|\geq n/(2m)}
 \Prob\bigg\{\forall w\in F :\,\, \dist(s(X_w-X_{w'}),\Z)\leq \frac{2}{m^{1/4}}\bigg\}
%\\ &\leq
% \frac{n}{m}\,2^{n/m}\,\bigg(\max\Big(\frac{C''}{z\,m^{1/4}},C''\varepsilon'\Big)\bigg)^{n/(2m)}
\end{align*}
To estimate the probability under maximum we use the definition of $\Lambda_n$ and
 independence of coordinates of the vector $X$. Note that for each fixed $w$ there
 is an integer interval $I_w$ of the length at least $2k$ such that $X_w$ is uniformly
 distributed on $I_w/k$. Therefore, fixing a realization $X_{w'}=b/k$, $b\in \Z$,
 we need to count how many $a\in I_w$ are such that $s(a-b)/k$ is close to an integer.
 This can be done by splitting $I_w$ into subintervals of length $k$ and considering cases
 $z\leq s\leq 1$, $1<s\leq   C'k/m^{1/4}$ (this case can be empty), and
 $C'k/m^{1/4}<s\leq  \eps' k$.
This leads to the following bound with an absolute constant $C''>0$,
\begin{align*}
\Prob\bigg\{
 f_i(s)
%\psi_{K_2}\bigg(\Big|\frac{1}{\lfloor n/m\rfloor}
%\sum_{w\in S_i}\exp(2\pi{\bf i}X_w s)\Big|\bigg)
 \geq 1-\frac{2}{\sqrt{m}}\bigg\}
 &\leq
 \frac{n}{m}\,2^{n/m}\,\bigg(\max\Big(\frac{C''}{z\,m^{1/4}},C''\varepsilon'\Big)\bigg)^{n/(2m)}
 \leq \frac{n}{m}\,\big(4 C''\varepsilon'\big)^{n/(2m)}
\end{align*}
%
%$$\Prob\bigg\{\psi_{K_2}\bigg(\Big|\frac{1}{\lfloor n/m\rfloor}
%\sum_{w\in S_i}\exp(2\pi{\bf i}X_w s)\Big|\bigg)\geq 1-\frac{2}{\sqrt{m}}\bigg\}\leq
%\frac{n}{m}\,2^{\lfloor n/m\rfloor}\,\max\bigg(\frac{C''}{z\,m^{1/4}},C''\varepsilon'\bigg)^{n/(2m)}.$$

Using this estimate and the fact that
 $\psi_{K_2}(t)\leq 1$ for
$t\leq 1$ (so, each $f_i (s)\leq 1$), we obtain
\begin{align*}
\Prob\bigg\{ f(s)
%\prod\limits_{i=1}^m \psi_{K_2}\bigg(\Big|\frac{1}{\lfloor n/m\rfloor}
%\sum_{w\in S_i}\exp(2\pi{\bf i}X_w s)\Big|\bigg)
\geq \Big(1-\frac{2}{\sqrt{m}}\Big)^{3m/4}\bigg\}
&\leq
\Prob\bigg\{  f_i(s)
%\psi_{K_2}\bigg(\Big|\frac{1}{\lfloor n/m\rfloor}
%\sum_{w\in S_i}\exp(2\pi{\bf i}X_w s)\Big|\bigg)
\geq 1-\frac{2}{\sqrt{m}}\;\;\mbox{for at least $m/4$ indices $i$}
\bigg\}\\
&\leq 2^m  \bigg(\frac{n}{m}\,\big(4 C''\varepsilon'\big)^{n/(2m)}\bigg)^{m/4}=
  \bigg(\frac{16n}{m}\bigg)^{m/4}
\big(4 C''\varepsilon'\big)^{n/8}.
\end{align*}

The last step of the proof is somewhat similar to the one used in the proof of
 Lemma~\ref{l: aux 2398205987305} --- we discretize the interval $[z,\varepsilon ' k]$ 
 and use the Lipschitzness  $f(s)$. Recall that $\gfn(n)\leq 2^n$ and thus, by 
 Lemma~\ref{l: lip of prod}, $f(s)$ is $(8 K_2\pi 2^n\,m)$-Lipschitz. 
 Let 
$$ 
 \beta:= \big(1-2/\sqrt{m}\big)^{3m/4}\big(8 K_2\pi \,2^n m \big)^{-1}
 \quad \qand \quad  T:=[z,\varepsilon ' k]\cap \beta\Z.
$$
Then for any $s,s'\in [z,\varepsilon ' k]$
satisfying $|s-s'|\leq \beta$ we have 
$|f(s)-f(s')|\leq \big(1-2/\sqrt{m}\big)^{3m/4}$ 
deterministically.
%
%\begin{align*}\bigg|&\prod\limits_{i=1}^m\psi_{K_2}\bigg(\Big|
%\frac{1}{\lfloor n/m\rfloor}\sum_{w\in S_i}\exp(2\pi{\bf i}X_w s)\Big|\bigg)
%-\prod\limits_{i=1}^m\psi_{K_2}\bigg(\Big|\frac{1}{\lfloor n/m\rfloor}
%\sum_{w\in S_i}\exp(2\pi{\bf i}X_w s')\Big|\bigg)\bigg|\\
%&\leq \Big(1-\frac{2}{\sqrt{m}}\Big)^{3m/4}.\end{align*}
%Hence
%\begin{align*}\Prob\bigg\{&\prod\limits_{i=1}^m\psi_{K_2}\bigg(\Big|
%\frac{1}{\lfloor n/m\rfloor}\sum_{w\in S_i}\exp(2\pi{\bf i}X_w s)\Big|\bigg)\leq 
%2\Big(1-\frac{2}{\sqrt{m}}\Big)^{3m/4}\;\;\mbox{ for all }s\in[z,\varepsilon'k]
%\bigg\}\\&\geq\Prob\bigg\{\prod\limits_{i=1}^m\psi_{K_2}\bigg(\Big|\frac{1}{\lfloor n/m\rfloor}
%\sum_{w\in S_i}\exp(2\pi{\bf i}X_w s)\Big|\bigg)\leq \Big(1-\frac{2}{\sqrt{m}}\Big)^{3m/4}\;\;
%\mbox{ for all }s\in T\bigg\}\\&\geq 1-\frac{k}{\beta}\,2^m  \bigg(\frac{n}{m}
%\,2^{\lfloor n/m\rfloor}\,\max\Big(\frac{C''}{z\,m^{1/4}},C''\varepsilon'\Big)^{n/(2m)}
%\bigg)^{m/4}\\&\geq 1-\varepsilon^n,\end{align*}
This implies that 
\begin{align*}
    \Prob\bigg\{\forall s \in[z,\varepsilon'k]\, :\,\,\, 
    f(s)&\leq 2\Big(1-\frac{2}{\sqrt{m}}\Big)^{3m/4}\bigg\}
 \geq\Prob\bigg\{\forall s \in T\, :\,\,\, 
    f(s)\leq \Big(1-\frac{2}{\sqrt{m}}\Big)^{3m/4}\bigg\}
 \\&\geq 1-\frac{k}{\beta}\bigg(\frac{16n}{m}\bigg)^{m/4}
     \big(4 C''\varepsilon'\big)^{n/8}
% \\ &
\geq 1-(\varepsilon/2)^n,
\end{align*}
provided that $\varepsilon':=c''\varepsilon^{8}$ for a sufficiently small universal constant $c''>0$.
\end{proof}

\begin{lemma}\label{l: aux 98508746104921-4}
Let $\rho, \varepsilon\in(0,1]$,  $k\geq 1$, $h\in\R$, $a_1\geq h+1$, $a_2\leq h-\rho-1$. 
Let $Y_1,Y_2$ be independent random variables,
with $Y_1$ uniformly distributed on $[h,a_1]\cap \frac{1}{k}\Z$ and
$Y_2$ uniformly distributed on $[a_2,h-\rho]\cap \frac{1}{k}\Z$.
Then for every $s\in  [-\eps/8, \eps/8]$ one has
$$
\Prob\big\{\big|\exp\big(2\pi{\bf i}Y_{1} s\big)+\exp\big(2\pi{\bf i}Y_{2} s\big)\big|> 2
- 2\pi \rho^2 s^2\big\}\leq \varepsilon.
$$
\end{lemma}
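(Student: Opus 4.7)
The strategy is to translate the trigonometric event into an anti-concentration inequality for the distance of $(Y_1-Y_2)s$ to the integer lattice, and then to bound this probability by an explicit lattice-point count.

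First, using $|e^{i\alpha}+e^{i\beta}|=2|\cos((\alpha-\beta)/2)|$, the event is equivalent to $|\cos(\pi(Y_1-Y_2)s)|>1-\pi\rho^2 s^2$, i.e.\ to $\dist((Y_1-Y_2)s,\Z)<\tilde\eta$ with $\tilde\eta:=\arccos(1-\pi\rho^2 s^2)/\pi$. The crude bound $\arccos(1-x)\le\pi\sqrt{x/2}$ gives $\tilde\eta\le\sqrt{\pi/2}\,\rho|s|$, while the sharper form $\arccos(1-x)=2\arcsin(\sqrt{x/2})\le\sqrt{2x}(1+x/12)$, combined with $\rho\le 1$ and $|s|\le 1/8$, yields the strict inequality $\tilde\eta<\rho|s|$, which will be crucial below.

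Second, the support hypotheses force $Y_1-Y_2\ge\rho$ deterministically, so $(Y_1-Y_2)|s|\ge\rho|s|>\tilde\eta$. Consequently $(Y_1-Y_2)s$ is bounded away from $0$, and the event can occur only when $(Y_1-Y_2)s$ lies within $\tilde\eta$ of some \emph{nonzero} integer $m$. In particular, if $(a_1-a_2)|s|<1/2$, no such $m$ is accessible, and the probability vanishes outright.

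Third, assume $s>0$ and $(a_1-a_2)s\ge 1/2$ (the opposite sign is symmetric). The preimage in $Y_1-Y_2$-space of each relevant integer $m\ge 1$ is an interval of length $2\tilde\eta/s$, containing at most $2\tilde\eta k/s+1$ points of $\tfrac{1}{k}\Z$; there are at most $(a_1-a_2)s+1$ admissible $m$'s. The observation $\Prob\{Y_1-Y_2=v\}\le 1/\max(N_1,N_2)$ (because only $\le\min(N_1,N_2)$ pairs in $I_1\times I_2$ produce a given difference), together with $N_1\ge k(a_1-h)\ge k$, $N_2\ge k(h-\rho-a_2)\ge k$, and $\rho\le 1$ giving $\max(N_1,N_2)\ge k(a_1-a_2)/3$, yields
\begin{equation*}
\Prob\bigl\{\text{event}\bigr\}\;\le\;\frac{3\bigl((a_1-a_2)s+1\bigr)\bigl(2\tilde\eta k/s+1\bigr)}{k(a_1-a_2)}.
\end{equation*}
Substituting $\tilde\eta\le\sqrt{\pi/2}\,\rho|s|$, using $(a_1-a_2)s\ge 1/2$ to absorb the additive $1$'s into a multiplicative factor, and invoking $|s|\le\varepsilon/8$ together with $\rho,\varepsilon\le 1$ and $k\ge 1$, the right-hand side simplifies to an expression bounded by $\varepsilon$. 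The main obstacle is the precise accounting of numerical constants in this last step: the buffer provided by the factor $1/8$ in $|s|\le\varepsilon/8$, the exclusion of the integer $m=0$, and the combined lower bound on $\max(N_1,N_2)$ must all be invoked to land at exactly $\varepsilon$ rather than at a constant multiple.
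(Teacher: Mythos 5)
Your overall strategy is sound and genuinely different from the paper's in two respects: (i) you work with the difference variable $Y_1-Y_2$ and the crude inequality $\Prob\{Y_1-Y_2=v\}\leq 1/\max(N_1,N_2)$, whereas the paper conditions on $Y_2$ and then uses that $Y_1$ (hence $(Y_1-\widetilde Y_2)s$) is genuinely uniform on a long arithmetic progression; (ii) your dichotomy is calibrated to $(a_1-a_2)|s|$, whereas the paper's is calibrated to $a_1-h$ versus $2\varepsilon^{-1}$ (so that in Case~2 the support size $N_1$ alone exceeds $2k/\varepsilon$). Your reduction to $\dist((Y_1-Y_2)s,\Z)<\tilde\eta$ with $\tilde\eta<\rho|s|$, and the observation that $Y_1-Y_2\geq\rho$ kills $m=0$, correctly replicate the content of the paper's Case~1 (which instead uses the inequality $\cos t\leq 1-t^2/\pi$ on $[-\pi/2,\pi/2]$ directly and deterministically, without passing through the lattice-distance reduction).

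The genuine gap is the one you flag yourself: the explicit inequality you write down does not in fact deliver the bound $\leq\varepsilon$. Plugging your ingredients into
$$\Prob\leq\frac{3\bigl((a_1-a_2)s+1\bigr)\bigl(2\tilde\eta k/s+1\bigr)}{k(a_1-a_2)},$$
using $\tilde\eta<\rho|s|\leq|s|$, $2\tilde\eta k/s+1\leq 3k$, $(a_1-a_2)s+1\leq 3(a_1-a_2)s$, and cancelling, gives $\Prob\lesssim 27|s|\leq 27\varepsilon/8>3\varepsilon$, not $\leq\varepsilon$. (The constant $1/3$ in $\max(N_1,N_2)\geq k(a_1-a_2)/3$ is also not justified at the stated level of generality: $\max(N_1,N_2)\geq k(a_1-a_2-\rho)/4$ is what the additivity argument yields, and promoting $1/4$ to $1/3$ requires $a_1-a_2\geq 3$, which you only get from being inside your Case~2.) The argument can be rescued, but only with sharper bookkeeping than you use: (a) observe that a nonzero $m$ is reachable only when $(a_1-a_2)s+\tilde\eta\geq 1$, i.e.\ $(a_1-a_2)s\geq 1-\rho s\geq 7/8$, which is a stronger threshold than your $1/2$ and gives $a_1-a_2\geq 7/\varepsilon$; (b) exploit the strict inequality (open interval) to count at most $\lceil 2\tilde\eta k/s\rceil\leq 2k$ lattice points per integer $m$, eliminating the $+1$; (c) bound the number of admissible $m$ by $\lfloor(a_1-a_2+\rho)s\rfloor\leq\tfrac{8}{7}(a_1-a_2)s$. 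With these three tightenings the quotient does drop to $\leq 8|s|\leq\varepsilon$. As written, however, the final step of your proposal is an assertion, not a proof, and the stated constants do not close the estimate.
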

\begin{proof}
Clearly, it is enough to consider $0<s<\eps/8$ only. Note that  
$$
 \big|\exp\big(2\pi{\bf i}Y_{1} s\big)+\exp\big(2\pi{\bf i}Y_{2} s\big)\big|= 
 \big|1+\exp\big(2\pi{\bf i}(Y_1-Y_{2}) s\big)\big|= 2\big|\cos\big(\pi{\bf i}(Y_1-Y_{2}) s\big)\big|.
$$
We consider two cases.

\smallskip
\noindent 
{\it Case 1. } $a_1\leq h+2\varepsilon^{-1}$ and $a_2\geq h-2\varepsilon^{-1}$.
In this case, deterministically, $\rho \leq Y_1-Y_{2}\leq 4/\eps$, therefore, 
using that $\cos t\leq 1-t^2/\pi$ on $[-\pi/2, \pi/2]$,  we have 
for every $s\in(0,\varepsilon/8]$,
$$
 \big|\exp\big(2\pi{\bf i}Y_{1} s\big)+\exp\big(2\pi{\bf i}Y_{2} s\big)\big|\leq 2-2\pi \rho^2s^2.
$$

\smallskip
\noindent
{\it Case 2. }  Either $a_1> h+2\varepsilon^{-1}$ or $a_2< h-2\varepsilon^{-1}$.
Without loss of generality, we will assume the first inequality holds.
We condition on a realization $\widetilde Y_2$ of $Y_2$
(further in the proof, we compute conditional probabilities given $Y_2=\widetilde Y_2$).
For any $s\leq \varepsilon/8$, the event
$$
\big\{\big|1+\exp\big(2\pi{\bf i}(Y_1-\widetilde Y_{2}) s\big)\big|\geq 2-s^2\big\}
$$
is contained inside the event
$$
\big\{\dist\big((Y_1-\widetilde Y_{2}) s,\Z\big)\leq s\big\}.
$$
On the other hand, since $(Y_1-\widetilde Y_{2})s$ is uniformly distributed on a set
$[b_1,b_2]\cap \frac{s}{k}\Z$, for some $b_2\geq b_1+2\varepsilon^{-1} s$,
the probability of the last event is less than $\varepsilon$. The result follows.

\end{proof}
%The crucial point of the above lemma is that the constant $c_{\text{\tiny\ref{l: aux 98508746104921-4}}}$ participating in the definition
%of the event, does not depend on $\varepsilon$; in a sense, the probability of the event can be made arbitrarily small
%while keeping $c_{\text{\tiny\ref{l: aux 98508746104921-4}}}$ ``relatively large''.
%This feature will be used in the proof of Lemma~\ref{l: aux -29802609872}.

\begin{lemma}[Integration for small $s$]\label{l: aux -29802609872}
For any $\widetilde\varepsilon\in(0,1]$, $\rho\in(0,1/4]$ and $\delta\in (0,1/2]$ there are 
$n_{\text{\tiny\ref{l: aux -29802609872}}}
=n_{\text{\tiny\ref{l: aux -29802609872}}}(\widetilde\varepsilon,\delta,\rho)$,
$C_{\text{\tiny\ref{l: aux -29802609872}}}=C_{\text{\tiny\ref{l: aux -29802609872}}}(\widetilde\varepsilon,\delta,\rho)\geq 1$, and
$K_{\text{\tiny\ref{l: aux -29802609872}}}=K_{\text{\tiny\ref{l: aux -29802609872}}}(\delta,\rho)\geq 1$
with the following property.
Let $A_{nm}$ be defined as in (\ref{anm}),  $n\geq n_{\text{\tiny\ref{l: aux 20985059837}}}$, $k\geq 1$,
$m\in\N$ with $n/m\geq C_{\text{\tiny\ref{l: aux -29802609872}}}$ and $m\geq 2$, and let $X=(X_1,\dots,X_n)$ be a random vector
uniformly distributed on $\Lambda_n$. 
Then for every  $K_2\geq 4$,
\begin{align*}
\Prob\bigg\{&A_{nm} \,
%\frac{\big((\lfloor n/m\rfloor)!\big)^m\,(n-m\lfloor n/m\rfloor)!}{n!}\cdot\\&
\sum\limits_{S_1,\dots,S_m}\;
\int\limits_{-\sqrt{m}/C_{\text{\tiny\ref{l: aux -29802609872}}}}^{\sqrt{m}/C_{\text{\tiny\ref{l: aux -29802609872}}}}
\prod\limits_{i=1}^{m}\psi_{K_2}\bigg(\Big|\frac{1}{\lfloor n/m\rfloor}
\sum_{w\in S_i}\exp\big(2\pi{\bf i}X_{w} m^{-1/2}\,s\big)\Big|\bigg)\,ds\geq K_{\text{\tiny\ref{l: aux -29802609872}}}
\bigg\}\leq (\widetilde\varepsilon/2)^n,
\end{align*}
where the sum is taken over all disjoint subsets $S_1,\dots,S_m\subset[n]$ of cardinality $\lfloor n/m\rfloor$ each.
\end{lemma}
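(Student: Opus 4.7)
My proof plan is to bound the expectation over $X$ of the integrated quantity, and then pass to the high-probability conclusion via the integral Markov inequality of Lemma~\ref{l: int markov}. By Fubini's theorem and independence --- the coordinates $X_w$ are independent because $\Lambda_n$ is a product set, and for disjoint $S_i$'s the averages $U_i(s) := \frac{1}{\lfloor n/m\rfloor}\sum_{w\in S_i}\exp(2\pi{\bf i} X_w s/\sqrt m)$ are independent functions of $X$ --- we have
\begin{align*}
\Exp_X\prod_{i=1}^m\psi_{K_2}(|U_i(s)|)=\prod_{i=1}^m\Exp_X\psi_{K_2}(|U_i(s)|).
\end{align*}
Lemma~\ref{l: aux 2498276098059385-} then allows restricting to \emph{good} tuples $(S_1,\dots,S_m)$, for which at least $c_\delta m$ indices $i$ satisfy $\min(|S_i\cap Q_1|,|S_i\cap Q_2|)\geq(\delta/2)\lfloor n/m\rfloor$; bad tuples contribute at most $e^{-c_\delta n}A_{nm}^{-1}$, absorbed into the final constant.

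For each good $i$, I would pair up elements of $S_i\cap Q_1$ with elements of $S_i\cap Q_2$, obtaining $p\geq(\delta/4)\lfloor n/m\rfloor$ pairs. For each such pair $(w_1,w_2)$ with $w_1\in Q_1$, $w_2\in Q_2$, Lemma~\ref{l: aux 98508746104921-4} (applicable because $|s/\sqrt m|\leq\widetilde\varepsilon/8$ follows from $|s|\leq\sqrt m/C$ once $C_{\text{\tiny\ref{l: aux -29802609872}}}\geq 8/\widetilde\varepsilon$) gives $\Exp_X|\exp(2\pi{\bf i} X_{w_1}s/\sqrt m)+\exp(2\pi{\bf i} X_{w_2}s/\sqrt m)|\leq 2-(1-\widetilde\varepsilon)2\pi\rho^2 s^2/m$. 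Passing to means via Jensen's inequality and summing over pairs yields $|\bar\mu_i(s)|\leq 1-c_{\delta,\rho}s^2/m$ where $\bar\mu_i:=\Exp_X U_i$. Combining with the elementary variance bound $\Exp_X|U_i|^2\leq|\bar\mu_i|^2+1/|S_i|$ and the pointwise inequality $\psi_{K_2}(t)\leq\sqrt{t^2+K_2^{-2}}$ gives, for each good $i$,
\begin{align*}
\Exp_X\psi_{K_2}(|U_i(s)|)\leq\sqrt{1-c_{\delta,\rho}\,s^2/m+1/|S_i|+K_2^{-2}}.
\end{align*}
Multiplying over the $\geq c_\delta m$ good factors and bounding the remaining factors by $1$ yields an integrand dominated by $\exp(-c'\rho^2 s^2)\cdot\exp\bigl(O(m/|S_i|+m/K_2^2)\bigr)$; choosing $C_{\text{\tiny\ref{l: aux -29802609872}}}$ so that $n/m$ is sufficiently large (and using $K_2\geq 4$) makes the correction term $O(1)$, so the full integral in $s$ is at most a constant $K(\delta,\rho)/2$ whose dependence on $\rho$ comes from the Gaussian-type envelope $\int e^{-c'\rho^2 s^2}\,ds\lesssim 1/\rho$.

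To upgrade this expectation bound to the required $(\widetilde\varepsilon/2)^n$-probability estimate, I would apply Lemma~\ref{l: int markov} with $\phi(s)=K_0\exp(-c\rho^2 s^2)$, which reduces matters to a \emph{pointwise} estimate of the form
\begin{align*}
\Prob_X\bigl\{A_{nm}\textstyle\sum_\mathcal{S}\prod_i\psi_{K_2}(|U_i(s)|)\geq\phi(s)\bigr\}\leq(\widetilde\varepsilon/4)^n,\quad s\in[-\sqrt m/C,\sqrt m/C].
\end{align*}
This pointwise bound would be proved by discretizing $s$ at a scale controlled by Lemma~\ref{l: lip of prod}, union-bounding over the lattice points, and at each lattice point exploiting the product structure over $\geq c_\delta m$ independent good factors via a Chernoff/high-moment estimate in the spirit of Lemma~\ref{l: aux 9871039481}. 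The main obstacle is precisely this super-exponential concentration step: a direct Markov bound from the first-moment estimate yields only polynomial decay, whereas the statement demands $(\widetilde\varepsilon/2)^n$. Moreover, the very-small-$s$ window --- where the variance correction $1/|S_i|$ overtakes the main decay $\rho^2 s^2/m$ --- must be treated separately, either by noting that it has small Lebesgue measure contributing only $O(m/\sqrt n)$ to the integral, or by a refined use of the $\psi_{K_2}$-floor at $1/K_2$ to keep the product from being too close to $1$ there.
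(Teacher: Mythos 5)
Your overall scaffolding---Fubini over the product lattice, Lemma~\ref{l: aux 2498276098059385-} to pass to tuples with many ``good'' indices, Lemma~\ref{l: aux 98508746104921-4} applied pairwise inside each $S_i$, and Lemma~\ref{l: int markov} to convert pointwise into integral bounds---is the right toolbox and coincides with the one the paper uses. But there are two genuine problems.

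First, the expectation computation is not correct as written. The pointwise majorant $\psi_{K_2}(t)\leq\sqrt{t^2+K_2^{-2}}$ is wasteful: after Jensen each good factor contributes $\sqrt{1-c_{\delta,\rho}s^2/m+1/|S_i|+K_2^{-2}}$, and the product over the $\gtrsim c_\delta m$ good indices accumulates a factor of roughly $\exp\!\big(c_\delta m/(2K_2^2)+c_\delta m^2/(2n)\big)$. With $K_2\geq 4$ a fixed absolute constant, the term $m/K_2^2$ is \emph{linear in $m$}, and with only the hypothesis $n/m\geq C$ the term $m^2/n$ is also unbounded. So the ``correction'' is $\exp(\Theta(m))$, not $O(1)$, and the first-moment bound you aim for does not hold. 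The fix that the paper uses is to never take an expectation of $\psi_{K_2}$ in this regime at all, but instead to observe that on the event one actually cares about ($\gamma_i(s)\geq 1-\pi\rho^2\delta s^2/2$, which for $s\in[-z,z]$ forces $\gamma_i(s)\geq 1/2\geq 1/K_2$), one has the exact identity $f_i(s)=\psi_{K_2}(\gamma_i(s))=\gamma_i(s)$, so the smoothing floor never enters.

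Second, as you correctly flag yourself, a first-moment-plus-Markov argument can only deliver polynomial decay, while the lemma demands $(\widetilde\varepsilon/2)^n$. The discretization-plus-Chernoff plan you sketch is not carried out, and it also faces a structural obstacle: the sum over tuples $(S_1,\dots,S_m)$ has heavy dependencies in $X$, so a Chernoff bound over tuples is not available. The paper sidesteps this entirely by never asking for independence across tuples. Its argument is: (a) for a \emph{single fixed} good tuple and a \emph{single fixed} $s\in[-z,z]$, use independence of the coordinates of $X$ across the $\geq(\delta/4)\lfloor n/m\rfloor$ disjoint pairs inside $S_i\cap Q_1\times S_i\cap Q_2$, together with Lemma~\ref{l: aux 98508746104921-4}, to get a large-deviation bound $\Prob\{\gamma_i(s)\geq 1-\pi\rho^2\delta s^2/2\}\leq(4\varepsilon)^{\delta n/(4m)}$; then, since $f_i=\gamma_i$ on this event and the $f_i$'s for $i\in J$ are independent, raise this to the power $|J|/2\approx c_\delta m/2$ to get $\Prob\{f(s)\geq(1-\pi\rho^2\delta s^2/2)^{|J|/2}\}\leq 2^{|J|}(4\varepsilon)^{\delta|J|n/(8m)}$, which is already $(\widetilde\varepsilon/2)^n$-small since the exponent is proportional to $n$; (b) pass from pointwise-in-$s$ to the integral over $[-z,z]$ via Lemma~\ref{l: int markov}; (c) pass from a single good tuple to the average over all good tuples via Lemma~\ref{l: sum markov}, which needs only boundedness $f(s)\leq 1$ and a per-tuple probability bound, not independence; (d) control the bad tuples deterministically by Lemma~\ref{l: aux 2498276098059385-}. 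Finally the variable change $s\mapsto m^{-1/2}s$ converts the integral over $[-z,z]$ into the one over $[-\sqrt m/C_{\text{\tiny\ref{l: aux -29802609872}}},\sqrt m/C_{\text{\tiny\ref{l: aux -29802609872}}}]$ stated in the lemma. So the super-exponential probability comes from the large-deviation estimate in step (a), not from a Chernoff bound for the tuple sum; and the $\psi_{K_2}$ floor is neutralized by working on the event $\{\gamma_i\geq 1/K_2\}$ rather than by a global pointwise majorant.
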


\begin{proof}
Let $n_\delta,C_\delta,c_\delta$, and $\mathcal S$ be as in  Lemma~\ref{l: aux 2498276098059385-}). 
 A given choice of subsets $(S_1,\dots,S_m)\in\mathcal S$  denote
$$
 \gamma _i(s):=\Big|\frac{1}{\lfloor n/m\rfloor}\sum_{w\in S_i}\exp(2\pi{\bf i}X_w s)\Big|, 
 \quad \quad f_i(s):=\psi_{K_2}\big( \gamma _i(s)\big),
\quad \mbox{ and  }  \quad
f(s):=\prod\limits_{i=1}^m f_i(s) 
$$
(note that functions $\gamma _i(s)$, $f_i(s)$, $f(s)$ depend on the 
choice of subsets $S_i$).

\smallskip 

First, we study the distribution of the variable $f(s)$
%$$  f(s):=\prod\limits_{i=1}^{m}\psi_{K_2}\bigg(\Big|\frac{1}{\lfloor n/m\rfloor}
%  \sum_{w\in S_i}\exp\big(2\pi{\bf i}X_{w} s\big)\Big|\bigg)$$
for a given choice of subsets $S_i$. 
We  assume that $n\geq n_\delta$ and $n/m\geq C_\delta$.
We also denote  $\varepsilon:=2^{-10/\delta}\, \widetilde\varepsilon^{\, 16/\delta c_\delta}$
and 
%By Lemma~\ref{l: aux 2498276098059385-}) we have $|\mathcal S'|\geq (1-e^{-c_\delta n})|\mathcal S|$,
\begin{align*}
\mathcal S':=
\Big\{&(S_1,\dots,S_m)\in \mathcal S:\;
\min(|S_i\cap Q_1|,|S_i\cap Q_2|)\geq \delta \lfloor n/m\rfloor/2 
\mbox{ for at least $c_\delta m$ indices $i$}
\Big\}.
\end{align*}
Fix a sequence $(S_1,\dots,S_m)\in \mathcal S'$, and $J\subset[m]$ be a subset of cardinality $\lceil c_\delta m\rceil$
such that 
$$
   \forall i\in J\, : \,\,\min(|S_i\cap Q_1|,|S_i\cap Q_2|)\geq \delta \lfloor n/m\rfloor/2 .
$$

For any $i\in J$, $w_1\in S_i\cap Q_1$, and $w_2\in S_i\cap Q_2$ by Lemma~\ref{l: aux 98508746104921-4} 
we have for $s\in [-\eps/8, \eps/8]$,
$$
\Prob\big\{\big|\exp\big(2\pi{\bf i}X_{w_1} s\big)+\exp\big(2\pi{\bf i}X_{w_2} s\big)\big|\geq 2
-2\pi\rho^2 s^2\big\}\leq \varepsilon.
$$
Within $S_i$, we can find at least $\frac{\delta}{2}\lfloor n/m\rfloor$ disjoint pairs 
of indices $(w_1,w_2)\in Q_1\times Q_2$
satisfying the above condition. Let $T$ be a set of such pairs with $|T|=\frac{\delta}{2}\lfloor n/m\rfloor$. 
Using the independence of coordinates of $X$, and denoting  $z:=\min\big(\sqrt{1/(\pi\rho^2\delta)},  
\eps/8\big)$, we obtain for every $s\in[-z,z]$, 
\begin{align*}
%\Prob\bigg\{&\Big|\frac{1}{\lfloor n/m\rfloor}
%\sum_{w\in S_i}\exp\big(2\pi{\bf i}X_{w} s\big)\Big|\geq 1-\frac{\pi\rho^2\delta s^2}{2}\bigg\}\\
\Prob\bigg\{&\gamma_i(s)\geq 1-\frac{\pi\rho^2\delta s^2}{2}\bigg\}\\
&\leq
\Prob\big\{\big|\exp\big(2\pi{\bf i}X_{w_1} s\big)+\exp\big(2\pi{\bf i}X_{w_2} s\big)\big|\geq 2
-2\pi\rho^2 s^2\mbox{ for at least $\frac{\delta}{4}\lfloor n/m\rfloor$ pairs $(w_1,w_2)\in T$}\big\}\\
&\leq 2^{\delta\lfloor n/m\rfloor/2}\,\varepsilon^{\delta\lfloor n/m\rfloor/4}
\leq (4\varepsilon) ^{\delta n/(4m)}.
\end{align*}
Applying this for all $i\in J$ together with observations $f(s)\leq 1$ and $f_i(s)=\gamma_i(s)$ 
(when $\gamma_i(s)\geq 1/K_2$),  we conclude that for every $s\in[-z,z]$,
\begin{align*}
\Prob\bigg\{f(s)\geq \big(1-\pi\rho^2\delta s^2/2\big)^{|J|/2}\bigg\}
&\leq \Prob\bigg\{
f_i(s)\geq 1-\pi\rho^2\delta s^2/2\, \, 
\mbox{ for at least $|J|/2$ indices $i\in J$}\bigg\}\\
&\leq 2^{|J|}\,(4\varepsilon) ^{\delta |J|n/(8m)}
\end{align*}
%\begin{align*}\Prob\bigg\{&\prod\limits_{i=1}^{m}\psi_{K_2}\bigg(\Big|\frac{1}{\lfloor n/m\rfloor}
%\sum_{w\in S_i}\exp\big(2\pi{\bf i}X_{w} s\big)\Big|\bigg)\geq \bigg(1-\frac{2\pi\rho^2\delta 
%s^2}{4}\bigg)^{|J|/2}\bigg\}\\&\leq \Prob\bigg\{\psi_{K_2}\bigg(\Big|\frac{1}{\lfloor n/m\rfloor}
%\sum_{w\in S_i}\exp\big(2\pi{\bf i}X_{w} s\big)\Big|\bigg)\geq 1-\frac{2\pi\rho^2\delta s^2}{4}
%\mbox{ for at least $|J|/2$ indices $i\in J$}\bigg\}\\&\leq 2^{|J|}\Big(2^{\frac{\delta}{2}\lfloor 
%n/m\rfloor}\,\varepsilon^{\frac{\delta}{4}\lfloor n/m\rfloor}\Big)^{|J|/2}\end{align*}
%for any $s\in[-z,z]$, where we denote $z:=\min\big(\sqrt{1/\pi\rho^2\delta)},
%1/C_{\text{\tiny\ref{l: aux 98508746104921-4}}}\big)$.

At the next step, we apply the Lemma~\ref{l: int markov} with $\xi(s)=f(s)$ to obtain
from the previous relation
$$
\Prob\bigg\{\int\limits_{-z}^{z}
f(s)\,ds
\leq \int\limits_{-z}^{z}
\bigg(1-\frac{\pi\rho^2\delta s^2}{2}\bigg)^{|J|/2}\,ds
+m^{-1/2}\bigg\}\geq 1-2z m^{1/2}\,2^{|J|}\,(4\varepsilon) ^{\delta |J|n/(8m)}.
$$

 Next we apply Lemma~\ref{l: sum markov}) with $I=\mathcal S'$, $\xi_i=f(s)$ 
 (recall that $f(s)$ depends also on the choice of $(S_1,\dots,S_m)\in\mathcal S$). 
 We obtain
\begin{align*}
%\frac{\big((\lfloor n/m\rfloor)!\big)^m\,(n-m\lfloor n/m\rfloor)!}{n!}&\cdot
\Prob\bigg\{A_{nm}\,
\sum\limits_{(S_1,\dots,S_m)\in\mathcal S'}\;\int\limits_{-z}^{z}
%\prod\limits_{i=1}^{m}\psi_{K_2}\bigg(\Big|\frac{1}{\lfloor n/m\rfloor}
%\sum_{w\in S_i}\exp\big(2\pi{\bf i}X_{w} s\big)\Big|\bigg)
 f(s)\,ds
\leq \int\limits_{-z}^{z}
\bigg(1-\frac{\pi\rho^2\delta s^2}{2}\bigg)^{|J|/2}\,ds+2m^{-1/2}\bigg\}
\geq 1-2z m\,2^{|J|}\,(4\varepsilon) ^{\delta |J|n/(8m)}.
\end{align*}
Further, since by Lemma~\ref{l: aux 2498276098059385-} we have 
$|\mathcal S'|\geq (1-e^{-c_\delta n})|\mathcal S|$ and since $f(s)\leq 1$, we observe 
that 
\begin{align*}
&A_{nm}\,
%\frac{\big((\lfloor n/m\rfloor)!\big)^m\,(n-m\lfloor n/m\rfloor)!}{n!}\cdot
\sum\limits_{(S_1,\dots,S_m)\in \mathcal S\setminus\mathcal S'}\;\int\limits_{-z}^{z}
f(s)\, ds
%\prod\limits_{i=1}^{m}\psi_{K_2}\bigg(\Big|\frac{1}{\lfloor n/m\rfloor}
%\sum_{w\in S_i}\exp\big(2\pi{\bf i}X_{w} s\big)\Big|\bigg)\,ds
%\\&
\leq 2z\,e^{-c_\delta n}
\end{align*}
deterministically.
Recalling that $|J|=\lceil c_\delta m\rceil$,
we obtain
\begin{align*}
\Prob\bigg\{&
A_{nm}\,
%\frac{\big((\lfloor n/m\rfloor)!\big)^m\,(n-m\lfloor n/m\rfloor)!}{n!}\cdot
\sum\limits_{(S_1,\dots,S_m)\in\mathcal S}\;\int\limits_{-z}^{z}
f(s)\, ds
%\prod\limits_{i=1}^{m}\psi_{K_2}\bigg(\Big|\frac{1}{\lfloor n/m\rfloor}
%\sum_{w\in S_i}\exp\big(2\pi{\bf i}X_{w} s\big)\Big|\bigg)\,ds
\leq C'' m^{-1/2} \bigg\}
%\\&
\geq 1-2z m\,2^{|J|}\,(4\varepsilon) ^{\delta |J|n/(8m)}
\geq 1-(\widetilde \varepsilon/2)^n,
\end{align*}
for some $C''\geq 1$ depending only on $\delta$ and $\rho$, provided that 
$n\geq n_0(\widetilde\varepsilon,\delta,\rho)$. 
The result follows by the  substitution $s= m^{-1/2}u$ in the integral.
\end{proof}

As a combination of Lemmas~\ref{l: aux 2398205987305},~\ref{l: aux 20985059837}
and~\ref{l: aux -29802609872}, we obtain Proposition~\ref{prop: 09582593852}.

\begin{proof}[Proof of Proposition~\ref{prop: 09582593852}]
As we mentioned at the beginning of this subsection, we 
fix $\rho,\delta\in(0,1/4]$, a growth function $\gfn$ satisfying \eqref{gfncond}, 
a permutation $\sigma\in\Pi_n$, a number $h\in\R$,  two sets $Q_1,Q_2\subset[n]$
such that $|Q_1|,|Q_2|=\lceil \delta n\rceil$, and we use 
 $\Lambda_n$ for the set $\Lambda_n(k,\gfn,Q_1,Q_2,\rho,\sigma,h)$
defined in \eqref{eq: param l def}. We also fix $\eps\in (0,1/4]$.

We start by selecting the parameters.
Assume that $n$ is large enough.
Set $\ell:=\ell_{\text{\tiny\ref{l: aux 2398205987305}}}(\varepsilon)$.
Let $\varepsilon'=\varepsilon'(\varepsilon)$ be taken from Lemma~\ref{l: aux 20985059837}.
Set $z:=1/C_{\text{\tiny\ref{l: aux -29802609872}}}(\varepsilon,\delta,\rho)$.
Fix an integer $m\in [C_{\text{\tiny\ref{l: aux 20985059837}}}(\varepsilon,z),
n/\max(\ell,C_{\text{\tiny\ref{l: aux -29802609872}}})]$ satisfying the condition
$R_{\text{\tiny\ref{l: aux 2398205987305}}}\sqrt{m}\,e^{-\sqrt{m}}\leq 1$,
and take $1\leq k\leq \min\big(2^{n/\ell},(K_2/8)^{m/2}\big)$. 
Let $A_{nm}$ be defined as in (\ref{anm}). 
We assume that $h$ is chosen in such a way that the set $\Lambda_n$
is non-empty. As before $X$ denotes the random vector uniformly distributed on 
$\Lambda_n$. 
Let  $\mathcal S$ be as in  Lemma~\ref{l: aux 2498276098059385-}).
 A given choice of subsets $(S_1,\dots,S_m)\in\mathcal S$  denote
$$
 f(s)=f_{S_1,\dots,S_m}(s):=\prod\limits_{i=1}^{m}\psi_{K_2}\bigg(\Big|\frac{1}{\lfloor n/m\rfloor}
\sum_{w\in S_i}\exp\big(2\pi{\bf i}X_{w} m^{-1/2}\,s\big)\Big|\bigg).
$$
We have
\begin{align*}
A_{nm}\sum\limits_{S_1,\dots,S_m}\;
&\int\limits_{-\varepsilon' m^{1/2}k}^{\varepsilon' m^{1/2}k}
f(s)\, ds 
%\prod\limits_{i=1}^{m}\psi_{K_2}\bigg(\Big|\frac{1}{\lfloor n/m\rfloor}
%\sum_{w\in S_i}\exp\big(2\pi{\bf i}X_{w} m^{-1/2}\,s\big)\Big|\bigg)\,ds
=
A_{nm}\sum\limits_{S_1,\dots,S_m}\;
\int\limits_{-z \sqrt{m}}^{z \sqrt{m}}  f(s)\, ds
%\prod\limits_{i=1}^{m}\psi_{K_2}\bigg(\Big|\frac{1}{\lfloor n/m\rfloor}
%\sum_{w\in S_i}\exp\big(2\pi{\bf i}X_{w} m^{-1/2}\,s\big)\Big|\bigg)\,ds
+2 A_{nm}\sum\limits_{S_1,\dots,S_m}\;
\int\limits_{z \sqrt{m}}^{\varepsilon'k\sqrt{m}}
f(s)\, ds 
%\prod\limits_{i=1}^{m}\psi_{K_2}\bigg(\Big|\frac{1}{\lfloor n/m\rfloor}
%\sum_{w\in S_i}\exp\big(2\pi{\bf i}X_{w} m^{-1/2}\,s\big)\Big|\bigg)\,ds.
\end{align*}
In view of Lemma~\ref{l: aux -29802609872},
with probability at least $1-(\varepsilon/2)^n$ the first summand is bounded above by
$K_{\text{\tiny\ref{l: aux -29802609872}}}$.
To estimate the second summand, we  combine Lemmas~\ref{l: aux 2398205987305} and~\ref{l: aux 20985059837}
(we assume that $z \leq\varepsilon' k$ as otherwise there is no second summand).
Fix for a moment a collection $(S_1,\dots,S_m)\in\mathcal S$.
By Lemma~\ref{l: aux 2398205987305}, with probability at least $1-(\varepsilon/2)^n$
the function $f$ on $[0,k\sqrt{m}/2]$ is bounded above by $(K_2/4)^{-m/2}$
for all points $s$ outside of some set of measure at most $R_{\text{\tiny\ref{l: aux 2398205987305}}}\sqrt{m}$
(note that we apply variable transformation $s\to m^{-1/2}s$ to use the lemma here).
Further, by Lemma~\ref{l: aux 20985059837}, with probability at least $1-(\varepsilon/2)^n$ we have
that $f$ is bounded above by $e^{-\sqrt{m}}$ for all $s\in[z \sqrt{m},\varepsilon' k\sqrt{m}]$.
Thus, with probability at least $1-2(\varepsilon/2)^n$,
$$
\int\limits_{z \sqrt{m}}^{\varepsilon' k\sqrt{m}}f(s)\,ds\leq \sqrt{m}k\,\Big(\frac{K_2}{4}\Big)^{-m/2}+
R_{\text{\tiny\ref{l: aux 2398205987305}}}\sqrt{m}\,e^{-\sqrt{m}}.
$$
Applying Lemma~\ref{l: sum markov} with $I=\mathcal S$ and $\xi_i=f(s)$, we  obtain that
\begin{align*}
& A_{nm}\sum\limits_{S_1,\dots,S_m}\;
\int\limits_{z \sqrt{m}}^{\varepsilon'k\sqrt{m}}
f(s)\, ds \leq \sqrt{m}k\,\Big(\frac{K_2}{4}\Big)^{-m/2}+
R_{\text{\tiny\ref{l: aux 2398205987305}}}\sqrt{m}\,e^{-\sqrt{m}}+1\leq 3
\end{align*}
with probability at least $1-2(\varepsilon/2)^n$.
Thus,
taking $K_1:=K_{\text{\tiny\ref{l: aux -29802609872}}}+3$, we obtain
$$\Prob\{\bal_n(X,m,K_1,K_2)\geq \varepsilon' m^{1/2}k\}\geq 1-3( \varepsilon/2)^n\geq 1-3 \varepsilon^n.$$
\end{proof}

\section{Complement of gradual non-constant vectors: constant $p$}
\label{steep:constant p}

In this section, we study the problem of invertibility of the \Ber matrix $M$ over the set
$\ST$ defined by (\ref{strvect}) in the case when the parameter $p$ is a small constant.
 This setting turns out to be much simpler
than treatment of the general case $C\ln n/n\leq p\leq c$ given in the next section.
Although the results of Section~\ref{s: steep} essentially absorb the statements of this section,
we prefer to include analysis of the constant $p$ in our work, first, because it provides
a short and relatively simple illustration of our method and, second, because the estimates obtained here
allow to derive better quantitative bounds\ for the smallest singular value of $M$.

\subsection{Spliting of $\R^n$ and main statements}
\label{subs: steep vectors}

We define the following four classes of  vectors $\stt_1, \dots,  \stt_{4}$. For simplicity,
we normalize vectors with respect to the Euclidean norm.
%If $n_1=1$ we set $\st_0 =\emptyset$. Otherwise,
The first class is the set of vectors with one coordinate much larger than the others, namely,
$$
  \stt _{1}= \stt _{1}(p): =\{x\in S^{n-1}\,:\,   x_{1}^{*}> 6 pn\, x_{2}^{*}\}.
% \quad \quad  \mbox{and} \quad  \quad
%  \st _{02} : =\{x\in \R^n\,:\,  x\not\in  \st_{01}
%  \,\, \mbox{ and } \, \,  x_{2}^{*}> 6 d\, x_{n_0}^{*}\} .
$$
For the next sets we fix a parameter $\beta_p = \sqrt{p}/C_0$, where $C_0$ is the
absolute constant  from Proposition~\ref{rogozin}. Recall also that the operator
$Q$ (which  annihilates the maximal coordinate of a given vector) and the set
$U(m, \gamma)$ were introduced in Subsection~\ref{net}.
%By $U_m$ we denote
We also fix a small enough absolute positive constant $c_0$. We don't try to compute
the actual value of $c_0$, the conditions on how small $c_0$ is can be obtained
from the proofs. We further fix an integer $1\leq m\leq n$.

The second class of vectors consist of those vectors for which
the Euclidean norm dominates the maximal coordinate. 
To control cardinalities of nets (discretizations) we intersect this
class with $U(m, c_0)$, specifically, we set
$$
 \stt_2 =\stt_2(p,m):=  \stt_2' \cap U(m, c_0), \quad \mbox{ where }\quad
  \stt_2' := \left\{x\in S^{n-1}\,:\,  x\not\in \stt_{1}
    \,\, \mbox{ and } \, \,  x_{1}^{*}\leq \beta_p   \r\}.
$$

The next set is similar to $\stt _2$, but instead of comparing $x_1^*$ with
the Euclidean norm of the entire vector, we compare $x_2^*$ with $\|Qx\|$.
%, that is,  with the Euclidean norm of the vector in which the largest coordinate was annihilated.
For a technical reason,
we need to control the magnitude of $\|Qx\|$ precisely; thus we partition the third set into subsets.
Let numbers $\lam_k$, $k\leq \ell$, be defined by
\begin{equation}\label{eq: 0495205965029385}
   \lam_1 = \frac{1}{6pn},\quad \lam_{k+1}= 3 \lam _k, \, \, k<\ell -1,
  \quad 1/3\leq \lam_{\ell-1} <1 \qand \lam _\ell =1.
\end{equation}
Clearly, $\ell \leq \ln n$.
Then for each $k\leq \ell-1$ we define
\begin{align*}
   \stt _{3,k}=\stt _{3,k}(p,m) &: =\left\{x\in S^{n-1}\,:\,  x\not\in \stt_{1} \cup  \stt_{2}', \, \,
  x_{2}^{*}\leq \beta_p  \|Qx\|
    \,\, \mbox{ and } \, \, \lam_k \leq \|Qx\| < \lam_{k+1}\r\}\cap U(m, c_0 \lam_k) .
% \st _{3}&:= \bigcup_{k=1}^{\ell-1}  \st _{3k}
\end{align*}
To explain the choice of $\lam_1$, note that if $x\not\in \stt_{1} \cup  \stt_{2}'$
and $\|x\|=1$, then
$x_2^*\geq x_1^*/(6pn)\geq \beta_p/(6pn)$. Thus, if in addition
$\beta_p\|Qx\| \geq x_{2}^{*}$, then $\|Qx\| \geq 1/(6pn)=\lam_1$.
We set  $$\stt _{3}=\stt _{3}(p,m):= \bigcup_{k=1}^{\ell-1}  \stt _{3,k}.$$

The fourth set covers the remaining options for vectors having a large almost constant part.
Let numbers $\mu_k$, $k\leq s$, be defined by
\begin{equation}\label{eq: 9635194-9580-98}
   \mu_1 = \frac{\beta_p}{6pn},\quad \mu_{k+1}= 3 \mu _k, \, \, k<s -1,
   \quad 1/3\leq \mu_{s-1} <1
   \qand \mu _s =1.
\end{equation}
Clearly, $s \leq \ln n$.
Then for each $k\leq s-1$ define the set $\stt _{4,k} =\stt _{4,k}(p,m)$ as
$$
%   \stt _{4k} =\stt _{4k}(m): =
  \left\{x\in S^{n-1}\,:\,  x\not\in \stt_{1} \cup  \stt_{2}',  \, \,
   x_{2}^{*}> \beta_p  \|Qx\| \,\, \mbox{ and } \, \,
   \mu_k \leq x^*_2 < \mu_{k+1}\r\}\cap U(m, c_0 \mu_k /\sqrt{\ln(e/p)}).
$$
% To explain the choice of $\mu_1$,
Note that if $x\not\in \stt_{1} \cup  \stt_{2}'$
and $\|x\|=1$, then $x_2^*\geq x_1^*/(6pn)\geq \beta_p/(6pn)$, justifying the choice of $\mu_1$.
%$$\|Qx\|\geq x_2^*\geq x_1^*/(6pn)\geq \beta_p/(6pn).$$
We set  $$\stt _{4}=\stt _{4}(p,m)= \bigcup_{k=1}^{\ell-1}  \stt _{4,k}.$$

%Below we  work with constant shifts of steep vectors,
%so we also introduce the following sets for  $0\leq j\leq r+1$,
%$$
%   \st _j^\CC  := \{v\in \C^n\,:\, v=x+y \,\, \, \, \mbox{ for some } \, \, \, \,
%  x\in  \st_{j} \, \, \mbox{ and } \, \,  y\in \CC \, \,
%  \mbox{ with } \, \, |y_1|\leq x_{n_1}^*/10 \}.
%$$

\smallskip

Finally define $\stt$ as the union of these four classes,
$
   \stt=\stt(p,m):=\bigcup_{j=1}^{4} \stt_{j}.
   %\quad \mbox{ and }
% \quad \st _\CC:=\bigcup_{j=0}^{r+1} \st_{ j}^\CC.
$

\medskip

In this section we prove  two following theorems.

\begin{theor} \label{t:steep}
There exists  positive absolute constants $c, C$ such that the following holds.
Let $n$ be large enough, let $m\leq cpn/\ln(e/p)$, and $(30\ln n)/n\leq p<1/20$.
Let $M$ be an $n\times n$ \Ber random matrix. Then
\begin{equation*}
\label{Psteep}
  \Prob \Big\{\exists\;x\in \stt\, \, \, \mbox{ such that }
  \, \, \,
  \|M x\| <  \frac{ 1}{C \sqrt{n \ln(e/p)}} \, \,  \|x\|
  \Big\}   \leq  n(1-p)^n + 4e^{-1.5np},
\end{equation*}
where the set $\stt=\stt(p,m)$ is defined above.
\end{theor}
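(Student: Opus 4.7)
The plan is to split $\stt=\stt_1\cup\stt_2\cup\stt_3\cup\stt_4$ and treat each piece with the matching anti-concentration tool from Subsection~\ref{ben-ineq} together with the net from Lemma~\ref{cardnet} and the approximation bound from Proposition~\ref{nettri}. First I would intersect with three high-probability matrix events: $\Event_{col}:=\{M\text{ has no zero column}\}$ of probability $\geq 1-n(1-p)^n$; $\Event_{sum}$ from Lemma~\ref{bennett} of probability $\geq 1-e^{-1.5np}$; and $\Event_{nrm}$ from Proposition~\ref{nettri} of probability $\geq 1-8e^{-cpn}$, which for $pn\geq 30\ln n$ is dominated by $2e^{-1.5np}$. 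The complements contribute exactly the claimed $n(1-p)^n+4e^{-1.5np}$.

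On $\Event_{col}\cap\Event_{sum}$ the class $\stt_1$ is disposed of deterministically. For $x\in\stt_1\cap S^{n-1}$ the relation $x_1^*>6pn\,x_2^*$ forces $x_1^*\geq 1/2$; letting $j=\sigma_x(1)$, $\Event_{col}$ supplies a row $i$ with $M_{ij}=1$, while $\Event_{sum}$ bounds the support of that row by $3.5pn$, so
\[
|(Mx)_i|\geq x_1^*-3.5pn\cdot x_2^*>\tfrac12 x_1^*\geq 1/4,
\]
much larger than $1/(C\sqrt{n\ln(e/p)})$.

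For $\stt_2$ and $\stt_{3,k}$ I would use Proposition~\ref{rogozin} pointwise. On $\stt_2$ one applies it with $A=[n]$ (since $\|y\|_\infty\leq\beta_p$), and on $\stt_{3,k}$ with $A=\sigma_y([2,n])$ (since $y_2^*\leq\beta_p\|Qy\|$), obtaining for fixed $y$ the bound $\Prob(\|My\|\leq c\sqrt{pn}\,\|y_A\|)\leq e^{-3n}$, where $\|y_A\|=1$ in the first case and $\|y_A\|\geq\lam_k$ in the second. Lemma~\ref{cardnet} then yields an $\eps$-net in the triple norm of cardinality at most $n^{O(1)}C^m\binom{n}{m}$ — for $\stt_2\cap U(m,c_0)$ with $\eps$ an absolute constant and $\beta=1$, and for $\stt_{3,k}\cap U(m,c_0\lam_k)$ with $\eps=c'\lam_k$ and $\beta=3\lam_k$ so that $\beta/\eps=O(1)$. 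Converting $|||x-y|||\leq\eps$ to $\|M(x-y)\|\leq 100\sqrt{pn}\,\eps$ via Proposition~\ref{nettri} and taking a union bound yields failure probability at most $e^{-2n}$ once $m\leq cpn/\ln(e/p)$, easily absorbed in $4e^{-1.5np}$. For $\stt_3$ an additional $\ln n$ factor from summing over $k\leq \ell\leq\ln n$ is harmless.

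The delicate step is $\stt_{4,k}$, where Rogozin's inequality is not sharp enough and I would instead invoke Proposition~\ref{anti2}: for fixed $y$ with $y_2^*\geq\mu_k$ it gives $\Prob\big(\|My\|\leq\mu_k\sqrt{pn}/(7\sqrt{\ln(e/p)})\big)\leq e^{-1.6np}$. Using $\|Qy\|\leq y_2^*/\beta_p$ one has $\stt_{4,k}\subset V(3C_0\mu_k/\sqrt{p})\cap U(m,c_0\mu_k/\sqrt{\ln(e/p)})$, and Lemma~\ref{cardnet} with $\eps\asymp\mu_k/\sqrt{\ln(e/p)}$ (matched to Proposition~\ref{nettri}) produces a net of size at most $n^{O(1)}\big(C\sqrt{\ln(e/p)/p}\big)^m\binom{n}{m}$, whose logarithm is $\lesssim m\ln\big(n\sqrt{\ln(e/p)/p}/m\big)$. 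The hypothesis $m\leq cpn/\ln(e/p)$ is chosen precisely so that this quantity is a small constant times $np$, so the union bound, including a further $\ln n$ from summing over $k\leq s\leq\ln n$, stays below $e^{-1.5np}$. The worst scale $\mu_1=1/(6C_0 n\sqrt{p})$ produces exactly the claimed lower bound $\|Mx\|\geq c/\sqrt{n\ln(e/p)}$, and the other three classes give strictly larger bounds on $\|Mx\|$, so the minimum over all pieces is what appears in the statement.

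The main obstacle is this final balance: the $\stt_4$ net has cardinality growing like $(\sqrt{\ln(e/p)/p})^m\binom{n}{m}$, which must be offset by the individual probability $e^{-1.6np}$ delivered by Proposition~\ref{anti2}. This matching is precisely why the upper bound on $m$ in the statement has the form $cpn/\ln(e/p)$, and why the intersection with $U(m,c_0\mu_k/\sqrt{\ln(e/p)})$ rather than a simpler shifted-sparse set is built into the definition of $\stt_{4,k}$.
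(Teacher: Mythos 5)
Your decomposition and choice of tools match the paper's proof exactly: deterministic treatment of $\stt_1$ on $\Event_{col}\cap\Event_{sum}$, Proposition~\ref{rogozin} plus the nets of Lemma~\ref{cardnet} and the approximation bound of Proposition~\ref{nettri} for $\stt_2$ and $\stt_3$, Proposition~\ref{anti2} for $\stt_4$, and the constraint $m\leq cpn/\ln(e/p)$ balancing net cardinality against $e^{-1.6np}$.

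However, your $\stt_1$ argument contains an incorrect intermediate claim. You assert that for $x\in\stt_1\cap S^{n-1}$ the inequality $x_1^*>6pn\,x_2^*$ forces $x_1^*\geq 1/2$. This is false in the sparse regime: from $x_i^*\leq x_2^*<x_1^*/(6pn)$ one only gets
$(x_1^*)^2>\bigl(1+\tfrac{n-1}{36p^2n^2}\bigr)^{-1}$,
and the right side drops below $1/4$ as soon as $p\lesssim n^{-1/2}$, which is permitted by the hypothesis $p\geq 30\ln n/n$ once $\sqrt{n}\gg\ln n$. Thus your lower bound $|(Mx)_i|\geq 1/4$ is not justified. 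The fix is exactly the one in Lemma~\ref{st1}: retain $\|Mx\|\geq x_1^*/3$ from the row with $M_{ij}=1$, then invoke the trivial bound $\|x\|\leq\sqrt{n}\,x_1^*$ to conclude $\|Mx\|\geq\|x\|/(3\sqrt{n})$, which still dominates $1/(C\sqrt{n\ln(e/p)})$ for $C\geq 3$. The rest of the proposal is sound and follows the paper's route.
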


Recall that the set % $\normr_n(r)$ and
$\gncvectors_n$ was introduced in Subsection~\ref{gradnac}.
The next theorem shows that, after a proper normalization, 
the complement of $\gncvectors_n$ (taken in $\normr_n(r)$) is contained in $\stt$ 
for some choice of $r, \delta, \rho$ and for the growth function
$\gfn(t)=(2t)^{3/2}$ (clearly, satisfying (\ref{gfncond})).

\begin{theor}
\label{compl-1}
There exists an absolute (small) positive constant $c_1$ such that the following holds.
Let $q\in (0, c_1)$ be a parameter. Then there exist $n_q\geq 1$,
$r=r(q), \rho=\rho (q)\in (0,1)$ such that  for $n\geq n_q$,  $p\in (q, c_1)$, $\delta = r/3$,
$\gfn(t)=(2t)^{3/2}$, and $m=\lfloor rn \rfloor$ one has
$$
   \Big\{x/\|x\|\, \, : \, \, x\in \normr_n(r)\setminus \gncvectors_n(r,\gfn,\delta,\rho)\Big\}\subset \stt(p,m).
$$
\end{theor}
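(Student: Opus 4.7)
The plan is to fix sufficiently small parameters $r=r(q)$, $\rho=\rho(q)$, $\delta=r/3$ and show directly from the definitions that for every $x\in\normr_n(r)\setminus\gncvectors_n(r,\gfn,\delta,\rho)$ the normalized vector $y:=x/\|x\|$ belongs to $\stt(p,m)=\stt_1\cup\stt_2\cup\stt_3\cup\stt_4$. Unfolding the definition of $\gncvectors_n$, either (ii) the gradual bound fails (some $x^*_i>(2n/i)^{3/2}$) or (iii) the non-constant condition~(\ref{cond2}) fails, and I will treat these two scenarios separately. Throughout I will repeatedly use the baseline bound $\|x\|^2\geq\lfloor rn\rfloor$ coming from $x^*_{\lfloor rn\rfloor}=1$, and the trivial equivalence $y\in\stt_1\iff x^*_1>6pn\,x^*_2$.

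For the \emph{almost-constant case} (condition (iii) fails), I apply Lemma~\ref{a-c-cond2}; its hypothesis $n\geq 2m>4k$ is immediately verified since $\delta=r/3$ and $m=\lfloor rn\rfloor$. This yields $A\subset[n]$ with $|A|>n-m$ and $|\lambda|=1$ such that $|x_i-\lambda|<\rho$ for $i\in A$, whence $\|x\|\geq(1-\rho)\sqrt{n-m}$ and $|y_i-\lambda/\|x\||\leq \rho/\|x\|$ on $A$. Setting $\lambda_y:=\lambda/\|x\|$, one verifies $|\lambda_y|\leq 2/\sqrt{m}$ and $\rho/\|x\|\leq\rho/((1-\rho)\sqrt{(1-r)n})$; therefore, taking $\rho$ small relative to the absolute constant $c_0$, the vector $y$ lies simultaneously in $U(m,c_0)$, in $U(m,c_0\lambda_k)$ for $k=\ell-1$ (where $\lambda_{\ell-1}\geq 1/3$), and in $U(m,c_0\mu_k/\sqrt{\ln(e/p)})$ for $k=s-1$. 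Since at least $n-m$ coordinates of $y$ have magnitude $\approx 1/\sqrt{n}$, one also gets $\|Qy\|\geq c$ for an absolute constant $c>0$, so $\beta_p\|Qy\|\geq c\sqrt{p}/C_0$. If $x^*_1>6pn\,x^*_2$ we are in $\stt_1$; otherwise $y^*_1\leq 6pn(1+\rho)/\|x\|$, and comparing $y^*_2$ with $\beta_p\|Qy\|$ places $y$ in $\stt_3$ (via the appropriate $\stt_{3,k}$) or $\stt_4$ (via $\stt_{4,k}$).

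For the \emph{non-gradual case} (condition (ii) fails), let $i_0$ be the least index with $x^*_{i_0}>(2n/i_0)^{3/2}$. Pigeonhole gives $\|x\|^2\geq i_0\,(x^*_{i_0})^2>8n^3/i_0^2$, hence $\|x\|>(2n)^{3/2}/i_0$. If $x^*_1>6pn\,x^*_2$ we are in $\stt_1$. Otherwise the dual inequality $\|x\|^2\leq (x^*_1)^2+(n-1)(x^*_2)^2\leq(36p^2n^2+n)(x^*_2)^2$ yields $\|x\|\leq 7pn\,x^*_2$, which combined with the lower bound on $\|x\|$ forces $x^*_2\geq c_q\sqrt{n}/(p\,i_0)$ for some $c_q>0$ depending only on $q$. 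Minimality of $i_0$ gives gradualness at every $i<i_0$, so $x^*_i\leq(2n/i)^{3/2}$ there, and together with $x^*_1\leq 6pn\,x^*_2$ this pins down $y^*_1=x^*_1/\|x\|$ and $y^*_2=x^*_2/\|x\|$ in terms of $i_0$. A direct computation then routes $y$ into $\stt_2$, $\stt_3$, or $\stt_4$ depending on the position of $y^*_1$ with respect to $\beta_p$ and of $y^*_2$ with respect to $\beta_p\|Qy\|$. The relevant $U(m,\gamma)$-membership follows by taking $\lambda=0\in[-2/\sqrt{m},2/\sqrt{m}]$ and $A=\sigma_x([m+1,n])$: gradualness at $i\leq m$ bounds the tail coordinates by $(2n/m)^{3/2}/\|x\|=O(1/\sqrt{n})$, which fits the $\gamma/\sqrt{n}$-envelope for the appropriate $\gamma$.

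The main obstacle I expect is the bookkeeping of the dyadic-scale indices $k$ from \eqref{eq: 0495205965029385} and~\eqref{eq: 9635194-9580-98}, where matching $\|Qy\|\in[\lambda_k,\lambda_{k+1})$ or $y^*_2\in[\mu_k,\mu_{k+1})$ with the corresponding $U(m,c_0\lambda_k)$- or $U(m,c_0\mu_k/\sqrt{\ln(e/p)})$-membership requires choosing the net parameters compatibly; once the preliminary norm and order-statistic bounds are in place this is mechanical. The conceptual core of the proof is the dichotomy ``non-gradual $\Rightarrow$ huge norm (pigeonhole), hence small $y^*_i$'' versus ``essentially constant $\Rightarrow$ coordinate concentration'', and in both cases the four classes $\stt_1,\ldots,\stt_4$ are wide enough (for constant $p$) to absorb the normalized vector once $r$ and $\rho$ are chosen small relative to $q$ and $c_0$.
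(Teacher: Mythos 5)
Your decomposition (non-gradual vs.\ almost-constant) and routing into $\stt_1,\dots,\stt_4$ mirrors the paper's Case~1/Case~2 structure, but the $U(m,\gamma)$-membership verification has a genuine gap. The required envelope $\gamma/\sqrt n$ is not of uniform size: on $\stt_{3,k}$ one needs $\gamma=c_0\lam_k$, and on $\stt_{4,k}$ one needs $\gamma=c_0\mu_k/\sqrt{\ln(e/p)}$, where $\lam_k$ and $\mu_k$ may be as small as $\lam_1=1/(6pn)$ and $\mu_1=\beta_p/(6pn)$. Your tail estimates, however, are a fixed multiple of $1/\sqrt n$: in the almost-constant case you use only $\|x\|\geq(1-\rho)\sqrt{(1-r)n}$, and in the non-gradual case only $1/\|x\|\leq i_0/(2n)^{3/2}\leq r/(2^{3/2}\sqrt n)$. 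These do \emph{not} fit the envelope when $\lam_k$ or $\mu_k$ is $o(1)$, which happens precisely for vectors $y\notin\stt_1$ with $x^*_1$ close to $6pn\,x^*_2$. Relatedly, the intermediate claim ``$\|Qy\|\geq c$ for an absolute constant $c$'' in your almost-constant case is false: since only $x^*_1\leq 6pn\,x^*_2$ constrains the largest coordinate, one can have $\|Qx\|\approx\sqrt n$ while $\|x\|$ is of order $pn\,\|Qx\|$, driving $\|Qy\|=\|Qx\|/\|x\|$ down to order $1/(pn)$. The statement $y^*_1\leq 6pn(1+\rho)/\|x\|$ is also unjustified, since $x^*_2$ need not be bounded by $1+\rho$ when the second-largest index lies outside $A$; and ``gradualness at $i\leq m$'' contradicts the minimality of $i_0<m$.

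The missing ingredient is exactly the one the paper uses: express the approximation error as a multiple of $\|Qy\|$ rather than of $1$, via
$y^*_{\lfloor rn\rfloor+1}=\frac{x^*_{\lfloor rn\rfloor+1}}{\|Qx\|}\,\|Qy\|$ and $|y_i-\lambda/\|x\||=\frac{|x_i-\lambda|}{\|Qx\|}\,\|Qy\|$,
and then lower-bound $\|Qx\|$ (not $\|x\|$). In the non-gradual subcase $j\geq 2$ the pigeonhole bound on $Qx$ gives $\|Qx\|\geq\sqrt n\cdot 2n/j$, hence $y^*_{\lfloor rn\rfloor+1}\leq\frac{3\delta/2}{\sqrt n}\|Qy\|$; in the almost-constant case $\|Qx\|^2\geq m/2$ gives $|y_i-\lambda/\|x\||\leq\frac{2\rho/\sqrt r}{\sqrt n}\|Qy\|$. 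Since $\|Qy\|<\lam_{k+1}\leq 3\lam_k$ on $\stt_{3,k}$ and $\|Qy\|<y_2^*/\beta_p\leq 3\mu_k/\beta_p$ on $\stt_{4,k}$, the $\|Qy\|$ factor is precisely what converts the constant prefactor into the required multiple of $\lam_k$ or $\mu_k$. What then remains are the scalar conditions $9\delta/2\leq c_0\beta_p/\sqrt{\ln(e/p)}$ and $6\rho/\sqrt r\leq c_0\beta_p/\sqrt{\ln(e/p)}$, and these dictate the paper's choice $r=r(q)=cq/\ln(e/q)$ and $\rho=\rho(q)=c_0\sqrt r\,\beta_q/(6\sqrt{\ln(e/q)})$. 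Without the $\|Qy\|$-scaling, no constant choice of $r,\rho$ (depending on $q$ but not on $n$) can close the argument.
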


%The next lemma provides a comparison of the $\ell_2$-norm of a given vector with one of its coordinates.
%It is similar to Lemmas~3.5 and 4.3 from \cite{LLTTY first part} and \cite{LLTTY-TAMS} respectively.
%Since our choice of classes is simpler than in those papers, the lemma is trivial.

%\begin{lemma} \label{l:norma}
%Let $p\in (0, 1/6]$,  $d=pn$. If  $x\in \R^n\setminus \st _{01}$, then
%$\|x\| \leq \sqrt{n+ 6 d}\, x_2^* \leq \sqrt{2n}\,  x^*_2$. If $x\in \R^n\setminus  \st _{0})$ then
%$\|x\| \leq \sqrt{n+ 36 d^2 + 6 d n_1}\,  x_{n_1}^*$.
%\end{lemma}

\subsection{Proof of Theorem~\ref{t:steep}}
%Lower bounds on $\|Mx\|$ for  vectors from $\st$}

Theorem~\ref{t:steep} is a consequence of four
lemmas that we prove in this section. Each lemma treats
one of the classes $\stt_i$, $i\leq 4$, and   Theorem~\ref{t:steep}
follows by the union bound.
%Here we provide a lower  bound on the ratio $\|Mx\|/\nx _2$
%for vectors $x$ from $\st \cap U(m, \gamma)$ for certain choice
%of parameters $m$ and $\gamma$ (
Recall that $U(m, \gamma)$ was introduced in Subsection~\ref{net} and
that given $x$, we fixed one permutation, $\sigma_x$,
such that $x_i^*=|x_{\sigma_x(i)}|$ for $i\le n$. Recall also that the event
$\Event_{nrm}$ was introduced in Proposition~\ref{nettri}.

\begin{lemma} \label{st1}
Let $n\geq 1$ and  $p\in (0, 1/2]$.
%be large enough and
%\geq 30$ and $0<p<0.001$ satisfy $pn\geq 100 \ln n$.
Let $\Event_{sum}$ (with $q=p$)
%and $\Event_{col}$
be the event introduced in Lemma~\ref{bennett} and by $\Event_{col}\subset \Mc$
denote the subset of $0/1$ matrices with no zero columns.
% and \ref{col} respectively.
%Let $M\in \Event_{sum} \cap \Event_{col}$.
Then for every $M\in \Event_{sum}\cap \Event_{col}$ and every $x\in \stt_1$,
$$
 \|M  x\| \geq \frac{1}{3\sqrt{n}}\, \|x\|.
$$
In particular,
$$
 \p\Bigl\{M\in\Mc:\;\exists x\in\stt_1 \,\, \mbox{ with }\,\,
  \|Mx\| \leq \frac{1}{3\sqrt{n}}
 %\frac{\sqrt{pn}}{3\sqrt{2} C_0} \|x_A\|
 \Bigr\}\leq n (1-p)^n + e^{-1.5np}.
$$
\end{lemma}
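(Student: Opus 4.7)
The plan is to exploit the defining property of $\stt_1$: a vector $x\in\stt_1$ has one coordinate, $x_{\sigma_x(1)}$, that strictly dominates all the others (in absolute value) by a factor $6pn$. Combined with the row-sum control from $\Event_{sum}$ and the no-zero-column condition from $\Event_{col}$, this single large coordinate alone will force $\|Mx\|$ to be large.

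Fix $M\in\Event_{sum}\cap\Event_{col}$ and $x\in\stt_1$. Without loss of generality $\|x\|=1$. Write $j:=\sigma_x(1)$ so that $|x_j|=x_1^*$, and note that $|x_i|\leq x_2^*<x_1^*/(6pn)$ for every $i\neq j$. Since $M\in\Event_{col}$, column $j$ of $M$ has at least one entry equal to $1$; pick any row index $k$ with $M_{kj}=1$. Since $M\in\Event_{sum}$, row $k$ has at most $3.5\,pn$ ones in total, so the off-diagonal (with respect to the coordinate $j$) contribution to $(Mx)_k$ is bounded by
\begin{equation*}
\Big|\sum_{i\neq j}M_{ki}x_i\Big|\leq 3.5\,pn\cdot x_2^*<3.5\,pn\cdot\frac{x_1^*}{6pn}=\tfrac{7}{12}x_1^*.
\end{equation*}
Hence $|(Mx)_k|\geq x_1^*-\tfrac{7}{12}x_1^*=\tfrac{5}{12}x_1^*$. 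Since $\|x\|=1$ forces $x_1^*\geq 1/\sqrt{n}$, we conclude
\begin{equation*}
\|Mx\|\geq |(Mx)_k|\geq \frac{5}{12\sqrt{n}}\geq\frac{1}{3\sqrt{n}},
\end{equation*}
which is the deterministic part of the statement.

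For the probabilistic assertion, observe that the event that some $x\in\stt_1$ violates the bound is contained in $\Event_{sum}^c\cup\Event_{col}^c$. By Lemma~\ref{bennett} applied with $q=p$, $\Prob(\Event_{sum}^c)\leq e^{-1.5np}$, and by the union bound,
\begin{equation*}
\Prob(\Event_{col}^c)\leq \sum_{j=1}^n\Prob\{\col_j(M)=0\}=n(1-p)^n.
\end{equation*}
Summing the two estimates gives the claimed bound $n(1-p)^n+e^{-1.5np}$. There is no real obstacle here — the proof is essentially a one-line deterministic argument followed by a union bound — so the only care needed is in tracking the numerical constants ($3.5$ from $\Event_{sum}$ against $6$ from the definition of $\stt_1$) to ensure the resulting lower bound $5/(12\sqrt{n})$ dominates $1/(3\sqrt{n})$.
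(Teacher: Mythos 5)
Your proof is correct and takes essentially the same route as the paper: pick a row $k$ with a $1$ in the dominant column (which exists on $\Event_{col}$), use the row-sum bound from $\Event_{sum}$ to control the remaining contribution by $3.5pn\,x_2^*<\frac{7}{12}x_1^*$, conclude $|(Mx)_k|\geq \frac{5}{12}x_1^*\geq \frac{5}{12\sqrt n}\|x\|$, and finish with the union bound $\Prob(\Event_{col}^c)+\Prob(\Event_{sum}^c)\leq n(1-p)^n+e^{-1.5np}$. The only cosmetic difference is that you normalize $\|x\|=1$ up front and use $x_1^*\geq 1/\sqrt n$, whereas the paper keeps $x$ unnormalized and invokes $\|x\|\leq\sqrt n\,x_1^*$ at the end; these are equivalent.
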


\begin{proof}
%As usual let $d=pn$. Since $n_1>4$ and $p\leq 1/6$ we have $d=pn\geq 1$.
Let $\delta_{ij}$, $i,j\leq n$ be entries of $M\in \Event_{sum}\cap \Event_{col}$.
Let $\sigma=\sigma_x$.
% denotes the (fixed)
%permutation of $[n]$ such that $x_i^*=|x_{\sigma(i)}|$ for $i\le n$.
Denote, $\ell=\sigma(1)$.
% and $J=\left\{\sigma(1)\r\}$.
Since $M\in \Event_{col}$, there exists $s\leq n$ such that
$\delta_{s\ell}=1$. Then
\begin{align*}
  |\langle R_{s} (M),\, x \rangle|
  &=\Big| x_{\ell}
       +   \sum_{j\ne\ell} \delta _{sj} x_j
 \Big|
    \geq|x_{\ell}|- \sum_{j\ne\ell} \delta _{sj} \, x_{j}\geq
    |x_{\ell}|- \sum_{j=1}^n \delta _{sj} \,x_{n_2}^*  .
\end{align*}
Using that $M\in \Event_{sum}$ we observe that
$\sum_{j=1}^n \delta _{sj} \leq 3.5 pn$. Thus,
$$
 \|Mx\| \geq |\langle R_{s} (M),\, x \rangle| \geq  x_{1}^* -  3.5 pn  x_{n_2}^* \geq x_{1}^*/3.
$$
The trivial bound $\|x\|\leq \sqrt{n} \, x_{1}^*$ completes the first estimate.
The ``in particular" part follows by the ``moreover" part of Lemma~\ref{bennett} and since
$\p(\Event_{col})\leq n(1-p)^n$.
\end{proof}

\begin{lemma} \label{st2}
There exists a (small) absolute positive constant $c$ such that the following holds.
Let $n$ be large enough and $m\leq cn$.
Let   $(4\ln n)/n\leq p<1/2$
and $M$ be \Ber matrix.
%be large enough and
%\geq 30$ and $0<p<0.001$ satisfy $pn\geq 100 \ln n$.
%Let $\Event_{sum}$ (with $q=p$)
%and $\Event_{col}$
%be the event introduced in Lemma~\ref{bennett} and by $\Event_{col}$
%denote the event that there is no zero columns in a matrix.
% and \ref{col} respectively.
%Let $M\in \Event_{sum} \cap \Event_{col}$.
Then
%$x\in \st_2\cap U(m, \gamma)$,
$$
 \p\Bigl(M\in \Event_{nrm} \qand \exists x\in\stt_2 \,\, \mbox{ with }\,\,
  \|Mx\| \leq \frac{\sqrt{pn}}{5C_0}
 %\frac{\sqrt{pn}}{3\sqrt{2} C_0} \|x_A\|
 \Bigr)\leq  e^{-2n}.
$$
%where $\Event_{nrm}$ was introduced in Proposition~\ref{nettri}.
\end{lemma}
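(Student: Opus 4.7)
The plan is a standard net-and-bound argument that combines three previously established ingredients: the individual anti-concentration estimate from Proposition~\ref{rogozin}, the covering bound from Lemma~\ref{cardnet}, and the $|||\cdot|||$-approximation on $\Event_{nrm}$ from Proposition~\ref{nettri}.

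First, for any fixed $x\in\stt_2$ one has $\|x\|=1$ and $\|x_{[n]}\|_\infty=x_1^*\leq \beta_p=\sqrt{p}/C_0=C_0^{-1}\sqrt p\,\|x_{[n]}\|$, so Proposition~\ref{rogozin} applied with $A=[n]$ yields the pointwise estimate
$$
\Prob\Bigl\{\|Mx\|\leq \frac{\sqrt{pn}}{3\sqrt 2\,C_0}\Bigr\}\leq e^{-3n}.
$$

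Next I would build a $|||\cdot|||$-net. Since $\|Qx\|\leq \|x\|=1$ and $\stt_2\subset U(m,c_0)$ by definition, we have $\stt_2\subset V(1)\cap U(m,c_0)$, so Lemma~\ref{cardnet} (with $\beta=1$, $\gamma=c_0$ and some $\eps\in [8c_0,1]$ to be chosen) produces an $\eps$-net $\net\subset V(1)\cap U(m,c_0)$ of cardinality at most
$$
|\net|\leq \frac{2^{10}\sqrt p\,n^2}{\eps^2\sqrt m}\Bigl(\frac{9}{\eps}\Bigr)^m\binom{n}{m}\leq \frac{2^{10}\sqrt p\,n^2}{\eps^2\sqrt m}\Bigl(\frac{9 e\,n}{\eps m}\Bigr)^m.
$$
With $\eps$ and $c_0$ fixed as absolute constants (depending only on $C_0$) and $m\leq cn$ for $c$ a sufficiently small absolute constant, the exponent $m\ln(9en/(\eps m))$ is bounded by $0.9 n$ for large $n$, so $|\net|\leq e^{0.95 n}$. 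Combined with the union bound and the individual estimate, this gives
$$
\Prob\Bigl\{\exists y\in\net:\ \|My\|\leq \frac{\sqrt{pn}}{3\sqrt 2\,C_0}\Bigr\}\leq |\net|\cdot e^{-3n}\leq e^{-2n}.
$$

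Finally I would transfer the bound from $\net$ to all of $\stt_2$ using Proposition~\ref{nettri}: on $\Event_{nrm}$ and for any $x\in\stt_2$ with nearest $y\in\net$, $\|M(x-y)\|\leq 100\sqrt{pn}\,|||x-y|||\leq 100\sqrt{pn}\,\eps$. I would fix $\eps$ so small that $100\eps<\tfrac{1}{3\sqrt 2\,C_0}-\tfrac{1}{5C_0}$ (a universal constraint depending only on $C_0$, which also pins down the admissible range of $c_0$ via $8c_0\leq \eps$); the reverse triangle inequality then forces $\|Mx\|>\sqrt{pn}/(5C_0)$ whenever $\|My\|>\sqrt{pn}/(3\sqrt 2\,C_0)$, yielding the claim. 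The main delicate point is parameter coordination: the dominant factor $(9en/(\eps m))^m$ in $|\net|$ must be dominated by $e^{3n}$ from the individual bound, while $\eps$ is forced to be a small absolute constant by the approximation step. This is precisely why the hypothesis $m\leq cn$ with a small absolute $c$ is required and cannot be substantially relaxed in this simple form of the argument.
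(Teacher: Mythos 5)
Your approach is essentially the paper's --- Rogozin on net points (Proposition~\ref{rogozin}), the net-cardinality bound from Lemma~\ref{cardnet}, and transfer on $\Event_{nrm}$ via Proposition~\ref{nettri}, with the same balance of constants. But there is a genuine gap in the union-bound step. Your net $\net$ is constructed in $V(1)\cap U(m,c_0)$, not in $\stt_2$, and the pointwise estimate you established in the first step (via Proposition~\ref{rogozin} with $A=[n]$) is valid only for vectors $y$ satisfying $\|y\|_\infty\leq C_0^{-1}\sqrt p\,\|y\|$, which is exactly the normalization $\|y\|=1$, $y_1^*\leq\beta_p$ that defines $\stt_2'$. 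An element of $V(1)\cap U(m,c_0)$ need not satisfy this: the definition of $V(1)$ controls $\|y\|_\infty$ and $\|Qy\|$ but puts no lower bound on $\|y\|$, so the ratio $\|y\|_\infty/\|y\|$ can be arbitrarily large, and Rogozin gives you nothing for such $y$. Thus the line ``$\Prob\{\exists y\in\net:\|My\|\leq \sqrt{pn}/(3\sqrt 2 C_0)\}\leq |\net|\,e^{-3n}$'' is not justified as written.

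The standard repair, which the paper uses, is the ``projection trick'': start from an $(\eps/2)$-net $\net_0\subset V(1)\cap U(m,c_0)$ from Lemma~\ref{cardnet}, and for each $y_0\in\net_0$ that lies within $\eps/2$ of $\stt_2$ choose a point $z\in\stt_2$ with $|||z-y_0|||\leq\eps/2$. This yields an $\eps$-net $\net\subset\stt_2$ of at most the same cardinality, and Proposition~\ref{rogozin} then applies to every $z\in\net$ because each such $z$ inherits $\|z\|=1$ and $z_1^*\leq\beta_p$. With that single adjustment your argument and parameter bookkeeping carry through exactly as in the paper.
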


\begin{proof}
By Lemma~\ref{cardnet} for $\eps\in [8c_0, 1)$ there exists an $(\eps/2)$--net
in
$V(1)\cap  U(m, c_0)$ with respect to the triple norm $|||\cdot|||$, with cardinality at most
$$
  \frac{C n^{2}}{\eps^2} \left(\frac{18 e n }{\eps m}\r)^m.
$$
Since $\stt_2\subset V(1)\cap  U(m, c_0)$,
by a standard ``projection'' trick, we can obtain from it an $\eps$--net $\mathcal{N}$
in $\stt_2$ of the same cardinality.
Let $x\in \stt_2$. % Then $\|x\|_\infty\leq 1$ and $\|Qx\|\leq 1$, so that $x\in V(1)\cap  U(m, c_0)$.
Let $z\in \mathcal{N}$ be such that $|||x-z|||\leq \eps$.
Since on $\stt_2$ we
have $z_1^*\leq \beta_p \|z\|=\beta _p$, Proposition~\ref{rogozin} implies that
with probability at least $1-e^{-3n}$,
\begin{equation}\label{eq: 49820598207492740329}
   \|Mz\| \geq \frac{\sqrt{pn}}{3\sqrt{2} C_0} .
\end{equation}
Further, in view of Proposition~\ref{nettri}, conditioned on \eqref{eq: 49820598207492740329}
and on $\{M\in \Event_{nrm}\}$, we have
%with probability $1-9e^{-6pn}$,
$$
 \|Mx\| \geq \|Mz\| -\|M(x-z)\| \geq \frac{\sqrt{pn}}{3\sqrt{2} C_0}
 - 100 \sqrt{pn} \eps \geq \frac{\sqrt{pn}}{5 C_0} ,
$$
%where $C_1$ is an absolute constant from Proposition~\ref{nettri} and
where
we have  chosen $\eps = 1/(5000 C_0 )$.
% (in particular we assume $c_0\leq 1(200 C_0 C_1)$).
Using the union bound and our choice of $\eps$, we obtain that
$$
 \p\Bigl(M\in \Event_{nrm} \qand \exists x\in\stt_2 \,\, \mbox{ with }\,\,
  \|Mx\| \leq \frac{\sqrt{pn}}{5C_0}
 %\frac{\sqrt{pn}}{3\sqrt{2} C_0} \|x_A\|
 \Bigr)\leq e^{-3n}|\mathcal{N}| \leq e^{-2n}
$$
for sufficiently large $n$ and provided that $c_0\leq 1/(40000  C_0)$
and $m\leq cn$ for
small enough absolute positive constant $c$. This completes the proof.
\end{proof}

\begin{rem}
Note that we used Proposition~\ref{rogozin} with the set $A=[n]$. In this
case we could use slightly easier construction for nets than the one in
Lemma~\ref{cardnet} --- we don't need to distinguish the first
coordinate in the net construction, in other words we could have only one
special direction, not two. However this would not lead
to a better estimate and in the remaining lemmas we will need
the fult strength of our construction.
\end{rem}
%$\|Qx\|$

Next we threat the case of vectors in $\stt_3$. The proof is similar to
the proof of Lemma~\ref{st2}, but we need to remove the maximal coordinate
and to deal with remaining part of the vector. Recall that the operator $Q$
serves this purpose.
%\aaaaaaaaaaaaaaaa

\begin{lemma} \label{st4}
There exists a (small) absolute positive constant $c$ such that the following holds.
Let $n$ be large enough, and $m\leq cpn/\ln(e/p)$, $(4\ln n)/n\leq p<1/2$.
Let $M$ be a random \Ber matrix.
%be large enough and
%\geq 30$ and $0<p<0.001$ satisfy $pn\geq 100 \ln n$.
%Let $\Event_{sum}$ (with $q=p$)
%and $\Event_{col}$
%be the event introduced in Lemma~\ref{bennett} and by $\Event_{col}$
%denote the event that there is no zero columns in a matrix.
% and \ref{col} respectively.
%Let $M\in \Event_{sum} \cap \Event_{col}$.
Then
%$x\in \st_2\cap U(m, \gamma)$,
$$
 \p\Bigl(M\in \Event_{nrm} \qand \exists x\in\stt_3 \mbox{ with }\,\,
  \|Mx\| \leq  \frac{1}{30C_0\sqrt{pn}}
 %\frac{\sqrt{pn}}{3\sqrt{2} C_0} \|x_A\|
 \Bigr)\leq  e^{-2 n}.
$$
\end{lemma}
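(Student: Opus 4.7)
The plan is to argue, for each scale $k\in[\ell-1]$ separately, that with high probability no $x\in\stt_{3,k}$ has $\|Mx\|$ too small, and then to sum the $\ell-1\leq \ln n$ bad-event probabilities via a union bound. Compared with Lemma~\ref{st2}, the distinctive feature of $\stt_3$ is that the maximal coordinate $x_1^*$ can be much bigger than $\beta_p$ (since $x\notin\stt_2'$), so the relevant small-ball estimate must come from Proposition~\ref{rogozin} applied to $Qx$, i.e.\ with $A=\sigma_x([2,n])$, rather than to $x$ itself.

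Fix $k$ and observe that on $\stt_{3,k}$ one has $\|x\|_\infty\leq 1$ and $\|Qx\|<\lambda_{k+1}=3\lambda_k$, so $\stt_{3,k}\subset V(3\lambda_k)\cap U(m,c_0\lambda_k)$. Setting $\varepsilon_k:=\lambda_k/(C'C_0)$ for a large absolute constant $C'$ and taking $c_0\leq 1/(8C'C_0)$ (so that $8\gamma\leq\varepsilon_k\leq\beta$ in the notation of Lemma~\ref{cardnet}), we obtain an $\varepsilon_k$-net for $\stt_{3,k}$ with respect to $|||\cdot|||$ of cardinality at most
\begin{equation*}
\frac{2^{10}\sqrt p\,n^2(C'C_0)^2}{\lambda_k^2\sqrt m}\,(27C'C_0)^m\binom{n}{m}.
\end{equation*}
By the standard projection trick we may extract a subset $\mathcal N_k\subset\stt_{3,k}$ of no greater cardinality which $2\varepsilon_k$-approximates $\stt_{3,k}$ in the triple norm (and hence in the Euclidean norm, since $pn\geq 1$). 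For any fixed $z\in\mathcal N_k$, writing $A_z=\sigma_z([2,n])$, we have $\|z_{A_z}\|_\infty\leq\beta_p\|z_{A_z}\|$ by membership in $\stt_{3,k}$, and $\|z_{A_z}\|=\|Qz\|\geq\lambda_k$, so Proposition~\ref{rogozin} yields $\Prob\{\|Mz\|\leq \sqrt{pn}\,\lambda_k/(3\sqrt 2\,C_0)\}\leq e^{-3n}$.

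Taking the union bound over $\mathcal N_k$ and combining with Proposition~\ref{nettri}, for any $x\in\stt_{3,k}$ with approximant $z$ we have on $\Event_{nrm}\cap\{\|Mz\|\geq \sqrt{pn}\,\lambda_k/(3\sqrt 2\,C_0)\}$ that $\|M(x-z)\|\leq 200\sqrt{pn}\,\varepsilon_k$, whence
\begin{equation*}
\|Mx\|\;\geq\; \sqrt{pn}\,\lambda_k\Big(\tfrac{1}{3\sqrt 2\,C_0}-\tfrac{200}{C'C_0}\Big)\;\geq\; \tfrac{\sqrt{pn}\,\lambda_k}{5C_0}\;\geq\; \tfrac{1}{30C_0\sqrt{pn}},
\end{equation*}
provided $C'$ is taken large enough (say $C'=2000$), where the last step uses $\lambda_k\geq\lambda_1=1/(6pn)$. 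Summing over $k\in[\ell-1]$ with $\ell\leq\ln n$ gives total failure probability at most $(\ln n)\,\max_k|\mathcal N_k|\,e^{-3n}$, so the only genuine work is showing $\ln|\mathcal N_k|\leq n$ uniformly in $k$: using $1/\lambda_k^2\leq 36 p^2n^2$ and $\binom{n}{m}\leq(en/m)^m$,
\begin{equation*}
\ln|\mathcal N_k|\;\leq\; O(\ln n)+m\ln(n/m)+m\ln(27eC'C_0),
\end{equation*}
and under the hypothesis $m\leq cpn/\ln(e/p)$ each of the last two terms is $O(cn)$ (with the dominant case being $\lambda_k\sim 1/(pn)$), so for a small enough absolute constant $c$ we get $\ln|\mathcal N_k|\leq n$ and the total bound becomes $e^{-2n}$. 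The main technical subtlety is that the constants must be chosen in the correct order --- first $C'$ (to absorb the $200/C'$ loss in the Proposition~\ref{nettri} step), then $c_0$ (to satisfy $8c_0\leq 1/(C'C_0)$), then $c$ (to keep the net cardinality subexponential across all scales) --- so that the Rogozin estimate, the net cardinality and the net-approximation error are simultaneously controlled at every scale $k$.
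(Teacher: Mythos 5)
Your proposal is correct and follows essentially the same route as the paper's own proof: decompose $\stt_3$ into dyadic (here, triadic) scales $\stt_{3,k}$ according to $\|Qx\|$, build an $\eps_k$-net with $\eps_k\propto\lambda_k$ via Lemma~\ref{cardnet}, invoke Proposition~\ref{rogozin} with $A=\sigma_z([2,n])$ for each net point, transfer to arbitrary $x$ by Proposition~\ref{nettri} on $\Event_{nrm}$, and sum over the $\ell-1\le\ln n$ scales. The only slip is arithmetic: with $C'=2000$ one has $\frac{1}{3\sqrt2}-\frac{200}{2000}\approx 0.136<\frac15$, so you do not actually reach $\sqrt{pn}\lambda_k/(5C_0)$; you need $C'\gtrsim 6000$ (or, equivalently, accept a slightly worse final threshold such as $1/(48C_0\sqrt{pn})$). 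Since you left $C'$ as a free large constant and noted it must be chosen large enough, this does not affect the validity of the argument, only the particular numerical example you gave.
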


\begin{proof}
Fix $1\leq k \leq \ell - 1$.
By Lemma~\ref{cardnet} for $\eps\in [8c_0 \lam_k, \lam_{k+1})$
there exists an $(\eps/2)$--net in
$V(\lam_{k+1})\cap  U(m, c_0\lam_k)$ with respect to $|||\cdot|||$, with cardinality at most
$$
  \frac{C n^{2}}{\eps^2} \left(\frac{18 e \lam_{k+1} n }{\eps m}\r)^m
  \leq \frac{C n^{2}}{\eps^2} \left(\frac{54 e \lam_{k} n }{\eps m}\r)^m.
$$
Again using a ``projection'' trick, we can construct an $\varepsilon$--net $\mathcal{N}_k$ in $\stt_{3,k}$
of the same cardinality.
Let $x\in \stt_{3,k}$. %Then $\|x\|_\infty\leq 1$ and $\|Qx\|\leq \lam_{k+1}$.
Let $z\in \mathcal{N}_k$ be such that $|||x-z|||\leq \eps$.
Since on $\stt_{3,k}$ we
have $z_2^*\leq \beta_p \|Qz\|$, Proposition~\ref{rogozin} applied
with $A=\sigma_z([2, n])$ implies that
with probability at least $1-e^{-3n}$,
$$
   \|Mz\| \geq \frac{\sqrt{pn}\, \|Qz\|}{3\sqrt{2} C_0}
   \geq \frac{\sqrt{pn}\, \lam_k}{3\sqrt{2} C_0} .
$$
Conditioned on the above inequality and on the event $\{M\in \Event_{nrm}\}$,
Proposition~\ref{nettri} implies that
%with probability1-9e^{-6pn}$,
$$
 \|Mx\| \geq \|Mz\| -\|M(x-z)\| \geq \frac{\sqrt{pn}\, \lam_k}{3\sqrt{2} C_0}
 - 100 \sqrt{pn} \eps \geq \frac{\sqrt{pn}\,\lam_k}{5 C_0} ,
$$
%where $C_1$ is an absolute constant from Proposition~\ref{nettri} and
where
we have  chosen $\eps = \lam_k /(5000C_0)$. Using the union bound, our choice
of $\eps$ and $\lam_k\geq 1/(6pn)$, we obtain that
$$
 P_k:=\p\Bigl(\exists x\in\stt_{3,k} \,\, \mbox{ with }\,\,
  \|Mx\| \leq \frac{\sqrt{pn}\,\lam_k}{5C_0}
 %\frac{\sqrt{pn}}{3\sqrt{2} C_0} \|x_A\|
 \Bigr)\leq e^{-3n}|\mathcal{N}_k | \leq e^{-2.5 n}
$$
%\mbox{
for  large enough $n$ and for $m\leq cn$, where $c>0$ is a
small enough absolute  constant (we also assume $c_0\leq 1/(40000 C_0)$).
Since $\ell \leq \ln n$ and $\lam_k\geq \lam_1\geq 1/(6pn)$,
we obtain
$$
 \p\Bigl(\exists x\in\stt_{3} \,\, \mbox{ with }\,\,
  \|Mx\| \leq \frac{1}{30C_0\sqrt{pn}}
 %\frac{\sqrt{pn}}{3\sqrt{2} C_0} \|x_A\|
 \Bigr)\leq  \sum _{k=1}^{\ell-1 } P_k\leq  e^{-2p n}.
$$
This completes the proof.
\end{proof}

Finally we threat the case of vectors in $\stt_4$.
%The proof is similar to
%the proof of Lemma~\ref{st3}, but, as in the previous case, we need to remove
%the maximal coordinate.

\begin{lemma} \label{st5}
There exists a (small) absolute positive constant $c$ such that the following holds.
Let $n$ be large enough and let $m\leq cpn/\ln(e/p)$, $(30\ln n)/n\leq p<1/20$.
Let $M$ be a \Ber random matrix.
%be large enough and
%\geq 30$ and $0<p<0.001$ satisfy $pn\geq 100 \ln n$.
%Let $\Event_{sum}$ (with $q=p$)
%and $\Event_{col}$
%be the event introduced in Lemma~\ref{bennett} and by $\Event_{col}$
%denote the event that there is no zero columns in a matrix.
% and \ref{col} respectively.
%Let $M\in \Event_{sum} \cap \Event_{col}$.
Then
%$x\in \st_2\cap U(m, \gamma)$,
$$
 \p\Bigl(M\in \Event_{nrm} \qand \exists x\in\stt_4 \,\, \mbox{ with }\,\,
  \|Mx\| \leq \frac{ 1}{60C_0 \sqrt{n \ln(e/p)}}
 %\frac{\sqrt{pn}}{3\sqrt{2} C_0} \|x_A\|
 \Bigr)\leq  e^{-1.5pn}.
$$
\end{lemma}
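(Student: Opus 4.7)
The plan is to follow the template of Lemma~\ref{st4}, with one key substitution: because $\stt_4$ is defined by $x_2^* > \beta_p \|Qx\|$, Proposition~\ref{rogozin} cannot be applied to $Qx$, and I will instead invoke Proposition~\ref{anti2} with $\alpha = \mu_k$, which is the anti-concentration estimate designed for vectors whose two largest coordinates dominate.

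Fix $k \in [1, s-1]$. Any $x \in \stt_{4,k}$ satisfies $\|Qx\| \leq x_2^*/\beta_p < 3\mu_k/\beta_p$, so
$$
  \stt_{4,k} \subset V(3\mu_k/\beta_p) \cap U(m, c_0 \mu_k/\sqrt{\ln(e/p)}).
$$
Pick a large absolute constant $C_1$ (of order $10^3$; explicitly $C_1 \geq 7000/3$ suffices) and set $\varepsilon_k := \mu_k/(C_1 \sqrt{\ln(e/p)})$. Provided $c_0$ is small enough that $8 c_0 \leq 1/C_1$, Lemma~\ref{cardnet} together with a projection back onto $\stt_{4,k}$ gives an $\varepsilon_k$-net $\mathcal{N}_k \subset \stt_{4,k}$ in the triple norm of cardinality
$$
  |\mathcal{N}_k| \leq \frac{C\, n^2\, \ln(e/p)}{\mu_k^2 \sqrt{m}}\left(\frac{27\, C_1\, C_0\, e\, n\, \sqrt{\ln(e/p)/p}}{m}\right)^m.
$$

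For each $z \in \mathcal{N}_k$ we have $z_2^* \geq \mu_k$, so Proposition~\ref{anti2} yields $\Prob\{\|Mz\| \leq \mu_k\sqrt{pn}/(7\sqrt{\ln(e/p)})\} \leq e^{-1.6 pn}$. Conditioning on the complementary event and on $\{M \in \Event_{nrm}\}$, Proposition~\ref{nettri} controls the approximation via $\|M(x - z)\| \leq 100 \sqrt{pn}\,\varepsilon_k$ whenever $|||x - z||| \leq \varepsilon_k$; the triangle inequality then gives
$$
\|Mx\| \geq \left(\frac{1}{7} - \frac{100}{C_1}\right)\frac{\mu_k \sqrt{pn}}{\sqrt{\ln(e/p)}} \geq \frac{\mu_k\sqrt{pn}}{10\sqrt{\ln(e/p)}} \geq \frac{1}{60 C_0 \sqrt{n\ln(e/p)}},
$$
where in the last step I use $\mu_k \geq \mu_1 = 1/(6 C_0 n\sqrt{p})$. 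Union-bounding over $\mathcal{N}_k$ and then over the at most $\ln n$ levels $k$, and using $p \geq 30\ln n/n$ to absorb the $\ln n$ factor into $e^{-0.05 pn}$, the overall probability is at most $e^{-1.5 pn}$ provided $|\mathcal{N}_k| \leq e^{0.05 pn}$.

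The main obstacle I anticipate is precisely this last cardinality bound. The dominant contribution to $\ln|\mathcal{N}_k|$ is $m\ln\!\big(C' n\sqrt{\ln(e/p)/p}/m\big)$; splitting it as $m\ln(n/m) + (m/2)\ln(\ln(e/p)/p)$ and substituting $m \leq c pn/\ln(e/p)$ yields bounds of the respective forms $O(c)\, pn \cdot (1 + \ln\ln(e/p)/\ln(e/p) + \ln(1/c)/\ln(e/p))$ and $O(c)\, pn$, which combine to at most $0.05\, pn$ for sufficiently small absolute $c$ (for the prefactor $\mu_k^{-2}$, we again use $\mu_k \geq \mu_1$, so $\ln\mu_k^{-1} = O(\ln n)$ is absorbed into the same $e^{-0.05 pn}$). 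This computation is exactly why the hypothesis $m \leq c pn/\ln(e/p)$ is sharp: each of the $m$ directions in the net contributes a factor $\sqrt{\ln(e/p)/p}$ whose logarithm is of order $\ln(e/p)$, and these $m \cdot \ln(e/p)$ contributions must be balanced against the exponent $pn$ coming from Proposition~\ref{anti2}.
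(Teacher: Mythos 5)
Your proof is correct and follows essentially the same route as the paper's: you cover $\stt_{4,k}$ by the $\varepsilon$-net from Lemma~\ref{cardnet} (with $\beta = 3\mu_k/\beta_p = \mu_{k+1}/\beta_p$), apply Proposition~\ref{anti2} to each net point since its two largest coordinates exceed $\mu_k$, control the approximation error through Proposition~\ref{nettri} on $\Event_{nrm}$, and then union-bound over the net and over the $\leq \ln n$ levels $k$. Your accounting of the net cardinality and of why the hypothesis $m \leq cpn/\ln(e/p)$ is exactly what balances the factor $\big(\sqrt{\ln(e/p)/p}\,\big)^m$ against the $e^{-1.6pn}$ from Proposition~\ref{anti2} matches the paper's computation.
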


\begin{proof}
Fix $1\leq k \leq s-1$.
By Lemma~\ref{cardnet} for $\eps\in [8c_0 \mu_k/\sqrt{\ln(e/p)}, \mu_{k+1})$
there exists an $(\eps/2)$--net in
$$V(\mu_{k+1}/\beta_p)\cap  U(m,  c_0 \mu_k /\sqrt{\ln(e/p)})$$ with respect to $|||\cdot|||$ with cardinality at most
$$
  \frac{C n^{2}}{\eps^2} \left(\frac{18 e \mu_{k+1} n }{\eps m \beta_p}\r)^m
  \leq \frac{C n^{2}}{\eps^2} \left(\frac{54 e \mu_{k} n }{\eps m \beta_p}\r)^m.
$$
By the projection trick, we get an $\eps$--net $\mathcal{N}_k$ in $\stt_{4,k}\subset V(\mu_{k+1}/\beta_p)\cap  U(m,  c_0 \mu_k /\sqrt{\ln(e/p)})$.

Let $x\in \stt_{4,k}$. %Then $\|x\|_\infty\leq 1$ and $\|Qx\|\leq x_2^* /\beta_p\leq \mu_{k+1}/\beta_p$.
%, since $x\not\in \st_3$,
% $\|Qx\|\leq x_2^* /\beta_p\leq \mu_{k+1}/\beta_p$.
Let $z\in \mathcal{N}_k$ be such that $|||x-z|||\leq \eps$.
Since on $\stt_4$ we
have $z_1^*\geq z_2^*\geq \mu_k$, Proposition~\ref{anti2} implies that
with probability at least $1-e^{-1.6np}$,
$$
    \|Mz\| \geq \frac{  \mu_k \sqrt{pn}}{7\sqrt{\ln(e/p)}}  .
$$
Conditioned on the above and on $\{M\in \Event_{nrm}\}$, Proposition~\ref{nettri} implies that
%that with probability $1-9e^{-1.6np}$,
$$
 \|Mx\| \geq \|Mz\| -\|M(x-z)\| \geq \frac{\mu_k \sqrt{pn}}{7\sqrt{\ln(e/p)}}
 - C_1 \sqrt{pn} \eps \geq \frac{\mu_k \sqrt{pn}}{10\sqrt{\ln(e/p)}}  ,
$$
%where $C_1$ is an absolute constant from Proposition~\ref{nettri} and
where
we have  chosen
$$
  \eps = \mu_k/(50C_1\sqrt{\ln(e/p)})
 % = \mu_k \sqrt{p}/(50C_0 C_1\sqrt{\ln(e/p)})
 \geq 8 c_0 \mu_k/ \sqrt{\ln(e/p)} ,
$$
provided that $c_0\leq 1/40000$.
Using the union bound and our choice
of $\eps$ we obtain that
$$
P_k:= \p\Bigl(M\in \Event_{nrm} \qand \exists x\in\stt_{4,k} \,\, \mbox{ with }\,\,
 \|Mx\| \leq \frac{  \mu_k \sqrt{pn}}{10\sqrt{\ln(e/p)}}
 %\frac{\sqrt{pn}}{3\sqrt{2} C_0} \|x_A\|
 \Bigr)\leq e^{-1.6pn}|\mathcal{N}_k| \leq e^{-1.55 p n}
$$
for large enough $n$ and for  $m\leq cpn/\ln(e/p)$, where $c>0$ is a
small enough absolute  constant.
%Since $s \leq \ln n$, $\beta_p= \sqrt{p}/C_0$ and
%$\mu_k\geq \mu_1\geq \beta_p/(6pn)$,
Since $s \leq \ln n$ and
$\mu_k\geq \mu_1\geq \beta_p/(6pn)= 1/(6C_0 n\sqrt{p})$,
we obtain
$$
 \p\Bigl(M\in \Event_{nrm} \qand \exists x\in\stt_{4} \,\, \mbox{ with }\,\,
  \|Mx\| \leq \frac{ 1}{60C_0 \sqrt{n \ln(e/p)}}
 %\frac{\sqrt{pn}}{3\sqrt{2} C_0} \|x_A\|
 \Bigr)\leq  \sum _{k=1}^{s-1} P_k\leq  e^{-1.5p n}.
$$
This completes the proof.
\end{proof}

\begin{proof}[Proof of Theorem \ref{t:steep}.]
%By $\Event_{steep}$ denote the event from Theorem \ref{t:steep}.
Lemmas~\ref{st1}, \ref{st2}, \ref{st4}, and \ref{st5} imply that
$$
  \Prob(\Event) \leq  n(1-p)^n + 3e^{-1.5np} +\p(\Event^c_{nrm}),
$$
where  $\Event$ denotes the event from Theorem \ref{t:steep}.
Lemma~\ref{mnorm} applied with $t=30$ and (\ref{normofone}) imply that
$
  \p(\Event^c_{nrm})\leq e^{-10pn},
$
provided that $pn$ is large enough. This completes the proof.
\end{proof}

\subsection{Proof of Theorem \ref{compl-1}}
%Complement of gradual non-constant vectors}

%Recall that $\normr_n(r)$ is the set of $r$-normalized vectors,
%that is of all vectors $x$ in $\R^n$
%with $x^*_{\lfloor rn\rfloor}=1$. Recall also that
%\begin{align*}
%\gncvectors_n=\gncvectors_n(r,L,\delta,\rho)
%:=\big\{x\in\normr_n(r):\;&x^*_i\leq (2n/i)^L %L^{\sqrt{n/i}}
%\;\;\mbox{for all }i\leq n,\;\;
%\mbox{and there are subsets }Q_1,Q_2\subset[n]\nonumber\\
%&\mbox{with }|Q_1|,|Q_2|\geq \delta n\,\, \mbox{ and }\,\, \max\limits_{i\in Q_2}
% x_i\leq \min\limits_{i\in Q_1}x_i- \rho\big\}.\label{eq: gnc def}
%\end{align*}

%Recall that the sets  $\normr_n(r)$ and $\gncvectors_n$ were introduced in Subsection~\ref{gradnac}.
%In this section we show that, after a proper normalization,  the complement of $\gncvectors_n$
%is contained in steep vectors for some choice of parameters
%$r, L, \delta, \rho$. $m=\lfloor rn \rfloor$.
%$m= \lfloor cpn/2\ln(e/p)\rfloor$ (then $\lfloor rn\rfloor<m<n/2$).

%\begin{prop} Under the choice of parameters above, for large enough $n$ and for
%$30\ln n/n \leq p \leq c$, where $c$ is a sufficiently small positive absolute constant,
%$$  \{x/\|x\|\, \, : \, \, x\in \normr_n(r)\setminus \gncvectors_n\}\subset \stt.$$
%\end{prop}

\begin{proof}
We prove the statement with   $r=r(q)= cq/\ln(e/q)$, where $c$ is the constant from Theorem~\ref{t:steep},
and  $\rho= \rho(q)= c_0 \sqrt{r}\beta_q /(6\sqrt{\ln(e/q)})$. %Since $r(p)$ and $\rho(p)$ are increasing on
%$(q, c_1)$ this will imply the statement with $r=r(q)$ and $\rho= \rho(q)$.
Note that under our choice of parameters (and assuming $c_1$ is small), $9\delta/2 \leq c_0\beta_q /\sqrt{\ln(e/q)}
\leq c_0\beta_p /\sqrt{\ln(e/p)}$.

Assume that $x\in \normr_n(r)\setminus \gncvectors_n$.
By $(x_i^{\#})_i$ denote the non-increasing  rearrangement
of $(x_i)_i$ (we would like to emphasize that  we do not take absolute values).
Note that for any $t>0$ there are two subsets $Q_1, Q_2\subset[n]$  with $|Q_1|,|Q_2|\geq \lceil\delta n\rceil$
satisfying $\max\limits_{i\in Q_2} x_i\leq \min\limits_{i\in Q_1}x_i- t$ if and only if
$x_{\lceil\delta n\rceil}^{\#}-x_{n-\lceil\delta n\rceil+1}^{\#}\geq t$. This leads to the two following cases.
\smallskip

\noindent
{\it Case 1. $x_{\lceil\delta n\rceil}^{\#}-x_{n-\lceil\delta n\rceil+1}^{\#}\geq  \rho$. }
Since $x\notin \gncvectors_n$, in this case there exists an index $j\leq n$ with $x_j^*>(2n/j)^{3/2}$. Note that since
$x^*_{\lfloor rn\rfloor}=1$, we have $j<rn=3\delta n$. %and $x^*_{m+1}\leq 1$.
%Since $m \geq \lfloor rn\rfloor$,
%we also have $x^*_{m+1}\leq 1$.

%\smallskip

%\noindent
%{\it Case 1. $ j\geq  2\delta n$. }
%Then either there are at least $\delta n$ indexes $i$ with
%$x_i\geq x_j^*\geq  (2/r)^L$ or there are at least $\delta n$ indexes $i$ with
%$x_i\leq -x_j^*\leq  -(2/r)^L$. Without lost of generality we assume the first
%case occurs. Since  $x^*_{\lfloor rn\rfloor}=1$, we have that there exist
%at least $n-rn$ indexes $i$ with
%$|x_i|\leq 1$. Since $r<1-\delta$, we obtain that
%$x_k^{\#}-x_{n-k+1}^{\#}> (2/r)^L -1\geq \rho$, which contradicts our assumption.

\smallskip

\noindent
{\it Subcase 1a. $1< j<  3\delta n$. }
Since $x_j^*>(2n/j)^{3/2}$ we get
$$
  \|Qx\|^2 \geq \sum _{i=2}^j (x_i^*)^2\geq \sum _{i=2}^j (2n/i)^{3} \geq \frac{j}{2}\, (2n/j)^{3}
  =n (2n/j)^{2}.
$$
%Thus, if $m= \lfloor cpn/2\ln(e/p)\rfloor$ then
%$m+1 \geq \lfloor rn\rfloor$, hence
%$x^*_{m+1}\leq 1$.
Therefore,
$$
%  \frac{x^*_{m+1}}{|x|}\leq
 \frac{x^*_{\lfloor rn\rfloor+1}}{\|Qx\|}\leq
   \frac{1}{\sqrt{n}} \, \frac{j}{2n}
   \leq
   \frac{(3\delta/2) }{\sqrt{n}} .
$$
Now let $y=x/\|x\|$. Then
\begin{equation}\label{estym}
  y^*_{\lfloor rn\rfloor+1} = \frac{x^*_{\lfloor rn\rfloor+1}}{\|x\|}\leq \frac{3\delta/2}{\sqrt{n}}  \,
   \frac{\|Qx\|}{\|x\|} =
   \frac{3\delta/2}{\sqrt{n}} \, \|Qy\| .
\end{equation}

Our goal is to show that $y\in \stt(p,m)$ (with $m=\lfloor rn\rfloor$).

If $y\in \stt_1(p)$, we are done.

Otherwise, if $y\in \stt_2'$, then (\ref{estym}) implies
that $y^*_{\lfloor rn\rfloor+1}\leq c_0/\sqrt{n}$, that is, there are at least $n-m$ coordinates
at the distance at most $c_0/\sqrt n$ from zero. Thus $y\in U(m, c_0)$ and
hence $y\in \stt_{2}$.

If
$y\not\in \stt_1\cup \stt_2'$ and
$y^*_2\leq \beta_p\|Qy\|$, then necessarily $\lam _k\leq \|Qy\|<\lam_{k+1}\leq 3\lam_k$ for some $k$, where
$\lam_k,\lam_{k+1}$ are defined according to \eqref{eq: 0495205965029385}. Then
(\ref{estym}) implies that  $y^*_{\lfloor rn\rfloor+1}\leq c_0 \lam_k/\sqrt{n}$, that is,
there are at least $n-m$ coordinates at the distance at most $c_0\lam_k/\sqrt n$ from zero.
Thus $y\in U(m, c_0\lam _k)$ and hence $y\in \stt_{3,k}$.

If $y\not\in \stt_1\cup \stt_2'$ and
$y^*_2> \beta_p\|Qy\|$ then necessarily $\mu _k\leq y_2^*<\mu_{k+1}\leq 3\mu_k$,
where $\mu_k,\mu_{k+1}$ are given by \eqref{eq: 9635194-9580-98}.
Then,
similarly,
$$
  y^*_{\lfloor rn\rfloor+1}
% = \frac{x^*_{m+1}}{|x|}\leq \frac{\sqrt{2}}{\sqrt{n}} \,
%  (2\delta )^{L-1/2}\, \frac{\|Qx\|}{|x|} =
\leq    \frac{3\delta/2}{\sqrt{n}}  \, \|Qy\|
  \leq  \frac{3\delta/2}{\sqrt{n}}  \, \frac{y_2^*}{\beta_p}
  \leq  \frac{9\delta/2}{\beta_p\sqrt{n}}  \,
  \mu_k \leq  \frac{c_0 \mu_k}{\sqrt{\ln(e/p)} \sqrt{n}} .
$$
This implies that $y\in U(m, c_0\mu _k/\sqrt{\ln(e/p)})$ and, thus, $y\in \stt_{4,k}$.
%provided that
%$3\, \delta ^{L-1/2} \leq c_0\beta_p /\sqrt{\ln(e/p)}.$
%Thus, in this case $y\in \stt$.

\smallskip

\noindent
{\it Subcase 1b. $j=1$. } In this case $x_1^*\geq (2n)^{3/2}$. Assume $x\not\in \stt_1$, that is
$x_1^*<6pn x_2^*$. Then
$$
%  \frac{x^*_{m+1}}{|x|}\leq
 \frac{x^*_{\lfloor rn\rfloor+1}}{\|Qx\|}\leq \frac{1}{x_2^*}\leq \frac{6pn}{(2n)^{3/2}}=
   \frac{6p}{2^{3/2} \sqrt{n}}.
$$
We can now define $y:=x/\|x\|$ and, having noted that $y^*_{\lfloor rn\rfloor+1} \leq
   \frac{6p}{2^{3/2}\sqrt{n}} \, \|Qy\|$, proceed similarly to the Subcase~1a. We will need to
use the condition
$18 p \leq  2^{3/2}  c_0 \beta_p /\sqrt{\ln(e/p)}$, which holds for small enough $p$.

\smallskip

\noindent
{\it Case 2. $x_{\lceil\delta n\rceil}^{\#}-x_{n-\lceil\delta n\rceil+1}^{\#}<\rho$. }
Set $\sigma$ be a permutation of $[n]$ such that $x_{i}^{\#}=x_{\sigma(i)}$, $i\leq n$
(note that $\sigma$ is in general different from the permutation $\sigma_x$ defined in connection with the
non-increasing rearrangement of the absolute values $|x_i|$).
Define the following set, which will play the role of the set in the definition
of $U(m, \gamma)$ (see Subsection~\ref{net}),
$$A:=\{\sigma(i) :\,\, \lceil\delta n\rceil<i\leq n-\lceil\delta n\rceil\}.$$
Then $|A|=n-2\lceil\delta n\rceil$, and $m> 2\lceil\delta n\rceil=2\lceil r n/3\rceil$.
%which is satisfied for  $m= \lfloor cp/2\ln(e/p)n\rfloor$.
Since $x^*_m=1$, we observe
that either $x_{\lceil\delta n\rceil+1}^{\#}\geq 1$ or $x_{n-\lceil\delta n\rceil}^{\#}\leq -1$ (or both).
Moreover, since $r<1/2$, we necessarily have that $|x^{\#}_i|\leq 1$ for some $\lceil\delta n\rceil<i\leq n-\lceil\delta n\rceil$.
Therefore, there exists an index $j\in A$ such that $|x_j|=1$. Taking $b=x_j$,
we observe that for every
$i\in A$, $|x_i-b|<\rho$.
%In other words, we have $2 x/\sqrt{m} \in U(m, (2\rho\sqrt{n}/\sqrt{m}))$.
On the other hand we have
$$
  \|x\|^2\geq \|Qx\|^2\geq \sum_{i=2}^{m}
   x_i^*\geq m -1 \geq m/2 \qand
   \forall i\in A\, : \,
   \frac{|x_i - b|}{\|Q x\|}\leq \frac{\sqrt{2}\, \rho}{\sqrt{m}}\leq
   \frac{1}{\sqrt{n}}\,\, \frac{2 \rho}{\sqrt{r}}.
$$
Now let $y=x/\|x\|$. Then
\begin{equation}\label{setam}
  \forall i\in A\, : \,
  \left|y_i - \frac{b}{\|x\|}\right| =
   \frac{|x_i - b|}{\|Q x\|}\, \frac{\|Qx\|}{\|x\|} \leq
   \frac{1}{\sqrt{n}}\,\, \frac{2 \rho}{\sqrt{r}}  \, \|Qy\|.
\end{equation}
The end of the proof is similar to the end of the proof of Case~1.
If $y\in \stt_1$, we are done.
If $y\in \stt_2'$, then using (\ref{setam}), $\|Qy\|\leq \|y\|=1$,
 and $6\rho/\sqrt{r} \leq c_0$ we obtain
 that $y\in U(m, c_0)$  and, thus, $y\in \stt_{2}$.
If $y\not\in \stt_1\cup \stt_2'$,
$y^*_2\leq \beta_p\|Qy\|$, and $\lam _k\leq \|Qy\|<\lam_{k+1}\leq 3\lam_k$ then,
using (\ref{setam}) and $6\rho/\sqrt{r} \leq c_0$ we obtain
 that $y\in U(m, c_0\lam _k)$  and, thus, $y\in \stt_{3,k}$.
 If $y\not\in \stt_1\cup \stt_2'$,
$y^*_2\geq \beta_p\|Qy\|$, and $\mu _k\leq y_2^*<\mu_{k+1}\leq 3\mu_k$ then,
similarly, using (\ref{setam}) and $6\rho/\sqrt{r} \leq c_0\beta_p /\sqrt{\ln(e/p)}$,
 we obtain  that $y\in U(m, c_0\mu _k/\sqrt{\ln(e/p)})$ and, thus, $y\in \stt_{4,k}$.
This completes the proof.
\end{proof}

\section{Complement of gradual non-constant vectors: general case}
%{Almost constant and steep vectors}
\label{s: steep}

We split $\R^n$ into two classes of vectors. The first class, the class of {\it steep} vectors
$\st$, is constructed in  essentially the same way as in
 \cite{LLTTY first part} and \cite{LLTTY-TAMS}. The proof of bound for this class resembles
corresponding proofs in \cite{LLTTY first part} and \cite{LLTTY-TAMS}, however, due to the
differences of the models of randomness, there are important modifications.
The second class $\Rv$, which we call $\Rv$-vectors, will consist
of vectors to which Proposition~\ref{rogozin} can be applied, therefore dealing with this class is simpler.
To control the cardinality of nets, part of this class will be intersected with
the almost constant vectors.
Then we show that the complement of $\gncvectors_n(r,\gfn,\delta,\rho)$ in $\normr_n(r)$
is contained in $\st\cup \Rv$.

%\smallskip

We now introduce the following parameters, which will be used throughout this section.
It will be convenient to denote $d=pn$. We always assume that $p\leq 0.0001$ and
$n$ is large enough (that is, larger than a certain positive absolute constant).
We also always assume that the ``average degree'' $d=pn\geq 200\ln n$.
Fix  a sufficiently small  absolute positive constant $\aaa$ and sufficiently
large absolute positive constant $\ct$
(we do not try to estimate the actual values of $\aaa$ and $\ct$, the conditions
on how small $r$ can be extracted from the proofs, in particular, the condition on $\ct$
comes for (\ref{ctau})).
We also fix two positive integers $\ell_0$ and $s_0$ such that
\begin{equation}\label{eq: l0s0 def}
 \ell_0 = \left\lfloor \frac{pn}{4\ln (1/p)}\r\rfloor
 \qand \ell_0^{s_0-1} \leq \frac{1}{64 p}=\frac{n}{64 d}< \ell_0 ^{s_0} .
\end{equation}
Note that $\ell_0 \geq 50$ and  that $s_0> 1$ implies $p\leq c\sqrt{(\ln n)/n}$.

%Now we set $n_0=2$ and, for $1\leq j\leq s_0$, we set  $n_{j}= 30 \ell_0^{j-1}$.
%Next, in the case $\left\lfloor 1/(64p) \r\rfloor \geq 15 n_{s_0}$ we set
%$n_{s_0+1}=\left\lfloor 1/(64p) \r\rfloor$. Otherwise,
%let $n_{s_0+1}=n_{s_0}$. Below we provide the proof only for the case
%$n_{s_0+1}\geq 15 n_{s_0}$, the other case is similar (in particular,
%in this case, the set $\st_{1 (s_0+1)}$, defined below, will be empty).
%Finally define $n_{s_0+2}= $

For $1\leq j\leq s_0$ we set
$$
 n_0:=2, \,\, \,\, \,\, \,\,   n_{j}:= 30 \ell_0^{j-1},  \,\,\,\, \,\, \,\,
%  n_{s_0+1}:=\max(15 \ell_0^{s_0-1}, \left\lfloor 1/(64p) \r\rfloor),
% =\left\lfloor \frac{n}{64d} \r\rfloor)  > 30,\,\,\,
   \,\,\,\, \,\, \,\,     n_{s_0+2} :=\left\lfloor   \sqrt{n/p}  \r\rfloor
   =\left\lfloor  \frac{n}{\sqrt{d} } \r\rfloor ,
%= \lfloor  n/\sqrt{d} \rfloor,
   \,\,\,\, \,\, \,\,     \mbox{ and }  \,\, \,\, \,\, \,\,     \nn :=\lfloor \aaa n \rfloor.
$$
Then, in the case $\left\lfloor 1/(64p) \r\rfloor \geq 15 n_{s_0}$ we set
$n_{s_0+1}=\left\lfloor 1/(64p) \r\rfloor$. Otherwise,
let $n_{s_0+1}=n_{s_0}$.
Note that with this definition we always have $n_{s_0+2}>n_{s_0+1}$.
{\bf The indices $n_j$, $j\leq s_0+3$, are global parameters which will be used throughout the section.}
Below we provide the proof only for the case
$\left\lfloor 1/(64p) \r\rfloor=n_{s_0+1}\geq 15 n_{s_0}$, the other case is treated similarly (in particular,
in that other case the set $\st_{1 (s_0+1)}$ defined below, will be empty).

We also  will use  another parameter,
\begin{equation}\label{eq: kappa def}
\kappa = \kappa(p):= \frac{\ln (6pn)}{\ln \ell_0}.
\end{equation}
Note that the function $f(p)=\ln (6pn)/(4\ln (1/p))$
is a decreasing function on $(0,1)$, therefore
for $p\geq (100 \ln n)/n$ and sufficiently larg $n$ we have
$1<\kappa\leq \ln \ln n$. Moreover, it is easy to see that if
$p\geq (100 \ln^2 n)/n$, then $\kappa\leq 2$.
We also notice that if  $pn\geq  6(5\ln n)^{1+\gamma}$ for some $\gamma\in (0,1)$ then
$\kappa \leq 1+1/\gamma$ and, using the definition of $\ell_0$ and $s_0$,
\begin{equation}\label{kappain}
  (6d)^{s_0-1} =
  \ell_0^{(s_0-1)\kappa}\leq 1/(64p)^{\kappa}.
  % =\ell_0^{(s_0-1)\ln (6 d)/\ln \ell_0}
\end{equation}

% At the end of this subsection we introduce the set of {\it constant} vectors
%%% Denote the class of constant vectors by
%$$\CC : = \{ x=(x_i)_{i=1}^n\in \R^n \, :\, x_1=x_2 =...=x_n\},$$
%and, given $\rho>0$,  the set of  {\it almost constant} vectors
%$$ \BB(\rho) :=\{ x\in \R^n\, : \, \exists \lambda \in \R \,  \mbox{ such that }\,
% |\lam|= x^*_{\nn}  \,  \mbox{ and }\, |\{ i\leq n \, : \, |x_i - \lambda|\leq \rho|\lam|\}| >n- \nn\}.$$

\subsection{Two classes of vectors and main results}
\label{subs: steep vectors}

%First, given $\rho>0$,  we introduce a class of almost constant vectors by
%$$
%  \BB(\rho) :=\{ x\in \R^n\, : \, \exists \lambda \in \R \,  \mbox{ such that }\,
%  |\lam|= x^*_{\nn}  \,  \mbox{ and }\, |\{ i\leq n \, : \, |x_i - \lambda|\leq \rho\,\lam\}| >n- \nn\}.
%$$
We first introduce the class of steep vectors. It will be constructed as a union of four subclasses.
% of steep vectors $\st_0, ...,  \st_{4}$ as follows.
%If $n_1=1$ we set $\st_0 =\emptyset$. Otherwise,
Set
$$
  \st _{0} : =\{x\in \R^n\,:\,   x_{1}^{*}> 6 d\, x_{2}^{*}\}
  \qand \st _{11} : =\{x\in \R^n\,:\,  x\not\in  \st_{0}
  \,\, \mbox{ and } \, \,  x_{2}^{*}> 6 d\, x_{n_1}^{*}\} .
% \quad \quad  \mbox{and for } \, \, j\leq s
%    \quad
%  \st _{1j} : =\{x\in \R^n\,:\,  x\not\in  \st_{01}
%  \,\, \mbox{ and } \, \,  x_{2}^{*}> 6 d\, x_{n_0}^{*}\} .
$$
Then for $2\leq j\leq s_0+1$,
$$
%\quad \quad
   \st _{1 j} : =\left\{x\in \R^n\,:\,  x\not\in \st_{0} \cup \bigcup_{i=1}^{j-1} \st_{1 i}
    \,\, \mbox{ and } \, \,  x_{n_{j-1}}^{*}>  6 d\, x_{n_{j}}^{*}\r\}
    \qand \st _{1} : = \bigcup_{i=1}^{s_0+1} \st_{1 i}.
$$
Finally, for $k=2,3$ set $j=j(k)= s_0+k$ and define
$$
  \st _{k} : =\left\{x\in \R^n\,:\,  x\not\in  \bigcup_{i=0}^{k-1} \st_{i}
   \,\, \mbox{ and } \, \, x_{n_{j-1}}^{*}>   \ct\sqrt{d}\,x_{n_{j}}^{*}\r\}.
$$
%
%$$   \st _{2} : =\left\{x\in \R^n\,:\,  x\not\in   \st_{ 0}\cup \st_{1}
%    \,\, \mbox{ and } \, \,  x_{n_{s_0+1}}^{*}> \ct\sqrt{d}\, x_{n_{s_0+2}}^{*}\r\}$$
%and
%$$   \st _{3} : =\left\{x\in \R^n\,:\,  x\not\in   \st_{ 0}\cup \st_{1}\cup \st_{ 2}
%   \,\, \mbox{ and } \, \,  x_{n_{s_0+2}}^{*}> \ct\sqrt{d}\, x_{n_{s_0+3}}^{*}\r\}.$$
The set of steep vectors is $\st:=\st_0\cup\st_1\cup\st_2 \cup\st_3$.
The ``rules'' of the partition are summarized in the diagram.
% and we also consider the set of non-steep  almost constant vectors,
%$\BB_0(\rho) :=  \BB(\rho)\setminus \st$.
%Below we  work with constant shifts of steep vectors,
%so we also introduce the following sets for  $0\leq j\leq 4$,
%$$  \st _j^\CC  := \{v\in \C^n\,:\, v=x+y \,\, \, \, \mbox{ for some } \, \, \, \,
%  x\in  \st_{j} \, \, \mbox{ and } \, \,  y\in \CC \, \,
%  \mbox{ with } \, \, |y_1|\leq x_{n_{s_0+1}}^*/10 \}.$$
%
%
%Finally we define sets of {\it steep} and {\it shifted steep} vectors as
%$$  \st:=\bigcup_{j=0}^{s_0+3} \st_{ j} \quad \mbox{ and }
% \quad \st _\CC:=\bigcup_{j=0}^{s_0+3} \st_{ j}^\CC.$$
%
%
%
%
\includegraphics[width=1.0\textwidth]{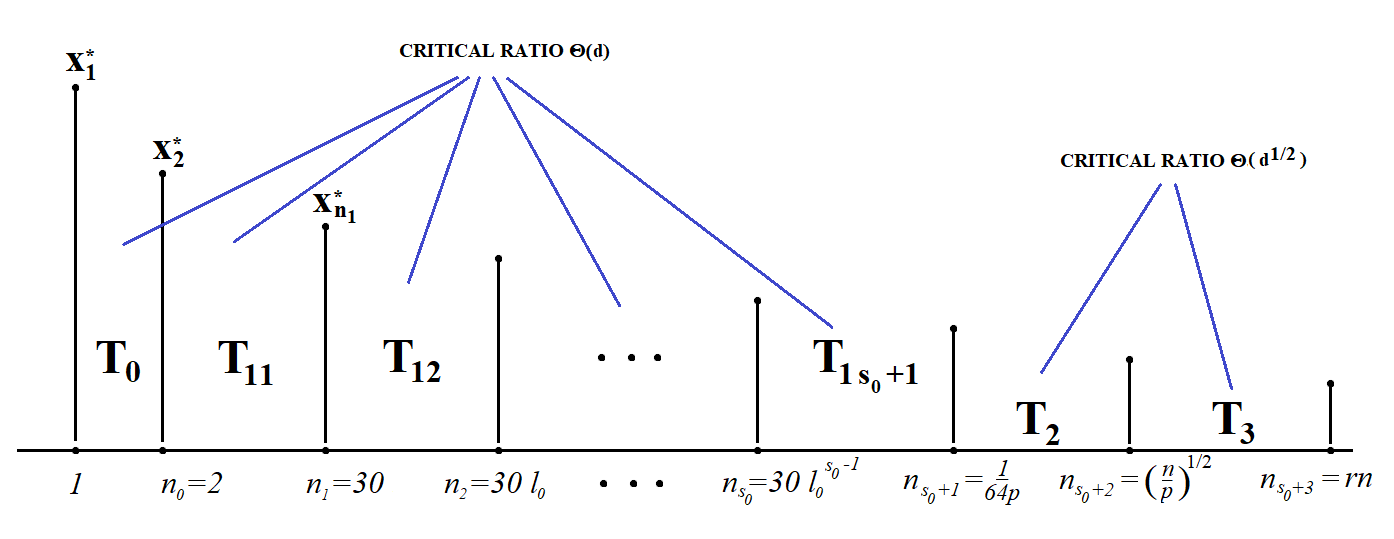}

For this class we prove the following bound.

\begin{theor} \label{steep}
There exist positive absolute constants $c,C>0$ such that the following holds.
Let $n\geq C$, and let $0<p<c$ satisfy $pn\geq  C \ln n$.
Let $M$ be a \Ber random matrix and denote
$$
  \Event_{steep}:=\bigg\{\exists\;x\in \st\, \, \, \mbox{ such that }
  \, \, \,
  \|M x\| < \frac{c(64p)^{\kappa}}{(pn)^2}\, \min\left(  1, \frac{1}{p^{1.5}n} \r) \, \,  \|x\|
  \bigg\},
$$
where as before $\kappa = \kappa(p):= (\ln (6pn))/\ln \ell_0.$
 Then
\begin{equation*}
\label{Psteep}
\Prob(\Event_{steep}) \leq n(1-p)^n + 2 e^{-1.4 pn}.
\end{equation*}
\end{theor}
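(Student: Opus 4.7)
\medskip

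\noindent\textbf{Proof plan for Theorem~\ref{steep}.}

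The plan is to partition $\st$ as $\st_0 \cup \bigcup_{j=1}^{s_0+1}\st_{1j} \cup \st_2\cup\st_3$ and prove the lower bound on $\|Mx\|/\|x\|$ separately on each piece, after which the theorem follows by a union bound. The $n(1-p)^n$ contribution will come from $\st_0$ (the event that no column of $M$ is zero), and the $e^{-1.4pn}$ contribution will absorb all the other exceptional events (uniform row sum bound $\Event_{sum}$, the ``existence of a good row'' events used for $\st_1$, and the net unions used for $\st_2,\st_3$).

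First I would handle $\st_0$ by the argument already used in Lemma~\ref{st1}: on $\Event_{col}\cap\Event_{sum}$ there is a row $s$ with $\delta_{s,\sigma_x(1)}=1$, whence $|\langle\row_s(M),x\rangle|\geq x_1^*-3.5pn\,x_2^*\geq x_1^*/2$, which together with $\|x\|\leq \sqrt n\,x_1^*$ gives a bound on $\|Mx\|/\|x\|$ much stronger than required. The probability cost is $n(1-p)^n+e^{-1.5pn}$ by Lemma~\ref{bennett}. Next I would treat $\st_1 = \bigcup_{j=1}^{s_0+1}\st_{1j}$ by a ``rare structured row'' argument: for each $j$ define the event $\mathcal G_j$ that for every pair of disjoint $J_1,J_2\subset[n]$ with $|J_1|=n_{j-1}$ and $|J_2|=n_j-n_{j-1}$, there exists a row of $M$ having exactly one $1$ among the $J_1$-coordinates and no $1$'s among the $J_2$-coordinates. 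Since a single row realizes this pattern with probability at least $n_{j-1}p(1-p)^{n_j-1}$ and rows are independent, while the number of $(J_1,J_2)$ pairs is at most $\binom{n}{n_{j-1}}\binom{n}{n_j-n_{j-1}}$, our choice (\ref{eq: l0s0 def}) of $\ell_0,s_0$ together with $n_{j-1}\geq 2$ makes $\Prob(\mathcal G_j^c)$ decay like $e^{-cpn}$ for each $j\leq s_0+1$, and summing over $j$ still gives a total exceptional probability of order $e^{-1.4pn}$. Conditioned on $\bigcap_j\mathcal G_j$ and $x\in\st_{1j}$, choose $J_1=\sigma_x([n_{j-1}])$, $J_2=\sigma_x([n_{j-1}+1,n_j])$ to obtain a row $s$ with $|\langle\row_s(M),x\rangle|\geq x_{n_{j-1}}^*-x_{n_j}^*\geq \tfrac12\,x_{n_{j-1}}^*$. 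Since $x\notin\st_0\cup\bigcup_{i<j}\st_{1i}$ we have $x_{n_{i-1}}^*\leq 6d\,x_{n_i}^*$ for every $i<j$, hence by telescoping $\|x\|\leq\sqrt n\,x_1^*\leq\sqrt n\,(6d)^{j-1}x_{n_{j-1}}^*$. This yields $\|Mx\|/\|x\|\gtrsim (6d)^{-(j-1)}/\sqrt n$; the worst case $j=s_0+1$ reproduces, via (\ref{kappain}), precisely the factor $(64p)^{\kappa}/(pn)^2$ demanded in the statement.

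For $\st_2,\st_3$ the ``single good row'' method fails because the relevant jump is only $C_\tau\sqrt d$, which is not large enough to make the probability of the required row pattern summable over pairs $(J_1,J_2)$. Here I would instead use the net machinery of Section~\ref{preliminaries}. Concretely, the non-steepness of $x$ up to index $n_{s_0+1}$ (resp.\ $n_{s_0+2}$) together with the $\sqrt d$-jump gives $x^*_{n_{j(k)}}\leq (\ct\sqrt d)^{-1} x^*_{n_{j(k)-1}}$, so the tail beyond the jump index carries a negligible $\ell_2$ mass and the vector is essentially supported on roughly $n_{s_0+1}\approx 1/p$ (for $\st_2$) or $n_{s_0+2}\approx\sqrt{n/p}$ (for $\st_3$) coordinates. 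This dimensional compression lets one construct an $\varepsilon$-net in the triple norm $|||\cdot|||$ whose cardinality is exponentially smaller than $e^{pn}$; I would partition each $\st_k$ further into level sets of $x^*_{n_{j(k)-1}}$ and $\|Qx\|$ (mimicking the $\lambda_k,\mu_k$ partitions used in Section~\ref{steep:constant p}) and intersect with appropriate almost-constant sets $U(m,\gamma)$ to keep net sizes under control. The individual small-ball estimate for a fixed net point then comes from Proposition~\ref{rogozin} applied to the coordinate set $A=\sigma_x([n_{j(k)},n])$ (where $P_Ax$ has its $\ell_\infty$-norm dominated by $\sqrt p\,\|P_Ax\|$ thanks to the $\sqrt d$-jump) combined with the tensorization Lemma~\ref{tens}, giving a factor $e^{-3n}$ per net point. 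Proposition~\ref{nettri} converts the triple-norm approximation to an operator-norm approximation on $\Event_{nrm}$, losing only a constant factor of $\sqrt{pn}\,\varepsilon$. A careful choice of $\varepsilon$ (scaling with the jump parameter $\ct\sqrt d$ and with the level-set scales) absorbs the net cardinality into the $e^{-3n}$ factor and yields the claimed lower bound with the extra $1/(p^{1.5}n)$ factor stemming from the $\sqrt d$-jump.

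The main obstacle I foresee is the analysis of $\st_2$ and $\st_3$: unlike $\st_0,\st_1$, where the argument is essentially combinatorial and very clean, the nets for $\st_2,\st_3$ must be simultaneously fine enough to beat the $\sqrt{pn}$-blow-up of $\|M\|$ in the triple norm and coarse enough that their cardinalities, multiplied by the $e^{-3n}$ per-point anti-concentration estimate, stay below $e^{-1.4pn}$. This forces a delicate balancing, further complicated by the need to sum over $O(\log n)$ scales $\lambda_k,\mu_k$ and over the value of $\ct$, which is what ultimately determines the constant $\ct$ mentioned right after (\ref{eq: l0s0 def}). This is precisely the ``much more challenging'' case alluded to in Subsection~\ref{ss: steep overview}.
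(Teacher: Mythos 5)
Your high-level plan (split $\st$ into $\st_0\cup\st_1\cup\st_2\cup\st_3$, single-row deterministic bound for $\st_0,\st_1$, net plus anti-concentration for $\st_2,\st_3$) matches the paper's structure. There are, however, two genuine gaps.

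\textbf{Euclidean norm bound for $\st_1$.} You use the crude estimate $\|x\|\leq\sqrt n\,x_1^*$, telescoping $x_1^*\leq (6d)^{j}x_{n_{j-1}}^*$, which in the worst case $j=s_0+1$ yields $\|Mx\|/\|x\|\gtrsim(64p)^{\kappa}/(d\sqrt n)$, i.e.\ a factor of order $(64p)^{\kappa}/(pn^{3/2})$. The target is $(64p)^{\kappa}/(pn)^2$, so you are off by a factor $pn^{1/2}/C$. Since $pn\geq C\ln n$ permits $p\sim \ln n/n$, the quantity $pn^{1/2}$ tends to $0$, and the inequality $(64p)^{\kappa}/(pn^{3/2})\geq c(64p)^\kappa/(pn)^2$ fails for large $n$. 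Thus your bound does not imply the stated event probability. The paper's Lemma~\ref{euclnorm} repairs exactly this: rather than $\|x\|\leq\sqrt n\, x_1^*$, it bounds $\|x\|^2$ by a sum $\sum_i n_i(6d)^{2(j-i)}(x^*_{n_{j-1}})^2$, exploiting that only the first few coordinates are large, and obtains $\|x\|/x^*_{n_{j-1}}\leq 64(pn)^2/(64p)^{\kappa}$. This extra care is not cosmetic; it is what lets $\st_1$ produce the factor $(pn)^{-2}$ rather than $n^{-1/2}d^{-1}$.

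\textbf{Anti-concentration for $\st_2,\st_3$.} You propose to apply Proposition~\ref{rogozin} with $A=\sigma_x([n_{j(k)},n])$, asserting that the $\sqrt d$-jump implies $\|P_Ax\|_\infty\leq\sqrt p\,\|P_Ax\|$. This does not follow: the jump controls $\|P_Ax\|_\infty=x^*_{n_{j(k)}}$ from above relative to $x^*_{n_{j(k)-1}}$, but gives no lower bound on $\|P_Ax\|$. Indeed, you yourself observe a few lines earlier that ``the tail beyond the jump index carries a negligible $\ell_2$ mass'' and that the net points of Lemma~\ref{l:nets} are supported on $[n_{s_0+i}]$ only — for such points $\|P_Ax\|=0$ and Proposition~\ref{rogozin} gives nothing. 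Steep vectors are precisely the vectors on which Rogozin's lemma is unusable; the condition $\|x_A\|/\|x_A\|_\infty\geq C_0/\sqrt p$ is built into the definition of the $\Rv$ classes (Theorem~\ref{classB}) and is absent here. The paper's Lemma~\ref{individual} handles $\st_2,\st_3$ through an entirely different mechanism: it partitions $[n]$ into blocks $J_q=J^\ell_q\cup J^r_q$, conditions on a class $\f(\ii,V)$ so that rows $i\in I_q$ have exactly one nonzero entry in $J_q$, uniformly placed, and derives anti-concentration from the swap between $J^\ell_q$ (where coordinates of $x$ are $\approx 1$) and $J^r_q$ (where, on the net, they are exactly $0$). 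This is then tensorized over $q$ via Esseen's lemma (the L\'evy function $\alpha=1/2$ per block) and over independent rows via the chaining in \eqref{ptensor}. Rogozin plus tensorization cannot be made to substitute for this; you would need a new idea here, and that idea is Lemma~\ref{individual}.
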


Next we introduce the class of $\Rv$-vectors, denoted by $\Rv$.
%To formulate our next lemma we introduce the following sets.
%Let $p\in (0, 1)$ and $d=pn$.
Let $C_0$ be the constant from Proposition~\ref{rogozin} and recall that the class $\BB(\rho)$
of almost constant vectors was defined by (\ref{acv}) in Subsection~\ref{ss: steep overview}.
Given $n_{s_0+1}<k\leq n/\ln^2 d$ denote $A=A(k):=[k, n]$ and consider the sets
$$
 \Rv_k^1  :=\left\{x\in \big(\normr_n(r)\setminus \st\big) \cap \BB(\rho) \, \, : \,  \,
 \frac{\|x_{\sigma_x(A)}\|}{\|x_{\sigma_x(A)}\|_\infty} \geq \frac{C_0}{\sqrt{p}}
 \qand \sqrt{n/2}\leq \|x_{\sigma_x(A)}\| \leq \ct \sqrt{dn}\r\},
$$
and
$$
 \Rv_k^2  :=\left\{x\in\normr_n(r)\setminus \st
  \, \, : \,  \, \frac{\|x_{\sigma_x(A)}\|}{\|x_{\sigma_x(A)}\|_\infty} \geq \frac{C_0}{\sqrt{p}}
 \qand \frac{2\sqrt{n}}{r}\leq \|x_{\sigma_x(A)}\| \leq \ct^2 d \sqrt{n}\r\}.
%\quad \quad \Rv=\bigcup _{k=1}^{n/\ln^2 d}\, (\Rv_k^1 \cup \Rv_k^2).
$$
Define
$
  \Rv:=\bigcup _{n_{s_0+1}<k\leq n/\ln^2 d}\, (\Rv_k^1 \cup \Rv_k^2).
$

\smallskip
The class $\Rv$ should be thought of as the class of {\it sufficiently spread}
vectors, not steep, but possibly without having two subsets of coordinates of size proportional to $n$,
which are separated by $\rho$ (which would allow us to treat those vectors as part of the set $\gncvectors_n$).
Crucially, the sets $\Rv_k^1$ and $\Rv_k^2$ are ``low complexity'' sets because they admit
$\varepsilon$--nets of relatively small cardinalities (see Subsection~\ref{sub-nets}).
For the class $\Rv$ we prove the following bound.
%(recall that the event $\Event_{nrm}$ was defined in Proposition~\ref{nettri}).

%\begin{theor}\label{a-c}
%Let $n$ be large enough and $0<p<0.001$ satisfy $pn\geq  4\ln n$.
%Let $M$ be a \Ber random matrix. Let $\rho\in (0, 1/4]$ and
% $x\in \BB(\rho)\setminus \st$.
%Then
%$$ \p(\|Mx\|\geq pn^{3/2}/32)\geq 1-    4 e^{-n}.$$
%\end{theor}

\begin{theor}
\label{classB}
There are absolute constants $r_0,\rho_0,C$ with the following property.
Let $0<r\leq r_0$, $0<\rho\leq \rho_0$, let
$n\geq 1$ and $p\in (0, 0.001]$ be such that
$d=pn\geq C\ln n$.
Then
$$
 \p\left(\left\{\exists x\in \Rv \, : \,
 \|Mx\|\leq \frac{\sqrt{p} n}{12 C_0}\r\}\right)  \leq  e^{-2n}   +  e^{-200pn}.
$$
\end{theor}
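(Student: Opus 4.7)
The plan is a net-plus-anti-concentration argument tailored to the structural definition of $\Rv$. The individual probability bound will come directly from Proposition~\ref{rogozin}, whose hypothesis is encoded into the very definition of $\Rv_k^1$ and $\Rv_k^2$; the work will be to build an $\varepsilon$-net for $\Rv$ of cardinality sub-exponentially smaller than $e^{3n}$, and then to transfer the estimate uniformly via the norm control supplied by Lemma~\ref{mnorm} and Proposition~\ref{nettri}.

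For the individual bound, fix $x\in \Rv_k^1\cup\Rv_k^2$ and set $A=\sigma_x([k,n])$. The defining inequality $\|x_A\|_\infty \leq C_0^{-1}\sqrt{p}\,\|x_A\|$ is precisely the hypothesis of Proposition~\ref{rogozin}, which yields
$$
\Prob\Bigl\{\|Mx\|\leq \tfrac{\sqrt{pn}}{3\sqrt{2}\,C_0}\,\|x_A\|\Bigr\}\leq e^{-3n}.
$$
Combined with $\|x_A\|\geq \sqrt{n/2}$ (for $\Rv_k^1$) or $\|x_A\|\geq 2\sqrt{n}/r$ (for $\Rv_k^2$), this gives an individual bound $\Prob\{\|Mx\|\leq \sqrt{p}\,n/(6C_0)\}\leq e^{-3n}$, with a margin of a factor $2$ compared to the target $\sqrt{p}\,n/(12C_0)$.

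Next I would construct, for each admissible $k$, a net $\mathcal{N}_k\subset \Rv_k^1\cup\Rv_k^2$ in the triple norm $|||\cdot|||$, of cardinality at most $e^{n}/\mathrm{poly}(n)$. Three structural features do the work. First, the ``head'' index set $\sigma_x([k-1])$ has size $k-1<n/\ln^2 d$, so choosing its location costs only $\binom{n}{k-1}=e^{o(n)}$. Second, since $x\notin\st$, the chain of non-steepness inequalities propagated along the scales $n_1,\dots,n_{s_0+2}$, together with $x^*_{\lfloor rn\rfloor}=1$, confines each head coordinate to a box of polynomial-in-$d$ diameter, which can be discretized with precision $\varepsilon/\sqrt{n}$. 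Third, the tail is handled differently on the two subsets: for $\Rv_k^1$ the condition $x\in\BB(\rho)$ replaces the $(n-k+1)$-dimensional tail by a one-parameter family $\lambda\cdot\mathbf 1_{\mathrm{tail}}$ (with $|\lambda|=1$) plus a sparse perturbation supported on at most $\lfloor rn\rfloor$ coordinates; for $\Rv_k^2$ the bound $\|x_A\|\leq \ct^2 d\sqrt{n}$ and the ordering cap $|x_{\sigma_x(i)}|\leq 1$ for $i\geq \lfloor rn\rfloor$ allow one to truncate the extreme tail and quantize only a middle segment. The detailed net construction is the content of Subsection~\ref{sub-nets}.

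For the approximation step I would invoke Proposition~\ref{nettri}: on the event $\Event_{nrm}$, which has probability at least $1-e^{-200pn}$ by Lemma~\ref{mnorm}, one has $\|M(x-y)\|\leq 100\sqrt{pn}\,|||x-y|||$. Choosing $\varepsilon$ so that $100\sqrt{pn}\,\varepsilon\leq \sqrt{p}\,n/(24C_0)$, i.e.\ $\varepsilon\asymp \sqrt{n}/C_0$, every $x\in\Rv$ is close enough to a net point $y$ that $\|Mx\|\geq \|My\|-\sqrt{p}\,n/(24C_0)$. A union bound over $y\in\mathcal{N}_k$ and over the $O(\log n)$ admissible $k$ then yields the claimed bound $e^{-2n}+e^{-200pn}$. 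The main obstacle is the net construction itself: one must keep $|\mathcal{N}_k|$ well below $e^{3n}$, which forces a careful interplay between the non-steepness chain (to bound the head in $L^\infty$), the almost-constant structure $\BB(\rho)$ (to collapse the $\Rv_k^1$-tail to one scalar plus sparse noise), and the triple-norm splitting (to handle the non-centeredness of $M$ via a separate, coarser net in the ${\bf 1}$-direction without blowing up the total cardinality).
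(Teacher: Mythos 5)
Your overall scheme is the right one (Rogozin anti-concentration for the individual bound, a net in $|||\cdot|||$, transfer via $\Event_{nrm}$ and Proposition~\ref{nettri}, union bound over $k$), and you correctly identify the three structural features that make a small net possible. But there is a genuine gap: you propose a single fixed net precision $\varepsilon\asymp\sqrt{n}/C_0$ for all of $\Rv_k^1\cup\Rv_k^2$, and no net of that precision can have cardinality $\ll e^{3n}$. The obstruction is the range of $\|x_{\sigma_x([k,n])}\|$, which runs from $\asymp\sqrt{n}$ up to $\ct^2 d\sqrt{n}$; a $(\sqrt{n}/C_0)$-net over a set of Euclidean diameter $\asymp d\sqrt{n}$ living in $\asymp rn$ effective dimensions has cardinality of order $(C_0 d)^{rn}$, which exceeds $e^{3n}$ as soon as $d=pn\gg 1$ (and $p$ can be as large as a constant here). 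Conversely, if you coarsen $\varepsilon$ to tame the cardinality, the approximation loss $100\sqrt{pn}\,\varepsilon$ overwhelms the uniform individual lower bound $\sqrt{p}\,n/(6C_0)$ you quoted.

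The missing ingredient is the \emph{level-set decomposition} $\Rv_k^s=\bigcup_i\Rv_{ki}^s$ by dyadic-type levels $\lam_i\sqrt{n}\leq\|x_{\sigma_x([k,n])}\|<\lam_{i+1}\sqrt{n}$. On each level the Rogozin bound is actually \emph{level-dependent}: $\|My\|\geq\frac{\sqrt{pn}}{3\sqrt{2}C_0}\|y_{\sigma(A)}\|\geq\frac{\sqrt{p}\,\lam_i n}{3\sqrt{2}C_0}$, not merely $\sqrt{p}\,n/(6C_0)$. This permits the net precision to scale as $\varepsilon\asymp\lam_i\sqrt{n}/R$, so that the ratio (diameter)/(precision) is $O(R)$ on every level and the per-level cardinality stays at $(e/r)^{3rn}$ (this is exactly what Lemma~\ref{newnet} requires: $\varepsilon\in[40\lam_t\sqrt{n}/R,\lam_t\sqrt{n}]$). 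A union bound over the $O(\ln d)\leq O(\ln n)$ levels and the $O(n)$ values of $k$ is then harmless. Without this matching of $\varepsilon$ to the local magnitude of $\|x_A\|$, the argument does not close.
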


%The next lemma provides a comparison of the $\ell_2$-norm of a given vector with one of its coordinates.
%It is similar to Lemmas~3.5 and 4.3 from \cite{LLTTY first part} and \cite{LLTTY-TAMS} respectively.
%Since our choice of classes is simpler than in those papers, the lemma is trivial.

%\begin{lemma} \label{l:norma}
%Let $p\in (0, 1/6]$,  $d=pn$. If  $x\in \R^n\setminus \st _{01}$, then
%$\|x\| \leq \sqrt{n+ 6 d}\, x_2^* \leq \sqrt{2n}\,  x^*_2$. If $x\in \R^n\setminus  \st _{0})$ then
%$\|x\| \leq \sqrt{n+ 36 d^2 + 6 d n_1}\,  x_{n_1}^*$.
%\end{lemma}

Finally we show that together with $\gncvectors_n$ classes $\st$ and $\Rv$ cover
all (properly normalized) vectors for the growth function defined by
\begin{equation}\label{gfn-str}
\gfn(t)=(2t)^{3/2}\,\,\, \mbox{ for }\, 1\leq t<64pn\quad \qand\quad
\gfn(t)=\exp(\ln^2(2t))\,\,\, \mbox{ for }\, t\geq 64pn.
\end{equation}
It is straightforward to check that $\gfn$ satisfies (\ref{gfncond}) with some absolute constant $K_3$.

\begin{theor}
\label{complement}
There are universal constants $c,C>0$ with the following property.
Let $n\geq C$, $p\in (0, c)$, and assume that $d=pn\geq 100 \ln n$.
Let $r\in(0,1/2)$, $\delta \in(0,r/3)$, $\rho\in(0,1)$, and let $\gfn$ be as in (\ref{gfn-str}).
%in (\ref{gfn-str})
 Then
\label{compl}
$$
 \normr_n(r)\setminus \gncvectors_n(r,\gfn,\delta,\rho) \subset \Rv\cup \st.
$$
\end{theor}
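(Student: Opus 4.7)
The plan is to show the contrapositive at the level of non-steepness: fix $x\in\normr_n(r)$ with $x\notin\st$ and show $x\in\gncvectors_n\cup\Rv$.

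\textbf{Step 1 (non-steepness implies graduality).} Iterate the defining inequalities of $\st^c$. Starting from $x^*_{\lfloor rn\rfloor}=x^*_{n_{s_0+3}}=1$, the bounds $x\notin\st_3$ and $x\notin\st_2$ yield $x^*_{n_{s_0+2}}\leq\ct\sqrt d$ and $x^*_{n_{s_0+1}}\leq\ct^2 d$; iterating $x\notin\st_1$ gives $x^*_{n_j}\leq(6d)^{s_0+1-j}\ct^2 d$ for $0\leq j\leq s_0$; and finally $x^*_1\leq 6d\,x^*_2\leq(6d)^{s_0+2}\ct^2 d$ from $x\notin\st_0$. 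Combining these with the non-increasing property of $x^*$: on each subinterval $[n_{j-1}+1,n_j]$ we have $x^*_i\leq x^*_{n_{j-1}}$, and this needs to be dominated by $\gfn(n/i)$, whose minimum over the interval occurs at the right endpoint. In the polynomial regime $i>n_{s_0+1}$ where $\gfn(n/i)=(2n/i)^{3/2}$, the inequality is straightforward from $n_{s_0+2}\approx\sqrt{n/d}$ and $n_{s_0+3}=\lfloor rn\rfloor$. In the super-polynomial regime $i\leq n_{s_0+1}$ where $\gfn(n/i)=\exp(\ln^2(2n/i))$, we must dominate $(6d)^{s_0+2-j}\ct^2 d$ by $\exp(\ln^2(2n/n_j))$; this is where identity (\ref{kappain}), namely $(6d)^{s_0-1}\leq (n/(64d))^\kappa$ with $\kappa=\ln(6d)/\ln\ell_0$, is essential, combined with the standing assumption $d\geq 100\ln n$.

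\textbf{Step 2 (reduction to almost constant).} If $x\in\gncvectors_n$ we are done, so assume $x\notin\gncvectors_n$. Since Step 1 has shown $x$ is gradual, $x$ must violate condition (\ref{cond2}). Apply Lemma~\ref{a-c-cond2} (its hypotheses $n\geq 2m>4\lceil\delta n\rceil$ following from $\delta<r/3$ and $m=\lfloor rn\rfloor$): there exists $A_0\subset[n]$ with $|A_0|>n-m$ and $|\lambda|=1$ such that $|x_i-\lambda|<\rho$ for all $i\in A_0$. In particular $x\in\BB(\rho)$, and at least $n-m+1$ coordinates of $x$ have magnitude in $[1-\rho,1+\rho]$.

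\textbf{Step 3 (placing $x$ in $\Rv$).} Set $k=\lfloor n/\ln^2 d\rfloor$; the condition $d\geq 100\ln n$ ensures $k\in(n_{s_0+1},n/\ln^2 d]$, the admissible range for $\Rv$. Write $f=\|x_{\sigma_x([k,n])}\|$. The abundance of medium coordinates (at least $(1-r-o(1))n$ of them lie in the tail $[k,n]$) gives $f\geq(1-\rho)\sqrt{(1-r)n-k}\geq\sqrt{n/2}$ for $\rho,r$ sufficiently small; while the polynomial graduality bound $x^*_i\leq(2n/i)^{3/2}$ for $i>n_{s_0+1}$ yields $f^2\leq 8n^3\sum_{i>n_{s_0+1}}i^{-3}+n(1+\rho)^2=O(nd^2)$, hence $f\leq\ct^2 d\sqrt n$. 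The ratio condition $f/x^*_k\geq C_0/\sqrt p$ follows from $x^*_k\leq\gfn(n/k)=(2\ln^2 d)^{3/2}$ and $d\geq C\ln n$ for $C$ large enough. If in addition $f\leq\ct\sqrt{dn}$ (the $\Rv_k^1$ upper bound), then $x\in\BB(\rho)\cap\Rv_k^1$; otherwise $f\in(\ct\sqrt{dn},\ct^2 d\sqrt n]\subset[2\sqrt n/r,\ct^2 d\sqrt n]$ (for $\ct$ sufficiently large and $r$ small), placing $x\in\Rv_k^2$.

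The main obstacle is Step 1. The super-polynomial growth $\exp(\ln^2(2t))$ in $\gfn$ is calibrated precisely to dominate the iterated $(6d)^{s_0}$ jump arising from chaining the $\st_1$-inequalities. Since $(6d)^{s_0-1}=(n/(64d))^\kappa$ with $\kappa$ depending delicately on $p$ (ranging from $\Theta(\ln n/\ln\ln n)$ near $p\approx C\ln n/n$ down to $\Theta(1)$ for larger $p$), a uniform comparison with $\gfn$ across the full range $C\ln n\leq d\leq cn$ requires careful accounting of the interplay between $\ell_0$, $s_0$, and $\kappa$; this is what identity (\ref{kappain}), together with the definitions of $\ell_0,s_0$ in (\ref{eq: l0s0 def}) and $\kappa$ in (\ref{eq: kappa def}), is designed to facilitate.
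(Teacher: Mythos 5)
Your Step 1 contains a fatal gap: non-steepness does \emph{not} imply graduality. The steepness conditions control $x^*$ only at the discrete indices $n_0,n_1,\dots,n_{s_0+3}$; between consecutive $n_j$'s the non-increasing rearrangement is free to stay nearly constant. Concretely, take $x$ with $x^*_{n_{s_0+2}}=\ct\sqrt d$, $x^*_{n_{s_0+3}}=1$, and $x^*_i$ equal to (say) $\ct\sqrt d/2$ for all $i\in(n_{s_0+2}, n/10]$. This is consistent with $x\notin\st$ (all the jump conditions at the prescribed indices are satisfied), and consistent with monotonicity. But at $i=n/10$ the graduality requirement is $x^*_i\leq\gfn(n/i)=(20)^{3/2}$, a constant, whereas $x^*_i=\ct\sqrt d/2\to\infty$. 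Your own claim that "on each subinterval $[n_{j-1}+1,n_j]$ we have $x^*_i\leq x^*_{n_{j-1}}$, and this needs to be dominated by $\gfn(n/i)$, whose minimum over the interval occurs at the right endpoint" is exactly where the argument breaks: $x^*_{n_{s_0+2}}\leq\ct\sqrt d$ cannot dominate the right-endpoint value $\gfn(1/r)=(2/r)^{3/2}$, a constant. The hand-waved sentence "the inequality is straightforward" covers a statement that is simply false.

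Because of this, the overall structure is wrong. You reduce everything to "$x$ is gradual, then either constant or non-constant"; but a vector can be non-steep, satisfy \eqref{cond2}, and still fail graduality. The paper's proof confronts this head-on: after disposing of the case where \eqref{cond2} fails (your Steps 2--3, which is essentially the paper's first branch placing $x\in\BB(\rho)\cap\Rv^1_k$), it treats the case where \eqref{cond2} holds but graduality fails by fixing the \emph{smallest violating index} $j$ and then running a delicate case analysis on $j$ (the paper's Cases 1--7). Cases 3--7 use the non-steepness chain bounds $x^*_{n_\ell}\leq(6d)^{s_0+1-\ell}\ct^2 d$ together with the lower bounds $x^*_j\geq\gfn(n/j)$ and the identity \eqref{kappain} to rule out $j<16C_0^2 n/d$ (this is where the super-polynomial part of $\gfn$ and the interplay with $\kappa,\ell_0,s_0$ actually gets used — to produce a \emph{contradiction} for small $j$, not to verify graduality). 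Cases 1--2 then show the surviving range $j\geq 16C_0^2 n/d$ lands $x$ in $\Rv^2_k$ with $k=m_0$ or $k=\lceil j/2\rceil$, which is precisely what the two-level-set definition of $\Rv^2$ is designed to accommodate. Your Step 3 also misses the paper's subcase split on whether $x^*_{m_0}\lessgtr\ln^2 d$ (choosing $k=m_0$ versus $k=n_{s_0+2}$); you instead rely on the (nonexistent) graduality bound $x^*_k\leq\gfn(n/k)$ to control the $\Rv$-ratio condition.
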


\subsection{Auxiliary lemmas}
\label{twolemmas}

In the following lemma we provide a simple bound on the Euclidean norms of vectors
in the class $\st$ and its complement in terms of their order statistics.

\begin{lemma}\label{euclnorm}
Let $n$ be large enough and $(200 \ln n)/n<p<0.001$.
% satisfy $pn\geq  (\ln n)^2$.
Consider the vectors $x\in \st_{1j}$ for some $1\leq j\leq s_0+1$,
$y\in \st_2$, $z\in \st_3$ and $w\in \st^c$.  Then
$$
   \frac{\|x\|}{ x_{n_{j-1}}^*}\leq \frac{ 64(pn)^2}{(64p)^{\kappa}}, \quad
   \frac{\|y\|}{ y_{n_{s_0+1}}^*}\leq \frac{ 384(pn)^3}{(64p)^{\kappa}}, \quad
   \frac{\|z\|}{ z_{n_{s_0+2}}^*}\leq \frac{ 384\ct (pn)^{3.5}}{(64p)^{\kappa}},
 \qand   \frac{\|w\|}{ w_{n_{s_0+3}}^*}\leq \frac{ 384\ct^2 (pn)^{4}}{(64p)^{\kappa}}.
$$
%$$  \|x\|\leq   n^2 x_{n_{j-1}}^*,\quad   \|y\|\leq p n^3 y_{n_{s_0+1}}^*,
%    \quad  \|z\|\leq \ct p^{1.5} n^{3.5} \|z\|_{n_{s_0+2}}^*,
%     \qand   \|w\|\leq \ct p^{1.5} n^{3.5} \|w\|_{n_{s_0+3}}^*.$$
\end{lemma}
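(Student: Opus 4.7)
The plan is to treat the four bounds uniformly: for each class one uses the chain of non-steepness inequalities implied by class membership to bound each $x^*_i$ in terms of the ``normalizing'' order statistic (respectively $x^*_{n_{j-1}}$, $y^*_{n_{s_0+1}}$, $z^*_{n_{s_0+2}}$, $w^*_{n_{s_0+3}}$), then splits $\|x\|^2=\sum_i(x^*_i)^2$ according to the blocks $(n_{k-1},n_k]$ together with the tail, and finally invokes the key estimate (\ref{kappain}): $(6d)^{s_0-1}\le (64p)^{-\kappa}$.

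Consider first $x\in\st_{1j}$. The hypotheses $x\notin\st_0\cup\bigcup_{i<j}\st_{1i}$ yield $x^*_1\le 6d\,x^*_{n_0}$ and $x^*_{n_{i-1}}\le 6d\,x^*_{n_i}$ for $i=1,\dots,j-1$, so by iteration $x^*_{n_k}\le (6d)^{j-1-k}x^*_{n_{j-1}}$ and $x^*_1\le (6d)^{j}x^*_{n_{j-1}}$. Splitting into the singletons $\{1\},\{n_0\}$, the blocks $(n_{k-1},n_k]$ for $k=1,\dots,j-1$, and the tail $\{i>n_{j-1}\}$, and using monotonicity of $(x^*_i)$,
$$
\frac{\|x\|^2}{(x^*_{n_{j-1}})^2}\le (6d)^{2j}+(6d)^{2(j-1)}+\sum_{k=1}^{j-1}n_k(6d)^{2(j-k)}+n.
$$
Since $\ell_0\le d/4$ (from $\ln(1/p)\ge 1$), the middle sum has consecutive ratio $\ell_0/(6d)^2\le 1/2$ and is dominated by twice its last term: $\le 2 n_{j-1}(6d)^2\le 60\,(6d)^2/(64p)\le 34\,dn$, using $\ell_0^{j-2}\le\ell_0^{s_0-1}\le 1/(64p)$. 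The key inequality (\ref{kappain}) gives $(6d)^{2j}\le (6d)^{2(s_0+1)}\le 1296\,d^4(64p)^{-2\kappa}$ for $j\le s_0+1$. Finally, since $\kappa\ge 1$ and $64p\le 1$, we have $(64p)^{-2\kappa}\ge (64p)^{-2}=n^2/(4096\,d^2)$, which implies $n$ and $dn$ are each bounded by an absolute constant times $d^4(64p)^{-2\kappa}$. Summing the four contributions yields a total below $64^2\,d^4(64p)^{-2\kappa}$, and taking square roots gives the first estimate.

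The remaining three bounds follow the same scheme with the chain extended link-by-link. For $y\in\st_2$, the link $y^*_{n_{s_0}}\le 6d\,y^*_{n_{s_0+1}}$ introduces an extra factor $(6d)^2$, producing a dominant term $\le 36\cdot 1296\,d^6(64p)^{-2\kappa}$ that fits inside $384^2\,d^6(64p)^{-2\kappa}$. For $z\in\st_3$, the further link $z^*_{n_{s_0+1}}\le\ct\sqrt d\,z^*_{n_{s_0+2}}$ brings a factor $\ct^2 d$, and a new block $(n_{s_0+1},n_{s_0+2}]$ of length $\le n/\sqrt d$ contributes $\le\ct^2\sqrt d\,n$ to $\|z\|^2/(z^*_{n_{s_0+2}})^2$, which is again absorbed. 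For $w\in\st^c$, the link $w^*_{n_{s_0+2}}\le\ct\sqrt d\,w^*_{n_{s_0+3}}$ contributes another factor $\ct^2 d$, and the block $(n_{s_0+2},n_{s_0+3}]$ of length $\le\aaa n$ adds $\le\aaa\ct^2 dn$.

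The main technical point is not the core geometric-sum estimate but the bookkeeping needed to absorb the various lower-order terms ($n$, $dn$, $\sqrt d\,n$) into the leading $d^{2k}(64p)^{-2\kappa}$ quantity with the exact absolute constants claimed. Beyond (\ref{kappain}), the crucial additional tool is the bound $n<64\,d^{s_0+1}$ arising from $1/(64p)<\ell_0^{s_0}\le d^{s_0}$. Combined with $(64p)^{-2\kappa}\ge (6d)^{2(s_0-1)}$ from (\ref{kappain}), this controls every such term by the main one with absolute constants, for every value of $s_0\ge 1$. Once that bookkeeping is performed, the stated numerical constants $64$, $384$, $384\,\ct$, $384\,\ct^2$ fall out after a short calculation.
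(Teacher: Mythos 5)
Your approach is essentially the same as the paper's: iterate the chain of non-steepness inequalities to bound each $x^*_i$ in terms of the normalizing order statistic, decompose $\|x\|^2$ into the blocks $(n_{k-1},n_k]$ plus a tail, and invoke \eqref{kappain}. However, there is a concrete error in the treatment of the middle geometric sum. You correctly observe that the sum $\sum_{k=1}^{j-1} n_k(6d)^{2(j-k)}$ has consecutive ratio $\ell_0/(6d)^2\le 1/2$, but then conclude it is dominated by twice its \emph{last} term $2n_{j-1}(6d)^2$. A sum whose terms decrease by a factor at least $2$ is dominated by twice its \emph{first} term $2n_1(6d)^{2(j-1)}=60(6d)^{2(j-1)}$, and in general $2n_{j-1}(6d)^2$ is much smaller than the actual sum, so the subsequent chain ``$\le 60(6d)^2/(64p)\le 34\,dn$'' is not a valid upper bound for it. (For example, with $j=s_0+1$ and $s_0\ge 2$, the first term $60(6d)^{2s_0}$ eventually exceeds $34\,dn<34\cdot 64\,d^{s_0+2}$ as $d$ grows.) Fortunately, the correct bound $60(6d)^{2(j-1)}$ is a factor of order $1/d^2$ times the leading term $(6d)^{2j}$, so it is absorbed at least as easily, and the stated constants $64$, $384$, $384\,C_\tau$, $384\,C_\tau^2$ still come out; the fix is simply to compare $60(6d)^{2(j-1)}$ with $(6d)^{2j}$ directly rather than with $dn$. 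Beyond this slip, your bookkeeping is sound and slightly different from the paper's: you absorb the additive $n$ and $dn$ terms via $\kappa\ge 1$, i.e.\ $(64p)^{-2\kappa}\ge(64p)^{-2}=n^2/(4096\,d^2)$, whereas the paper compares $n$ with $(6d)^{2(s_0+1)}\le (6d)^4(64p)^{-2\kappa}$; both routes work.
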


\begin{proof}
Let $d=pn$. Since $x\in \st_{1j}$, denoting $m=n_{j-1}$, we have
$$
  x_1^*\leq (6d) x^*_{2}\leq (6d)^2 x^*_{n_1}\leq \ldots \leq
  (6d)^j x_{n_{j-1}}^* = (6d)^j x_m^*.
% = p^{i \log 4d/\log p}.
$$
Since $n_i=30\ell_0^{i-1} \leq 30d^{i-1}$, $i\leq s_0$, since $\kappa>1$, and in view of \eqref{kappain}, 
we obtain
%$$
%\|x\|^2 = ((x_{1}^*)^2+\ldots (x_{p}^*)^2) + ((x_{p+1}^*)^2 + \dots (x_{p^2}^*)^2) + %\ldots \leq
%p (4d)^{2 i} + p^2 (4d)^{2(i-1)} + \ldots + p^{i} (4d)^{2} + n
%$$
%$$
%  = \frac{p(4d)^2 ((4d)^{2i} - p^{i})}{(4d)^2-p} + n
%  \leq 2 p  (4d)^{2i} +n  = 2p \, p^{2i \log 4d/\log p} + n ,
%$$
\begin{align*}
\|x\|^2 &= (x_{1}^*)^2+ (x_2^*+ \ldots + (x_{n_1}^*)^2) + ((x_{n_1+1}^*)^2 + \dots + (x_{n_2}^*)^2)
+ \ldots
\\&\leq
 ((6d)^{2 j} + n_1 (6d)^{2(j-1)} + n_2 (6d)^{2(j-2)}\ldots + n_{j-1} (6d)^{2} + n) (x_m^*)^2
\\&\leq
\Big((6d)^{2 j} + 5(6d)^{2 j-2}\sum_{i\geq 0}(6d)^{-i} +n \Big) (x_m^*)^2 \leq
\left(2 (6d)^{2 (s_0+1)} +n \r) (x_m^*)^2
\\&\leq
  \left(2 (6d)^4 /(64p)^{2\kappa}+n \r) (x_m^*)^2 \leq \left(3 (6d)^4 /(64p)^{2\kappa} \r) (x_m^*)^2 .
\end{align*}
This implies the first bound. The bounds for $y,z, w$ are obtained similarly.
%Denoting $\kappa = \frac{\ln (6d)}{\ln \ell_0}$ and
%using the definition of $\ell_0$ and $s_0$, we have
%$$  (6d)^{2(s_0+1)}=(6d)^4 \, (6d)^{2(s_0-1)} =(6d)^4 \,
%  \ell_0^{2(s_0-1)\kappa}\leq (6d)^4 \,(1/(64p))^{2\kappa}.
%  % =\ell_0^{(s_0-1)\ln (6 d)/\ln \ell_0}
%$$
%Note that if $p \geq 100(\ln n)^2/n$ then $\kappa\leq 2$. This implies
%$ (\|x\|/x_m^*)^2\leq 2 (6d)^4  (1/(64p))^4 +n,$
%hence $\|x\|\leq   n^2 x_m^*$. The bounds for $y,z, w$ are obtained similarly.
\end{proof}

The next two Lemmas~\ref{col} and~\ref{c:SJ} will be used to bound from below the norm of the matrix-vector
product $M x$ for vectors $x$ with a ``too large'' almost constant part which does
not allow to directly apply the L\'evy--Kolmogorov--Rogozin anti-concentration inequality together with the
tensorization argument. Lemma~\ref{col} will be used to bound $\|Mx\|$ by a single inner product
$|\langle \row_i(M),x\rangle|$ for a specially chosen index $i$, while Lemma~\ref{c:SJ}
will allow to extract a subset of ``good'' rows having large inner products with $x$.

\begin{lemma}
\label{col}
Let $n\geq 30$ and $0<p<0.001$ satisfy $pn\geq 200 \ln n$.
Let $m, \ell=\ell(m) \geq 2$ be such that either
$$m=2\mbox{ and }\ell = 15,$$
or
$$
  m\geq 30, \quad  \ell m\leq \frac{1}{64 p} \quad \quad \mbox{and}
  \quad \quad \ell \leq \frac{np}{4\ln \frac{1}{pm}}.
$$
Let  $M$ be an $n\times n$ \Ber random matrix.
By $\Event_{col}=\Event_{col}(\ell, m)$ denote the event that
%has no zero columns and
for any choice of two disjoint sets $J_1, J_2\subset [n]$ of cardinality
$|J_1|=m$, $|J_2|=\ell m - m$ there exists a row of $M$ with exactly one $1$ among components
indexed by $J_1$ and no $1$s among components indexed by $J_2$.
Then $\p(\Event_{col} )\geq 1-\exp(-1.5 pn).$
\end{lemma}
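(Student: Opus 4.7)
\textbf{Proof plan for Lemma~\ref{col}.}

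My strategy is a direct union bound: for each disjoint pair $(J_1,J_2)$ with $|J_1|=m$, $|J_2|=(\ell-1)m$, the probability that one fixed row of $M$ contains exactly one $1$ in $J_1$ and no $1$s in $J_2$ equals
$$q \;:=\; mp(1-p)^{\ell m - 1}.$$
Using the hypothesis $\ell m \le 1/(64p)$, hence $p(\ell m - 1)\le 1/64$, Bernoulli's inequality gives $(1-p)^{\ell m - 1}\ge 63/64$, so $q\ge \tfrac{63}{64} mp$. Rows of $M$ are independent, so the probability that no row witnesses the pair $(J_1,J_2)$ is at most $(1-q)^n\le e^{-qn}\le \exp(-\tfrac{63}{64}mpn)$. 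Denoting
$$N := \binom{n}{m}\binom{n-m}{(\ell-1)m} = \binom{n}{\ell m}\binom{\ell m}{m}\le \Big(\frac{en}{\ell m}\Big)^{\ell m}(e\ell)^m,$$
the union bound yields $\Prob(\Event_{col}^c) \le N\exp(-\tfrac{63}{64}mpn)$, and the task reduces to showing $\ln N \le \tfrac{63}{64}mpn - 1.5 pn$.

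For the distinguished case $m=2$, $\ell=15$, this is immediate: $\ln N\le 30\ln n$, and since $p\le 0.001$ we have $q\ge 2p(1-p)^{29}\ge 1.94 p$, so the hypothesis $pn\ge 200\ln n$ easily gives $30\ln n - 1.94 pn \le -1.5 pn$. The main work is the case $m\ge 30$, where I split $\ln N \le \ell m\ln(n/(\ell m)) + \ell m + m(1+\ln\ell)$ and bound each term. The first and dominant term, $\ell m \ln(n/(\ell m))$, is increasing in $\ell m$ on the relevant range, so I plug in the upper bound $\ell m \le mpn/(4\ln(1/(pm)))$ coming from $\ell \le pn/(4\ln(1/(pm)))$. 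Writing $\ln(n/(\ell m)) = \ln(1/(pm)) + \ln(d/\ell)$ with $d=pn$, and using $\ell \ge 2$ together with $pm\le 1/128$ (so $\ln(1/(pm))\ge \ln 128$) shows that at the worst case the ratio $\ln(n/(\ell m))/\ln(1/(pm))$ is bounded by an absolute constant, giving $\ell m \ln(n/(\ell m)) \le \tfrac{2}{3} mpn$. The second term satisfies $\ell m \le mpn/(4\ln 128)$, i.e.\ at most $mpn/19$. For the third, $m(1+\ln\ell)\le m\ln(ed/4)$, which is $\le mpn/100$ because $pn=d\ge 200\ln n$ forces $\ln d \ll d$. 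Summing, $\ln N \le (1-c)mpn$ for a fixed $c>0$, and the gap $\tfrac{63}{64}mpn - \ln N \ge c\cdot mpn \ge 30 c\, pn$ exceeds $1.5 pn$ with room to spare.

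The expected obstacle is not conceptual but combinatorial bookkeeping: a naive bound using only $\ell m \le 1/(64p)$ is too crude and produces a term $n\ln d/d$ that can outgrow $mpn$ when $m$ is close to its lower bound $30$. Circumventing this requires crucially exploiting the second constraint $\ell\le pn/(4\ln(1/(pm)))$ to tighten the range of $\ell m$, and careful use of the hypothesis $pn\ge 200\ln n$ to control the sub-leading terms arising from $\binom{\ell m}{m}$ and the $+1$ in $\ln(en/(\ell m))$. Once the constants are tracked as above, the full bound $\Prob(\Event_{col}^c)\le\exp(-1.5pn)$ follows.
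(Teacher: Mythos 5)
Your proof is correct and takes essentially the same approach as the paper: a union bound over disjoint pairs $(J_1,J_2)$, the per-row success probability $q=mp(1-p)^{\ell m -1}$ bounded below via $p\ell m\leq 1/64$, the combinatorial count $N$ bounded via the two hypotheses $\ell m\leq 1/(64p)$ and $\ell\leq np/(4\ln(1/(pm)))$, and a final comparison using $m\geq 30$ and $pn\geq 200\ln n$. The only wrinkle is your phrase ``using $\ell\ge 2$'' when bounding $\ln(d/\ell)$: by itself this gives only $\ln(d/\ell)\leq\ln(d/2)$, which is not bounded by a constant multiple of $\ln(1/(pm))$ as $d\to\infty$; what actually closes the argument is that, having pushed $\ell m$ up to its maximum value $L^*=md/(4\ln(1/(pm)))$ by monotonicity, the worst case has $\ell=d/(4\ln(1/(pm)))$ so $d/\ell=4\ln(1/(pm))$, and then $\ln(n/L^*)/\ln(1/(pm))=1+(\ln 4+\ln\ln(1/(pm)))/\ln(1/(pm))\leq 1.61<8/3$ using $\ln(1/(pm))\geq\ln 128$. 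With that wording corrected, your computation and constants all check out.
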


\begin{proof}
We first treat the case $m\geq 30$.
Fix two disjoint sets $J_1, J_2\subset [n]$ of required cardinality.  The probability that
a fixed row has exactly one $1$ among components
indexed by $J_1$ and no $1$s among components indexed by $J_2$ equals
$$
  q:=m p(1-p)^{\ell m - 1}\geq m p \exp(- 2p\ell m ) \geq 29mp/30,
$$
where we used $\ell m p\leq 1/64$.
Since the rows are independent, the probability that $M$ does not have such a row
is
$$
 (1-q)^n  \leq \exp(-nq) \leq  \exp(-29 m p n/30).
$$
Note that the number of all choices of $J_1$ and $J_2$ satisfying the conditions of the lemma is
$$
 {n \choose \ell m - m} {n-\ell m +m \choose m} \leq
 \left( \frac{en}{(\ell-1) m}\r)^{\ell m -m} \left(\frac{en}{m}\r)^m
 \leq \left( \frac{3n}{\ell m}\r)^{\ell m } (2\ell) ^m .
$$
Thus union bound over all choices of $J_1$ and $J_2$ implies
$$
 \p((\Event_{col})^c )\leq \left( \frac{3n}{\ell m}\r)^{\ell m } (2\ell) ^m   \exp(-29 m p n/30).
$$
Using that $m\leq 1/(64p)$ and $\ell \leq \frac{np}{4\ln (1/(pm))}$, we observe
$
 \left( \frac{3n}{\ell m}\r)^{\ell m } \leq \exp( m p n/2).
$
Since $np\geq 200 \ln n$, we have $(2\ell) ^m \leq \exp( 2m p n/5)$.
Thus,
$$
 \p((\Event_{col} )^c )\leq  \exp(- m p n/15) \leq \exp(- 2 p n),
$$
which proves this case.

The case $m=2$, $\ell = 15$ is similar.
Fixing two disjoint sets $J_1, J_2\subset [n]$ of the required cardinality,  the probability that
a fixed row has exactly one $1$ among components
indexed by $J_1$ and no $1$s among components indexed by $J_2$ equals
$$
  q:=2p(1-p)^{29}\geq 2p \exp(-29 p).
$$
Since rows are independent, the probability that $M$ does not have such a row
is
$$
 (1-q)^n  \leq (1- 2p \exp(-29 p))^n \leq  \exp(-2pn \exp(-29 p))
  \leq \exp(-1.8 pn ).
$$
Using union bound over all choices of $J_1$ and $J_2$ we obtain
$$
 \p(\Event_{sum}^c )\leq  \frac{n^{30}}{2\cdot 28!}\exp(-1.8p n)\leq
\exp(-1.5p n),
$$
which proves the lemma.
\end{proof}

\smallskip

In the next lemma we restrict a matrix to a certain set of columns
and estimate the cardinality of a set of rows having exactly one $1$.
To be more precise, for any  $J\subset [n]$ and a $0/1$ matrix $M$
denote
$$
   I_J=I(J,M) := \{i\le n :\,  |\supp \row_i(M)\cap J|=1\}.
$$
The following statement is similar to Lemma 2.7 from \cite{LLTTY first part} and
Lemma~3.6 in \cite{LLTTY-TAMS}.

\begin{lemma}\label{c:SJ}
Let $\ell \geq 1$  be an integer and   $p\in (0, 1/2]$ be such that $p\ell \leq 1/32$.
Let $M$ be a \Ber random matrix.
Then with  probability at least
$$
   1- 2{n \choose \ell}\exp\left(-n\ell p/4\r)
$$
for every  $J\subset [n]$ of cardinality $\ell$ one has
$$
   \ell pn/16 \leq |I(J,M)| \leq 2\ell n p.
$$
In particular, if $\ell = 2\lfloor 1/(64p)\rfloor\leq n$, $n\geq 10^5$,  and $p\in [100/n, 0.001]$  then,
denoting
$$
 \Event _{card} =\Event _{card} (\ell) :=\{M\in \Mc \, :\, \forall J\subset [n]\,\,\, \mbox{with}\,\,\, |J|=\ell
 \,\,\, \mbox{one has}\,\,\,   |I(J,M)|\in [\ell p n/16, 2 \ell p n]\},
$$
we have
$$
 \p\left(\Event _{card} \r)\geq  1- 2\exp\left(-n/500\right).
%  \p\left(\forall J\subset [n], |J|=\ell, \,  : \,  |I_J|\in [(q-\eps)n, (q+\eps)n]\r)\geq  1- 2\exp\left(-n/500\right).
$$
\end{lemma}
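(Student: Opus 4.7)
The plan is straightforward: for each fixed $J \subset [n]$ with $|J| = \ell$, the random variable $|I(J,M)|$ is a sum of $n$ i.i.d.\ Bernoulli indicators (one per row), to which we apply a Chernoff/Bennett estimate, and then we take the union bound over $J$. First I would compute the success probability of each indicator: for row $i$, the event $\{|\supp \row_i(M)\cap J|=1\}$ has probability $p_\ell := \ell p(1-p)^{\ell-1}$. The assumption $p\ell \leq 1/32$ gives $p(\ell-1) \leq 1/32$, hence $(1-p)^{\ell-1} \geq 1 - p(\ell-1) \geq 31/32$, so
$$
\tfrac{31}{32}\ell p \leq p_\ell \leq \ell p.
$$
Thus $\mu := n p_\ell \in [\tfrac{31}{32}\ell pn,\, \ell pn]$.

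Next I would apply multiplicative Chernoff to the binomial $|I(J,M)| \sim \mathrm{Bin}(n, p_\ell)$. For the lower tail, $\ell pn/16 \leq \mu/15$, so $\Prob(|I(J,M)| \leq \ell pn/16) \leq \Prob(|I(J,M)| \leq (1-\tfrac{14}{15})\mu) \leq \exp(-\tfrac{1}{2}(14/15)^2 \mu) \leq \exp(-0.4\,\ell pn)$; for the upper tail, $2\ell pn \geq 2\mu$, so $\Prob(|I(J,M)| \geq 2\ell pn) \leq (e/4)^{\mu} \leq \exp(-0.37\,\ell pn)$. Combined and crudely bounded, the individual failure probability is at most $2\exp(-n\ell p/4)$. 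A union bound over the $\binom{n}{\ell}$ choices of $J$ yields the first claim.

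For the ``in particular'' part I would verify the hypotheses and then plug in. With $\ell = 2\lfloor 1/(64p)\rfloor$ we get $p\ell \leq 1/32$ deterministically, and $p\ell \geq 2p(1/(64p)-1) \geq 1/32 - 2p \geq 1/33$ using $p \leq 10^{-3}$. Also $\ell \leq 1/(32p) \leq n/3200$ since $p \geq 100/n$, so $\ell \leq n$. Hence $n\ell p/4 \geq n/132$, and the binomial coefficient obeys
$$
\binom{n}{\ell}\leq (en/\ell)^{\ell}\leq \bigl(32 e\,pn\bigr)^{1/(32p)}.
$$
Taking logs, $\log\binom{n}{\ell}\leq \frac{1}{32p}\bigl(\log(32e)+\log(pn)\bigr)$. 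Using $pn\geq 100$ one checks elementarily that $\log(32e)+\log(pn)\leq \tfrac{1}{6}pn$, whence $\log\binom{n}{\ell}\leq n/192$. Combining,
$$
2\binom{n}{\ell}\exp(-n\ell p/4)\leq 2\exp\!\bigl(n/192 - n/132\bigr)\leq 2\exp(-n/500),
$$
after verification that $1/132-1/192 \geq 1/500 + o(1)$ for $n\geq 10^5$. This gives the asserted bound on $\Prob(\Event_{card})$.

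There is no serious obstacle here; the only minor care required is ensuring that the gap between the exponents $n\ell p/4$ and $\log\binom{n}{\ell}$ survives at the lower endpoint $pn=100$, which is precisely why the hypothesis $p\geq 100/n$ (and not a weaker one like $p\geq (\ln n)/n$) is imposed. The numerical slack is comfortable, so the crude Chernoff bounds above are sufficient and no sharper inequality (e.g.\ the full Bennett form of Lemma~\ref{bennett}) is needed, though one could equivalently invoke Bennett's inequality to obtain the same individual tail estimate.
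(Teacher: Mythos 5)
Your proof follows exactly the same structure as the paper's: compute the per-row success probability $\ell p(1-p)^{\ell-1}$, concentrate the binomial count by a Chernoff-type bound, then union-bound over $J$; the paper uses the Bennett form from Lemma~\ref{bennett} rather than multiplicative Chernoff, but these give the same exponent $n\ell p/4$. One small numerical slip in the ``in particular'' part: $p\ell \geq 2p\bigl(1/(64p)-1\bigr) = 1/32 - 2p \geq 0.02925$ (not $\geq 1/33 \approx 0.0303$; e.g.\ $p=0.001$ gives $p\ell=0.03<1/33$), so the correct consequence is $n\ell p/4 \geq n/137$ rather than $n/132$ --- but since $1/137 - 1/192 > 1/500$, the final bound survives with the corrected constant.
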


\smallskip

\begin{proof}
Fix $J\subset [n]$ of cardinality $\ell$. Denote  $q=\ell p (1-p)^{\ell -1}$. Since $\ell p\leq 1/32$,
$$
     15\ell p/16\leq  \ell p(1-2p\ell )\leq \ell p \exp(-2 p\ell)\leq q\leq \ell  p \leq 1/2.
$$
For every $i\leq n$, let $\xi_i$ be the indicator of the event $\{i\in I(J,M)\}$.
Clearly, $\xi_i$'s are independent \Berq random variables and
$|I(J,M)| = \sum_{i=1}^n \xi_i$. Applying Lemma~\ref{bennett}, we observe
that for every $0<\eps<q$
%both events  $\{|\il|\(q-\eps)n\}$ and
%$\{|\il|<(q+\eps)n\}$ occur with probability at least
$$
  \p\left(|I(J,M)|\in [(q-\eps)n, (q+\eps)n]\r)\geq  1- 2\exp\left(-\frac{n\eps^2}{2q(1-q)}  \,
 \left(1-\frac{\eps}{3q}\right)\right).
$$
 Taking $\eps = 14q/15$ we obtain that
 $$
    (q-\eps)n= qn/15\geq \ell pn/16 \quad  \quad \mbox{and} \quad  \quad
    (q+\eps)n\leq 2 qn\leq 2\ell pn,
 $$
 and
 %, using that $\ell p\leq 1/60$,
$$
  \frac{n\eps^2}{2q(1-q)}  \, \left(1-\frac{\eps}{3q}\r)
 % = \frac{2nq}{1-q}  \, \frac{7}{81}
  \geq  \frac{98 \cdot 31 nq}{225 \cdot 45}   \geq
  0.3 n\ell p (1-2\ell p) \geq  n\ell p/4.
%0.28 n\ell p.
$$
This implies the bound for a fixed $J$. The lemma follows by the union bound.
%Finally, note that if $p\leq 0.12$ and $m\geq 4$ then
%$$  (1-p)^n\geq \exp(-pn(1+p)) \geq \exp(-0.28n\ell p).$$
%This proves the first bound. The ``moreover" part of the theorem follows
%by interchanging roles of $\il$ and $\ir$.
% provided that $p\leq 0.24$.
\end{proof}

\subsection{Cardinality estimates for $\varepsilon$--nets}
\label{sub-nets}

In this subsection we provide bounds on cardinality of certain discretizations of the sets of vectors introduced earlier.
Recall that $\edv$ denotes the vector  ${\bf 1} /\sqrt{n}$,
$P_\edv$ denotes the projection on $\edv^\perp$, and
$P_\edv^\perp$ is the projection on $\edv$, that is $P_\edv^\perp = \la \cdot , \edv\ra \edv$.
We recall also that given $A\subset [n]$, $x_A$ denotes coordinate projection of $x$ on $\R^A$,
and that given $x\in \R^n$, $\sigma_x$ is a (fixed) permutation corresponding to
non-increasing rearrangement of $\{|x_i|\}_{i=1}^n$.

Our first lemma deals with nets for $\st_2$ and $\st_3$.
We will consider the following normalization:
$$
 \st'_2=\{ x\in \st_2\,:\, x_{n_{s_0+1}}^{*}=1 \}, \qand
  \st'_3=\{ x\in \st_3\,:\, x_{n_{s_0+2}}^{*}=1 \}.
$$
%
%Similarly, for $j\leq n$, let $P_j$ be the projection on $e_j^\perp$ and
%$P_j^\perp$ be the projection on $e_j$. Recall that for $x\in \R^n$, the permutation $\sigma_x$
%satisfies $|x_{\sigma_x(i)}|= x_i^*$, $i\leq n$. Define a (non-linear)
%operator $Q:\R^n\to \R^n$ by $Qx = P_j x$, where $j=\sigma _x(1)$, in other words, $x_j=\|x\|_\infty$.
%Consider the triple norm on $\R^n$ defined by
The triple norm is defined by
$
  ||| x |||^2 := \|P_\edv x\|^2 + p n \|P_\edv^\perp x\|^2.
$

\begin{lemma}%[Cardinalities of nets]
\label{l:nets}
Let $n\geq 1$, $p\in (0, 0.001]$, and assume that $d=pn$ is sufficiently large. Let  $i\in\{2,3\}$.
Then there exists a set $\Net _i= \Net_i' + \Net_i''$, $\Net_i'\subset \R^n$, $\Net_i''\subset \spn\{{\bf{}1}\}$, with the following properties:
\begin{itemize}

\item $
  |\Net_i| \le \exp\left( 2  n_{s_0+i}\ln d \r).
$

\item For every $u\in \Net_i'$ one has $u_j^*=0$ for all $j\geq n_{s_0+i}$.

\item For every $x\in \st_i'$  there are $u\in \Net_i'$ and $w\in \Net_i''$
  satisfying
$$
         \|x-u\|_\infty \leq \frac{1}{\ct \sqrt{d}},
         \quad \|w\|_\infty \leq\frac{1}{\ct \sqrt{d}}, \qand
         |||x-u-w|||\leq \frac{\sqrt{2n}}{\ct \sqrt{d}}.
$$
\end{itemize}
\end{lemma}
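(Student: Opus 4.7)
The plan is to build $\mathcal{N}_i = \mathcal{N}_i' + \mathcal{N}_i''$ as a Cartesian sum, where $\mathcal{N}_i'$ is a level-adaptive $\ell_\infty$-net for the ``big'' part of $x$ (its top $N-1$ sorted positions, with $N := n_{s_0+i}$) and $\mathcal{N}_i'' \subset \mathrm{span}\{\mathbf{1}\}$ is a short one-dimensional grid of shifts $w = w_0 \mathbf{1}$ used to absorb a possible drift along $\edv$ under the triple norm. The key structural input is that for $x \in \st'_i$ one can chain the non-jump inequalities $x_{n_{j-1}}^* \leq 6d \cdot x_{n_j}^*$ (valid since $x \notin \st_0 \cup \st_1$), together with either the normalization $x_{n_{s_0+1}}^* = 1$ (when $i=2$) or the consequence $x_{n_{s_0+1}}^* \leq \ct\sqrt{d}$ of $x \notin \st_2$ (when $i=3$), to obtain an explicit magnitude bound $R_j$ on every position in the $j$-th block; meanwhile, the steepness condition defining $\st_i$ gives $x_k^* < 1/(\ct\sqrt{d})$ for all $k \geq N$, so $u$ can be taken to vanish outside the top $N-1$ coordinates.

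To form $\mathcal{N}_i'$, I enumerate (i) a support $S \subset [n]$ of size $N-1$, contributing $\binom{n}{N-1}$; (ii) an ordered partition of $S$ into $s_0+3$ blocks of prescribed sizes, contributing a multinomial bounded by $(s_0+3)^{N-1}$; and (iii) a rounding of each coordinate in block $j$ to the nearest multiple of $1/(\ct\sqrt{d})$ in $[-R_j,R_j]$. For $\mathcal{N}_i''$ I take a $\delta_w$-grid on the scalar $w_0$ with $|w_0| \leq 1/(\ct\sqrt{d})$ and $\delta_w := 1/(\ct p\sqrt{n})$; a direct count gives $|\mathcal{N}_i''| = O(\sqrt{d})$, and the range automatically forces $\|w\|_\infty \leq 1/(\ct\sqrt{d})$.

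For a given $x \in \st'_i$, I would select $S := \sigma_x([1,N-1])$, take $u$ to be the coordinate-wise rounding of $x$ on $S$ (and zero elsewhere), and $w_0$ to be the nearest grid point to $\langle x-u, \edv\rangle/\sqrt{n}$. Property (a) follows from the rounding on $S$ combined with the steepness bound off $S$; (b) is built into the range of $\mathcal{N}_i''$; and (c) uses the identity $P_\edv(x-u-w) = P_\edv(x-u)$ (since $w$ is a multiple of $\mathbf{1}$) to split
$$||| x-u-w |||^2 = \|P_\edv(x-u)\|^2 + pn\big(\langle x-u,\edv\rangle - w_0\sqrt{n}\big)^2 \leq \frac{n}{\ct^2 d} + pn\,\delta_w^2 \leq \frac{2n}{\ct^2 d}.$$

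The main obstacle is verifying the cardinality bound $\ln|\mathcal{N}_i| \leq 2N\ln d$. The support contributes $N\ln(en/N)$, which is $(\tfrac12+o(1))N\ln d$ for $i=2$ (via $n/N = \sqrt{d}$) and $O(N)$ for $i=3$. The multinomial contributes $O(N\ln(s_0+3)) = O(N\ln\ln n)$, which is absorbed into $O(N\ln d)$ in the regime $d \geq C\ln n$. The discretization on the ``top'' blocks $j = 0,1,\ldots,s_0+1$ is a geometric-type sum $\sum_j |S_j|\ln(2R_j\ct\sqrt{d})$ dominated by its $j = s_0+1$ term, of order $n_{s_0+1}\ln d \leq (\ln d)/(64p) = O(n\ln d/d)$, which is $o(N\ln d)$ for $i=2$ since $n/d = o(n/\sqrt{d}) = o(N)$. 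Finally, the ``middle'' block positions $n_{s_0+1}+1,\ldots,N-1$ contribute $(N-1-n_{s_0+1})\ln(2\ct\sqrt{d}) \leq (\tfrac12+o(1))N\ln d$. Summing the four contributions gives $\ln|\mathcal{N}_i'| \leq (1+o(1))N\ln d$, and combining with $\ln|\mathcal{N}_i''| = O(\ln d)$ yields $\ln|\mathcal{N}_i| \leq 2N\ln d$ for $d$ sufficiently large. The case $i=3$ is analogous but strictly easier: since $N = \lfloor\aaa n\rfloor$ is a constant fraction of $n$, all non-middle contributions become asymptotically negligible compared to $N\ln d$.
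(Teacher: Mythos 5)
Your overall scheme --- a block-structured $\ell_\infty$-net for the top $n_{s_0+i}-1$ sorted coordinates plus a one-dimensional grid along $\edv$ to control $|||\cdot|||$ --- is the same one the paper uses (it is the Lemma~\ref{newnet} / \cite[Lemma~3.8]{LLTTY first part} mechanism). However, the cardinality accounting has concrete errors that prevent the argument from closing, particularly for $i=3$.

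\textbf{The $i=3$ middle block magnitudes are not $O(1)$.} You bound the middle block ($n_{s_0+1}<j<N$) contribution by $(N-1-n_{s_0+1})\ln(2\ct\sqrt{d})$, i.e.\ one grid step per coordinate over an interval of length $O(1)$. This is correct for $i=2$, since the normalization $x^*_{n_{s_0+1}}=1$ forces all middle coordinates $\leq 1$. But for $i=3$ the normalization is $x^*_{n_{s_0+2}}=1$, and $x\notin\st_2$ only gives $x^*_{n_{s_0+1}}\leq\ct\sqrt{d}\,x^*_{n_{s_0+2}}=\ct\sqrt{d}$. So for $n_{s_0+1}\leq j<n_{s_0+2}$ the coordinates can be as large as $\ct\sqrt{d}$, the per-coordinate grid has $\approx\ct^2 d$ points rather than $\ct\sqrt{d}$, and the middle block contributes $\approx N\ln d$, not $(\tfrac12+o(1))N\ln d$. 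The claim that $i=3$ is ``analogous but strictly easier'' is precisely where this is overlooked. The paper handles this by splitting the middle region into two blocks ($B_2$ with magnitude up to $\ct^2 d$ and small size $\approx n_{s_0+2}=o(N)$, then $B_3$ with magnitude up to $1$ and the bulk of the size), which is what recovers the $(\tfrac12+o(1))N\ln d$ estimate. Your single middle block cannot.

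\textbf{The multinomial bound is too lossy to absorb the above.} You bound the number of ordered partitions by $(s_0+3)^{N-1}$, i.e.\ $N\ln(s_0+3)=O(N\ln\ln n)$. In the critical regime $d=\Theta(\ln n)$ this is $\Theta(N\ln d)$ with constant essentially $1$, not negligible. Adding it to the (corrected) middle block $\approx N\ln d$ for $i=3$, plus the support term $N\ln(e/\aaa)$ (which is $\Theta(N)$ with a constant that grows as $\aaa\to0$), you land at roughly $(2-\ln 200/\ln d)N\ln d + N\ln(3\ct^2 e/\aaa)$. Since $\aaa$ is a small and $\ct$ a large absolute constant, the budget does not close. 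The fix the paper uses is to \emph{not} apply a crude multinomial: since all but $o(N)$ of the chosen positions live in the dominant middle/last block, the partition count is a nested product of binomials controlled by $n_{s_0+1}\ln d$ (for the top blocks) plus a single $\binom{n}{|B_j|}$ term per block, which is $o(N\ln d)$. You would need to make this observation explicitly; ``absorbed into $O(N\ln d)$'' is not enough when the target is exactly $2N\ln d$.

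\textbf{A smaller slip in $\Net_i''$.} With $\delta_w=1/(\ct p\sqrt{n})$ and $|w_0|\leq 1/(\ct\sqrt{d})$, the number of grid points is $\approx 2p\sqrt{n}/\sqrt{d}=2\sqrt{p}<1$, not $O(\sqrt{d})$; and the approximation error in $\langle x-u,\edv\rangle - w_0\sqrt{n}$ is $\delta_w\sqrt{n}/2$, not $\delta_w$, so the line $pn(\langle x-u,\edv\rangle-w_0\sqrt n)^2\leq pn\delta_w^2$ fails by a factor of $n$. The spacing should be $\delta_w=1/(\ct pn)=1/(\ct d)$ (equivalently a $1/(\ct p\sqrt n)$-grid in the variable $w_0\sqrt n$), which gives the correct count $O(\sqrt d)$ and restores the triple-norm bound.
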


Since the proof of this lemma in many parts repeats the  proofs
of Lemma~3.8 from \cite{LLTTY first part} and of Lemma~\ref{newnet} below,
we only sketch it.

\begin{proof} Fix $\mu =1/(\ct\sqrt{d}$) and $i\in \{2, 3\}$.
We first repeat the proof of Lemma~3.8 from \cite{LLTTY first part} with our choice of parameters.
See also the beginning of the proof of Lemma~\ref{newnet} below --- many definitions, constructions, and
calculations are exactly the same, however note that the normalization is slightly different.
In particular,
the definitions of sets $B_1(x)$, $B_2(x)$ (with $k-1=n_{s_0+i-1}$),
%% for $\st_{i}'$ and  $k-1=n_{s_0+2}$ for $\st_3'$),
$B_3(x)$ are the same (we do not need the sets $B_0(x)$ and $B_4(x)$).
%Clearly, by the definition and normalization of $\st_i'$, the part corresponding to $B_3(x)$ will be approximated just by $0$ vector.
This will show (for large enough $d$) the existence of a $\mu$-net $\mathcal{N}_i'$ (in the $\ell_\infty$ metric)
for $\st_i'$ such that for every $u\in \Net_i'$ one has $u_j^*=0$ for all $j\geq n_{s_0+i}$ and
$
 |\mathcal{N}_i'|\leq \exp\left( 1.1  n_{s_0+i}\ln d \r)$.
%$$ \qand |\mathcal{N}_3'|\leq \exp\left( 1.1  n_{s_0+3}\ln d \r).$$

Next given $x\in \st_i'$ let $u=u(x)\in \mathcal{N}_i'$
be such that $\|x-u\|_\infty \leq \mu$. Then $\|P_\edv^\perp (x-u)\|\leq \mu \sqrt{n}$.
Let $\mathcal{N}_i''$ be a $(\mu\sqrt{n/d})$-net in the segment $\mu \sqrt{n} \, [-\edv, \edv]$ of cardinality at most
$2\sqrt{d}$ (note, we are in the one-dimensional setting). Note that every $w\in \mathcal{N}_i''$ is of the form
$w=a\, \edv=a\, {\bf 1}/\sqrt{n}$, $|a|\leq \mu \sqrt{n}$, in particular, $\|w\|_\infty\leq \mu$.
Then for $x$ (and the corresponding $u=u(x)$),
there exists $w\in \mathcal{N}_i''$ such that
$$
   |||x-u-w|||^2=
   \| P_\edv (x - u-w)\|^2 + d\| P_\edv^\perp (x - u -w) \|^2=
   \| P_\edv (x - u)\|^2 + d\| P_\edv^\perp (x - u) -w \|^2\leq 2 \mu^2 n .
$$
Finally, note that $|\mathcal{N}_i' + \mathcal{N}_i''|\leq 2\sqrt{d} \exp\left( 1.1  n_{s_0+i}\ln d \r)\leq  \exp\left( 2  n_{s_0+i}\ln d \r)$.
This completes the proof.
\end{proof}

Let $\Rv_{k}^1$, $\Rv_{k}^2$ be the vector subsets introduced in Subsection~\ref{subs: steep vectors}.
Consider the increasing sequence $\lam _1<\lam_2<\ldots <\lam _m$, $m\geq 1$, defined by
\begin{equation}\label{eq 2498520598207560}
\mbox{$\lam _1 = 1/\sqrt{2}$, $\,\, \, \lam _{i+1}=3\lam _i\,\,\, $ for $1<i<m,\quad $ and
$\quad \lam _{m-1} <\lam _m=\ct^2 d \leq 3\lam _{m-1}$.}
\end{equation}
Clearly $m\leq n$.
For $s\in\{1,2\}$, $n_{s_0+1}<k\leq n/\ln^2 d$ and $i\leq m$ set
$$
  \Rv_{k i}^s  :=
%\Rv_k  \cap
  \left\{x\in\Rv_k^s \, \, : \,  \,  \lam _i \sqrt{n}\leq \|x_{\sigma_x([k,n])}\| \leq \lam _{i+1} \sqrt{n}\r\}.
$$
It is not difficult to see that the union of  $\Rv_{k i}^s$'s over admissible $i$ gives $\Rv_k^s$.
The sets $\Rv_{k i}^s$ are ``low complexity'' sets in the sense that they admit
efficient $\varepsilon$-nets. For $s=1$, the low complexity is a consequence of the condition that
$\Rv_{k i}^1\subset \BB(\rho)$, i.e., the vectors have a very large almost constant part.
For the sets $\Rv_{k i}^2$, we do not assume the almost constant behavior, but instead rely on the assumption
that $\|x_{\sigma_x([k,n])}\|$ is large (much larger than $\sqrt{n}$). This will allow us to pick
$\varepsilon$ much larger than $\sqrt{n}$, and thus construct a net of small cardinality.
%In the next lemma we construct nets for
%$\Rv_{k i}^s$.

\begin{lemma}%[Cardinalities of nets]
\label{newnet}
Let $R\geq 40$ be a (large) constant.
Then there is $r_0>0$
depending on $R$ with the following property.
Let $0<r\leq r_0$, $0<\rho\leq 1/(2R)$, let
$n\geq 1$ and $p\in (0, 0.001]$ so that $d=pn$ is sufficiently large (larger than a constant depending on $R,r$).
%
% Let $r>0$ be a small enough constant (depending on $R$),
%$\rho \in (0, 1/4)$, $p\in (0, 0.001]$.
%Let $n\geq 1$, and $d=pn$ be large enough (in particular, larger that a constant depending on $r$).
Let $s\in \{1,2\}$,  $n_{s_0+1}<k\leq n/\ln^2 d$, $t\leq m$, and
$40 \lam_t \sqrt{n}/R\leq  \eps \leq \lam_t\sqrt{n}$,
where $\lam_t$ and $m$ are defined according to relation \eqref{eq 2498520598207560}.
%Let $A=[k, n]$.
Then there exists an $\eps$-net
$\mathcal{N}_{k t}^s\subset \Rv_{k t}^s$ for  $\Rv_{k t}^s$ with respect to $|||\cdot|||$
of cardinality at most $(e/r)^{3rn}$.
%such that $|\mathcal{N}_{k t}^s|\leq e^n$ and for every $x\in \Rv_{k t}^s$ there is $y\in \mathcal{N}_{k t}^s$
%satisfying
%$$ \|y_{\sigma_x(A)}\| \geq C_0\|y_{\sigma_x(A)}\|_\infty/\sqrt{p}, \quad
% \|y_{\sigma_x(A)}\|\geq \lam_j \sqrt{n}, \qand |||x-y|||\leq \eps. $$
%$$  \frac{\|y_{\sigma_x(A)}\|}{\|y_{\sigma_x(A)}\|_\infty} \geq \frac{C_0}{\sqrt{p}}, \quad \qand \quad
%  |||x-y|||\leq \eps. $$
\end{lemma}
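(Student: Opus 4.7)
The plan is to discretize each $x\in\Rv_{kt}^s$ by splitting it according to the triple-norm structure. Write $x=y+z+c\edv$, where $y$ is supported on the $k-1$ largest coordinates $J:=\sigma_x([1,k-1])$, the scalar $c:=\langle x,\edv\rangle$ captures the one-dimensional constant direction, and $z:=P_\edv x-y$ is supported on $\sigma_x([k,n])$ and lies in $\edv^\perp$. Because $|||x-x'|||^2=\|P_\edv(x-x')\|^2+pn\,\langle x-x',\edv\rangle^2$, an $(\varepsilon/3)$-net for $y+z$ in $\ell_2$ combined with a one-dimensional net for $c$ at scale $\varepsilon/(3\sqrt{pn})$ yields an $\varepsilon$-net for $x$ in $|||\cdot|||$. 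Thus it suffices to produce three pieces: a net for the support $J$, a net for the top values $y|_J$, and a net for the tail $z$.

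The choice of $J$ contributes $\binom{n}{k-1}\leq\exp(k\log n)$; since $k\leq n/\ln^2 d$, this is $\exp(o(rn\log(e/r)))$ once $r_0=r_0(R)$ is chosen so that $\ln^2 d\gg\log(1/r)$, and is therefore absorbed in $(e/r)^{3rn}$. The values $y|_J$ live in a $(k-1)$-dimensional space, and by non-steepness of $x$ together with Lemma~\ref{euclnorm} their coordinates are bounded in magnitude by a polynomial in $pn$; a standard volumetric $\ell_2$-net at scale $\varepsilon/3$ therefore has cardinality $\exp(Ck\log(\ct d))$, also negligible. The scalar $c$ satisfies $|c|\leq\|x\|/\sqrt n$ and a one-dimensional net at scale $\varepsilon/(3\sqrt{pn})$ has cardinality polynomial in $d$.

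The main work is the tail $z$, and here the two cases split. For $s=2$, the hypothesis gives $\|z\|\leq\lam_{t+1}\sqrt n\leq 3\lam_t\sqrt n\leq (3R/40)\varepsilon$, and by the decreasing rearrangement $\|z\|_\infty\leq 2\lam_{t+1}$; a careful lattice count inside this ``flat'' rearranged box, analogous to the argument in Lemma~\ref{l:nets} and to the construction in \cite{LLTTY first part}, produces a net at scale $\varepsilon/3$ of cardinality at most $(e/r)^{2rn}$. For $s=1$, I use $\Rv_k^1\subset\BB(\rho)$: there exist $\lam\in\R$ with $|\lam|=x^*_{\lfloor rn\rfloor}$ and $A\subset[n]$ with $|A|>n-\lfloor rn\rfloor$ such that $|x_i-\lam|\leq\rho$ on $A$. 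Absorbing the constant $\lam$ into the $c\edv$ piece, the restriction $z|_{A\cap J^c}$ has $\ell_\infty$-norm $\leq\rho$ and hence $\ell_2$-norm $\leq\rho\sqrt n\leq\varepsilon/4$, by $\rho\leq 1/(2R)$ together with $\varepsilon\geq 40\lam_t\sqrt n/R\geq 20\sqrt n/R$. Only the at most $\lfloor rn\rfloor$ coordinates of $z$ in $J^c\setminus A$ require genuine discretization; an $\ell_2$-net at scale $\varepsilon/4$ in a ball of radius $\lam_{t+1}\sqrt n$ inside this $(\leq rn)$-dimensional subspace has cardinality $\exp(Crn\log R)\leq(e/r)^{rn}$, and together with the $\binom{n}{\lfloor rn\rfloor}\leq(e/r)^{rn}$ choices of $A^c$, the total count stays within $(e/r)^{3rn}$.

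The hard part will be the bookkeeping in case $s=2$: the naive volumetric bound on a ball of radius $\sim R\varepsilon$ in $\R^n$ gives $e^{O(n)}$ points, far too many, so the decreasing-rearrangement structure must be exploited crucially to cut the count to $(e/r)^{2rn}$. A standard projection trick at the end replaces the obtained approximations by genuine elements of $\Rv_{kt}^s$ at a factor-two cost in $\varepsilon$, which is absorbed into the hypothesis $\varepsilon\geq 40\lam_t\sqrt n/R$.
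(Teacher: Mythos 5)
Your overall strategy --- partition the coordinates according to the rearrangement of $x$, discretize the ``large'' part in the uniform norm and the ``small'' part in the Euclidean norm, then patch in a one-dimensional net for the $\edv$-direction --- is the same as the paper's, and the $s=1$ branch is essentially sound: the condition $x\in\BB(\rho)$ lets you collapse the almost-constant block to $\{\pm P_A\mathbf{1}\}$, leaving only $O(rn)$ genuine degrees of freedom at radius $\sim\lambda_t\sqrt n$ versus mesh $\sim\varepsilon\sim\lambda_t\sqrt n/R$, giving cardinality $\exp(O(rn\log R))$, which you absorb into $(e/r)^{3rn}$ by choosing $r_0\le R^{-C}$.

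However, the $s=2$ branch is a genuine gap, and you flag it yourself as unresolved. You say ``the decreasing-rearrangement structure must be exploited crucially to cut the count to $(e/r)^{2rn}$'' and invoke an analogy with Lemma~\ref{l:nets}, but that lemma nets steep vectors using the jump structure $x^*_{n_{j-1}}>6d\,x^*_{n_j}$; non-steep vectors in $\Rv^2_{kt}$ have no such jumps, so that construction does not apply. Moreover, the intermediate step $\|z\|_\infty\le 2\lambda_{t+1}$ is not established: the only constraint on a single coordinate of the tail is $\|z\|\le 3\lambda_t\sqrt n$, which by itself allows a single coordinate as large as $3\lambda_t\sqrt n$. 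The missing observation that actually makes $s=2$ work is that $\Rv^2_{kt}$ is \emph{empty} unless $3\lambda_t\ge 2/r$ (since by definition $\|x_{\sigma_x([k,n])}\|\ge 2\sqrt n/r$ while $\Rv^2_{kt}$ also imposes $\|x_{\sigma_x([k,n])}\|\le\lambda_{t+1}\sqrt n\le 3\lambda_t\sqrt n$). Thus $\lambda_t\gtrsim 1/r$ and hence $\varepsilon\ge 40\lambda_t\sqrt n/R\gtrsim\sqrt n/(rR)$. On the other hand, the normalization $x^*_{\lfloor rn\rfloor}=1$ forces every coordinate beyond position $n_{s_0+3}=\lfloor rn\rfloor$ to have absolute value at most $1$, so the entire tail beyond $\lfloor rn\rfloor$ has Euclidean norm at most $\sqrt n\le\frac{3r}{2}\lambda_t\sqrt n$, which for $r$ small is negligible against $\varepsilon$. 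It is this dimension reduction --- only the at most $\lfloor rn\rfloor$ coordinates between position $k$ and $n_{s_0+3}$ need a genuine Euclidean net --- that produces the $(e/r)^{O(rn)}$ count in the $s=2$ case, and it is absent from your proposal.

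A smaller issue: the decomposition $x=y+z+c\edv$ with $y$ supported on $J$, $z$ supported on $J^c$, and $z\in\edv^\perp$ is overdetermined --- $P_\edv x$ is not supported on $J^c$, and requiring $z=P_\edv x-y$ to both lie on $J^c$ and in $\edv^\perp$ forces constraints that are not satisfiable in general. The paper avoids this by first building an $\ell_2$-net $\mathcal N_0$ for the coordinate pieces and only then applying $P_\edv$ and augmenting with the one-dimensional net $\mathcal N_*$; you would need the same two-step arrangement.
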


\begin{proof}
Note that in case of $s=2$ the set $\Rv_{k t}^2$ is empty whenever $3\lambda_t<\frac{2}{r}$.
So, in the course of the proof we will implicitly assume that $3\lambda_t\geq\frac{2}{r}$ whenever $s=2$.

We follow ideas of the proof of Lemma~3.8 from \cite{LLTTY first part}.
We split a given vector from $\Rv_{k t}^s$ into few parts according to magnitudes of its coordinates
%(the largest coordinates, the middle sized coordinates, and the smallest coordinates
%with certain adjustments)
and approximate each part separately.
Then we construct nets for vectors with the
same splitting and take the union over all nets. We now discuss the splitting. For each $x\in \Rv_{k t}^s$
consider the following (depending on $x$) partition of $[n]$.
If $s=2$, set $B_0'(x)=\emptyset$. If $s=1$ then $x\in \BB(\rho)$ and we set
$$
  B_0'(x) :=\sigma _x(\{ j\leq n \, : \,  |x_j-\lam _x|\leq \rho \}),
$$
where $\lam _x=\pm 1$ is from the definition of $\BB(\rho)$ (note that under the normalization
in $\normr_n(r)$ we have  $x^*_{\nn}=1$). Then $|B_0'(x)|> n - \nn$ for $s=1$.
%Next let $m=\min(k-1, n_{s_0+2})$ and
Next, we set
%$$  B_1(x)=\sigma_x([n_{s_0+1}])\setminus B_0(x), \quad
%   B_2(x)= \sigma_x([m])\setminus (B_0(x)\cup  B_1(x)),$$
%$$  B_3(x)= \sigma_x([k-1])\setminus (B_0(x)\cup  B_1(x)\cup B_2(x) ), \quad
%   B_4(x)=[n]\setminus (B_0(x)\cup  B_1(x)\cup B_2(x)\cup B_3(x) )$$
\begin{align*}
B_1(x)&=\sigma_x([n_{s_0+1}]);\\
B_2(x)&= \sigma_x([k-1])\setminus B_1(x);\\
B_3(x)&= \sigma_x([n_{s_0+3}])\setminus (B_1(x)\cup  B_2(x));\\
B_0(x) &= B_0'(x)\setminus (B_1(x)\cup  B_2(x)\cup  B_3(x));\\
B_4(x) &=[n]\setminus (B_0(x)\cup  B_1(x)\cup B_2(x)\cup B_3(x) )
\end{align*}
(one of  the sets $B_0(x)$, $B_4(x)$ could be empty).
Denote $\ell _x:=|B_0(x)|$.
Note that the definition of $B_3(x)$ and $B_4(x)$ imply that $\ell_x \leq n-\nn$,
while the condition $k-1\leq n_{s_0+3}$ and the above observation for $B_0'(x)$ give
$n-2\nn< \ell_x$ for $s=1$. Clearly, $\ell_x=0$ for $s=2$.

Moreover, we have both for $s=1$ and $s=2$:
\begin{equation}\label{cardpart}
 % |B_0(x)|=\ell_x, \quad
  |B_1(x)|=n_{s_0+1}, \quad   |B_2(x)|= k-1 - n_{s_0+1},
  \quad   |B_3(x)|= \nn - k+1,
  \quad   |B_4(x)|= n- \ell_x - \nn.
\end{equation}

Thus, given $\ell \in \{0\}\cup [n-\nn-k+1, n-k+1]$ and a partition of $[n]$ into five sets
$B_i$, $0\leq i\leq 4$,  with cardinalities as in (\ref{cardpart}),
it is enough to construct a net for vectors $x\in \Rv_{k t}^s$ with
$B_i(x)=B_i$,  $0\leq i\leq 4$, $\ell_x=\ell$, and then to take the union of nets over all possible realizations of $\ell$ and all
such partitions $\{B_0,B_1,B_2, B_3, B_4\}$ of $[n]$.
%We also notice that $\|x_{B_0(x)}\|\leq (1+\rho)\sqrt{n}$, therefore, if $x\in \Rv_{k j}^1$ then

\smallskip

Now we describe our construction. Fix $\ell$ as above and fix two parameters $\mu= 1/(\ct \sqrt{d})$, %1/(\ct \sqrt{d})$
and
$\nu=9\lam_t \sqrt{n}/R$.
We would like to emphasize that for the actual calculations in this lemma, taking $\mu$ to be a small constant multiple of
$R^{-1}$ would be sufficient, however, we would like to run the proof with the above choice of $\mu$ because this
corresponds to the parameter choice in the previous Lemma~\ref{l:nets} whose proof we only sketched.
Note  that for $x\in \Rv_{k t}^s$ we have $x\not\in \st$, hence
 $x^*_{n_{s_0+1}}\leq \ct \sqrt{d}x^*_{n_{s_0+2}}\leq \ct^2 d$ and
\begin{equation}\label{decr2}
  x_1^*\leq (6d) x^*_{2}\leq (6d)^2 x^*_{n_1}\leq \ldots \leq (6d)^{s_0+2} x_{n_{s_0+1}}^*
  \leq \ct^2 d (6d)^{s_0+2}.
\end{equation}

Fix $I_0\subset [n]$ with $|I_0|=n_{s_0+1}$ (which will play the role of $B_1$). We shall construct
a $\mu$-net $\Net_{I_0}$ (in the $\ell_\infty$-metric) for the set
\begin{align*}
   \st _{I_0}:=\big\{P_{B_1(x)}x:\;x\in\Rv_{k t}^s,\;B_1(x)=I_0\big\}.
\end{align*}
Clearly, the nets $\Net_{I_0}$ for various $I_0$'s can be related by appropriate permutations, so
without loss of generality we can assume for now that $I_0=[n_{s_0+1}]$.
First, consider the partition of $I_0$ into sets $I_1, \ldots, I_{s_0+2}$ defined by
$$
  I_1=[2] \qand  \, \, \, I_j=[n_{j-1}]\setminus [n_{j-2}], \, \, \mbox{ for }\,\, 2\leq j \leq s_0+2.
$$
Consider the set
$$\st^*:=\big\{x\in\st_{[n_{s_0+1}]}:\,\sigma_x(I_j)=I_j,\;\;j=1,2,\dots,s_0+2\big\}.$$
%and construct a $\mu$-net $\Net^*$ in $\st^*$.
By the definition of $\st _{I_0}$, for every $x\in \st^*$, one has  $\|P_{I_j}x\|_\infty\le b_j:=\ct^2 d (6d)^{s_0+3-j}$ for every
$j\le s_0+2$  (where as usual $P_I$ denotes the coordinate projection onto $\R^I$). Define
a $\mu$--net (in the $\ell_\infty$-metric) for $\st^*$ by setting
$$
  \Net^*:=\Net_{1}\oplus\Net_{2}\oplus\cdots\oplus\Net_{s_0+2},
$$
where $\Net_{j}$ is a $\mu$-net (in the $\ell_\infty$-metric) of cardinality at most
$$
  (3 b_j/\mu )^{|I_j|} \leq  (\ct^3 d^{3/2} (6d)^{s_0+3-j})^{n_{j-1}} \leq (\ct^3  (6d)^{s_0+5-j})^{n_{j-1}%m_{j}
}
$$
in the coordinate projection of the  cube $P_{I_{j}}(b_j B_\infty^n)$. %, where $m_1=2$, $m_{s_0+2}= n_{s_0+1}$,
%and $m_{j}= 30\ell_0^{j-2}$ for $2\leq j\leq  s_0+1$.
 Recall that $n_0=2$, $n_j=30\ell_0^{j-1}$, $1\leq j\leq s_0$, where $\ell_0$ and $s_0$ are given by \eqref{eq: l0s0 def}.
 Since $d$ is large enough,
\begin{align*}
  2s_0 +8 + 30 \sum_{j=2}^{s_0+1}  (s_0+5-j) \ell_0^{j-2} &=
  2s_0 +8 + 30 \sum_{m=1}^{s_0-1} (m+3) \ell_0^{s_0-m}
%  \\&=
  %2s_0 +8 + 90 \ell_0^{s_0}/(1-1/\ell_0)^2
\leq 121 \ell_0^{s_0-1} \leq 4.1 n_{s_0+1},
\end{align*}
which implies
$$
  |\Net^*|\le\prod_{j=1}^{s_0+2}|\Net_{j}| \le \exp( 7.1 n_{s_0+1}  \ln (6 \ct^2 d)).
$$
To pass from the net for $\st^*$ to the net for $\st_{[n_{s_0+1}]}$, let $\Net_{[n_{s_0+1}]}$ be the union
of nets constructed as $\Net^*$ but for arbitrary partitions $I_1',\dots, I_{s_0+2}'$ of $[n_{s_0+1}]$ with $|I_j'|=|I_j|$.
Using that
$$
   \sum _{j=1}^{s_0+1} {n_{j-1}} \le 2 + 30 \sum _{j=0}^{s_0-1} \ell_0^j  \leq  2 + 30 \ell_0^{s_0-1}/(1-1/\ell_0) \leq  2 n_{s_0+1}
$$
 and $e\ell_0\leq d$ we obtain that the cardinality of $\Net_{[n_{s_0+1}]}$ is at most
\begin{align*}
%   |\Net_{[n_1]}| &\le
   |\Net^*|\,   \prod_{j=1}^{s_0+1} { n_{j} \choose n_{j-1} }
  & \le  |\Net^*|\,   \prod_{j=1}^{s_0+1} \Big(\frac{e n_{j}}{n_{j-1}}\Big)^{n_{j-1}} \le
   |\Net^*|\, \prod_{j=1}^{s_0+1} (e \ell_0)^{n_{j-1}}
    \le
%  \exp( n_{s_0+1}(6  \ln (6 \ct d)+ 2\ln(e\ell_0))\leq
    \exp(9.1 n_{s_0+1}  \ln (6 \ct^2 d)) .
%\leq \exp(8 n_{s_0+1}  \ln (6 \ct^3 d)) .
\end{align*}

 Next we construct a net for the parts of the vectors corresponding to $B_2$. Fix
 $J_0\subset [n]$ with $|J_0|= k-1 - n_{s_0+1}$ (it will play the role
 of $B_2$). We  construct a $\mu$-net  (in the $\ell_\infty$-metric) for the set
$$
   \st^2 _{J_0}:=\{ P_{B_2(x)}x \, : \,  x\in\normr_n(r)\setminus \st,\,\, B_2(x)=J_0  \}.
$$
%$x^*_{n_{s_0+2}}\leq \ct \sqrt{d}$, $x^*_{n_{s_0+1}}\leq \ct^2 d$,
  Since by \eqref{decr2}, we have  $x^*_{n_{s_0+1}}\leq  \ct^2 d$ for every $x\in \normr_n(r)\setminus \st$,
  it is enough to take a $\mu$-net  ${\mathcal{K}}_{J_0}$  of cardinality  at most
$$
   |{\mathcal{K}}_{J_0}|  \leq (3 \ct^2 d/\mu) ^{|J_0|} \leq
   (3 \ct^3 d^{3/2}) ^{k}
$$
in the coordinate projection of the  cube  $P_{J_{0}}(\ct^2 d B_\infty^n)$.

 Now we turn to the part of the vectors corresponding to $B_3$. Fix  $D_0\subset [n]$ with
 $|D_0|= \nn - k+1$ (it will play the role of $B_3$).
 For this part we use $\ell_2$-metric and  construct a $\nu$-net  (in the {\it Euclidean metric} this time) for the set
$$
   \st^3 _{D_0}:=\{ P_{B_3(x)} x \, : \,  x\in \Rv_{k t}^s,\,\, B_3(x)=D_0  \}.
$$
Since for $x\in \Rv_{k t}^s$ we have $\|x_{B_3(x)}\|\leq \|x_{\sigma_x([k,n])}\|\le 3\lam _t \sqrt{n}$, there exists
a corresponding $\nu$-net   ${\mathcal{L}}_{D_0}$  in the coordinate projection of the Euclidean ball
  $P_{D_{0}}(3\lam_t \sqrt{n} B_2^n)$
of cardinality  at most
$$
   |{\mathcal{L}}_{D_0}|  \leq (9\lam _t \sqrt{n}/\nu) ^{|D_0|} \leq
   R^{\nn}\leq R^{r n}.
$$
%We also notice that $\|x_{B_0(x)}\|\leq (1+\rho)\sqrt{n}$, therefore, if $x\in \Rv_{k j}^1$ then

 Next we approximate the almost constant part of a vector (corresponding to $B_0$),
 provided that it is not empty (otherwise we skip this step). Fix $A_0\subset [n]$ with
 $|A_0|= \ell$ (it will play the role of $B_0$) and denote
$$
   \st^0 _{A_0}:=\{ P_{B_0(x)} x \, : \,  x\in\big(\normr_n(r)\setminus \st\big) \cap \BB(\rho),\,\, B_0(x)=A_0  \}.
$$
Let ${\mathcal{K}}^0_{A_0} :=\{ \pm P_{A_0} {\bf 1}\}$. Since for every $x\in\normr_n(r)$ we have
either $\lam_x=1$ or $\lam_x=-1$, by the definition of $B_0(x)$, every $z\in \st^0 _{A_0}$
is approximated by one of $\pm P_{A_0} {\bf 1}$ within error $\rho$ in the $\ell_\infty$-metric.

The last part of the vector, corresponding to $B_4$ we just approximate by $0$.
Note that for any $x\in\Rv_{k t}^1$ we have $\|P_{B_4(x)}x\|\leq \sqrt{rn}\leq \sqrt{2r}\lambda_t\sqrt{n}$,
in view of the condition $x\in \BB(\rho)$.
On the other hand, for $x\in\Rv_{k t}^2$ we have $\|P_{B_4(x)}x\|\leq \sqrt{n}\leq \frac{3r}{2}\lambda_t\sqrt{n}$.

Now we combine our nets. Consider the net
$$
 \Net_0 :=\bigcup\limits_{\ell,I_0,J_0,D_0,A_0}\big\{y=y_1+y_2+y_3+y_0:\,y_1\in\Net_{I_0},\,y_2\in\mathcal{K}_{J_0},\,
 y_3\in \mathcal{L}_{D_0}, y_0\in \mathcal{K}^0_{A_0}\big\},
$$
where the union is taken over all $\ell\in \{0\}\cup [n-2\nn, n-\nn]$ and all partitions of $[n]$ into $I_0, J_0, D_0, A_0, B$ with
$|I_0|=n_{s_0+1}$, $|J_0|=k-1 - n_{s_0+1}$, $|D_0|= \nn - k+1$, $|A_0|=\ell$, and
$B=[n]\setminus(I_0\cup J_0 \cup D_0 \cup A_0)$.
Then the cardinality of  $\Net_0$,
\begin{align*}
 |\Net_0|&\le n  {n\choose n_{s_0+1}} {n- n_{s_0+1}\choose k-1 - n_{s_0+1}}
  {n- k+1 \choose \nn - k+1} {n- \nn \choose \ell} \max\limits_{I_0}|\Net_{I_0}| \max\limits_{J_0}|\mathcal{K}_{J_0}|
  \max\limits_{D_0}|\mathcal{L}_{D_0}| \max\limits_{A_0}|\mathcal{K}^0_{A_0}|.
% \\& \le \left(\frac{e n}{ n_{s_0+1}}\r)^{ n_{s_0+1}}\, \left(\frac{e  n_{s_0+1}}{k-1 - n_{s_0+1}}\r)^{k-1 - n_{s_0+1}}\,
% \left(\frac{e  k-1 - n_{s_0+1}}{\nn - k+1}\r)^{\nn - k+1} \, \left(\frac{e  \nn - k+1}{ \ell}\r)^{ \ell}
% \,  (3 d)^{11 n_1+ 3 i n_{i+1}+ 3}
\end{align*}
Using that $n_{s_0+1}\leq n/(64d)$, $k\leq n/\ln^2 d$, $\nn\leq rn$, $\ell =0$ or $\ell\geq n-2\nn$,
the obtained bounds on nets, as well as  that $d$ is large enough and $r$ is small enough
(smaller than a constant depending on $R$),
we observe that the cardinality of $\Net_0$ is bounded by
$$
  n \left(e d\r)^{n/d}\,  \left(2e \ln ^2 d\r)^{n/\ln^2 d}\,  \left(2e /r\r)^{rn}\,
  \left(2e /r\r)^{rn}\, \exp(9.1n \ln (6 \ct^2 d)/(64d))\, (3 \ct^3 d^{3/2}) ^{n/\ln ^2 d}
  R^{rn}\, \cdot 2 \leq \left(e /r\r)^{2.5 rn}.
$$
By construction, for every $x\in \Rv_{k t}^s$  there exists $y=y_1+y_2+y_3+y_0\in \Net_0$ such that
\begin{align*}
   \|x-y\|&\leq \| P_{B_1(x)}x-y_1\|+\| P_{B_2(x)}x-y_2\| +\| P_{B_3(x)}x-y_3\| + \| P_{B_4(x)}x\|
  +\| P_{B_0(x)}x-y_0\|
  \\&\leq
   \mu \sqrt{n_{s_0+1}} + \mu \sqrt{k-1- n_{s_0+1}} + \nu  +\sqrt{2r}\lambda_t\sqrt{n} + \rho \sqrt{n}\leq \frac{2\sqrt{n}}{\ct \sqrt{d}}
    + \rho \sqrt{n}+ \frac{9\lam_t\sqrt{n}}{R}\leq \frac{10\lam_t\sqrt{n}}{R},
\end{align*}
where we used that $\rho \leq 1/(2R)\leq \lam_1 /(\sqrt{2} R)\leq \lam_t /(\sqrt{2} R)$ and that $r$ is sufficiently small.

Finally we adjust our net to $|||\cdot|||$. Note that by Lemma~\ref{euclnorm} for every $x\in \normr_n(r)\setminus \st$,
$$
 |\la x , \edv\ra|= \left|\sum _{i=1}^n \frac{x_i}{\sqrt{n}}\r|
 \leq \|x\|\leq
 %\frac{ 384\ct^2 (pn)^{4}}{(64p)^{\kappa}}\leq
 \frac{ 384\ct^2 d^{4}}{(64p)^{\ln (6d)}}\leq e^{rn}.
$$
Therefore, there exists an $\eps/(4\sqrt{pn})$-net $\mathcal{N}_*$ in $P_\edv^\perp \Rv_{k t}^s$ of
cardinality $8\sqrt{pn}e^{rn}/\eps$ (note, the rank of $P_\edv^\perp$ is one).
Then, by the constructions of  nets,
 for every $x\in \Rv_{k t}^s$ there exist $y\in \mathcal{N}_0$ and $y_*\in \mathcal{N}_*$ such that
$$
  ||| x - P_\edv y -y_*||| ^2=
   \| P_\edv (x - y)\|^2 + pn \|P_\edv^\perp x  -y_*\|^2\leq \frac{100\lam_t^2n}{R^2} + \eps^2/16\leq \eps^2/8.
$$
Thus the set $\mathcal{N} = P_\edv(\mathcal{N}_0) + \mathcal{N}_*$
is an $(\eps/2)$-net for $\Rv_{k t}^s$ with respect to $|||\cdot|||$ and its cardinality is bounded by $(e/r)^{3rn}$.
Using standard argument we pass to an  $\eps$-net $\mathcal{N}_{k t}^s \subset \Rv_{k t}^s$ for $\Rv_{k t}^s$.
\end{proof}

%\smallskip

\subsection{Proof of Theorem~\ref{classB}}
%{Lower bounds on $\|Mx\|$ for  vectors from $\Rv$}

%Recall that the event $\Event_{nrm}$ was defined in Proposition~\ref{nettri}, the set
%$\Rv$ was introduced in Subsection~\ref{subs: steep vectors}, and
%the sets $\Rv_{kj}^s$, $s=1,2$, were introduced just before Lemma~\ref{newnet}.

\begin{proof}
Recall that the sets $\Rv_{ki}^s$ were introduced just before Lemma~\ref{newnet} and
the event $\Event_{nrm}$ was defined in Proposition~\ref{nettri}.

Fix $s\in \{1, 2\}$, $k\leq n/\ln^2 d$, $A:=[k, n]$, $i\leq m$.
Set $\eps :=\lam _i \sqrt{n}/(600\sqrt{2} C_0)$, where $\lam_i$ and $m$ are defined according to \eqref{eq 2498520598207560}.
Applying Lemma~\ref{newnet} with $R= 24000 \sqrt{2} C_0$, we
find an $\eps$-net (in the $|||\cdot|||$--norm) $\mathcal{N}_{k i}^s\subset \Rv_{k i}^s$ for $\Rv_{k i}^s$
of cardinality at most $(e/r)^{3rn}$.
Take for a moment any $y\in  \mathcal{N}_{k i}^s$. Note that
$\|y_{\sigma(A)}\| \geq C_0\|y_{\sigma(A)}\|_\infty/\sqrt{p}$, $\|y_\sigma(A)\|\geq \lam_i \sqrt{n}$
(where $\sigma=\sigma_y$).
Then Proposition~\ref{rogozin} implies $\p(\Event_y^c)\leq e^{-3n}$, where
$$
   \Event_y=\left\{ \|My\|> \frac{\sqrt{pn}}{3\sqrt{2} C_0} \, \|y_{\sigma(A)}\| \r\}.
$$
Let us condition on the event $\Event_{nrm}\cap  \bigcap\limits_{y\in \mathcal{N}_{k i}^s}\Event_y$.
Using the definition of $\mathcal{N}_{k i}^s$ and $\Rv_{k i}^s$, the triangle inequality, and
the definition of $\Event_{nrm}$ from
Proposition~\ref{nettri}, we get that for any $x\in \Rv_{k i}^s$ there is $y\in \mathcal{N}_{k i}^s$ such that
$|||x-y|||\leq \varepsilon$, and hence
$$
  \|Mx\|\geq  \|My\| - \|M(x-y)\| > \frac{\sqrt{pn}}{3\sqrt{2} C_0} \, \|y_{\sigma(A)}\| - 100\sqrt{pn} \eps \geq
   \frac{\sqrt{p}\lam _i n }{6\sqrt{2} C_0}  .
$$
Using that $|\mathcal{N}_{k i}^s|\leq (e/r)^{3rn}$, that $\lam_i\geq 1/\sqrt{2}$, and  the union bound, we obtain
$$
 \p\left(\Event_{nrm} \cap \left\{\exists x\in \Rv_{ki}^s \, : \,
 \|Mx\|\leq \frac{\sqrt{p} n}{12 C_0}\r\}\right)
\leq \Prob\Big(\Event_{nrm}\cap  \bigcup\limits_{y\in \mathcal{N}_{k i}^s}\Event_y^c\Big)
  \leq  e^{-3(1-r\ln(e/r))n}.
$$
Since $\Rv=\bigcup_{k,i}\, (\Rv_{ki}^1 \cup \Rv_{ki}^2)$ and $r$ is small enough,
the result follows by the union bound and by  Lemma~\ref{bdd} applied with $t=30$ in order to estimate $\Prob(\Event_{nrm})$.
\end{proof}

\subsection{Lower bounds on $\|Mx\|$ for  vectors from $\st_0\cup \st_1$}

The following lemma provides a lower  bound on the ratio $\|Mx\|/\nx _2$  for vectors $x$ from $\st_0\cup \st_1$.
%Recall that $\kappa$ was introduced in Subsection~\ref{twolemmas}.

\begin{lemma} \label{l:T0}
%There exists an absolute positive constant $C$ such that the following holds.
%Assume $p\in (0, 1/12]$, $n_1>4$, $p n_1\leq \ln 2$.
%Let $n\geq 1$ be large enough, $p\in (0, 0.001]$, and $d=pn\geq 1$.
%Let $M$ be a \Ber random matrix.
% assume  $n_1>4$, $p n_1\leq \ln 2$
%Let $\Event_{sum}$ (with $q=p$) and $\Event_{col}$ be events
%introduced in Lemmas~\ref{bennett} and \ref{col} respectively.
Let $n\geq 1$, $0<p<0.001$, and assume that  $d=pn\geq 200 \ln n$. Then
%Let $M\in \Event_{sum} \cap \Event_{col}$. Then
$$
  \p\left(\Big\{\exists\;x\in \st_0 \cup  \st_1 \, \, \, \mbox{ such that }
  \, \, \, \|M  x\| \leq    \frac{(64p)^{\kappa}}{ 192(pn)^2}\, \|x\| \Big\}\r)
  \leq n(1-p)^n+e^{-1.4np},
$$
where $\kappa$ is defined by \eqref{eq: kappa def}.
\end{lemma}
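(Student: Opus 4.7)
The plan is to reduce everything to two ``good'' events for the matrix $M$: the row-sum event $\Event_{sum}$ from Lemma~\ref{bennett}, which guarantees each row sum is at most $3.5 pn$, and the combinatorial events $\Event_{col}(\ell,m)$ from Lemma~\ref{col}, which guarantee that for every pair of disjoint subsets $J_1,J_2$ of prescribed sizes there exists a row of $M$ with exactly one $1$ inside $J_1$ and no $1$'s inside $J_2$. I will use $\Event_{col}(15,2)$ for $j=1$ and $\Event_{col}(\ell_0, n_{j-1})$ for $2\leq j\leq s_0+1$; one checks directly that the parameter constraints $\ell \cdot m\leq n_j\leq n_{s_0+1}\leq 1/(64p)$ and $\ell \leq pn/(4\ln(1/(pm)))$ of Lemma~\ref{col} are met by our definition \eqref{eq: l0s0 def} of $\ell_0$ and $n_j$. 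Combined with the no-zero-column event (for $\st_0$), whose failure probability is $\leq n(1-p)^n$, the union bound over the $s_0+2\leq\ln n + 2$ events gives total failure probability $\leq n(1-p)^n + e^{-1.4np}$, where the logarithmic factor is absorbed using $pn\geq 200\ln n$.

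On this good event I handle $\st_0$ and each $\st_{1j}$ with the same scheme. For $x\in\st_0$, I simply repeat the argument of Lemma~\ref{st1}: picking a row $i$ with $M_{i,\sigma_x(1)}=1$ (available since no column is zero) and using $\Event_{sum}$ together with the jump $x_1^*>6d\,x_2^*$, I obtain $|(Mx)_i|\geq x_1^* - 3.5d\,x_2^* \geq x_1^*/3$. For $x\in\st_{1j}$ with $1\leq j\leq s_0+1$, I set
\[
J_1:=\sigma_x([n_{j-1}]),\qquad J_2:=\sigma_x([n_{j-1}+1,\ell\, n_{j-1}]),
\]
where $\ell=15$ if $j=1$ and $\ell=\ell_0$ otherwise, so that $J_1\cup J_2=\sigma_x([n_j])$ with the convention $15\cdot 2=n_1=30$. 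The event $\Event_{col}$ supplies a row $i$ whose unique $1$ inside $J_1\cup J_2$ sits at some column $\sigma_x(i_0)$ with $i_0\leq n_{j-1}$, hence
\[
|(Mx)_i|\geq x_{i_0}^* - \sum_{j\notin J_1\cup J_2}M_{ij}|x_j| \geq x_{n_{j-1}}^* - 3.5 d\,x_{n_j}^* \geq x_{n_{j-1}}^*\bigl(1-\tfrac{3.5}{6}\bigr)\geq x_{n_{j-1}}^*/3,
\]
where the second inequality uses $\Event_{sum}$ and that all coordinates outside $J_1\cup J_2$ have absolute value at most $x_{n_j}^*$, and the third uses the defining jump $x_{n_{j-1}}^*>6d\,x_{n_j}^*$ of $\st_{1j}$.

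Finally, I convert the pointwise lower bound $\|Mx\|\geq x_{n_{j-1}}^*/3$ to the advertised bound on $\|Mx\|/\|x\|$ via Lemma~\ref{euclnorm}, which yields $\|x\|\leq 64(pn)^2\, x_{n_{j-1}}^*/(64p)^\kappa$ for $x\in\st_{1j}$. This produces exactly the constant $(64p)^\kappa/(192(pn)^2)$ claimed in the lemma. The $\st_0$ estimate $\|Mx\|\geq x_1^*/3\geq\|x\|/(3\sqrt n)$ is much stronger than the stated bound under $pn\geq 200\ln n$, so it is automatically absorbed.

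There is no serious obstacle in this proof; the only care needed is in the bookkeeping to verify the hypotheses of Lemma~\ref{col} for every $(\ell, m)=(\ell_0,n_{j-1})$ and in bounding $(s_0+1)e^{-1.5pn}\leq e^{-1.4pn}$, both of which follow from the conditions $pn\geq 200\ln n$ and $\ell_0^{s_0-1}\leq 1/(64p)$.
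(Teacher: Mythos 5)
Your proof takes essentially the same route as the paper's: decompose into the no-zero-column event, the row-sum event $\Event_{sum}$, and the combinatorial events $\Event_{col}$ from Lemma~\ref{col}; on the good event find a row with a unique $1$ inside $J_1\cup J_2 \supseteq \sigma_x([n_j])$ and use $\Event_{sum}$ to bound the tail contribution, yielding $\|Mx\|\ge x_{n_{j-1}}^*/3$; then convert to a bound on $\|Mx\|/\|x\|$ via Lemma~\ref{euclnorm}. Your choice of $(\ell,m)=(15,2)$ for $j=1$ is in fact cleaner than the paper's blanket notation $\Event_{col}(\ell_0,n_{j-1})$ (which formally falls outside the hypotheses of Lemma~\ref{col} when $j=1$, since then $m=2<30$ and $\ell_0\neq 15$ in general).

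There is, however, a genuine gap at the boundary index $j=s_0+1$. You claim "one checks directly that the parameter constraints $\ell\cdot m\le n_j\le n_{s_0+1}\le 1/(64p)$ of Lemma~\ref{col} are met," but for $j=s_0+1$ you take $\ell=\ell_0$ and $m=n_{s_0}=30\ell_0^{s_0-1}$, so $\ell m=30\ell_0^{s_0}$. By the defining inequality $\ell_0^{s_0}>1/(64p)$ in \eqref{eq: l0s0 def}, this gives $\ell m>30/(64p)>1/(64p)\ge n_{s_0+1}$, so both $\ell m\le n_j$ and $\ell m\le 1/(64p)$ fail. Worse, $\ell mp$ is not bounded by any absolute constant (it can be as large as $30\ell_0/64$), so the probability that a given row has exactly one $1$ in $J_1$ and none in $J_2$ is no longer comparable to $mp$ and Lemma~\ref{col}'s bound $\Prob(\Event_{col})\ge 1-e^{-1.5np}$ does not follow. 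You need a separate, smaller choice of $\ell$ for this step — roughly $\ell\approx n_{s_0+1}/n_{s_0}$ — so that $J_1\cup J_2$ still covers $\sigma_x([n_{s_0+1}])$ while $\ell m$ stays of order $1/(64p)$; because $n_{s_0+1}/n_{s_0}$ need not be an integer this in turn requires either a mild relaxation of the constant $1/64$ in Lemma~\ref{col}'s hypothesis, or a modified definition of $n_{s_0+1}$ as a multiple of $n_{s_0}$. (For the record, the paper's own text also writes $\Event_j=\Event_{col}(\ell_0,n_{j-1})$ for $j=s_0+1$ without addressing this.)
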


\begin{proof}
Let $\delta_{ij}$, $i,j\leq n$ be entries of $M$.
Let $\Event$ be the event that there are no zero columns in $M$.
Clearly, $\p(\Event)\geq 1-n(1-p)^n$.

Also, for each $1\leq j\leq s_0+1$, let $\Event_j= \Event_{col} (\ell_0, n_{j-1})$ be the event
introduced in Lemma~\ref{col} (with $s_0,\ell_0$ defined in \eqref{eq: l0s0 def}), and observe that, according to Lemma~\ref{col},
$\p(\Event_j)\geq 1-e^{-1.5np}$ for every  $j$.

Recall that $\sigma_x$ denotes a permutation $[n]$ such that $x_i^*=|x_{\sigma(i)}|$ for $i\le n$.
Pick any $x\in \st_0 \cup  \st_1$.
In the case $x\in \st_{0}$ set $m=m_1=1$ and $m_2=2$.
In the case $x\in \st_{1j}$ for some $1\leq j\leq s_0+1$ set $m=m_1=n_{j-1}$ and $m_2=n_j$.
Then by the definition of sets $\st_{0}, \st_1$ we have
$x^*_m>6d x^*_{m_2}$.
Let
$$
  J^\ell=J^\ell(x)=\sigma_x([m]), \quad  J^r=J^r(x)=\sigma_x([m_2-1]\setminus[m]), \quad \mbox{ and } \quad
  J(x)=(J^\ell\cup J^r)^c
$$
(if $x\in \st_{0}$ then $J^r=\emptyset$).
Note that by our definition we have $|x_i|>6d |x_u| $ for any $i\in J^\ell(x)$ and $u\in J(x)$, and that
$\max_{i\in J(x)}|x_i|\le x^*_{m_2}$.
Denote by $\il(x)$ the (random) set of rows of $M$ having exactly one 1 in $J^\ell(x)$ and no 1's in $J^r(x)$.
Now we recall that the event $\Event_{sum}$ was introduced in Lemma~\ref{bennett}
(we use it with $q=p$) and set
$$
\Event':= \Event\cap \Event_{sum}\cap \bigcap_{j=1}^{s_0+1} \Event_j.
$$
Clearly, conditioned on $\Event'$, the set $\il(x)$ is not empty for any $x\in \st_0 \cup  \st_1$.
By definition, for every $s\in \il(x)$  there exists
$j(s)\in J^\ell(x)$ such that
$$
   \supp R_{s}(M)\cap J^\ell(x)=\{j(s)\},\quad \supp R_{s}(M)\cap J^r(x)= \emptyset. %,\quad\text{and}\quad
   %\max_{i\in J(x)}|x_i|\le x^*_{m_2} .
$$
Since
%$|y_1|\leq x^*_{n_{s_0+1}}/10$
%$s\not\in J^\ell\cup J^r$ (which implies $\vert x_{s}\vert \leq x^*_{pm}$),
%and
$j(s)\in J^{\ell}(x)$
(which implies $|x_{j(s)}|\geq x^*_m> 6 d x^*_{m_2}$), we obtain
\begin{align*}
  |\langle R_{s} (M),\, x \rangle|
  &=\Big| x_{j(s)}
       +   \sum_{j\in J(x)} \delta _{sj} x_j \Big|
   \geq|x_{j(s)}|- x_{m_2}^* \sum_{j\in J(x)} \delta _{sj}
%  \\&
  \geq  x_{m}^* -  \frac{x_{m}^*}{6d} \sum_{j\in J(x)} \delta _{sj} .
\end{align*}
Observe that conditioned on $\Event_{sum}$ we have
$\sum_{j\in J(x)} \delta _{sj} \leq \sum_{j=1}^n \delta _{sj} \leq 3.5 pn=3.5 d$. Thus,
everywhere on $\Event'$ we have for all $x\in \st_0 \cup  \st_1$,
$$
  \|M x \| \geq |\langle R_{s} (M),\, x\rangle|
%\geq  x_{m}^* - (0.1 + 3.85 d)  x_{m_2}^*
\geq x_{m}^*/3,\quad s\in \il(x).
$$
%This implies
%$$ \|M(x+y)\| \geq  x^*_{m}/ 3.$$
%Using Lemma~\ref{l:norma}, we have
Finally, in the case $x\in \st_{0}$ we have $m=1$ and $\|x\|\leq  \sqrt{n}x^*_1$.
%$$\|x+y\|\leq \nx_2 + \|y\| _2\leq \sqrt{n} \, x^*_{m} + \sqrt{n} \, |y_1| \leq 1.1 \sqrt{n} \, x^*_{m}.$$
In the case $x\in \st_{1j}$ by Lemma~\ref{euclnorm} we have
$$
 \|x\|\leq   \frac{ 64(pn)^2}{(64p)^{\kappa}}\, x^*_{m},
$$
%$$\|x+y\|\leq \nx_2 + \|y\| _2\leq   n^2 x_m^* + \sqrt{n} \, |y_1| \leq
% 1.1  n^2 x^*_{m}.$$
This proves the lower bound on $\|Mx\|/\|x\|$  conditioned on  $\Event'$.
The probability bound
 follows  by the union bound, Lemmas~\ref{bennett} and \ref{col}, and since $s_0\leq \ln n$,
 indeed
$$
 \p\left(\Event\cap \Event_{sum}\cap \bigcap_{j=1}^{s_0+1} \Event_j\r) \geq
 1-  n(1-p)^n - (s_0+2)e^{-1.5np} \leq 1-  n(1-p)^n - e^{-1.4 np} .
$$
\end{proof}

\subsection{Individual bounds for vectors  from $\st_2 \cup \st_3$}
\label{subs: nets}

In this section we provide  individual probability bounds for vectors from the nets constructed in
Lemma~\ref{l:nets}.
To obtain the lower bounds on $\|M x\|$, we consider
the behavior of the inner products $\la \row_i(M), x \ra$, more specifically, of the
L\'evy concentration function for $\la \row_i(M), x\ra$. To estimate this
function, we will consider $2m$ columns of $M$ corresponding
to the $m$ biggest and $m$ smallest (in absolute value) coordinates of $x$, where $m=n_{s_0+1}$ or $m=n_{s_0+2}$.
In a sense, our anti-concentration estimates will appear in the process of swapping $1$'s and $0$'s within a specially
chosen subset of the matrix rows. A crucial element in this process is to extract a pair of subsets of indices on which the chosen
matrix rows have only one non-zero component. This will allow to get anti-concentration bounds by ``sending'' the
non-zero component into the other index subset from the pair.
The main difficulty in this scheme comes from the restriction $2m p \leq 1/32$ from
Lemma~\ref{c:SJ}, which guarantees existence of sufficiently many required subsets (and rows)
but which cannot be directly applied to $m=n_{s_0+2}$. To resolve this problem we use idea from
\cite{LLTTY-TAMS}. We split
the initially fixed set of $2m$ columns into smaller subsets of columns of size at most $1/(64 p)$ each,
and create independent random variables corresponding to this splitting. %and such that their sum is
%$\la \row_i(M), x\ra$ up to a constant.
Then we apply Proposition~\ref{prop: esseen},
allowing to deal with the L\'evy concentration function for sums of independent random variables.

We first describe subdivisions of $\Mc$ used in \cite{LLTTY-TAMS}.  Recall that $\Mc$ denotes
the class of all $n\times n$ matrices with $0/1$ entries.
We recall also that the probability measure $\Prob$ on $\Mc$ is always assumed to be induced by a
\Ber random matrix.
Given $J\subset [n]$ and $M\in \Mc$
denote
$$
  I (J, M) = \{i \leq n \, : \, |\supp \row_i(M) \cap J |       =1\}.
$$
%({\it cf.},  the definition of $\il(M)$,  $\ir(M)$  before  Lemma~\ref{c:SJ} -- clearly, if
%we split $J$ into $J^\ell$ and $J^r$, then $I (J, M)= \il(M) \cup \ir(M)$).
By $\MSet _J$ we denote
the set of $n\times |J|$ matrices with $0/1$ entries and with columns indexed
by $J$.
%obtained from matrices $M\in \Mc$ by taking columns with indices in $J$,
%i.e.,
%$$
%     \MSet _J = \left\{ V = \{v_{ij}\}_{i\leq n, j\in J}\, : \, \exists M\in \Mc \, \, \mbox{ such that }
%     \, \, \forall i\leq n\, \forall j\in J \, \,\, \,  v_{ij}=\mu_{ij} \r\}.
%$$
Fix $q_0\leq n$ and a partition $J_0$, $J_1$, ..., $J_{q_0}$ of $[n]$.
Given subsets $I_1, \dots,I_{q_0}$ of $[n]$ and $V=(v_{ij})\in \MSet _{J_0}$, denote
$\ii = (I_1, \ldots, I_{q_0})$ and  consider the class
$$
   \f (\ii, V) =  \left\{M=(\mu_{ij})\in \Mc \, :\,  \forall  q\in [q_0] \quad I (J_q, M) =  I_q \,\,
     \mbox{ and } \,\,\forall i\leq n\, \forall j\in J_0  \,\,\,  \mu_{ij}= v_{ij} \right\}.
$$
%(depending on the choice of $\ii$ such a class can be empty).
In words, we fix the columns
indexed by $J_0$ and for each $q\in [q_0]$ we fix the row indices having exactly one $1$ in columns indexed by $J_q$.
%(working with Bernoulli matrices we condition on realization of corresponding entries).
Then, for any fixed partition $J_0$, $J_1$, ..., $J_{q_0}$, $\Mc$ is the disjoint union of classes $\f (\ii, V)$ over all $V\in \MSet _{J_0}$ and
all $\ii \in (\mathcal{P} ([n]))^{q_0}$, where  $\mathcal{P} (\cdot)$ denotes the power set.

\smallskip 

The following is an important, but simple observation.
\begin{lemma}\label{l: indep 20598}
Let $\f (\ii, V)$ be a non-empty class (defined as above), and denote by $\Prob_{\f}$ the induced probability measure on
$\f (\ii, V)$, i.e., let
$$
\Prob_{\f}(B):=\frac{\Prob(B)}{\Prob(\f (\ii, V))},\quad B\subset \f (\ii, V).
$$
Then the matrix rows for matrices in $\f (\ii, V)$ are mutually independent with respect to $\Prob_{\f}$,
in other words, a random matrix distributed according to $\Prob_{\f}$ has mutually independent rows.
\end{lemma}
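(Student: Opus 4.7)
The plan is to exploit the fact that the defining condition of $\f(\ii,V)$ is a ``row-wise'' constraint, in the sense that it can be written as an intersection of events each depending on a single row of $M$. Once this decomposition is in place, the independence of rows under the unconditional measure $\Prob$ is preserved after conditioning on an event of such a product form, and the conclusion follows.

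Concretely, the first step would be to write
\[
\{M\in \f(\ii,V)\}=\bigcap_{i=1}^n A_i,
\]
where, for each $i\leq n$,
\[
A_i:=\Big\{\forall j\in J_0:\;\mu_{ij}=v_{ij}\Big\}\,\cap\,
\bigcap_{q=1}^{q_0}B_{i,q},\qquad
B_{i,q}:=\begin{cases}\{|\supp\row_i(M)\cap J_q|=1\},&\text{if }i\in I_q,\\
\{|\supp\row_i(M)\cap J_q|\neq 1\},&\text{if }i\notin I_q.\end{cases}
\]
The point is that each $A_i$ is determined entirely by the $i$-th row of $M$; in particular, the global event $\{I(J_q,M)=I_q\}$ factorizes across rows because specifying $I_q$ is equivalent to specifying, for each individual row $i$, whether it has exactly one $1$ inside $J_q$ or not.

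The second step would be to invoke the independence of the rows $\row_1(M),\dots,\row_n(M)$ under $\Prob$. For any measurable subsets $E_1,\dots,E_n$ of $\{0,1\}^n$, using row-independence and the row-measurability of $A_i$,
\[
\Prob\Big(\bigcap_{i=1}^n\{\row_i(M)\in E_i\}\cap\bigcap_{i=1}^n A_i\Big)
=\prod_{i=1}^n\Prob\bigl(\{\row_i(M)\in E_i\}\cap A_i\bigr),
\]
and in particular $\Prob(\bigcap_i A_i)=\prod_i\Prob(A_i)>0$ since $\f(\ii,V)\neq\emptyset$. Dividing the first identity by the second and applying it with $E_i=\{0,1\}^n$ for all but one index $i$ shows that, under $\Prob_{\f}$, the marginal distribution of $\row_i(M)$ is $\Prob(\cdot\cap A_i)/\Prob(A_i)$, and the joint distribution is the product of these marginals; this is exactly mutual independence of rows under $\Prob_{\f}$.

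There is essentially no technical obstacle here: everything reduces to checking that the constraints defining $\f(\ii,V)$ separate into single-row constraints, which is immediate from the definitions of $I(J_q,M)$ and of the fixed block on $J_0$. The only place where one must be slightly careful is in verifying that ``$I(J_q,M)=I_q$'' really does split as $\bigcap_i B_{i,q}$ rather than introducing a cross-row constraint; but since $I_q$ is just the list of rows with the single-$1$-in-$J_q$ property, this is precisely a conjunction of per-row events and the factorization goes through.
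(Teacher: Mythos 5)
Your argument is correct, and it is exactly the verification the paper implicitly relies on: the paper states the lemma without proof, labeling it a "simple observation," so there is no alternate route to compare against. The key observation—that $\{M\in\f(\ii,V)\}$ decomposes as an intersection $\bigcap_{i=1}^n A_i$ of row-$i$ measurable events, because $I(J_q,M)=I_q$ is a conjunction of per-row conditions (``row $i$ has exactly one $1$ in $J_q$'' for $i\in I_q$, and ``row $i$ does not'' for $i\notin I_q$) and the $J_0$-block constraint is likewise row-local—is precisely what makes the lemma true, and your factorization identity and division argument carry this through cleanly to conditional mutual independence.
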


%
%
%Furthermore, given $V$ and $\ii$ as above, we split each class $\f (\ii, V)$
%into smaller equivalence classes using the following
%equivalence relation. Fix $i\leq n$ and $A\subset [q_0]$.
%Denote $A_0:=\{0\}\cup ([q_0]\setminus A)$. We say that two matrices
%$M=\{\mu _{sj}\}, \widetilde M=\{\tilde \mu _{sj}\}\in \f (\ii, V)$
%are equivalent  if
%$$
%  \forall s< i\, \, \forall j\leq n \quad\quad  \mu _{sj}= \tilde \mu _{sj}
%  \quad \quad \mbox{and} \quad \quad
%  \forall s\leq n\, \, \forall j\in  J':= \bigcup_{q\in A_0}J_q \quad\quad  \mu _{sj}= \tilde \mu _{sj}.
%$$
%%   and
%%$$
%%  \forall s\leq n\, \, \forall q\in A \quad\quad  \sum _{j\in J_q}\mu _{sj}= \sum _{j\in J_q}\tilde \mu _{sj}.
%%$$
%The collection of equivalence classes corresponding to this relation will be denoted by
%$$
% \mathcal{H} = \mathcal{H} (\f (\ii, V), i, A),\quad
% \mbox{ in particular }\quad \f (\ii, V) = \bigcup _{H\in \mathcal{H}} H.
%$$
%Note that for matrices in a given equivalence class $H$, the rows $R_1,\dots,R_{i-1}$ are fixed
%and  blocks $[n]\times J_q$, $q\in A$, are ``independent'' of each other.

Finally, given a vector $v\in \R^n$, a class $\f (\ii, V)$, indices $i\leq n$, $q\leq q_0$,
define
\begin{equation}\label{xiq}
   \xi _q(i) = \xi_q (M,v,i) := \sum _{j\in J_q} \delta _{ij} v_j,\quad M=(\delta_{ij})\in \f (\ii, V).
\end{equation}
We will view $\xi_q(i)$ as random variables on $\f (\ii, V)$ (with respect to the measure $\Prob_{\f}$).
It is not difficult to see that for every fixed $i$, the variables $\xi_1(i),\dots\xi_{q_0}(i)$ are mutually independent,
and, moreover, whenever $i\in I_q$, the variable $\xi_q(i)$ is uniformly distributed on the multiset $\{v_j\}_{j\in J_q}$.
%a class $H\in \mathcal{H}$ (in particular, $V, \ii, i, A$ are fixed), and $q\in A$, for a \Ber random matrix $M=\{\delta _{sj}\}$, we introduce a random variable $\xi_q$
%on $H$ by
%In words, $\xi_q$ represents the inner product of $v$ with the restriction of the $i$-th row to $J_q$.
%Since the entries of $M$ are independent, the variables $\xi_q$, $q\in A$ are independent.
%Later we use this construction in the case when $i\in I_q$ for all $q\in A$,
%that is for a specific choice of parameters
%defining our classes.  Recall that for $M\in H$
%% $|\supp R_i(M) \cap J_q |=1$
%%provided that $i\in I_q$
%% that every block $[n]\times J_q$ has a prescribed sum in each row,
%%and that
%the entries in rows indexed by $J'=\cup_{q\in A_0}J_q$ are fixed.
%Thus, using that for a
%% constant vector $y\in \CC$ and a vector $x\in\R^n$,
%fixed vector $v\in\R^n$ the function
%$$
%  \xi'  = \xi'(M, v) := \sum _{j\in  J'} \delta_{ij} v_j
%  %+ y_1 \sum _{j\in  J'} \delta_{ij}
%$$
%is a constant on $H$,
Thus, we may apply Proposition~\ref{prop: esseen} to
$$
   \left|\la \row_i(M), v \ra\right| = \Big| \sum _{q=0}^{q_0} \xi_q(i)\Big|
$$
with some $\alpha >0$ satisfying  $\cf (\xi_q(i) , 1/3)\leq \alpha$ for every $i\in I_q$. This  gives
\begin{equation}\label{conc-inner}
%  \Prob \left(\left|\la \row_i(M), x+y \ra \right| \leq 1/3\right) \leq \frac{C_0 \alpha }{\sqrt{(1-\alpha) |A|}},
\Prob_{\f} \left\{\left|\la \row_i(M), x+y \ra \right| \leq 1/3\right\} \leq \frac{C_0 \alpha }{\sqrt{(1-\alpha) |\{q\geq 1:\,i\in I_q\}|}},
\end{equation}
where $C_0$ is a positive absolute constant.

\smallskip

We are ready now to estimate individual probabilities.

\begin{lemma}[Individual probabilities]
\label{individual}
There exist absolute constants $C, C'>1>c_1>0$ such that the following holds.
Let $p\in (0, 1/64]$, $d=pn\geq 2$,
Set $m_0=   \lfloor 1/(64 p)\rfloor$ and
let $m_1$ and $m_2$ be such that
$$1\leq m_1<m_2\leq n-m_1.$$
 Let  $y\in \spn\{{\bf{1}}\}$ and assume that
$x\in \R^n$ satisfies
$$
   x^*_{m_1}> 2/3 \quad \mbox{ and } \quad x^*_i = 0 \, \, \, \mbox{ for every }\, \, i>  m_2.
$$
 Denote $m=\min(m_0, m_1)$ and consider the event
$$
  E(x, y) = \left\{
     M \in \Mc\, :\, \|M (x+y) \|\leq   \sqrt{ c_1 m d}   \r\}.
$$
Then in the case $m_1 \leq m_0$ one has
$$
   \Prob(E(x, y)\cap \Event_{card})\leq  2^{-m d/20},
$$
and in the case $m_1> C'  m_0$ one has
$$
   \Prob(E(x, y)\cap\Event_{card} )\leq  \left(\frac{C n}{m_1 d}\r)
   ^{m d/20},
% + \frac{4m_1}{m_0}(1-p)^n,
$$
where $\Event_{card}$ is the event introduced in Lemma~\ref{c:SJ} with $\ell=2m$.
\end{lemma}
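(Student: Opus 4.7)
The plan is to combine the column-partition framework introduced just before the lemma with a row-wise anti-concentration estimate, using the partition plus Hoeffding in Case~1 and a direct Kesten-plus-tensorization argument in Case~2.

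For \textbf{Case~1} ($m = m_1 \leq m_0$), I would form a partition $J_0 \sqcup J_1 \sqcup \cdots \sqcup J_m$ of $[n]$ by setting $J_q = \{a_q, b_q\}$ for $q \geq 1$ with $a_q = \sigma_x(q)$ (so $|x_{a_q}| > 2/3$) and $b_q \in \sigma_x([n] \setminus [m_2])$ (so $x_{b_q} = 0$). Since $|J_1 \cup \cdots \cup J_m| = 2m \leq 2m_0$, one has $2mp \leq 1/32$, so Lemma~\ref{c:SJ} applies with $\ell = 2m$; on $\Event_{card}$ this yields $|I(J_1 \cup \cdots \cup J_m, M)| \in [md/8, 4md]$. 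Fix a class $\f(\ii, V) \subset \Event_{card}$ and work under $\Prob_\f$; by Lemma~\ref{l: indep 20598} the rows are conditionally independent. For each $i \in I(J_1 \cup \cdots \cup J_m, M)$ there is a unique $q = q(i)$ with $i \in I_q$, and under $\Prob_\f$ the variable $\xi_q(i)$ is uniform on $\{c,\, x_{a_q} + c\}$ (writing $y = c\mathbf{1}$); since these two values differ by $|x_{a_q}| > 2/3$, one obtains $\Prob_\f\{|\la \row_i(M), x+y \ra| \leq 1/3\} \leq 1/2$. Combining the elementary estimate $\|M(x+y)\|^2 \geq (1/9)\, |\{i : |\la \row_i(M), x+y \ra| > 1/3\}|$ with the definition of $E(x, y)$ forces at least $|I(J_1 \cup \cdots \cup J_m, M)| - 9 c_1 md \geq md/8 - 9 c_1 md$ of these row indicators to equal $1$. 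They are independent Bernoullis of mean $\leq 1/2$, so for $c_1$ a sufficiently small absolute constant (in particular $c_1 < 1/144$, ensuring a strictly positive deviation uniformly over the admissible range of $|I(J_1 \cup \cdots \cup J_m, M)|$), Hoeffding's inequality bounds the conditional probability by $2^{-md/20}$; summing over the disjoint classes $\f \subset \Event_{card}$ gives the Case~1 bound.

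For \textbf{Case~2} ($m = m_0$, $m_1 > C' m_0$), I would bypass the class conditioning entirely and apply Proposition~\ref{rog} directly to the independent sum $\la \row_i(M), x+y \ra = \sum_{j=1}^n \delta_{ij}(x_j + c)$. The key observation is that $A_c := \{j \in [n] : |x_j + c| > 1/3\}$ satisfies $|A_c| \geq m_1$ uniformly in $c$: if $|c| > 1/3$ then all $n - m_2 \geq m_1$ zero coordinates of $x$ satisfy $|x_j + c| = |c| > 1/3$, while if $|c| \leq 1/3$ the reverse triangle inequality gives $|x_j + c| \geq |x_j| - |c| > 2/3 - 1/3 = 1/3$ for every $j \in \sigma_x([m_1])$. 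Applying Proposition~\ref{rog} with $\lambda = \lambda_j = 1/6$, and noting that $1 - \cf(\delta_{ij}(x_j + c), 1/6) = p$ for $j \in A_c$, yields $\cf(\la \row_i(M), x+y \ra, 1/6) \leq C \sqrt{n / (m_1 d)}$ uniformly in $i$. A single application of Lemma~\ref{l: tensor} with this $\gamma$, $\lambda = 1/6$, and $\varepsilon = 36 c_1 m p$ chosen so that $(1/6) \sqrt{\varepsilon n} = \sqrt{c_1 m d}$ then bounds $\Prob\{\|M(x+y)\| \leq \sqrt{c_1 md}\}$ by a quantity which, for $c_1$ sufficiently small and $C'$ sufficiently large, is comfortably dominated by $(Cn/(m_1 d))^{md/20}$; the stated form of the bound is chosen so as to be convenient for the subsequent union bound over nets.

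The most delicate ingredient is the Hoeffding deviation calculation in Case~1, which is what pins $c_1$ down to a specific small absolute constant. In Case~2, the crucial geometric input is the uniform lower bound $|A_c| \geq m_1$; it is this bound that converts the adversarial freedom in the parameter $c$ (coming from $y = c \mathbf{1}$) into the clean $\sqrt{n/(m_1 d)}$ L\'evy estimate, and without it the argument would fail in exactly the cases where $c$ is chosen to cancel the large coordinates of $x$. Once these two ingredients are in place, the remainder of the proof is a direct application of Proposition~\ref{rog}, Lemma~\ref{l: tensor}, and Hoeffding's inequality.
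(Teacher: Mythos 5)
Your proof is essentially correct, but it takes a genuinely different route from the paper in both cases, and there is one subtle technical gap in Case~1 worth flagging.

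In \textbf{Case~2}, you bypass the entire class-conditioning machinery and go directly through Proposition~\ref{rog} plus Lemma~\ref{l: tensor}. This is a real simplification. The paper instead keeps the unified framework: it partitions $\sigma_x([m_1])\cup\sigma_x([n-m_1+1,n])$ into $q_0=m_1/m$ blocks $J_q$ of size $2m$, conditions on a class $\f(\ii,V)$, tracks the set $I_0$ of row indices lying in many $I_q$'s, and applies the Kesten--Rogozin inequality \emph{inside} the class measure $\Prob_\f$ to the sum $\sum_{q\in A_i}\xi_q(i)$. Your observation that $|A_c|\geq m_1$ uniformly over the shift $c$ is exactly the right geometric input, and it converts the problem into a clean unconditional L\'evy-function estimate $\cf(\la\row_i(M),x+y\ra,1/6)\leq C\sqrt{n/(m_1d)}$, after which tensorization finishes. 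Your bound has exponent of order $n$ rather than the paper's $m_0d/20$, so it is strictly stronger. What the paper's elaborate route buys is uniformity: the same class-conditioning argument and the same $\alpha=1/2$ estimate are used for both cases, which is tidier but not necessary here.

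In \textbf{Case~1}, your partition into $m$ pairs $J_q=\{a_q,b_q\}$ is different from the paper's single block $J_1=J_1^\ell\cup J_1^r$ of size $2m$, and this creates a gap you should address. You treat $I(J_1\cup\cdots\cup J_m,M)$ as if it were determined by the class $\f(\ii,V)$, but it is not: the class fixes the individual sets $I_q$, and $I(\text{union},M)$ depends on the full support pattern, which is still random within $\f$ (a row can have exactly one $1$ in $J_{q}$ and two $1$'s in some $J_{q'}$). The correct deterministic surrogate is $\widetilde I:=\{i:\,i\in I_q\text{ for exactly one }q\}$, which is a function of $\ii$ alone; one has $I(\text{union},M)\subseteq\widetilde I$ for every $M\in\f$, so $|\widetilde I|\geq md/8$ whenever $\f\cap\Event_{card}\neq\emptyset$. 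With $\widetilde I$ in place of $I(\text{union},M)$ your argument goes through: for $i\in\widetilde I$, conditioning on the independent $\xi_{q'}(i)$, $q'\neq q(i)$, shows $\Prob_\f\{|\la\row_i(M),x+y\ra|\leq 1/3\}\leq 1/2$. The paper's choice of a single block $J_1$ of size $2m$ avoids this issue entirely, since then $I_1=I(\text{union},M)$ is fixed by $\f$ — that is what its construction buys. Finally, the numeric claim ``$c_1<1/144$ suffices'' is too generous: $c_1<1/144$ only makes the Hoeffding deviation nonnegative, not large enough for $2^{-md/20}$; tracking the worst case $|\widetilde I|=md/8$ gives roughly $c_1\lesssim 1/500$. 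The paper sidesteps this by passing to a fixed subset of $\lceil md/9\rceil$ indices and using the binomial union bound $\binom{k}{k_0}(1/2)^{k_0}$ directly, which is cleaner than invoking Hoeffding.
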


\begin{rem}\label{rem-ind}
We apply this lemma below for sets $\st_i$ with the following choice of parameters.
For $i=2$ we set
$$
 m_1 =m_0=  n_{s_0+1}=\max(30 \ell_0^{s_0-1}, \left\lfloor 1/(64p) \r\rfloor),
 \quad  m_2=n_{s_0+2}, \quad \mbox{and} \quad
p\leq 0.001,
$$
obtaining
\begin{equation*} \label{individual-one}
   \Prob(E(x, y)\cap \Event_{card} )\leq   2^{-n_{s_0+1} d/20}.
   %+ 4 (1-p)^n\leq 5 (1-p)^n
\end{equation*}
For $i=3$, we set
$$
m_1=n_{s_0+2}=\lfloor  n/\sqrt{d} \rfloor > m_0=n_{s_0+1}, \quad
m_2=n_{s_0+3}, \quad \mbox{and} \quad  p\leq 0.001,
$$
obtaining for large enough $d$,
\begin{equation*}\label{individual-two}
  \Prob(E(x, y)\cap \Event_{card} )\leq  \left(\frac{C n}{n_{s_0+2} d}\r)
   ^{n_{s_0+1} d/20} \leq \left( \sqrt{d}/(2C)\r)
   ^{-n_{s_0+1} d/20} .
   % + \frac{4m_1}{m_0} (1-p)^n \leq   300\sqrt{d} (1-p)^n.
\end{equation*}
\end{rem}

\medskip

To prove Lemma~\ref{individual} it will be convenient to use the same notation as in
Lemma~\ref{l:T0}.
%For every subset  $J\subset [n]$, let
%\begin{align*}
%& S_J:=\{i\le n\,:\, \supp\,R_i\cap J\neq\emptyset \}
%\end{align*}
%be the union of supports of columns indexed by $J$ (the matrix will be clear from the context).
Given two disjoint subsets $J^\ell$, $J^r\subset[n]$
and a matrix $M\in \Mc$, denote
$$
\il=\il(M):=\{i\le n :\,|\supp \row_i(M)\cap J^\ell|=1 \, \, \text{ and }\, \,\supp \row_i(M)\cap J^r=\emptyset\},
$$
and
$$
 \ir=\ir(M):=\{i\le n :\,\supp \row_i(M)\cap J^\ell=\emptyset\,\,\text{ and }\,\,|\supp \row_i(M)\cap J^r|=1\}.
$$
Here the upper indices $\ell$ and $r$ refer to {\it left} and {\it right}.
%, since  for a given
%vector $x\in \R^n$, denoting by $\sigma$ a permutation  of $[n]$ satisfying
%$x_i^*=|x_{\sigma(i)}|$ for all $i\le n$, we will choose $J^\ell\subset \sigma([k_1])$ and
%$J^r\subset \sigma([k_2, n])$ for some $k_1<k_2$.

\begin{proof}
Let $d=pn$ and  fix $\gamma = mp/72= md/(72n)$.

Fix $x\in \R^n$ and $y\in \spn\{{\bf 1}\}$ satisfying the conditions of the lemma. Let $\sigma=\sigma_x$, that is,
 a permutation of $[n]$ such that
$x_i^*=|x_{\sigma(i)}|$ for all $i\le n$. Denote $q_0=m_1/m$ and without loss of generality
assume that either $q_0=1$ or that $q_0$ is a large enough integer. Let
$J^{\ell}_1, J_2^\ell, \ldots,  J^\ell_{q_0}$ be a partition
of $\sigma ([m_1])$ into sets of cardinality $m$ each, and
let $J^{r}_1, J_2^r, \ldots,  J^r_{q_0}$ be a partition
of $\sigma ([n-m_1+1, n])$ into sets of cardinality $m$ each. Denote
$$
 J_q:=J^\ell_q\cup J^r_q  \, \, \, \mbox{ for }\,\,\, q\in [q_0] \quad \mbox{ and } \quad
 J_0:= [n]\setminus \bigcup _{q=1}^{q_0} J_q.
$$
Then $J_0$, $J_1$, ..., $J_{q_0}$ is a partition of $[n]$, which we fix in this proof.
Let $M$ be a $0/1$ $n\times n$ matrix.
 For every pair  $J^\ell_q$, $J^r_q$, let the sets $\il_q(M)$ and  $\ir_q(M)$
be defined as
%before  Lemma~\ref{c:SJ}
after Remark~\ref{rem-ind} and let $I_q(M)= \il_q(M) \cup \ir_q(M)$.
Since
$$
 |J_q|=2m \le 2m_0\le 1/(32p),
$$
 %Lemma~\ref{c:SJ}  implies  that with probability at least $1- 4 (1-p)^n$,
 and by the definition of  the event $\Event_{card}$ (see Lemma~\ref{c:SJ} with $\ell=2m$), we have
%$$
% |\il_q(M)|,\,|\ir_q(M)|\in[md/16,\,2md],
%$$
% in particular,
 \begin{equation}\label{cond-card}
 |I_q(M)|\in[md/8,\,4md]
 \end{equation}
everywhere on $\Event_{card}$.
% Denote the event that for every $q\in [q_0]$ the condition  (\ref{cond-card}) holds by $\omp$.
% By above, $\p(\omp)\geq 1- 4 q_0 (1-p)^n$.
Now we represent $\Mc$ as a disjoint union of classes $\f (\ii, V)$ defined at the beginning of this subsection
with $V\in \MSet _{J_0}$ and  $\ii =(I_1, \ldots, I_q)$.
%Since the complement of $\omp$ has small probability it is enough to
%Note that $\f (\ii, V)\cap \omp \ne \emptyset$ implies that
%$I_q(M)$ satisfies (\ref{cond-card}) for every $q\leq q_0$ and $M\in \f (\ii, V)\cap \omp$.
%Thus, to prove our lemma it is enough
%
Since it is enough to prove a uniform upper bound for classes $\f (\ii, V)\cap \Event_{card}$
and since for every such non-empty class $\ii$ must satisfy (\ref{cond-card}) for every $q\leq q_0$,
% intersecting $\omp$,
we have
$$
  \Prob(E(x, y)\cap  \Event_{card} )\leq \max  \p (E(x, y) \cap \Event_{card}\,|\, \f (\ii, V))
  \leq \max  \p (E(x, y) |\, \f (\ii, V)),
$$
where the first maximum is taken over all  $\f (\ii, V)$
with $\f (\ii, V) \cap \Event_{card} \ne \emptyset$
and
the second maximum is taken over all $\f (\ii, V)$ with $I_q$'s satisfying
condition (\ref{cond-card}).

Fix any class $\f (\ii, V)$, where $\ii$ satisfies \eqref{cond-card}, and denote the corresponding induced probability measure on the class
by $\p_\f$, that is
$$
 \p_\f (\cdot) = \p( \cdot \, | \,  \f (\ii, V)).
$$
 Let
 $$
  I: = \bigcup _{q=1}^{q_0} I_q.
 $$
 Note that $|I|\leq 4 q_0 md$.
We first show that the set of $i$'s which belongs to many $I_q$'s is large.
More precisely, denote
$$
 A_i = \{ q\in[q_0]\, : \, i\in I_q\},\;\;i\in[n], \quad \quad \mbox{ and }\quad\quad
 I_{0}=\{i\leq n\, : \, |A_i|\ge \gamma   q_0\}.
 %,  \quad \mbox{ and }\quad I_0=I_{00}\cap \KK.
$$
Then, using bounds on cardinalities of $I_q$'s, one has
$$
  m d q_0 /8 \leq \sum_{q=1}^{q_0} |I_q| = \sum_{i=1}^n |A_i| \leq |I_{0}| q_0 + (n-|I_{0}|) \gamma q_0
  \leq |I_{0}| q_0 + n \gamma q_0.
$$
Thus,
$$
   |I_0|\geq    m d/8 - n\gamma    \geq md/9.
$$
 Without loss of generality
we assume that $I_0=\{1, 2, \ldots |I_0|\}$ and  only consider the first
$k:=\lc  md/9 \rc$  indices from it. Then $[k]\subset I_0$.

Now, by definition, for matrices $M\in E(x, y)$ we have
$$
  \|M(x+y)\|^2 = \sum _{i=1}^n | \la \row_i(M), x+y \ra|^2 \le c_1\, md .
$$
Therefore there are at most $9 c_1 md$ rows  with
$| \langle \row_i(M), x+y) \rangle|\ge 1/3$. Hence,
$$
 |\{i\leq k\, : \,|  \langle \row_i(M), x+y \rangle|< 1/3\}|\ge
   md/9 - 9 c_1 md \ge (1/9- 9c_1)md.
$$
Let
 $
   k_0:= \lc (1/9- 9c_1) md\rc
 $
and for every $i\leq k$ denote
$$
  \Omega_i:=\{M\in\f (\ii, V) \, :\, |\la \row_i(M), x+y \ra|< 1/3 \}
  \quad \mbox{ and } \quad \Omega_0= \f (\ii, V) .
$$
Then
\begin{align*}
  \Prob_{\f}(E(x,y)) &\le  \sum _{B\subset [k]\atop |B|=k_0 } \,
  \Prob_{\f}\Big(\bigcap_{i\in B}\Omega_i\Big)
  \le   {k \choose k_0 }\, \max _{B\subset [k]\atop |B|=k_0 } \, \Prob_{\f}\Big(\bigcap_{i\in B} \Omega_i\Big).
\end{align*}
Without loss of generality we assume that the maximum above is attained at $B=[k_0]$. Then
\begin{equation} \label{ptensor}
  \Prob_{\f}(E(x, y))  \le
 \left(e/(81 c_1)\r)^{9c_1 md} \,\, \, \prod_{i=1}^{k_0} \, \Prob_{\f}(\Omega_{i}|\,\Omega_1\cap\ldots\cap \Omega_{i-1})
=\left(e/(81 c_1)\r)^{9c_1 md} \,\, \, \prod_{i=1}^{k_0} \, \Prob_{\f}(\Omega_{i}),
\end{equation}
where at the last step we used mutual independence of the events $\Omega_{i}$ (with respect to measure $\Prob_{\f}$),
see Lemma~\ref{l: indep 20598}.

 Next we estimate the factors in the product. Fix $i\leq k_0$ and $A_i= \{ q\, : \, i\in I_q\}$. Since, by our assumptions,
$i\in I_0$, we have
 $|A_i|\geq \gamma q_0$.
Consider the random variables $\xi_q(i)=\xi_q(M,x+y,i)$, $q\in A_i$,
defined in (\ref{xiq}). Then by
(\ref{conc-inner}) we have
\begin{align*}
  \Prob_{\f}(\Omega_{i}) &= \Prob_{\f}\big\{|\la \row_i(M), x+y \ra | < 1/3\big\}
\leq \cf_{\f}\Big(\sum _{q=0}^{q_0} \xi_q(i),1/3\Big)\\
&\leq\cf_{\f}\Big(\sum _{q\in A_i} \xi_q(i),1/3\Big)
  \leq \frac{C_0 \alpha}{\sqrt{(1-\alpha) |A_i|}}
\leq\frac{C_0 \alpha}{\sqrt{(1-\alpha) \gamma q_0}}
\end{align*}
where
 $\alpha = \max _{q\in A_i} \cf_{\f} (\xi_q (i), 1/3)$.
%Consider
% the splitting of $\f (\ii, V)$ into equivalence classes $H\in \mathcal{H} = \mathcal{H} (\f (\ii, V), i, A_i)$ as described
% before  the statement of the lemma and let $\p _H$ denote the corresponding induced probability measure on a class $H$, i.e.,
% $\p_H(\cdot)=\p_{\f}(\cdot | H)$.  Since in
%every class $H$ all matrices have their first $i-1$ rows fixed,  for every $H$ the intersection
% $H_i:=H\cap \Omega_1\cap\ldots\cap \Omega_{i-1}$ is either  $H$ or $\emptyset$.
% Thus
% $$
%    \Prob_{\f} (\Omega_{i} | \, \Omega_1\cap\ldots\cap \Omega_{i-1} ) \leq
%    \max _{H: H_i\ne \emptyset}   \Prob_{H}(\Omega_{i}).
% $$
%Fix $H$ such that $H_i\ne \emptyset$ and consider the random variables $\xi_q$, $q\in A_i$,
%defined in (\ref{xiq}). Then by
%(\ref{conc-inner}) we have
%$$
%  \Prob_{H}(\Omega_{i}) = \Prob_{H}(|\la \row_i(M), x+y \ra | \leq 1/3)
%  \leq \frac{C_0 \alpha}{\sqrt{(1-\alpha) |A_i|}}\leq
%  \frac{C_0 \alpha}{\sqrt{(1-\alpha) \gamma q_0}}
%$$
%where
% $\alpha = \max _{q\in A_i} \cf (\xi_q (M), 1/3)$.
Moreover, in the case $q_0=1$ we just have
$$
  \Prob_{\f}(\Omega_{i}) \leq \alpha = \cf (\xi_1 (i), 1/3).
$$
Thus it remains to estimate $\cf_{\f} (\xi_q(i) , 1/3)$ for $q\in A_i$. Fix $q\in A_i$, so that $i\in I_q$.
Recall that, by construction, the intersection of the support of $\row_i(M)$ with $J_q$
is a singleton everywhere on $\f (\ii, V)$. Denote the corresponding index by $j(q,M)=j(q,M,i)$. Then
$$
  \xi _q(i) = \xi_q(M, x+y,i) = \sum _{j\in J_q} \delta _{ij} (x_j+y_1) = x_{j(q,M)}+y_1,
$$
and note that $|x_{j(q,M)}|>2/3$ whenever $j(q,M)\in J^\ell_q$ and
$x_{j(q,M)} =0$ whenever $j(q,M)\in J^r_q$.
Observe further that $\Prob_{\f}\big\{j(q,M)\in J^r_q\big\}=\Prob_{\f}\big\{j(q,M)\in J^\ell_q\big\}=1/2$.
Hence, we obtain
$$
   \cf_{\f} (\xi_q (i), 1/3) \leq 1/2:=\alpha.
$$

%Denote
%$$
%  H^\ell = \{M\in \f (\ii, V) \, :\, j(q,M)\in J_q^\ell \} \quad \mbox{ and } \quad
%  H^r = \{M\in \f (\ii, V) \, :\, j(q,M)\in J_q^r \}.
%$$
%Since $\xi _q()$ is either equals $x_{j(q,M)} +y_1$ with
%$|x_{j(q,M)}|$ larger than $2/3$ or equals $y_1$, we observe that
%$$
%   \cf (\xi_q (M), 1/3) \leq \max\{ \Prob_{H}(H^\ell),\,  \Prob_{H}(H^r)\}.
%$$
%Since $|J_q^r|=|J_q^\ell|$ and since the entries of $M$ are i.i.d., we note that
%$\Prob_{H}(H^\ell)= \Prob_{H}(H^r)=\alpha=1/2.$
Combining the probability estimates starting with  (\ref{ptensor}) and
using that $\gamma = md/(72n)$,
we obtain in the case $q_0=m_1/m\geq C'$,
\begin{align*}
   \Prob_{\f}(E(x, y))&\leq  \left( \frac{e}{81 c_1}\r)^{9 c_1 md} \,\, \,    \left(\frac {C_0}{\sqrt{2\gamma q_0}}\r)^{(1/9- 9c_1) md}
   \\& =
    \left(\frac{e}{81c_1}\r)^{9 c_1 md} \,\, \,    \left(\frac{6 C_0 \sqrt{n}}{\sqrt{ m_1 d}}\r)^{(1/9-9 c_1)md} \leq
   \left(\frac{C_1 n}{m_1 d}\r)^{md/20},
\end{align*}
provided that $c_1$ is small enough and $C_1=36C_0^2$. Note that the bound is
meaningful only if $C'$ is large enough.
In the case $q_0=1$ we have
$$
   \Prob_{\f}(E(x,y))\leq \left( \frac{e}{81 c_1}\r)^{9 c_1 md}   \,\, \, \left( \frac{1}{2}\r)^{(1/9-9 c_1) md}   \leq
   \left( \frac{1}{2}\r)^{md/20},
$$
provided that $c_1$ is small enough.
This completes the proof.
\end{proof}

\subsection{Proof of Theorem \ref{steep}}

We are ready to complete the proof.
%\begin{proof}[Proof of Theorem \ref{steep}.]
Denote
$$m= m_0=n_{s_0+1}:=\max(30 \ell_0^{s_0-1}, \left\lfloor 1/(64p) \r\rfloor)\in [n/(64 d), n/(2d)].$$
% and note that
%$n/d^{3/2}\leq n_1\leq m_0\leq n_2$ and that $|K^c|\leq \LL\leq 3n_1d/5\leq 3m_0d/5$.
Lemma~\ref{l:T0} implies that
$$
  \p\left(\Big\{\exists\;x\in \st_0 \cup  \st_1 \, \, \, \mbox{ such that }
  \, \, \, \|M  x\| \leq    \frac{(64p)^{\kappa}}{ 192(pn)^2}\, \|x\|\Big\}\r)
  \leq n(1-p)^n+e^{-1.4np}.
$$

\smallskip

We now turn  to the remaining cases. Fix $j\in \{2, 3\}$.
%Since the proof is very similar in both cases,
%$v\in \st_1^\CC$ and $v\in \st_1^\CC$ we consider them simultaneously.
%Let $$ \Event_{norm}:=\Big\{M\in\Mc\,:\, \|M\| \le 2 d \Big\}$$
Let
\begin{align}
 &\Event_{j}:=\Big\{M\in\Mc\,:\,\exists\, x\in \st_j \,\, \, \mbox{such that}\,\,\,\|M x\|\le
 \frac{\sqrt{ c_1 m d}}{2 \, b_j }\, \|x\|\Big\},\notag
\end{align}
where $c_1$ is the constant from Lemma~\ref{individual},
and $b_{2}= 384(pn)^3/(64p)^{\kappa}$, $b_{3}=384\ct (pn)^{3.5}/(64p)^{\kappa}$.
%$b_1=\sqrt{72 p^2 n^2 + 3 n}$, and $b_2= (pn)^{3/2}\sqrt{72 p^2 n^2 + 3 n}$.
%Note that in both cases
%$$
%  \frac{\sqrt{ c_1 m d}}{2 \, b_i }\, \geq \frac{\sqrt{ c_1 n}}{2^8 \,(pn)^{3/2}\sqrt{72 p^2 n^2 + 3 n} }
%  \geq c_2 \min\left(\frac{1}{(pn)^{3/2}}, \frac{\sqrt{n}}{(pn)^{5/2}} \r),
%$$
%where $c_2$ is a positive absolute constant.

Recall that $\Event_{nrm}$ was defined in Proposition~\ref{nettri}.
For any matrix $M\in \Event_{j}\cap \Event_{nrm}$ there exists $x=x(M)\in \st_j$ satisfying
$$
 \|M x\|\le  \frac{\sqrt{ c_1 m d}}{2 \, b_j }\, \|x\|.
$$
Normalize $x$ so that $x_{n_{s_0+j-1}}^{*}=1$, that is, $x\in \st_j'$.
%By Lemma~\ref{l:norma} we have
%By the definition of sets $\st_j$ and
By Lemma~\ref{euclnorm} we have  $\|x\|\leq  b_j$.
% hence $\|M x\|\le  \sqrt{ c_1 m d}/2$.

Let $\Net_j=\Net_j'+\Net_j''$ be the net constructed in Lemma~\ref{l:nets}. Then there exist
$u\in \Net_j'$ with
$$u_{{s_0+j-1}}^{*}\geq 1-1/(\ct\sqrt{d})>2/3$$
and $u_{\ell}^{*}=0$ for $\ell> n_{s_0+j}$, and $w\in \Net_j''\subset \spn\{{\bf 1}\}$, such that
$|||x-(u+w)|||\leq \sqrt{2n}/(\ct \sqrt{d}).$
%\frac{\sqrt{2n}}{\ct \sqrt{d}}
 Applying Proposition~\ref{nettri} (where $\Event_{nrm}$ was introduced), and using that $\ct$ is large enough,
 we obtain that
 for every matrix $M\in \Event_{j}\cap \Event_{nrm}$ there exist
$u=u(M)\in \Net_j'$ and $w=w(M)\in \Net_j''\subset \spn\{{\bf 1}\}$ with
\begin{equation}
\label{ctau}
   \|M (u+w)\|\le \|Mx\| + \|M (x-u-w)\| \leq
  \sqrt{ c_1 m d}/2 + 200 \sqrt{2n}/\ct \leq
  \sqrt{ c_1 m d}.
\end{equation}
Using   our choice of $n_{s_0+1}$, $n_{s_0+2}$, $n_{s_0+3}$, Lemma~\ref{l:nets}, and
Lemma~\ref{individual} twice ---
first  with $m_1=m_0=n_{s_0+1}$, $m_2=n_{s_0+2}$,
then with $m_1=n_{s_0+2}>m_0=n_{s_0+1}$, $m_2=n_{s_0+3}$  (see Remark~\ref{rem-ind}),
we obtain that for small enough $\aaa$ and large enough $d$ the probability
$\p\left(\Event_{2} \cap \Event _{nrm}\cap  \Event_{card}\r)$ is bounded by
$$
% \p\left(\Event_{s_0+1} \cap \Event _{norm}\cap  \Event_{card}\r) \leq
  \exp \left( 2 n_{s_0+2} \ln d\r) 2^{-n_{s_0+1} d/20} \leq
 \exp \left(- n_{s_0+1} d/30 \r)\leq  \exp \left(- n /2000 \r)
$$
%(we used $n_1\geq \frac{200 n_2 \ln d}{d}$ and $p\leq 0.001$)
and that the probability
$\p\left(\Event_{3} \cap \Event _{nrm}\cap  \Event_{card}\r)$ is bounded by
$$
% \p\left(\Event_{2}  \cap \Event _{norm}\cap  \Event_{card}\r) \leq
 \exp \left(2 n_{s_0+3} \ln d\r) \left(\sqrt{d}/(2C)\r)
   ^{-n_{s_0+1} d/20} \leq
 \exp \left(-  n \ln d/10000\r),
$$
%(we used $n_2 \geq C n/d^{2/3}$ for large $d$ and  $n_1 d\geq 210 n_3$),
where  $\Event_{card}$ is the event
introduced in Lemma~\ref{c:SJ} with $\ell=2m$.

Combining all three cases we obtain that the desired bound holds
for all $x\in \st$ with
probability at most
$$
  2\exp \left(- n /2000 \r)
   + \p\left( \Event _{norm}^c\r)
   + \p\left( \Event_{card}^c\r).
$$
It remains to note that since $np$ is large, by Lemma~\ref{bdd} (applied with $t=30$)
 and by Lemma~\ref{c:SJ}, 
$$
 \p\left( \Event _{nrm}^c\r)
   + \p\left( \Event_{card}^c\r)\leq   4 e^{-225n p}+2\exp(-n/500) \leq  \exp(-10pn) .
$$
\kkk
%Since $p\leq 0.00001$, this completes the proof.
%\end{proof}

\subsection{Proof of Theorem~\ref{complement}}
%{Complement of gradual non-constant vectors}

%In this subsection we proof.

%\label{eq: gnc def}
%\gncvectors_n(r,L,\delta,\rho)
%x\in\normr_n(r)

\begin{proof}
Clearly, it is enough to show that
$\normr_n(r)\setminus (\gncvectors_n(r,\gfn,\delta,\rho) \cup \st) \subset \Rv.$
Let $x\in\normr_n(r)\setminus \st$ and set $\sigma:=\sigma_x$. Note that $|x_{n_{s_0+2}}|\leq \ct \sqrt{d}$,
where $s_0$ was defined in \eqref{eq: l0s0 def}.
Denote $m_0=\lfloor n/\ln^2 d\rfloor> 2{n_{s_0+2}}$. %To simplify notation, below we
%write $x_A$ meaning $x_{\sigma(A)}$.
%Clearly, it is enough to show that
%if $x\notin \gncvectors_n(r,\gfn,\delta,\rho)$ then $x\in \mathcal B$.

\smallskip

Assume first that $x$ does not satisfy (\ref{cond2}).
Then by Lemma~\ref{a-c-cond2},
%the set of indexes $i$ with |x_i-1| \sum _{i\in A} |x_i^*|
 $x\in \BB(\rho)$.
If $x_{m_0}^*\leq \ln^2 d$ then denoting
$k=m_0$, $A=[k, n]$, and using the definition of $\BB(\rho)$, we observe
$$
 \|x_{\sigma(A)}\|\geq\sqrt{(n-n_{s_0+3}-k)(1-\rho)}\geq \sqrt{n/2},
$$
whence
$$
  \frac{\|x_{\sigma(A)}\|}{\|x_{\sigma(A)}\|_\infty} \geq \frac{\sqrt{n/2}}{\ln^2 d}\geq \frac{C_0}{\sqrt{p}}.
$$
On the other hand,  $x_{m_{0}}^*\leq |x_{n_{s_0+2}}|\leq \ct \sqrt{d}$, hence $\|x_{\sigma(A)}\|\leq \ct \sqrt{dn}$.
This implies that $x\in \Rv_k^1\subset \Rv$.

Now, if $x_{m_0}^*> \ln^2 d$ then denoting
$k=n_{s_0+2}$, $A=[k, n]$,  we get
$$
 \|x_{\sigma(A)}\|^2\geq \sum _{i=n_{s_0+2}}^{m_{0}} (x_i^*)^2 \geq
 (m_0/2)\ln^4 d \geq (n/4)\ln^2 d,
$$
whence
$$
  \frac{\|x_{\sigma(A)}\|}{\|x_{\sigma(A)}\|_\infty} \geq \frac{\sqrt{n}\ln  d}{2\ct \sqrt{d}}\geq \frac{C_0}{\sqrt{p}}.
$$
As in the previous case  we have $\|x_{\sigma(A)}\|\leq \ct \sqrt{dn}$, which
 implies that $x\in \Rv_k^1\subset \Rv$.

\medskip

 Next we assume that $x$ does  satisfy (\ref{cond2}). Then, by the definition of the set $\gncvectors_n(r,\gfn,\delta,\rho)$
 and our function $\gfn$,
 $x$ does not satisfy the following condition:
%(\ref{cond1}).
\begin{align*}
\label{cond1}
  \forall i\leq \frac{1}{64p} : \, \, \, x^*_i\leq  \exp (\ln ^2 (2n/i))
  \qand
  \forall \frac{1}{64p} < i\leq n : \, \, \, x^*_i\leq (2n/i)^{3/2}.
\end{align*}
 We fix the smallest value of $j\geq 1$
 which breaks this condition  and consider several cases.
 Note that since $x\in \normr_n(r)$, we must have $j\leq rn$.

\smallskip

\noindent
 {\it Case 1. }  $2 m_0 \leq j\leq rn$. In this case by the conditions and by
 minimality of $j$, we have
 $x_{m_0}^*\leq (2n/m_0)^{3/2}$ and $x_j^*\geq (2n/j)^{3/2}$. Take $k=m_0$ and $A=[k, n]$.
 Then we have
 $$
   \|x_{\sigma(A)}\|\geq \sqrt{j - m_0+1 } \, x_{j}^* \geq
   \sqrt{j/2} \, (2n/j)^{3/2}\geq \sqrt{rn/2} \, (2/r)^{3/2} =  2\sqrt{n} /r ,
 $$
 hence
 $$
  \frac{\|x_{\sigma(A)}\|}{\|x_{\sigma(A)}\|_\infty} \geq \left(\frac{2}{r}\r)\,
  \frac{ \sqrt{n}}{(2n/m_0)^{3/2}}\geq\left(\frac{2}{r}\r) \frac{ \sqrt{n}}{(2\ln d)^{3}}
  \geq \frac{C_0}{\sqrt{p}}.
$$
As above  we have $\|x_{\sigma(A)}\|\leq \ct \sqrt{dn}$, which
 implies that $x\in \Rv_k^2\subset \Rv$.

\smallskip

\noindent
 {\it Case 2. } $16 C_0^2 n/d \leq j\leq 2 m_0$. Take $k=\lceil j/2\rceil$ and $A=[k, n]$.
 Then we have $x_{k}^*\leq (2n/k)^{3/2}\leq (4n/j)^{3/2}$, $x_j\geq (2n/j)^{3/2}$, and
$$
  \|x_{\sigma(A)}\|\geq \sqrt{j - k +1 } \, x_{j}^* \geq
   \sqrt{j/2} \, (2n/j)^{3/2} \geq (2/r)\, \sqrt{n}.
%\geq  \sqrt{m_0} \, (n/m_0)^L
%   \geq \sqrt{rn/2} \, (2/r)^L =  (2/r)^{L-1/2} \sqrt{n} ,
$$
Therefore,
$$
  \frac{\|x_{\sigma(A)}\|}{\|x_{\sigma(A)}\|_\infty} \geq \left(\frac{j}{2}\r)^{1/2}\frac{ (2n/j)^{3/2}}{(4n/j)^{3/2}}\geq \frac{C_0}{\sqrt{p}}.
$$
Since $x\not\in \st$, we observe $x_k^*\leq \ct^2 d$, hence $\|x_{\sigma(A)}\|\leq \ct^2 d\sqrt{n}$ and
$x\in \Rv_k^2\subset \Rv$.

\medskip

In the rest of the proof we show that we must necessarily have  $j\geq 16 C_0^2 n/d$.

\smallskip

\noindent
 {\it Case 3. } $n_{s_0+1}\leq j <C_1  n/d $, where $C_1=16 C_0^2$.
 Using that $x\not\in \st$,
 in this case we have
$$
   \ct^2 d \geq  x_j^*\geq  \left(\frac{2n}{j}\r)^{3/2}\geq \left(\frac{2 d}{C_1}\r)^{3/2},
$$
which is impossible for  large enough $d$.

\smallskip

\noindent
 {\it Case 4. } $n_{s_0}\leq j < n_{s_0+1}$.
 Using that $x\not\in \st$ and that
 $n_{s_0+1}=\left\lfloor 1/(64p) \r\rfloor=\left\lfloor n/(64d) \r\rfloor$, in this case we have
$$
   (6d) \ct^2 d  \geq x_j^*\geq
   \exp (\ln ^2 (2n/j))\geq \exp (\ln ^2 (2n/n_{s_0+1}))\geq
   \exp (\ln ^2 (128 d))
$$
which is impossible for  large enough $d$.

\smallskip

\noindent
 {\it Case 5. } $n_k\leq j < n_{k+1}$ for some $1\leq k\leq s_0-1$.
 Recall that $n_k=30\ell _0^{k-1}$ and recall also that if $s_0>1$
 (as in this case) then $p\leq c\sqrt{n\ln n}$.
%  $30\ell_0^{k-1}\leq j < \min\{30\ell_0^{k}, n_{s_0+1}\}$
 Using that $x\not\in \st$, in this case we have
\begin{equation*}\label{case5in}
   (\ct^2 d) (6d)^{s_0-k+1} \geq   x_j^*\geq \exp (\ln ^2 (2n/j))\geq
   \exp (\ln ^2 (2n/(30\ell_0^k))),
%\left(\frac{2n}{j}\r)^{L}\geq \left(\frac{2 d}{C_1}\r)^{L},
\end{equation*}
hence
\begin{equation}\label{case5in}
  (\ct^2 d) (6d)^{s_0+1} \geq   (6d)^{k}  \exp (\ln ^2 (2n/(30\ell_0^k))).
\end{equation}
Considering the function $f(k):= k \ln (6d) +   \ln ^2 (2n/(30\ell_0^k)$,
we observe that its derivative is linear in $k$, therefore $f$ attains its maximum
either at $k=1$ or at $k=s_0-1$. Thus, to  show that (\ref{case5in})
is impossible it is enough to consider $k=1, s_0-1$ only. Let $k=1$.
 By (\ref{kappain}),
$(6d)^{s_0}\leq (6d) \,1/(64p)^{\kappa}$, where $\kappa = \frac{\ln (6d)}{\ln \ell_0}$.
Therefore, the logarithm of the left hand side of (\ref{case5in}) is
\begin{equation}\label{boundln}
   \ln ((\ct^2 d) (6d)^{s_0+1})\leq 4\ln d + \frac{\ln (6d)}{ \ln \ell_0}\, \ln (1/64p) .
\end{equation}

On the other hand, $n/\ell_0 \geq (4\ln (1/p) )/p$, therefore the
logarithm of the left hand side of (\ref{case5in}) is larger than
$\ln ^2 (\ln (1/p)/(4p ))$.  Thus,
it is enough to check that
$$
 (1/2) \ln ^2 (\ln (1/p)/(4p )) \geq 4 \ln d
 \qand
 (1/2) \ln ^2 (\ln (1/p)/(4p )) \ln \ell_0 \geq \ln (6d)\, \ln (1/64p) .
$$
Both inequalities follows since $p\leq c\sqrt{n\ln n}$, $d=pn$, $d$ and $n$
are large enough, and since $\ell_0\geq 25$.
Next assume that  $k=s_0-1$. Note that in this case $\ell_0^k\leq n/(64 d)$.
Thus, to disprove (\ref{case5in}) it is enough to show that
$$
  \ln ^2 (64d/15) \geq \ln (36 \ct^2 d^3),
$$
which clearly holds for large enough $d$.

\smallskip

\noindent
 {\it Case 6. } $2\leq j< 30$. In this case we have
 $$
   (\ct^2 d) (6d)^{s_0+1} \geq x_j^* \geq \exp (\ln ^2 (2n/j))\geq
   \exp (\ln ^2 (2n/30)),
 $$
By (\ref{boundln}) this implies
$$
   4\ln d + \frac{\ln (6d)}{ \ln \ell_0}\, \ln (1/64p)\geq \ln ^2 (2n/30),
$$
which is impossible.

\smallskip

\noindent
 {\it Case 7. } $j=1$. In this case we have
 $
   (\ct^2 d) (6d)^{s_0+2} \geq x_1^*\geq  \exp (\ln ^2 (2n))
 $
and we proceed as in Case~6.
\end{proof}

\section{Proof of the main theorem}\label{s: main th}

In this section, we combine the results of Sections~\ref{s: unstructured}, \ref{steep:constant p},
and \ref{s: steep}, as well as Subsection~\ref{subs: lower b}
to prove the main theorem, Theorems~\ref{th: main}, and the following improvement
for the case of constant $p$:

\begin{theor}\label{const-p-th}
There exists an absolute positive constant $c$ with the following property.
Let $q\in (0, c)$ be a parameter (independent of $n$).
Then there exist $C_q$ and $n_q\geq 1$ (both depend only  on $q$),
 such that  for every $n\geq n_q$ and  every $p\in (q, c)$  a
 \Ber $n\times n$ random matrix $M_n$ satisfies
$$\Prob\big\{\mbox{$M_n$ is singular}\big\}=(2+o_n(1))n\,(1-p)^n,$$
and, moreover, for every $t>0$,
$$
\Prob\big\{s_{\min}(M_n)\leq C_q\, n^{-2.5}\, t \big\}\leq t+(1+o_n(1))\Prob\big\{\mbox{$M_n$ is singular}\big\}
=t+(2+o_n(1))n\,(1-p)^n.
$$
\end{theor}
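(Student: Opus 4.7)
The scheme mirrors the proof of Theorem~\ref{th: main} but leverages the sharper small-ball estimate of Theorem~\ref{t:steep} available in the constant-$p$ regime. The lower bound $\Prob\{M_n\text{ is singular}\}\geq (2-o_n(1))n(1-p)^n$ follows directly from the computation in Subsection~\ref{subs: lower b}, since $np-\ln n\to\infty$. It remains to establish the matching upper bound on the singularity probability and the stated small-ball estimate for $s_{\min}(M_n)$.

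Fix constants $r=r(q)$ and $\rho=\rho(q)$ from Theorem~\ref{compl-1}, take $\delta=r/3$, $\gfn(t)=(2t)^{3/2}$, and write $m=\lfloor rn\rfloor$. Using $M_n\stackrel{d}{=}M_n^\top$, I would split the event $\{s_{\min}(M_n)\leq s\}$ into a structured event (I): $\{\exists\,x\in\ST:\;\|M_n^\top x\|\leq s_1\|x\|\}$, and an unstructured event (II): $\{\exists\,x\in\gncvectors_n:\;\|M_n x\|\leq s_2\|x\|\}$ intersected with the complement of (I). For (I), Theorem~\ref{compl-1} shows that, after normalization, every unit vector of $\ST$ lies in $\stt(p,m)$; hence Theorem~\ref{t:steep} applied with $s_1$ of order $1/\sqrt{n}$ bounds the probability of (I) by $n(1-p)^n+4e^{-1.5np}=(1+o_n(1))n(1-p)^n$, since $p\geq q$ forces $e^{-1.5np}=o_n(n(1-p)^n)$.

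For (II), I would combine the deterministic bound $s_{\min}(M_n)\geq n^{-1/2}\min_i\dist(\col_i(M_n),H_i(M_n))$ with the invertibility-via-distance lemma (Lemma~\ref{l: inv via dist}, in the spirit of Section~\ref{s: overview}). Together with a permutation-invariance and column-support reduction based on Lemma~\ref{l: column supports}, this places (II), up to probability $e^{-cn}$, inside the event that for some $i$, $|\supp\col_i(M_n)|\in[pn/8,8pn]$, and there exists a vector ${\bf Y}$ measurable with respect to $H_i(M_n)$ with ${\bf Y}/{\bf Y}^*_{m}\in\gncvectors_n$ and $|\langle{\bf Y},\col_i(M_n)\rangle|\leq s\sqrt{n}\,\|{\bf Y}\|$. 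Applying Theorem~\ref{th: gradual} with a large absolute constant $R$, with probability at least $1-e^{-Rpn}$ the vector ${\bf Y}$ also satisfies $\bal_n({\bf Y},m',K_1,K_2)\geq e^{Rpn}$ for every $m'\in[pn/8,8pn]$. Conditioning on this event and on the support of $\col_i$ (on which the nonzero coordinates are uniform on a set with $m'$ ones), Theorem~\ref{p: cf est} yields
\[
\cf\big(\langle{\bf Y},\col_i(M_n)\rangle,\sqrt{m'}\tau\big)\leq C\big(\tau+e^{-Rpn}\big).
\]
Since $\gfn(t)=(2t)^{3/2}$ gives a deterministic estimate $\|{\bf Y}\|\leq C_0 n^{3/2}\,{\bf Y}^*_{m}$, plugging $s=C_q n^{-5/2}t$ into the above and using $m'\asymp pn$ makes $\tau\leq C' C_q t/(n\sqrt{p})$, so the conditional probability for each $i$ is at most $C''(C_q t/(n\sqrt{p})+e^{-Rpn})$. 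A union bound over the $n$ choices of $i$ and a choice $C_q=\sqrt{q}/C''$ (depending only on $q$) absorbs the constants and delivers the desired bound $t+(1+o_n(1))n(1-p)^n$.

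The most delicate point will be the conditioning in the invertibility-via-distance step: one must condition on the support pattern of $\col_i(M_n)$ so that, given this support, its nonzero entries are jointly independent of $H_i(M_n)$ (and hence of ${\bf Y}$) and remain uniformly distributed on the relevant set of $0/1$ vectors, which is exactly what is needed to feed Theorem~\ref{p: cf est}. This is the role played by the intermediate reduction~\eqref{eq: aux -8572059873} in the overview. Once that is properly set up, everything else is driven by the two black-box results (Theorems~\ref{th: gradual} and~\ref{t:steep}); the polynomial factor $n^{-5/2}$ emerges from the combination of the $n^{-1/2}$ loss in the invertibility-via-distance inequality, the $n^{3/2}$ Euclidean-norm bound on normalized gradual vectors, and the $\sqrt{m'}\asymp\sqrt{n}$ scaling in the Esseen-type estimate of Theorem~\ref{p: cf est}.
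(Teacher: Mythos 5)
Your outline correctly identifies the ingredients---the structured/unstructured split, Corollary~\ref{cor: steep2} (via Theorems~\ref{t:steep} and~\ref{compl-1}), Theorem~\ref{th: gradual}, and Theorem~\ref{p: cf est}---and your parameter accounting $n^{-5/2}=\sqrt{n}/b_n^2$ with $b_n\asymp n^{3/2}$ is consistent with the paper. However, the reduction of event (II) to a single index $i$ via the deterministic bound $s_{\min}(M_n)\geq n^{-1/2}\min_i\dist(\col_i(M_n),H_i(M_n))$, followed by a union bound over $n$ choices of $i$, cannot yield the claimed $(1+o_n(1))n(1-p)^n$ error.

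The issue is quantitative. In the constant-$p$ regime the event $\Omega_i^c=\{|\supp\col_i(M_n)|\notin[pn/8,8pn]\}$ has probability $\Prob(\Omega_i^c)\approx(1-p)^{n/2}$ (Lemma~\ref{bennett}), and $(1-p)^{n/2}$ is \emph{not} $o_n(n(1-p)^n)$: the ratio $(1-p)^{n/2}/\bigl(n(1-p)^n\bigr)=(1-p)^{-n/2}/n$ tends to infinity for fixed $p>0$. Your union bound over the $n$ choices of $i$ would therefore leave an uncontrolled term of order $n(1-p)^{n/2}$, which swamps the target bound. The invocation of Lemma~\ref{l: column supports} does not help here: for constant $p$ the quantity $\lfloor(pR)^{-1}\rfloor$ is $O(1)$ and the lemma neither controls column~1 nor rules out a single bad column being the minimizer, so it cannot replace the mechanism you are missing. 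The paper resolves this precisely through the \emph{pairwise} form of the invertibility-via-distance step (Lemma~\ref{l: inv via dist}): normalization $x^*_{\lfloor rn\rfloor}=1$ forces at least $\binom{\lfloor rn\rfloor}{2}$ pairs $(i,j)$ with both $\dist(\col_i,H_i)$ and $\dist(\col_j,H_j)$ small, and by permutation invariance this reduces to the single pair $(1,2)$ with a bounded constant factor $\tfrac{2}{r^2}$ rather than a factor of $n$. Crucially, the atypical-support event is now $\Omega_1^c\cap\Omega_2^c$, whose probability is $(1-p)^n=o_n\bigl(n(1-p)^n\bigr)$ by independence of columns, and only then does the argument drop to a single distance $\Event_1\cap\Omega_1$. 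Your proposal would be correct if this pairing mechanism were made explicit in place of the $\min_i$ reduction and the union bound over $n$ indices.
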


At this stage, the scheme of the proof to a large extent follows the approach of Rudelson and Vershynin
developed in \cite{RV}.
However, a crucial part of their argument --- ``invertibility via distance'' (see \cite[Lemma~3.5]{RV}) ---
will be reworked in order to keep sharp probability estimates
for the matrix singularity and to be able to bind this part of the argument with the previous
sections, where we essentially condition on row- and column-sums of our matrix.

 We start by restating main results of Sections~\ref{steep:constant p} and \ref{s: steep}
 using the vector class $\gncvectors_n(r,\gfn,\delta,\rho)$ defined by \eqref{eq: gnc def},
 together with Lemma~\ref{l:closure}.

% Note that applying a small perturbation we always may assume that
% $x^*_{\lfloor rn\rfloor }\ne 0$ and then, normalizing, we may assume that
% $x^*_{\lfloor rn\rfloor }=1$.  Therefore, proving the bound $\|Mx\|\geq \lam \|x\|$,
% we can always assume that $x\in \normr_n(r)$. Thus, we can combine  Theorems~\ref{steep},
% \ref{classB}, and \ref{complement} in the following corollary.

\begin{cor}\label{cor: steep}
There are universal constants $C\geq 1$,
$\delta,\rho\in(0,1)$ and $r\in(0,1)$ with the following property. Let $M_n$
be a random matrix satisfying \eqref{eq: assumptions} with $C$
%and let the vector
%class $\gncvectors_n(r,\gfn,\delta,\rho)$ be defined as in \eqref{eq: gnc def}, with
and let the growth function $\gfn$ be given by \eqref{gfn-str}.
Then
\begin{equation}\label{singval}
\Prob\Big\{\|M_n x\|\leq a_n^{-1}
\|x\|\,\,\, \mbox{ for some }\,\,\, x\notin \bigcup\limits_{\lambda\geq 0}\big(\lambda\,\gncvectors_n(r,\gfn,\delta,\rho)\big)\Big\}
=(1+o_n(1))n\,(1-p)^n,
\end{equation}
where $a_n=\frac{(pn)^2}{c(64p)^{\kappa}}\, \max\left(1, p^{1.5} n\r)$,
$\kappa = \kappa(p):= (\ln (6pn))/\ln \big\lfloor\frac{pn}{4\ln (1/p)}\big\rfloor.$
\end{cor}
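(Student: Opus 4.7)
The plan is to prove matching upper and lower bounds of $(1+o_n(1))n(1-p)^n$ for the probability in \eqref{singval}. For the lower bound, observe that if $\col_j(M_n) = \mathbf{0}$ then $M_n e_j = \mathbf{0}$, and $e_j$ lies in $\ST := \R^n \setminus \bigcup_{\lambda \geq 0}\lambda\gncvectors_n(r,\gfn,\delta,\rho)$: indeed, $(e_j)^*_{\lfloor rn\rfloor} = 0$ for $rn \geq 2$, so $e_j$ cannot be a non-negative scalar multiple of any vector in $\gncvectors_n$. Hence the event in \eqref{singval} contains $\{M_n \text{ has a zero column}\}$, and a direct computation (as at the beginning of Subsection~\ref{subs: lower b}) gives $\Prob\{M_n \text{ has a zero column}\} = 1-(1-(1-p)^n)^n = (1+o_n(1))n(1-p)^n$, since $n(1-p)^n \leq n^{1-C} \to 0$ under \eqref{eq: assumptions}.

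For the matching upper bound, I would first fix universal constants $r,\delta,\rho \in (0,1)$ and $C \geq 1$ ensuring the simultaneous applicability of Theorems~\ref{steep}, \ref{classB}, and~\ref{complement}: concretely, $r$ is chosen small enough that both the hypothesis of Theorem~\ref{classB} and the inequality $n_{s_0+3} = \lfloor \aaa n\rfloor \geq \lfloor rn\rfloor$ hold (the latter by taking $r \leq \aaa$), $\delta = r/3$, $\rho$ small, and $C$ large enough that $pn$ dominates $\ln n$ by the factors required in each cited result. Since the map $y \mapsto \|M_n y\|$ is continuous on $S^{n-1}$, Lemma~\ref{l:closure} combined with a standard density argument (costing at most a factor of $2$ in the threshold, which we absorb by shrinking the constant $c$ defining $a_n$) gives
\begin{equation*}
\Prob\bigl\{\exists x \in \ST:\;\|M_n x\| \leq a_n^{-1}\|x\|\bigr\} \leq \Prob\bigl\{\exists y \in \normr_n(r)\setminus\gncvectors_n:\;\|M_n y\| \leq 2 a_n^{-1}\|y\|\bigr\}.
\end{equation*}
By Theorem~\ref{complement}, $\normr_n(r) \setminus \gncvectors_n \subset \st \cup \Rv$, so it suffices to bound each contribution separately.

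The $\st$-contribution is essentially Theorem~\ref{steep}: choosing the constant $c$ in $a_n^{-1} = \frac{c(64p)^\kappa}{(pn)^2}\min(1, 1/(p^{1.5}n))$ to be at most half of the constant appearing there, the threshold $2 a_n^{-1}$ stays below the Theorem~\ref{steep} threshold, yielding probability at most $n(1-p)^n + 2e^{-1.4pn}$. For the $\Rv$-contribution, I must verify that $2 a_n^{-1}\|y\| \leq \sqrt{p}n/(12 C_0)$ for every $y \in \Rv \subset \normr_n(r) \setminus \st$, so that Theorem~\ref{classB} delivers probability at most $e^{-2n}+e^{-200pn}$. By Lemma~\ref{euclnorm} applied to $w \in \st^c$, together with the normalization $y^*_{\lfloor rn\rfloor} = 1$ and the inequality $n_{s_0+3} \geq \lfloor rn\rfloor$ (ensured by the choice $r \leq \aaa$), one has $\|y\| \leq 384\ct^2(pn)^4/(64p)^\kappa$; a direct computation in each of the two regimes $p^{1.5}n \leq 1$ and $p^{1.5}n > 1$ (corresponding to the two branches of the $\min$ in $a_n^{-1}$) yields $a_n^{-1}\|y\| = O(\sqrt{p}n)$, well below the required threshold once $c$ is chosen small enough.

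Both error terms $e^{-1.4pn}$ and $e^{-2n}+e^{-200pn}$ are $o_n(n(1-p)^n)$: using $(1-p)^n \geq e^{-2pn}$ (valid for small $p$) and $pn \geq C\ln n$, each such term divided by $n(1-p)^n$ is bounded by $n^{-\eta C}$ for some universal $\eta > 0$, which tends to zero once $C$ is chosen sufficiently large. Combining the $\st$- and $\Rv$-contributions with the lower bound of the first paragraph produces the required asymptotic equality. The only delicate point is the simultaneous coordination of the parameters $r, \delta, \rho, c, C$ and the absolute constants from Section~\ref{s: steep} so that every invocation of the cited theorems is legitimate; this is bookkeeping rather than a conceptual obstacle.
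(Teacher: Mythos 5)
Your proof is correct and follows exactly the route the paper intends (the paper states this corollary without proof, as a restatement of Theorems~\ref{steep}, \ref{classB}, and \ref{complement} via Lemma~\ref{l:closure}, together with the zero-column lower bound): the decomposition $\normr_n(r)\setminus\gncvectors_n \subset \st\cup\Rv$, the reduction from $\ST$ to $\normr_n(r)\setminus\gncvectors_n$ by closure plus continuity, the threshold comparisons with $a_n^{-1}$, the $\|y\|\leq 384\ct^2(pn)^4/(64p)^\kappa$ bound from Lemma~\ref{euclnorm}, and the verification that $e^{-1.4pn}$, $e^{-2n}$, $e^{-200pn}$ are all $o_n(n(1-p)^n)$ once $pn\geq C\ln n$ with $C$ large and $p\leq C^{-1}$, all match what the authors have in mind. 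The one spot worth polishing is the $\Rv$-contribution: since the bound in Theorem~\ref{classB} is the absolute threshold $\|Mx\|\leq\sqrt{p}\,n/(12C_0)$ rather than a threshold relative to $\|x\|$, you correctly notice that one must control $\|y\|$ for $y\in\Rv$ in the $\normr_n(r)$ normalization, and your appeal to Lemma~\ref{euclnorm} with $n_{s_0+3}\geq\lfloor rn\rfloor$ (requiring $r\leq\aaa$) does exactly this; the two-case computation then reduces to choosing the constant $c$ defining $a_n$ small enough relative to $C_0$ and $\ct$, which is indeed only bookkeeping.
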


Further, Theorems~\ref{t:steep},~\ref{compl-1} and Lemma~\ref{l:closure} are combined as follows.

\begin{cor}\label{cor: steep2}
There are universal positive constants $c, C$ with the following property.
Let $q\in (0, c)$ be a parameter. Then there exist $n_0=n_0(q)\geq 1$,
$r=r(q), \rho=\rho (q)\in (0,1)$ such that  for $n\geq n_0$,  $p\in (q, c)$, $\delta = r/3$,
$\gfn(t)=(2t)^{3/2}$, %$k=\lceil \delta n\rceil$, and $m=\lfloor rn \rfloor$
the random \Ber $n\times n$ matrix $M_n$ satisfies (\ref{singval}) with
$a_n=C \sqrt{n \ln(e/p)}$.
\end{cor}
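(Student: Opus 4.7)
The plan is to combine Theorem~\ref{compl-1}, Theorem~\ref{t:steep}, and Lemma~\ref{l:closure} for the upper bound on the failure probability in~\eqref{singval}, and then to produce a matching lower bound from the zero-column analysis of Subsection~\ref{subs: lower b}.

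I would fix $c:=\min(c_1,1/20)$, where $c_1$ is the universal constant of Theorem~\ref{compl-1}. Given $q\in(0,c)$, I would take $r=r(q)$ and $\rho=\rho(q)$ as prescribed by Theorem~\ref{compl-1}, set $\delta=r/3$ and $\gfn(t)=(2t)^{3/2}$, and put $m:=\lfloor rn\rfloor$. The specific choice $r(q)=c'q/\ln(e/q)$ used in the proof of Theorem~\ref{compl-1} is small enough to ensure the hypothesis $m\leq c''pn/\ln(e/p)$ of Theorem~\ref{t:steep} for every $p\in(q,c)$; the hypothesis $pn\geq 30\ln n$ is automatic for $n\geq n_q$ large enough since $p>q$ is a positive constant.

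For the upper bound, Theorem~\ref{compl-1} gives the inclusion
$$\{x/\|x\|:\,x\in\normr_n(r)\setminus\gncvectors_n(r,\gfn,\delta,\rho)\}\subset\stt(p,m).$$
By Lemma~\ref{l:closure}, $\{x/\|x\|:\,x\in\ST\}$ is contained in the closure of the set on the left, hence in $\overline{\stt(p,m)}$. Since the map $y\mapsto\|M_ny\|$ is continuous on $S^{n-1}$, the inequality $\|M_ny\|\geq(C\sqrt{n\ln(e/p)})^{-1}$ guaranteed by Theorem~\ref{t:steep} for every unit $y\in\stt(p,m)$ (with probability at least $1-n(1-p)^n-4e^{-1.5np}$) extends to its closure, and so to every unit $y\in\{x/\|x\|:\,x\in\ST\}$. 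Because $p\geq q$ is bounded below, $4e^{-1.5np}=o_n(n(1-p)^n)$, and the failure probability in~\eqref{singval} is bounded by $(1+o_n(1))n(1-p)^n$.

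The matching lower bound is straightforward: one has $\Prob\{M_n \text{ has a zero column}\}=1-(1-(1-p)^n)^n=(1+o_n(1))n(1-p)^n$, and on that event $M_ne_i=0$ for some $i\leq n$, while for $n\geq 2/r$ the vector $e_i$ satisfies $(e_i)^*_{\lfloor rn\rfloor}=0$, so $e_i\notin\bigcup_{\lambda\geq 0}\lambda\gncvectors_n$ and $e_i$ is a valid witness for the event in~\eqref{singval}. There is no substantial obstacle in this corollary: the delicate analyses are contained in Theorems~\ref{compl-1} and~\ref{t:steep}, and the present statement is a matter of parameter matching plus absorbing the auxiliary exponential error into the $o_n(1)$ factor, both of which rely only on $p$ being bounded away from zero by the positive constant $q$.
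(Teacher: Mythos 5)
Your proposal is correct and follows exactly the combination the paper has in mind: the paper treats Corollary~\ref{cor: steep2} as an immediate consequence of Theorems~\ref{t:steep}, \ref{compl-1} and Lemma~\ref{l:closure} without writing out a proof, and your argument supplies precisely that combination, with the appropriate checks that the constants match, that $4e^{-1.5np}=o_n(n(1-p)^n)$ for $p$ bounded below, and that the closure step is handled by continuity of $y\mapsto\|M_ny\|$. The only cosmetic point worth keeping in mind is that Theorem~\ref{t:steep} yields the non-strict bound $\|M_ny\|\geq (C_0\sqrt{n\ln(e/p)})^{-1}$ on $\overline{\stt}$ after the closure argument, whereas the complement of the event in~\eqref{singval} asks for a strict inequality; this is absorbed by taking the universal constant $C$ in the corollary slightly larger than the constant $C_0$ of Theorem~\ref{t:steep}.
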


Below is our version of ``invertibility via distance,'' which deals with {\it pairs} of columns.
\begin{lemma}[Invertibility via distance]\label{l: inv via dist}
Let $r,\delta,\rho\in(0,1)$, and let $\gfn$ be a growth function.
Further, let $n\geq 6/r$ and let $A$ be an $n\times n$ random matrix. Then for any $t>0$ we have
\begin{align*}
\Prob\big\{&\|A x\|\leq t\,\|x\|\quad \mbox{ for some }\quad x\in \gncvectors_n(r,\gfn,\delta,\rho)\big\}\\
&\leq \frac{2}{(rn)^2}\sum\limits_{i\neq j}\Prob\big\{\dist(H_i(A),\Col_i(A))\leq t\, b_n
\qand \dist(H_j(A),\Col_j(A))\leq t\, b_n \big\},
\end{align*}
where the sum is taken over all ordered pairs $(i,j)$ with $i\neq j$ and $b_n=\sum_{i=1}^n \gfn(i)$.
\end{lemma}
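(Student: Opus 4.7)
The plan is to combine the deterministic identity $|x_i|\,\dist(\Col_i(A),H_i(A))\leq\|Ax\|$ with the defining features of $\gncvectors_n$, and then to convert the resulting collection of ``column distance is small'' events for many coordinates into the stated probability bound through a pair-counting (second moment) argument.

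First I would record the purely deterministic observation that, for any $n\times n$ matrix $B$, any $y\in\R^n$, and any index $i\in[n]$,
$$|y_i|\,\dist(\Col_i(B),H_i(B))\;=\;\|P_{H_i(B)^\perp}\,By\|\;\leq\;\|By\|,$$
since the orthogonal projection onto $H_i(B)^\perp$ annihilates every $\Col_j(B)$ with $j\neq i$, so only the $i$-th term of $By=\sum_j y_j\Col_j(B)$ survives. Next I would exploit the two salient features of a vector $x\in\gncvectors_n(r,\gfn,\delta,\rho)$: the normalization $x^*_{\lfloor rn\rfloor}=1$ forces the index set $S(x):=\{i\leq n:|x_i|\geq 1\}$ to have cardinality at least $\lfloor rn\rfloor$, while the growth bound $x^*_i\leq\gfn(n/i)$ yields the norm estimate $\|x\|\leq\|x\|_1\leq\sum_{i=1}^n\gfn(n/i)\leq b_n$. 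The last inequality is the one nonroutine point and comes from a monotone summation-by-parts argument: writing $\gfn(k)=\sum_{j\leq k}(\gfn(j)-\gfn(j-1))$ and collecting terms, the coefficient of $\gfn(k)-\gfn(k-1)$ on the left-hand side is controlled by $\lfloor n/(k-1)\rfloor$, which is dominated by $n-k+1$ (the corresponding coefficient on the right-hand side, yielding $b_n$) for every admissible $k$. Combining this norm bound with the deterministic identity, whenever $x\in\gncvectors_n$ satisfies $\|Ax\|\leq t\|x\|$, every $i\in S(x)$ inherits
$$\dist(\Col_i(A),H_i(A))\;\leq\;\frac{\|Ax\|}{|x_i|}\;\leq\;t\,\|x\|\;\leq\;t\,b_n.$$

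Second, I would pass from ``at least $\lfloor rn\rfloor$ coordinates have small column distance'' to the stated probability bound via a second-moment argument. Define $\chi_i:=\mathbf{1}\{\dist(\Col_i(A),H_i(A))\leq t\,b_n\}$ for $i\leq n$. By the previous step the bad event is contained in $\bigl\{\sum_i\chi_i\geq\lfloor rn\rfloor\bigr\}$, and so in $\bigl\{\sum_i\chi_i\,(\sum_i\chi_i-1)\geq\lfloor rn\rfloor(\lfloor rn\rfloor-1)\bigr\}$. Markov's inequality together with the identity $\Exp\bigl[\sum_i\chi_i(\sum_i\chi_i-1)\bigr]=\sum_{i\neq j}\Prob(\chi_i=\chi_j=1)$ produces exactly the sum over ordered pairs $(i,j)$ that appears in the statement. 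Finally, the hypothesis $n\geq 6/r$ gives $rn\geq 6$, which implies $\lfloor rn\rfloor(\lfloor rn\rfloor-1)\geq (rn-1)(rn-2)\geq (rn)^2/2$ by a one-line computation, supplying the prefactor $2/(rn)^2$.

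The main (if rather mild) obstacle I foresee is the norm bound $\|x\|\leq b_n$: a naive termwise comparison $\gfn(n/i)\leq\gfn(i)$ is false in general, so this step must rely on the monotone-summation estimate sketched above. Once that estimate is in place, both the deterministic distance identity and the pair-counting step are standard, and the target bound assembles immediately.
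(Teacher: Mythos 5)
Your proof is correct and follows essentially the same route as the paper: the deterministic bound $|x_i|\,\dist(\Col_i(A),H_i(A))\leq\|Ax\|$, the estimate $\|x\|\leq b_n$, and the pair-counting/Markov reduction with $\lfloor rn\rfloor(\lfloor rn\rfloor-1)\geq (rn)^2/2$ are all as in the paper's argument (which phrases the last step via indicators ${\bf 1}_{ij}$). One remark on the step you flag as the only nonroutine point: your coefficient estimate $\lfloor n/(k-1)\rfloor\leq n-k+1$ fails at $k=2$, where it reads $n\leq n-1$, so the Abel-summation bookkeeping as written has an index shift (the coefficient of $\gfn(k)-\gfn(k-1)$ is $\lfloor n/k\rfloor$, not $\lfloor n/(k-1)\rfloor$). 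The claim $\sum_{i=1}^n\gfn(n/i)\leq\sum_{j=1}^n\gfn(j)$ is most cleanly seen by comparing sorted sequences: the $k$-th largest element of the multiset $\{n/i:\;i\leq n\}$ is $n/k$, and $n/k\leq n-k+1$ for all $1\leq k\leq n$ since $(k-1)(n-k)\geq 0$, so monotonicity of $\gfn$ gives $\|x\|\leq\|x\|_1\leq\sum_{i}\gfn(n/i)\leq\sum_j\gfn(j)=b_n$.
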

\begin{proof}
For every $i\neq j$, denote by ${\bf 1}_{ij}$ the indicator of the event
$$
\Event_{ij}:= \big\{\dist(H_i(A),\Col_i(A))\leq t\, b_n\qand \dist(H_j(A),\Col_j(A))\leq t\, b_n\big\}.
$$
The condition
$$
\|A x\|\leq t\,\|x\|
$$
for some $x\in \gncvectors_n=\gncvectors_n(r,\gfn,\delta,\rho)$
implies that for every $i\leq n$,
$$
 |x_i|\, \dist(H_i(A),\Col_i(A))\leq\|Ax\|\leq  t\,b_n,
$$
where the last inequality follows from the definition of $\gncvectors_n$.
Since $x^*_{\lfloor rn\rfloor }=1$, we get that everywhere on the event
$\{\|A x\|\leq t\,\|x\|\mbox{ for some }x\in \gncvectors_n\}$ there are at least
$\lfloor rn\rfloor\,(\lfloor rn\rfloor-1)\geq (rn)^2/2$ ordered pairs of indices $(i,j)$ such that for each pair
the event $\Event_{ij}$ occurs.
Rewriting this  assertion in terms of indicators, we observe
$$
\{\|A x\|\leq t\,\|x\|\mbox{ for some }x\in \gncvectors_n\}
\subset\Big\{\sum\limits_{i\neq j}{\bf 1}_{ij}\geq (rn)^2/2\Big\}.
$$
Applying Markov's inequality in order to estimate probability of the event on the right hand side, we obtain
the desired result.
\end{proof}

\begin{proof}[Proof of Theorems~\ref{th: main} and \ref{const-p-th}]
The proofs of both theorems are  almost the same, the only difference is that
Theorem~\ref{th: main} uses  Corollary~\ref{cor: steep2} while  Theorem~\ref{th: main}
uses Corollary~\ref{cor: steep}.
Let parameters $\delta,\rho,r,\gfn, a_n$ be taken from Corollary~\ref{cor: steep} or
from Corollary~\ref{cor: steep2} correspondingly. We always
write $\gncvectors_n$ for $\gncvectors_n(r,\gfn,\delta,\rho)$.
Let  $b_n=\sum_{i=1}^n \gfn(i)$.
Without loss of generality, we can assume that $n\geq 6/r$.
%and that the constant $C$ from \eqref{eq: assumptions} is at least $100$.
 Fix  $t\in (0, 1]$, and denote by $\Event$ the complement of the event
$$
\Big\{\|M_n x\|\leq a_n^{-1}\|x\|\;
\mbox{ or }\;\|M_n^\top x\|\leq a_n^{-1} \|x\|\quad
\mbox{ for some }\quad x\notin\bigcup\limits_{\lambda\geq 0}\big(\lambda\,\gncvectors_n\big)\Big\}.
$$
For $i=1,2$ denote
$$
  \Event_i:= \big\{\dist(H_i(M_n),\Col_i(M_n))\leq a_n^{-1} \, t\big\}.
$$
Applying Corollary~\ref{cor: steep} (or Corollary~\ref{cor: steep2}), Lemma~\ref{l: inv via dist} and
 the invariance of the conditional distribution of $M_n$ given $\Event$
under permutation of columns, we obtain
\begin{align*}
\Prob&\big\{s_{\min}(M_n)\leq (a_n b_n)^{-1} t\big\}
\\&\leq (2+o_n(1))n\,(1-p)^n
%&\hspace{-2cm}
 +\Prob\big(\big\{\|M_n x\|
\leq (a_n b_n)^{-1}\, t\|x\|\quad \mbox{ for some }\quad x\in \gncvectors_n\big\}\cap \Event\big)
\\&\leq (2+o_n(1))n\,(1-p)^n+ \frac{2}{r^2}\, \Prob\big(\Event\cap \Event_1\cap \Event_2\big)  .
\end{align*}

At the next step, we consider events
$$
  \Omega_i:= \big\{|\supp\Col_i(M_n)|\in [pn/8, 8pn]\big\},\, i=1,2,\qand
%  \quad
%  \Omega_2:=   \big\{|\supp\Col_2(M_n)|\in [pn/4, 4pn]\big\},
%  \qand
   \Omega:= \Omega_1\cup \Omega_2.
%\big\{\mbox{ either }\, \, \Omega_1\, \, \mbox{ or }\, \, \Omega_2 \, \, \mbox{ occurs }\big\},
$$
Since columns of $M$ are independent and  consist of i.i.d. 
\Ber variables, applying Lemma~\ref{bennett}, we observe 
$$
 \Prob\big(\Omega^c\big)= \Prob\big(\Omega_1^c\big)\Prob\big(\Omega_2^c\big) 
 \leq    (1-p)^{n}.
$$
Therefore, in view of equidistribution of the first two columns, we get
\begin{align*}
\Prob&\big(\Event\cap \Event_1\cap \Event_2\big)
\leq (1-p)^n+\Prob\big(\Event\cap \Event_1\cap \Event_2\cap \Omega \big)
%\\&
\leq (1-p)^n+2\Prob\big(\Event\cap \Event_1\cap \Omega_1\big).
\end{align*}
Denote by ${\bf Y}$ a random unit vector orthogonal to (and measurable with respect to) $H_1(M_n)$.
Note that on the event $\Event_1$
%$$
%\big\{\dist(H_1(M_n),\Col_1(M_n))\leq t\,n^{-C_{\text{\tiny\ref{cor: steep}}}}\big\}\cap \Event
%$$
the vector ${\bf Y}$ satisfies
$$
 |\langle{\bf Y},\Col_1(M_n)\rangle|= \|M_n^\top {\bf Y}\|\leq a_n^{-1}\, t \, \|{\bf Y}\|,
$$
which implies that on the event $\Event\cap \Event_1$ we also have
${\bf Y}^*_{\lfloor r n\rfloor}\neq 0$, and
${\bf Z}:={\bf Y}/{\bf Y}^*_{\lfloor r n\rfloor}\in \gncvectors_n$.
By the definition of $\gncvectors_n$, we have $\|{\bf Z}\|\leq b_n$,
%Since $\|{\bf Z}\|\geq 1$
therefore,
\begin{align*}
&P_0:=\Prob\big(\Event\cap \Event_1\cap \Omega_1\big)\leq \Prob\big(\Omega_1 \cap \big\{\mbox{There is $Z\in H_1(M_n)^\perp\cap \gncvectors_n$}:\;
|\langle Z,\Col_1(M_n)\rangle|\leq a_n^{-1}\, b_n\,  t\big\}\big).
\end{align*}
On the other hand, applying Theorem~\ref{th: gradual} with $R=2$, we get that
for some constants $K_1\geq 1$ and $K_2\geq 4$, with probability at least $1-\exp(-2pn)$,
\begin{align*}
&H_1(M_n)^\perp\cap \gncvectors_n
%\\&\hspace{1cm}
\subset\big\{
x\in\normr_n(r):\;\bal_n(x,m,K_1,K_2)\geq \exp(2pn)\,\,\, \mbox{ for any }\,\,\, m\in [pn/8, 8pn]
\big\}.
\end{align*}
Combining the last two assertions and applying Theorem~\ref{p: cf est}, we observe
%\begin{align*}
%&\hspace{-3cm}\Prob\big(\big\{\dist(H_1(M_n),\Col_1(M_n))\leq t\,n^{-C_{\text{\tiny\ref{cor: steep}}}}
%\mbox{ and }
%|\supp\Col_1(M_n)|\in [pn/4, 4pn]\big\}\cap \Event\big)\\
%\leq \exp(-2pn)+\Prob\big\{&|\supp\Col_1(M_n)|\in [pn/4, 4pn]\quad \mbox{ and}\\
%&|\langle{\bf Y},\Col_1(M_n)\rangle|\leq t\,n^{-C_{\text{\tiny\ref{cor: steep}}}}\quad \mbox{ and }\quad
%{\bf Y}/{\bf Y}^*_{\lfloor r n\rfloor}\in \gncvectors_n(r,L,\delta,\rho)\quad \mbox{ and}\\
%&\bal_n({\bf Y}/{\bf Y}^*_{\lfloor r n\rfloor},m,K_1,K_2)\geq \exp(2pn)\mbox{ for any }m\in [pn/4, 4pn]\big\}\\
%&\hspace{-3.2cm}\leq
%\exp(-2pn)+\sup\limits_{\substack{m\in [pn/4, 4pn],\,y\in \normr_n(r),\\
%\bal_n(y,m,K_1,K_2)\geq \exp(2pn)}}\Prob\big\{|\langle y,\Col_1(M_n)\rangle|
%\leq t\,n^{-C_{\text{\tiny\ref{cor: steep}}}}\;\big|\;|\supp\Col_1(M_n)|=m\big\}.
%\end{align*}
\begin{align*}
P_0\leq \exp(-2pn)+\Prob\big(&\Omega_1\cap\big\{\mbox{There is $Z\in H_1(M_n)^\perp\cap \gncvectors_n$}:\;
|\langle Z,\Col_1(M_n)\rangle|\leq a_n^{-1}\, b_n\,  t,\mbox{ and }\\
&\bal_n(Z,m,K_1,K_2)\geq \exp(2pn)\mbox{ for any }m\in [pn/8, 8pn]\big\}\big)\\
&\hspace{-3.6cm}\leq
\exp(-2pn)+\sup\limits_{\substack{m\in [pn/8, 8pn],\,y\in \normr_n(r),\\
\bal_n(y,m,K_1,K_2)\geq \exp(2pn)}}\Prob\big\{|\langle y,\Col_1(M_n)\rangle|
\leq a_n^{-1}b_n\, t\,\,\big|\,\, |\supp\Col_1(M_n)|=m\big\}
\\&\hspace{-3.6cm}\leq
 (1+C_{\text{\tiny\ref{p: cf est}}})\exp(-2pn)+  \frac{C_{\text{\tiny\ref{p: cf est}}}b_n}{ a_n\sqrt{pn/8}} \, t.
\end{align*}
Thus
$$
  \Prob\big\{s_{\min}(M_n)\leq (a_n b_n)^{-1} t\big\}
  \leq  (2+o_n(1))n\,(1-p)^n  + \frac{8 C_{\text{\tiny\ref{p: cf est}}}b_n}{ r^2 \, a_n\sqrt{pn}} \, t.
$$
By rescaling $t$ we obtain
$$
\Prob\Big\{s_{\min}(M_n)\leq \frac{r^2 \,\sqrt{pn}}{(8 C_{\text{\tiny\ref{p: cf est}}}b_n^2)}\, t\Big\}
\leq (2+o_n(1))n\,(1-p)^n+t,\quad 0\leq t\leq \frac{8 C_{\text{\tiny\ref{p: cf est}}}b_n}{ r^2 \, a_n\sqrt{pn}}.
$$
%\begin{equation}\label{upbont}
%t\leq \frac{8 C_{\text{\tiny\ref{p: cf est}}}b_n}{ \rho^2 \, a_n\sqrt{pn}}.
%\end{equation}

\medskip

In the case of constant $p$ (applying Corollary~\ref{cor: steep2}) we have $a_n=C \sqrt{n \ln(e/p)}$ and $b_n\leq 2\sqrt{3}n^{3/2}$,
and we get the small ball probability estimate of Theorem~\ref{const-p-th}.

\medskip

In the case of ``general'' $p$ (with the application of Corollary~\ref{cor: steep}) we have
$a_n=\frac{(pn)^2}{c(64p)^{\kappa}}\, \max\left(1, p^{1.5} n\r)$ and $b_n\leq \exp(1.5\ln^2(2n))$.
Therefore, $$\frac{r^2 \,\sqrt{pn}}{(8 C_{\text{\tiny\ref{p: cf est}}}b_n^2)}\geq \exp(-3\ln^2(2n))$$ for large enough $n$,
and the $s_{\min}$ estimate follows.

\medskip

In both cases the upper bound on $t$, $\frac{8 C_{\text{\tiny\ref{p: cf est}}}b_n}{ r^2 \, a_n\sqrt{pn}}$,
is greater than $1$, so we may omit it.

Finally, applying the argument of Subsection~\ref{subs: lower b}, we get the matching lower bound for the singularity probability.
This completes the proof.
\end{proof}

\section{Open questions}\label{s: further}

The result of this paper leaves open the problem of estimating the singularity probability
for Bernoulli matrices in two regimes: when $n p_n$ is logarithmic in $n$ and when $p_n$ is larger
than the constant $C^{-1}$ from Theorem~\ref{th: main}.

For the first regime, we recall that the singularity probability of $M_n$, with $n p_n$ in a (small) neighborhood of $\ln n$,
was determined up to the $1+o_n(1)$ multiple in the work of Basak--Rudelson \cite{BasRud-sharp}.
Definitely, it would be of interest to bridge that result and the main theorem of this paper.
\begin{Problem}[A brigde: Theorem~\ref{th: main} to Basak--Rudelson]
Let $p_n$ satisfy
$$1\leq \liminf np_n/\ln n\leq \limsup np_n/\ln n<\infty,$$
and for each $n$ let $M_n$
be the $n\times n$ matrix with i.i.d.\ Bernoulli($p_n$) entries.
Show that
$$
\Prob\big\{
M_n\mbox{ is singular}
\big\}=(1+o_n(1))\Prob\big\{M_n\mbox{ has a zero row or a zero column}\big\}.
$$
\end{Problem}
Note that the main technical result for unstructured (gradual non-constant) vectors,
Theorem~\ref{th: gradual} proved in Section~\ref{s: unstructured},
 remains valid for these values of $p_n$. It may be therefore expected that the above problem can be positively resolved
by finding an efficient treatment for the structured vectors (the complement of gradual non-constant vectors),
which would replace (or augment) the argument from Section~\ref{s: steep}.

\medskip

On the contrary, the second problem --- singularity of random Bernoulli matrices with large values of $p_n$
--- seem to require
essential new arguments for working with the unstructured  vectors as the
basic idea of Section~\ref{s: unstructured} --- gaining on anti-concentration estimates by grouping
together several components of a random vector --- does not seem to be applicable in this regime.
\begin{Problem}[Optimal singularity probability for dense Bernoulli matrices below the $1/2$ threshold]
Let the sequence $p_n$ satisfy
$$0< \liminf p_n\leq \limsup p_n<1/2.$$ Show that
\begin{align*}
\Prob\big\{
M_n\mbox{ is singular}
\big\}
&=(1+o_n(1))\Prob\big\{M_n\mbox{ has a zero row or a zero column}\big\}
%\\&
=(2+o_n(1))n\,(1-p_n)^n.
\end{align*}
\end{Problem}

\subsection*{Acknowledgments}

K.T.\ was partially supported by the Sloan Research Fellowship.

\nocite{*}

\address


\begin{thebibliography}{99}

\bibitem{BVH}
A.S. Bandeira, R. van Handel,
{ Sharp nonasymptotic bounds on the norm of random matrices with independent entries.}
Ann.  Probab. 44 (2016),  2479--2506.

\bibitem{BCZ}
{
A. Basak,\ N. Cook and\ O. Zeitouni, Circular law for the sum of random permutation matrices, Electronic Journal of Probability, {\bf 23} (2018), Paper No. 33, 51 pp.
}

\bibitem{BasRud}
{
A. Basak\ and\ M. Rudelson, Invertibility of sparse non-Hermitian matrices, Adv. Math. {\bf 310} (2017), 426--483. MR3620692
}

\bibitem{BasRud circ}
{
A. Basak\ and\ M. Rudelson, {The circular law for sparse non-Hermitian matrices}, arXiv:1707.03675
}

\bibitem{BasRud-sharp}
{
A. Basak\ and\ M. Rudelson,
 Sharp transition of the invertibility of the adjacency matrices of random graphs,
 arXiv:1809.08454
}

\bibitem{BLM}
{
S. Boucheron, G. Lugosi, P. Massart,
 Concentration inequalities. A nonasymptotic theory of independence.
 With a foreword by Michel Ledoux. Oxford University Press, Oxford, 2013.
}

\bibitem{BVW}
{
J. Bourgain, V. H. Vu\ and\ P. M. Wood, On the singularity probability of discrete random matrices,
J. Funct. Anal. {\bf 258} (2010), no.~2, 559--603. MR2557947
}

\bibitem{lama}
{D. Chafa{\"i}, O. Gu{\'e}don, G. Lecu{\'e}, A. Pajor,
 Interactions between compressed sensing random matrices and high
dimensional geometry,
Panoramas et Synth{\`e}ses [Panoramas and Syntheses], {\bf 37}.
Soc. Math. de France, Paris, 2012.
}

\bibitem{Cook adjacency}
{
N. A. Cook, On the singularity of adjacency matrices for random regular digraphs,
Probab. Theory Related Fields {\bf 167} (2017), no.~1-2, 143--200. MR3602844
}

\bibitem{Cook circular}
{
N. Cook, The circular law for random regular digraphs,
Ann. Inst. Henri Poincare Probab. Stat., to appear,
 arXiv:1703.05839
}

\bibitem{DevLug}
{
L. Devroye; G. Lugosi,
 Combinatorial methods in density estimation. Springer Series in Statistics. Springer-Verlag, New York, 2001.
}

%\bibitem{G}
%{
%L. Geisinger, Convergence of the density of states and delocalization of eigenvectors on random regular graphs.
% J. Spectr. Theory {\bf 5} (2015), no.~4, 783–827.
%}

%\bibitem{Dodziuk}
%{
%J. Dodziuk, Difference equations, isoperimetric inequality and transience of certain random walks,
%Trans. Amer. Math. Soc. {\bf 284} (1984), no.~2, 787--794. MR0743744
%}

\bibitem{Erdos}
{
P. Erd\"{o}s, On a lemma of Littlewood and Offord, Bull. Amer. Math. Soc. {\bf 51} (1945), 898--902. MR0014608
}

\bibitem{Esseen-type}
{
C. G. Esseen, On the Kolmogorov-Rogozin inequality for the concentration function,
Z. Wahrsch. Verw. Gebiete {\bf 5} (1966), 210--216. MR0205297
}

%\bibitem{Esseen}
%{
%C. G. Esseen, On the concentration function of a sum of independent random variables,
%Z. Wahrsch. Verw. Gebiete {\bf 9} (1968), 290--308. MR0231419
%}

%\bibitem{Girko}
%{
%V. L. Girko, The circular law, Teor. Veroyatnost. i Primenen. {\bf 29} (1984), no.~4, 669--679. MR0773436
%}

\bibitem{GT}
{
F. G\"otze\ and\ A. Tikhomirov, The circular law for random matrices, Ann. Probab. {\bf 38} (2010), no.~4, 1444--1491. MR2663633
}

%\bibitem{Hal}
%{
%G. Hal\'asz,
%On the distribution of additive arithmetic functons, Acta Arithmetica,
%{\bf XXVII} (1975), 143--152.
%}
%
%\bibitem{Hal2}
%{
%G. Hal\'asz,
%Estimates for the concentration function of combinatorial number
%theory and probability, Per. Math. Hung. {\bf 8}, (1977), 197--211.
%}

%\bibitem{Hoef}
%{
%W. Hoeffding, Probability inequalities for sums of bounded random variables, J. Amer. Statist. Assoc. {\bf 58} (1963), 13--30. MR0144363
%}

%\bibitem{HLW}
%{
%S. Hoory, N. Linial\ and\ A. Wigderson, Expander graphs and their applications,
%Bull. Amer. Math. Soc. (N.S.) {\bf 43} (2006), no.~4, 439--561. MR2247919
%}

\bibitem{Huang}
{
J. Huang, Invertibility of adjacency matrices for random $d$-regular graphs, preprint,
arXiv:1807.06465, 2018.
}

\bibitem{KKS95}
{
J. Kahn, J. Koml\' os\ and\ E. Szemer\' edi,
On the probability that a random $\pm 1$-matrix is singular, J. Amer. Math. Soc. {\bf 8} (1995), no.~1, 223--240. MR1260107
}

%\bibitem{Kesten walks}
%{
%H. Kesten, Symmetric random walks on groups, Trans. Amer. Math. Soc. {\bf 92} (1959), 336--354. MR0109367
%}

\bibitem{kesten}
{
H. Kesten, A sharper form of the Doeblin-L\'evy-Kolmogorov-Rogozin inequality for concentration functions,
Math. Scand. {\bf 25} (1969), 133--144. MR0258095
}

%\bibitem{KlLiv}
%{
%B. Klartag, G. Livshyts,
%The lower bound for Koldobsky's slicing inequality via random rounding,
%arXiv:1810.06189
%}

\bibitem{Komlos}
{
J. Koml\'{o}s, On the determinant of $(0,\,1)$ matrices, Studia Sci. Math. Hungar {\bf 2} (1967), 7--21. MR0221962
}


\bibitem{LSY}
{
B. Landon, P. Sosoe, H. Yau,  Fixed energy universality of Dyson Brownian motion, Adv. Math. {\bf 346} (2019),
1137--1332.
}

\bibitem{Ledoux}
{
 M. Ledoux, The concentration of measure phenomenon. Mathematical Surveys and Monographs,
 {\bf 89}. American Mathematical Society, Providence, RI, 2001.
}

\bibitem{LO lemma}
{
J. E. Littlewood\ and\ A. C. Offord, On the number of real roots of a random algebraic equation. III,
Rec. Math. [Mat. Sbornik] N.S. {\bf 12(54)} (1943), 277--286. MR0009656
}

\bibitem{LLTTY-cras}
{
A. E. Litvak, A.~Lytova, K.~Tikhomirov, N.~Tomczak-Jaegermann\ and\ P.~Youssef, Anti-concentration property for random digraphs and invertibility of their adjacency matrices, C. R. Math. Acad. Sci. Paris {\bf 354} (2016), no.~2, 121--124. MR3456885
}


\bibitem{LLTTY:15}
{
A. E. Litvak, A.~Lytova, K.~Tikhomirov, N.~Tomczak-Jaegermann\ and\ P.~Youssef, Adjacency matrices of random digraphs: singularity and anti-concentration, J. Math. Anal. Appl. {\bf 445} (2017), no.~2, 1447--1491. MR3545253
}

\bibitem{LLTTY first part}
{
A. E. Litvak, A.~Lytova, K.~Tikhomirov, N.~Tomczak-Jaegermann\ and\ P.~Youssef,
{The smallest singular value of a shifted $d$-regular random square matrix},
Probab. Theory Related Fields, {\bf 173} (2019), 1301--1347.
}

\bibitem{LLTTY third part}
{
A. E. Litvak, A.~Lytova, K.~Tikhomirov, N.~Tomczak-Jaegermann\ and\ P.~Youssef,
{The circular law for sparse random regular digraphs}, J. European Math. Soc., to appear.
}

\bibitem{LLTTY rank n-1}
{
A. E. Litvak, A.~Lytova, K.~Tikhomirov, N.~Tomczak-Jaegermann\ and\ P.~Youssef,
The rank of random regular digraphs of constant degree, J. of Complexity, {\bf 48} (2018), 103--110.
}

\bibitem{LLTTY-TAMS}
{
A. E. Litvak, A.~Lytova, K.~Tikhomirov, N.~Tomczak-Jaegermann\ and\ P.~Youssef,
Structure of eigenvectors of random regular digraphs, Trans. Amer. Math. Soc., {\bf 371} (2019), 8097--8172.
}

\bibitem{LPRT}
{
A. E.~Litvak, A.~Pajor, M.~Rudelson\ and\ N.~Tomczak-Jaegermann,
Smallest singular value of random matrices and geometry of random polytopes, Adv. Math. {\bf 195} (2005), no.~2, 491--523. MR2146352
}

\bibitem{LiRi}
{
A. E. Litvak\ and\ O. Rivasplata, Smallest singular value of sparse random matrices, Studia Math. {\bf 212} (2012), no.~3, 195--218. MR3009072
}

\bibitem{Livshyts}
{
G. V. Livshyts, The smallest singular value of heavy-tailed not necessarily i.i.d.\ random matrices via random rounding, arXiv:1811.07038.
}

\bibitem{LTV}
{
G. V. Livshyts, K. Tikhomirov, R. Vershynin,
The smallest singular value of inhomogeneous square random matrices, arXiv:1909.04219
}

\bibitem{LyTi}
{
A. Lytova, K. Tikhomirov,
On delocalization of eigenvectors of random non-Hermitian matrices, Probab. Theor. Rel. Fields, to appear.
}

\bibitem{LMN}
{
K. Luh, S. Meehan, H.H.\ Nguyen,
Some new results in random matrices over finite fields, arXiv:1907.02575
}

\bibitem{LOR}
{
K. Luh, S. O'Rourke, Eigenvector Delocalization for Non-Hermitian Random Matrices and Applications, arXiv:1810.00489
}

\bibitem{Mez}
A. M\'esz\'aros, The distribution of sandpile groups of random regular graphs, preprint, arXiv: 1806.03736, 2018.

%\bibitem{miroshnikov}
%{
%A. L. Miroshnikov, Theory Probab. Appl. {\bf 34} (1989), no.~3, 535--540 (1990); translated from Teor. Veroyatnost. i Primenen. {\bf 34} (1989), no. 3, 593--598. MR1022646
%}

%\bibitem{Nguyen sym}
%{
%H. H. Nguyen, Inverse Littlewood-Offord problems and the singularity of random symmetric matrices,
%Duke Math. J. {\bf 161} (2012), no.~4, 545--586. MR2891529
%}


\bibitem{NW}
H.H. Nguyen and M.M. Wood, Cokernels of adjacency matrices of random r-regular graphs,
preprint, arXiv: 1806.10068, 2018.

\bibitem{RT}
{
E. Rebrova, K. Tikhomirov, Coverings of random ellipsoids, and invertibility of matrices with i.i.d. heavy-tailed entries,
Israel J. Math., to appear.
}

\bibitem{Rog}
{
B. A. Rogozin, On the increase of dispersion of sums of independent random variables, Teor. Verojatnost. i Primenen {\bf 6} (1961), 106--108. MR0131894
}

\bibitem{R-ann}
{
M. Rudelson,  Invertibility of random matrices: norm of the inverse,
Ann. of Math. {\bf 168} (2008), 575--600.
}

\bibitem{Rud13}
{
M. Rudelson, Recent developments in non-asymptotic theory of random matrices, in {\it Modern aspects of random matrix theory}, 83--120, Proc. Sympos. Appl. Math., 72, Amer. Math. Soc., Providence, RI. MR3288229
}

\bibitem{RV}
{
M. Rudelson\ and\ R. Vershynin, The Littlewood--Offord problem and invertibility of random matrices, Adv. Math. {\bf 218} (2008), no.~2, 600--633. MR2407948
}

\bibitem{RV09}
{
M. Rudelson\ and\ R. Vershynin, Smallest singular value of a random rectangular matrix,
Comm. Pure Appl. Math. {\bf 62} (2009), no.~12, 1707--1739. MR2569075
}

%\bibitem{RV2010}
%{
%M. Rudelson\ and\ R. Vershynin, Non-asymptotic theory of random matrices:
%extreme singular values, in {\it Proceedings of the International Congress of Mathematicians. Volume III},
%1576--1602, Hindustan Book Agency, New Delhi. MR2827856
%}

%\bibitem{RV deloc}
%{
%M. Rudelson\ and\ R. Vershynin, Delocalization of eigenvectors of random matrices with 
%independent entries, Duke Math. J. {\bf 164} (2015),  2507--2538. MR3405592
%}

\bibitem{RV no gaps}
{
M. Rudelson\ and\ R. Vershynin, No-gaps delocalization for general random matrices, Geom. Funct. Anal. {\bf 26} (2016), no.~6, 1716--1776. MR3579707
}

\bibitem{TV disc1}
{
T. Tao\ and\ V. Vu, On random $\pm1$ matrices: singularity and determinant,
Random Structures Algorithms {\bf 28} (2006), no.~1, 1--23. MR2187480
}

\bibitem{TV bernoulli}
{
T. Tao\ and\ V. Vu, On the singularity probability of random Bernoulli matrices, J. Amer. Math. Soc. {\bf 20} (2007), no.~3, 603--628. MR2291914
}

\bibitem{TV sparse}
{
T. Tao\ and\ V. Vu, Random matrices: the circular law, Commun. Contemp. Math. {\bf 10} (2008),  261--307. MR2409368
}

\bibitem{TV ann math}
{
T. Tao\ and\ V. H. Vu, Inverse Littlewood-Offord theorems and the condition number of random discrete matrices,
Ann. of Math.  {\bf 169} (2009),  595--632. MR2480613
}

\bibitem{KT-last}
{
K.~Tikhomirov, Singularity of random Bernoulli matrices, Annals of Math., to appear. arXiv:1812.09016
}

%\bibitem{V12}
%{
%R. Vershynin, Introduction to the non-asymptotic analysis of random matrices, in {\it Compressed sensing},
%210--268, Cambridge Univ. Press, Cambridge. MR2963170
%}

\end{thebibliography}
\end{document}